\documentclass[a4paper, 11pt, reqno]{amsart}

\usepackage{amsmath,amssymb,amsthm}
\usepackage{enumerate}
\usepackage{cases}
\usepackage{color}
\usepackage[mathscr]{euscript}
\usepackage{url}
\usepackage[all]{xy}
\usepackage{booktabs}
\usepackage{graphicx}
\usepackage{epstopdf}
\usepackage{algorithmic}
\usepackage[caption=false]{subfig}
\usepackage{mathtools}
\usepackage{multirow}
\usepackage{makecell}
\usepackage{bigdelim}
\usepackage[ruled,vlined]{algorithm2e}
\usepackage{threeparttable}
\usepackage{hyperref}
\usepackage{xcolor}
\definecolor{cobalt}{rgb}{0.0, 0.28, 0.67}
\definecolor{darkblue}{rgb}{0.0, 0.0, 0.55}
\definecolor{color1}{RGB}{246, 189, 96}
\definecolor{color2}{RGB}{247, 237, 226}
\definecolor{color3}{RGB}{245, 202, 195}
\definecolor{color4}{RGB}{132, 165, 157}
\definecolor{color5}{RGB}{242, 132, 130}
\definecolor{color6}{RGB}{132, 220, 198}
\definecolor{color7}{RGB}{165, 255, 214}
\usepackage{mathdots}

\usepackage{tikz}
\usetikzlibrary{shapes}
\usetikzlibrary{arrows}
\usetikzlibrary{matrix}
\usetikzlibrary{cd}

\newcommand{\GL}{\mathbf{GL}}
\newcommand{\SL}{\mathbf{SL}}

\newcommand{\St}{\mathbf{St}}
\newcommand{\RSL}{\mathbf{RSL}}

\newcommand{\TSDO}{{$\mathbf{T}_{m,n}$-SDO}}

\hypersetup
{
bookmarksopen=true,
pdftitle={Nonsymmetric Jacobi-G algorithm},
pdfauthor={SL},
pdfsubject={15A12},
pdfmenubar=true,
pdfhighlight=/O,
colorlinks=true,
pdfpagelayout=SinglePage,
pdffitwindow=true,
linkcolor=cobalt,
citecolor=cobalt,
urlcolor=cobalt
}

\usepackage[capitalize,nameinlink,noabbrev]{cleveref}

\usepackage[scale=0.75]{geometry}

\topmargin -0.1in
\textwidth 6.25in
\textheight 8.5in
\oddsidemargin -0.2in
\evensidemargin -0.2in

\theoremstyle{definition}

\newtheorem{theorem}{Theorem}[section]
\newtheorem{corollary}[theorem]{Corollary}
\newtheorem{lemma}[theorem]{Lemma}
\newtheorem{definition}[theorem]{Definition}

\newtheorem{remark}[theorem]{Remark}

\newtheorem{example}[theorem]{Example}

\definecolor{darkred}{rgb}{0.7,0,0}
\definecolor{darkgreen}{rgb}{0,0.46,0}
\definecolor{purple}{rgb}{0.6,0,0.5}

\newcommand{\tens}[1]{\boldsymbol{\mathcal{#1}}}

\newcommand{\matr}[1]{\boldsymbol{#1}}

\newcommand{\matrelem}[1]{\mathrm{#1}}

\newcommand{\vect}[1]{\boldsymbol{#1}}

\newcommand{\set}[1]{\mathcal{#1}}

\newcommand{\T}{{\sf T}}        
\renewcommand{\top}{{\sf T}}        

\makeatletter
\newcommand{\rank}[1]{\mathop{\operator@font rank}(#1)}
\newcommand{\colrank}[1]{\mathop{\operator@font colrank}\{#1\}}
\newcommand{\krank}[1]{\mathop{\operator@font krank}\{#1\}}
\newcommand{\trace}[1]{\mathop{\operator@font tr}\left(#1\right)}
\newcommand{\symmm}[1]{\mathop{\operator@font sym}\left(#1\right)}
\newcommand{\skeww}[1]{\mathop{\operator@font skew}(#1)}
\newcommand{\Diag}[1]{\mathop{\operator@font \textbf{Diag}}\{#1\}}    
\newcommand{\diag}[1]{\mathop{\operator@font diag}\{#1\}}    
\newcommand{\Span}[1]{\mathop{\operator@font Span}\{#1\}}    
\newcommand{\argmin}{\mathop{\operator@font argmin}}

\newcommand{\offdiag}[1]{\mathop{\operator@font offdiag}\{#1\}}    
\newcommand{\Proj}[2]{\mathop{\operator@font Proj_{#1}}{#2}}
\newcommand{\ProjGrad}[2]{\mathop{{\operator@font grad} }#1(#2)}

\newcommand{\expp}[1]{\mathop{\operator@font exp}\left(#1\right)}
\makeatother

\newcommand{\eqdef}{\stackrel{\sf def}{=}}
\newcommand{\RR}{\mathbb{R}}
\newcommand{\CC}{\mathbb{C}}

\newcommand{\SON}{\mathbf{SO}}

\newcommand{\contr}[1]{\mathop{\bullet_{#1}}}   






\makeatletter
\def\bbordermatrix#1{\begingroup \m@th
\@tempdima 4.75\p@
\setbox\z@\vbox{%
\def\cr{\crcr\noalign{\kern2\p@\global\let\cr\endline}}%
\ialign{$##$\hfil\kern2\p@\kern\@tempdima&\thinspace\hfil$##$\hfil
	&&\quad\hfil$##$\hfil\crcr
	\omit\strut\hfil\crcr\noalign{\kern-\baselineskip}%
	#1\crcr\omit\strut\cr}}%
\setbox\tw@\vbox{\unvcopy\z@\global\setbox\@ne\lastbox}%
\setbox\tw@\hbox{\unhbox\@ne\unskip\global\setbox\@ne\lastbox}%
\setbox\tw@\hbox{$\kern\wd\@ne\kern-\@tempdima\left[\kern-\wd\@ne
\global\setbox\@ne\vbox{\box\@ne\kern2\p@}%
\vcenter{\kern-\ht\@ne\unvbox\z@\kern-\baselineskip}\,\right]$}%
\null\;\vbox{\kern\ht\@ne\box\tw@}\endgroup}
\makeatother

\linespread{1.1}

\begin{document}

\title[Projectively and weakly simultaneously diagonalizable matrices]{Projectively and weakly simultaneously diagonalizable matrices and their applications}

\author[Wentao Ding]{Wentao Ding$^{\dagger}$}
\thanks{$\dagger$ School of Data Science, The Chinese University of Hong Kong, Shenzhen, Guangdong, China (wentaoding@link.cuhk.edu.cn)}

\author[Jianze Li]{Jianze Li$^{\ddagger}$}
\thanks{$\ddagger$ Shenzhen Research Institute of Big Data, The Chinese University of Hong Kong, Shenzhen, Guangdong, China (lijianze@gmail.com)}

\author[Shuzhong Zhang]{Shuzhong Zhang$^{\natural}$}
\thanks{$\natural$ Department of Industrial and Systems Engineering, University of Minnesota, Minneapolis, MN 55455, USA (zhangs@umn.edu)}

\date{\today}

\keywords{simultaneous diagonalization, weak simultaneous diagonalization, projective simultaneous diagonalization, canonical form, quadratically constrained quadratic programming, independent component analysis}

\thanks{This work was supported in part by the National Natural Science Foundation of China (No. 11601371) and the GuangDong Basic and Applied Basic Research Foundation (No. 2021A1515010232).}

\subjclass[2020]{15A20, 15A21, 15A22, 90C20, 90C30}

\begin{abstract}
 Characterizing simultaneously diagonalizable (SD) matrices has been receiving considerable attention in the recent decades due to its wide applications and its role in matrix analysis. 
 However, the notion of SD matrices is arguably still restrictive for wider applications. 
 In this paper, we consider two error measures related to the simultaneous diagonalization of matrices, and propose several new variants of SD thereof; in particular, TWSD, TWSD-B, \(\textbf{T}_{m,n}\)-SD (SDO), DWSD and \( \textbf{D}_{m,n} \)-SD (SDO). Those are all weaker forms of SD. We derive various sufficient and/or necessary conditions of them under different assumptions, and show the relationships between these new notions. Finally, we discuss the applications of these new notions in, e.g., quadratically constrained quadratic programming (QCQP) and 
 independent component analysis (ICA).
\end{abstract}

\maketitle
\section{Introduction}\label{sec:introduction}











Let $\SL_{m}(\RR)\eqdef\{\matr{P}\in\RR^{m\times m}, {\rm det}(\matr{P})=1\}$ be the \emph{special linear group} with $m\geq 1$, and $\SON_{m}\subseteq\SL_{m}(\RR)$ be the \emph{special orthogonal group}. 
Let $\textbf{symm}(\RR^{m\times m})\subseteq\RR^{m\times m}$ be the set of all \emph{symmetric} matrices, and 
\begin{equation}\label{set_C}
\mathcal{C}\eqdef\{\matr{A}_{i}\}_{1\leq i\leq L} \subseteq \textbf{symm}(\RR^{m\times m})
\end{equation}
be a set of $L$ symmetric matrices. 
Then the set $\mathcal{C}$ is said to be {\it simultaneously diagonalizable on $\SL_{m}(\RR)$} (SD) \cite{horn2012matrix} if there exists a matrix $\matr{P}\in \SL_{m}(\RR)$ such that
$\matr{P}^\T \matr{A}_{i} \matr{P}$ 
is diagonal for all $1\leq i\leq L$. 
In particular, it is said to be {\it simultaneously diagonalizable on $\SON_{m}$} (SDO) \cite{horn2012matrix} if there exists a matrix $\matr{P}\in \SON_{m}$ such that
$\matr{P}^\T \matr{A}_{i} \matr{P}$ is diagonal for all $1\leq i\leq L$.


The notions of SDO and SD are closely related to many intrinsic properties of matrices, including the \emph{community} and \emph{eigenvalue} \cite{weierstrass1868theorie,uhlig1979recurring}. For example, a well-known theorem, which can be dated back to 1868 \cite{weierstrass1868theorie}, states that the set \( \mathcal{C} \) in \eqref{set_C} is SDO if and only if all of the matrices in \( \mathcal{C} \) commute with each other.
Throughout the last few decades, a variety of characterizations of SD have also emerged under different conditions.
When the set \( \mathcal{C} \) in \eqref{set_C} is a nonsingular symmetric matrix pair (\emph{i.e.}, $L=2$, $\matr{A}_1$ or $\matr{A}_2$ is nonsingular), the study of its canonical form suggests a sufficient and necessary condition for SD by the real Jordan normal form \cite{uhlig1973SimultaneousBlockDiagonalizationa, uhlig1976canonical}; see also \cite{uhlig1979recurring} for a historical review.
In \cite{jiang2016simultaneous}, this result was further extended to the case of a singular symmetric matrix pair.
For the set \( \mathcal{C} \) in \eqref{set_C} with $L > 2$, the procedures to check whether \( \mathcal{C} \) is SD were also derived in \cite{jiang2016simultaneous} if \( \mathcal{C} \) has a positive semi-definite pencil, and in \cite{nguyen2020simultaneous, le2020simultaneous, bustamante2020SolvingProblemSimultaneousa} for real and complex cases without any assumption.

Apart from their theoretical importance, SDO and SD also have practical applications; see e.g.~\cite{luo2020effective,vollgraf2006quadratic,wang2021new, wang2021fast}, including \emph{signal processing} \cite{Cardoso93:JADE,cardoso1996jacobi, Como10:book, yeredor2002NonorthogonalJointDiagonalization} and \emph{quadratically constrained quadratic programming} (QCQP) \cite{ben2014hidden,jiang2016simultaneous, nguyen2020simultaneous}. To be more specific, if the quadratic forms in the objective function and constraints are SD, then there exist efficient algorithms to solve those problem via reformulations \cite{ben2014hidden,jiang2016simultaneous,zhou2020simultaneous, zhou2019simultaneous}. Also, if the SDP relaxation is employed to this kind of problem \cite{luo2010semidefinite,sturm2003cones,todd2001semidefinite,ye2003new}, then more theoretical results can be obtained; see \cite{burer2020ExactSemidefiniteFormulations,wang2020generalized,wang2020geometric,wang2021TightnessSDPRelaxations} and the references therein.

In spite of these remarkable properties and wide applications of SDO and SD, the sets of matrices that are SD or SDO are still limited.
To overcome this drawback, Wang and Jiang \cite{wang2021new} proposed two weaker notions: \emph{almost SDC} (ASDC) and \( d \)-\emph{restricted SDC} (\( d\)-RSDC). They derived full characterizations of the ASDC pairs and nonsingular ASDC triples, and proved that all singular pairs are ASDC and almost all pairs are 1-RSDC over complex field $\CC$ and real field $\RR$.
As an application, they discussed how to use these two properties to solve the QCQP models with a single quadratic constraint over a polytope.


In this paper, from an even broader perspective, we shall consider more weaker versions of SDO and SD, and study their theoretical characterizations and applications. 
To this end, let us first introduce some definitions and notations. 
Let 
$\St(m,n) \eqdef\{\matr{P}\in\RR^{n\times m}, \matr{P}^{\T}\matr{P}= \matr{I}_m\}$ 
be the \emph{Stiefel manifold} with $n\geq m$. 
Define the \emph{rectangular special linear set} \cite{li2020gradient} as
\begin{align*}
\textbf{RSL}(m,n) \eqdef \{\matr{P}\in\RR^{n\times m}, \matr{P}^{\T}\matr{P}\in\SL_{m}(\RR)\},
\end{align*}
which can be seen as a non-orthogonal analogue of $\St(m,n)$. It is easy to verify that\footnote{This equation means that a matrix $\matr{Z}\in\textbf{RSL}(m,n)$ if and only if there exist $\matr{Y}\in\St(m,n)$ and $\matr{X}\in\SL_{m}(\RR)$ such that $\matr{Z}=\matr{Y}\matr{X}$.}:
\begin{align}
\textbf{RSL}(m,n) = \St(m,n)\times\SL_{m}(\RR).
\label{eq:rela-equivalent}
\end{align}
Let $\mathbf{D}_{n}\subseteq\RR^{n\times n}$ be the set of all \emph{diagonal} matrices.
Let the set $\mathcal{C}$ be as in \eqref{set_C}.
We define two error-measuring functions
\begin{align}\label{eq:func_T}
\varphi_{\rm T}(\matr{P},\mathcal{D}) &\eqdef \sum_{i=1}^{L} \|\matr{P}\matr{A}_i\matr{P}^\T-\matr{D}^{(i)}\|^2,\\
\varphi_{\rm D}(\matr{P},\mathcal{D}) &\eqdef \sum_{i=1}^{L} \|\matr{A}_i-\matr{P}^\T\matr{D}^{(i)}\matr{P}\|^2,\label{eq:func_D}
\end{align}
for $\matr{P}\in\textbf{RSL}(m,n)$ ($\matr{P}\in\St(m,n)$), and  $\mathcal{D}=\{\matr{D}^{(i)}\}_{1\leq i\leq L}\subseteq\mathbf{D}_{n}$. 
It is clear that the set $\mathcal{C}$ is SD (SDO) if and only if $n=m$, and there exists $\matr{P}\in\SL_{m}(\RR)$ ($\matr{P}\in\SON_{m}$) and $\mathcal{D}\subseteq\mathbf{D}_{m}$ such that $\varphi_{\rm T}(\matr{P},\mathcal{D})=0$ (equivalently, $\varphi_{\rm D}(\matr{P},\mathcal{D})=0$).

In this paper, using the function $\varphi_{\rm T}$ in \eqref{eq:func_T}, we will extend the SDO and SD from two different angles. 
The first one is to allow \( \matr{P} \) to be 
non-square, {\it i.e.}, \( n \geq m \), 
which we shall term as ``projectively'' SD, including $\textbf{T}_{m,n}$-SDO and $\textbf{T}_{m,n}$-SD for \( \matr{P} \in \St(m,n) \) and \( \matr{P} \in\RSL(m,n) \), respectively.
The second one is to allow that the off-diagonal elements of \( \matr{P}^{\T}\matr{A}_i\matr{P} \) to be not necessarily exactly equal to \( 0 \), but asymptotically approaches \( 0\), 
which we shall term as ``weakly'' SD, including the TWSD-B and TWSD.
Similarly, using the function $\varphi_{\rm D}$ in \eqref{eq:func_D}, we will also introduce several definitions weaker than SDO and SD, including the $\textbf{D}_{m,n}$-SDO, $\textbf{D}_{m,n}$-SD, and DWSD. 
These new notions and definitions are summarized in \Cref{table-example-3-0}.

{\renewcommand{\arraystretch}{1.4}
\begin{table}[h!]
\centering
\caption{A summary of the new weaker notions}
\label{table-example-3-0}
\scalebox{0.8}{
\begin{tabular}{|c|c|c|p{7cm}|c|}
\hline
Classical notions        & Functions                                  & Types                    & \makecell[c]{Full name}         & Short name            \\ \hline
\multirow{7}{*}[-4em]{\makecell[c]{SD, SDO}} & \multirow{4}{*}[-3em]{\(\varphi_{\rm T}\) in \eqref{eq:func_T}} & \multirow{2}{*}[-1.2em]{Projectively} &  Transformation based projectively simultaneously diagonalizable on \( \St(m,n) \) & \multirow{1}{*}[-0.7em]{\( \mathbf{T}_{m,n} \)-SDO (\cref{def:TSDO-SD})}    \\ \cline{4-5}
                         &                                            &                          &    Transformation based projectively simultaneously diagonalizable on \( \RSL(m,n) \) & \multirow{1}{*}[-0.7em]{\( \mathbf{T}_{m,n} \)-SD (\cref{def:TSDO-SD})}       \\ \cline{3-5}
                         &                                            & \multirow{2}{*}[-1.2em]{Weakly}  &    Transformation based weakly simultaneously diagonalizable & \multirow{1}{*}[-0.7em]{TWSD (\cref{def:twsd})}   \\ \cline{4-5}
                         &                                            &                          &     Bounded transformation based weakly simultaneously diagonalizable & \multirow{1}{*}[-0.7em]{TWSD-B (\cref{def:twsd})} \\ \cline{2-5}
                         & \multirow{4}{*}[-1em]{\(\varphi_{\rm D}\) in \eqref{eq:func_D}} & \multirow{2}{*}[-1.2em]{Projectively}   &    Decomposition based projectively simultaneously diagonalizable on \( \St(m,n) \) & \multirow{1}{*}[-0.7em]{\( \mathbf{D}_{m,n} \)-SDO (\cref{def:D-SDO-SD})}    \\ \cline{4-5}
                         &                                            &                          &    Decomposition based projectively simultaneously diagonalizable on \( \RSL(m,n) \) & \multirow{1}{*}[-0.7em]{\( \mathbf{D}_{m,n} \)-SD (\cref{def:D-SDO-SD})}    \\ \cline{3-5}
                         &                                            & Weakly    &Decomposition based weakly simultaneously diagonalizable & \multirow{1}{*}[-0.7em]{DWSD (\cref{def:DWSD})} \\ \hline
\end{tabular}}
\end{table}}

For these new notions in \Cref{table-example-3-0}, we will develop different characterizations of them under different assumptions. 
For the function $\varphi_{\rm T}$ based notions, we will first prove that there is no difference between \(\mathbf{T}_{m,n}\)-SD (or \( \mathbf{T}_{m,n} \)-SDO) and SD (or SDO) for all sets of matrices. 
Then we will focus on TWSD-B and TWSD. 
For a pair of matrices, we will derive sufficient and necessary conditions for them to be TWSD-B, and develop algorithms to check whether they are TWSD-B or not. 
In particular, it will be shown that any singular pair is TWSD-B. 
For a set of nonsigular matrices ($L\geq 2$), we will also provide a necessary condition and two sufficient conditions to ensure that they are TWSD-B, which will be helpful if one needs to verify whether this 
set is TWSD-B or not. 
In particular, if this set is positive definite, we will show 
that TWSD-B is essentially equivalent to SD, while TWSD is not. 
For the function $\varphi_{\rm D}$ based notions, we will prove that any set of $L$ matrices is \(\mathbf{D}_{m,Lm}\)-SDO.
It will be shown that the ASDC and \(d\)-RSDC proposed in \cite{wang2021new} are exactly the DWSD and $\textbf{D}_{m,n}$-SD, respectively. 
Based on the theoretical results we are going to develop in this paper, as well as  
the characterizations from \cite{wang2021new} about DWSD and $\textbf{D}_{m,n}$-SD (ASDC and \(d\)-RSDC), 
the relationships between these notions from different perspectives are schematically shown in \Cref{fig:relationship-WSD-PSD,figure-relathionships-WSD}.


We then consider applications based on these newly introduced concepts. 
As a first application, different from the approximation method used in \cite{wang2021new} to explore the application of DWSD (ASDC) in QCQP problem, we will prove a theoretical result about the QCQP problem with a single constraint. If the quadratic forms in the objective function and constraint are TWSD-B and the constraint is nonsingular, then we can reformulate it as a linear programming problem.
As the second application, using \(\mathbf{D}_{m,n}\)-SDO, we prove a theoretical result about the \emph{independent component analysis}(ICA), which is helpful to solving the \emph{blind source separation}(BSS) problem.


The organization of this paper is as follows. In \cref{sec:preliminary}, we recall several characterizations of the classical SDO and SD notions, as well as some results that will be frequently used in the next sections. 
In \cref{sec:trans_based_SD}, we propose the function $\varphi_{\rm T}$ based notions in \Cref{table-example-3-0}, and show that \( \mathbf{T}_{m,n} \)-SD (\( \mathbf{T}_{m,n} \)-SDO) is the same as SD (SDO). Then we derive several necessary and/or sufficient conditions for TWSD-B of a pair of matrices under different assumptions. 
In \cref{sec:TWSD}, we study the relationship between TWSD and TWSD-B, through which we also obtain some useful sufficient conditions for TWSD-B. 
In \cref{sec:decom_based_SD}, we propose the $\varphi_{\rm D}$-based analogs in \Cref{table-example-3-0}, and show that the ASDC and \(d\)-RSDC proposed in \cite{wang2021new} are exactly the DWSD and $\textbf{D}_{m,n}$-SD, respectively. 
In particular, we observe that any set of matrices is \(\mathbf{D}_{m,n}\)-SDO when $n$ is large enough. 
In \cref{sec:relationship}, based on the theoretical results we obtain and the characterizations from \cite{wang2021new}, we present the relationships among all these new notions. In \cref{sec:application-QCQP,sec:application-approximate-diagonalization}, we consider the applications of these new notions to the QCQP and BSS problems, respectively.
In \cref{sec:conclusi}, we conclude this paper with some discussions and possible future work. 


\section{Preliminaries}
\label{sec:preliminary}

\subsection{Notations}
Let $\GL_{m}(\RR)\eqdef \{\matr{X}\in\RR^{m\times m}, {\rm det}(\matr{X})\neq 0\}$ be the \emph{general linear
group}. 
Matrices and vectors will be respectively denoted by  bold uppercase letters, \textit{e.g.}, $\matr{A}$, and by bold lowercase letters, \textit{e.g.}, $\vect{u}$; corresponding entries will be denoted by $\matrelem{A}_{ij}$ and $u_i$.
We denote by $\|\cdot\|$ the Frobenius norm of a matrix,
or the Euclidean norm of a vector.
Denote $\matr{I}_{m,n}=[\vect{e}_1,\vect{e}_2,\cdots,\vect{e}_m]\in\RR^{n\times m}$ and  $\matr{I}_{m}=\matr{I}_{m,m}$. Let \( \matr{0}_{m \times n} \)  denote a zero matrix in \( \mathbb{R}^{m \times n} \).
For a matrix $\matr{A}\in\RR^{m\times m}$, we denote by $\textbf{diag}(\matr{A})$ the matrix $\matr{A}$ with the offdiagonal elements being set to 0, and by $\textbf{offdiag}(\matr{A})$ the matrix $\matr{A}$ with the diagonal elements being set to 0. 
We denote \( \matr{A} \succ 0 \) (\( \matr{A} \succeq 0 \)), if the matrix \( \matr{A} \) is positive definite (positive semi-definite). 
For multiple square matrices \( \matr{X_1}, \matr{X}_2 , \ldots , \matr{X}_p \), we denote the square block diagonal matrix consisting of them by \( \Diag{\matr{X}_1 , \ldots , \matr{X}_p} \).
We denote by $\set{SD}$($\set{SDO}$) the class of SD (SDO) sets.

We now define several special kinds of matrices which will be used to in the canonical form of a matrix pair.
Let $\matr{E}(m), \matr{F}(m), \matr{H}(m) \in \mathbb{R}^{m \times m}$ be defined as:
\begin{align}\label{def:E-F-H}{\footnotesize
\matr{E}(m) \eqdef \begin{bmatrix}
0 &   &   &   & 1 \\
&   &   & \iddots &   \\
&   & \iddots &   &   \\
& \iddots &   &   &   \\
1 &   &   &   & 0 \\
\end{bmatrix},\ 
\matr{F}(m) \eqdef \begin{bmatrix}
0       &   &   &   & 0 \\
  &   &   & \iddots & 1       \\
  &   & \iddots & \iddots &         \\
  & \iddots & \iddots &   &         \\
0 & 1 &   &   & 0       \\
\end{bmatrix},\ 
\matr{H}(m) \eqdef \begin{bmatrix}
0       &   &   & 1  & 0 \\
  &   & \iddots  & \iddots & -1       \\
  &  \iddots & \iddots & \iddots &         \\
1  & \iddots & \iddots &   &         \\
0 & -1 &   &   & 0       \\
\end{bmatrix}.
}
\end{align}
Denote by \( \matr{J}(\lambda, m) \) the real \emph{Jordan block} associated with eigenvalue \(\lambda\) and size \( m \), {\it i.e.},
\[{\footnotesize \matr{J}(\lambda, m) \eqdef
\left[ \begin{array}{ccccc}
c & e &  &   &    \\
& c & e &  &  \\
&  & \ddots & \ddots &  \\
&  &  & \ddots & e \\
&  &  &  & c
     \end{array} \right]
\in\RR^{m\times m}.}
\]
If the eigenvalue $\lambda \in \mathbb{R}$, then $c=\lambda$ and \( e = 1 \). If the eigenvalue pair \( \lambda = a\pm bi \in \mathbb{C}\backslash\mathbb{R} \) with $b \neq 0$, then \( c = \left[ \begin{array}{cc} a & -b\\ b & a \end{array} \right] \) and \(e = \matr{I}_2\).
Define the matrices
\( \matr{G}(m), \matr{R}_k(m) \in \mathbb{R}^{m \times m}\) as
\begin{align}\label{def:matrix-G-R}
\matr{G}(m) &\eqdef \left\{
\begin{array}{ll}
\Diag{\matr{I}_{m/2}, -\matr{I}_{m/2}}, & \text{if } m \text{ is even};\\
\Diag{\matr{I}_{(m+1)/2}, -\matr{I}_{(m-1)/2}}, & \text{if } m \text{ is odd},
\end{array}
\right.\\
\matr{R}_{k}(m) &\eqdef \Diag{k^{\delta_1}, k^{\delta_2}, \ldots ,k^{\delta_m}},\label{def:matrix-G-R-k}
\end{align}
for $k\geq 1$, where \(\delta_s \eqdef (m+1)/2 - s\) for \(1 \leq s \leq m\).
Note that \( \matr{G}(m) \) is the real Jordan normal form of \( \matr{E}(m) \). 
There exists a matrix \( \matr{Q}\in\SON_{m} \) such that \( \matr{G}(m) = \matr{Q}^{\T} \matr{E}(m) \matr{Q} \).
It can be also seen that \( \matr{R}_{k}(m) \in \SL_m(\mathbb{R}) \), and
\begin{equation*}
\label{eq:R-sequence-J}{\small 
\matr{R}_{k}(m)^{-1}\matr{J}(\lambda, m)\matr{R}_{k}(m)= \begin{bmatrix}
\lambda & k^{-1} &  &   &    \\
& \lambda & k^{-1} &  &  \\
&  & \ddots & \ddots &  \\
&  &  & \ddots & k^{-1} \\
&  &  &  & \lambda
\end{bmatrix}}
\end{equation*}
for any Jordan block \( J(\lambda, m) \) with real eigenvalue \(\lambda\), and thus
\begin{equation}\label{eq:R-sequence-J-limit}
\lim_{k \to \infty} \matr{R}_k(m)^{-1} \matr{J}(\lambda, m) \matr{R}_k(m) = \lambda \matr{I}_{m}.
\end{equation}

Let \( \mathcal{C}\) be as in \eqref{set_C}. The \emph{linear span} of \( \mathcal{C} \) is denoted by
\begin{equation}\label{eq:span-C}
\operatorname{span}(\mathcal{C})\eqdef\left\{ \sum_{i=1}^{L} \alpha_{i} \matr{A}_{i}, \alpha_{i} \in \mathbb{R}, 1 \leq i \leq L \right\}.
\end{equation}
Every element in \eqref{eq:span-C} is called a {\it pencil} of \( \mathcal{C} \). 
If \( \mathcal{C} \) has a nonsingular pencil, we say \( \mathcal{C}\) is {\it nonsingular}; otherwise, it is {\it singular}. 
If \( \mathcal{C} \) has a positive definite pencil, we say \( \mathcal{C}\) is {\it positive definite}.
Let $\matr{A}, \matr{B}, \matr{S}\in\textbf{symm}(\RR^{m\times m})$, and $\matr{S}$ be nonsingular.
We define the $\matr{S}$-\emph{commutator} of $\matr{A}$ and $\matr{B}$ as 
\begin{equation*}\label{eq:commu_AB}
[\matr{A}, \matr{B}]_{\matr{S}}\eqdef\matr{S}^{-1}\matr{A} \matr{S}^{-1}\matr{B} - \matr{S}^{-1}\matr{B}\matr{S}^{-1}\matr{A},
\end{equation*}
which will be frequently used in this paper. 
In particular, we denote $[\matr{A}, \matr{B}]\eqdef [\matr{A}, \matr{B}]_{\matr{I}_{m}}$ for simplicity.

\subsection{Simultaneous diagonalization on $\SON_{m}$ and $\SL_{m}(\RR)$}
Let the set $\mathcal{C}$ be as in \eqref{set_C}. 
It is well-known that $\mathcal{C}$ is SDO if and only if $[\matr{A}_{i}, \matr{A}_{j}]=\matr{0}$ for all \( 1 \leq i\neq j \leq L \), \emph{i.e.,} they commute with each other \cite{horn2012matrix}. A generalization of this result for multiple matrices is as follows.

\begin{lemma}[{\cite[Theorem 9]{jiang2016simultaneous}}]\label{lem:SDO-community}
Let the set $\mathcal{C}$ be as in \eqref{set_C}. Then \( \mathcal{C} \cup \{ \matr{I}_m \} \) is SD if and only if $[\matr{A}_{i}, \matr{A}_{j}]=\matr{0}$ for all \( 1 \leq i\neq j \leq L \).
\end{lemma}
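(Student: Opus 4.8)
The plan is to reduce the statement about $\mathcal{C} \cup \{\matr{I}_m\}$ being SD to the classical Weierstrass--type theorem that a family of symmetric matrices is SDO if and only if they pairwise commute. The key observation is that once the identity matrix $\matr{I}_m$ is adjoined to the set, any congruence $\matr{P}^{\T}(\cdot)\matr{P}$ that diagonalizes all members must in particular send $\matr{I}_m$ to a diagonal matrix $\matr{P}^{\T}\matr{P}=\matr{D}$; since $\matr{P}\in\SL_m(\RR)$ this $\matr{D}$ is positive definite with determinant $1$. First I would write $\matr{P} = \matr{Q}\matr{D}^{1/2}$ using the fact that $\matr{D}^{1/2}$ is a well-defined positive definite diagonal square root, and check that $\matr{Q} = \matr{P}\matr{D}^{-1/2}$ is orthogonal because $\matr{Q}^{\T}\matr{Q} = \matr{D}^{-1/2}\matr{P}^{\T}\matr{P}\matr{D}^{-1/2} = \matr{I}_m$. (A determinant adjustment, replacing one column sign, lets us take $\matr{Q}\in\SON_m$ if needed, but this is cosmetic since diagonality is unaffected.)

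Next I would verify that $\matr{Q}$ still simultaneously diagonalizes $\mathcal{C}$: for each $i$, $\matr{Q}^{\T}\matr{A}_i\matr{Q} = \matr{D}^{1/2}(\matr{P}^{\T}\matr{A}_i\matr{P})\matr{D}^{1/2}$ is the product of three diagonal matrices, hence diagonal. This shows $\mathcal{C}$ is SDO, and then the classical commuting criterion gives $[\matr{A}_i,\matr{A}_j]=\matr{0}$ for all $i\neq j$. Conversely, if the $\matr{A}_i$ pairwise commute, the classical theorem produces $\matr{Q}\in\SON_m \subseteq \SL_m(\RR)$ diagonalizing all of them simultaneously; since $\matr{Q}^{\T}\matr{I}_m\matr{Q}=\matr{I}_m$ is diagonal as well, the same $\matr{Q}$ witnesses that $\mathcal{C}\cup\{\matr{I}_m\}$ is SD. Note that adjoining $\matr{I}_m$ does not add any commutation constraints, since $\matr{I}_m$ commutes with everything, so the pairwise-commuting condition for $\mathcal{C}\cup\{\matr{I}_m\}$ is the same as for $\mathcal{C}$.

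The only genuine subtlety — and the step I would be most careful about — is the forward direction: the $\matr{P}$ provided by the SD hypothesis lies in the \emph{non-orthogonal} group $\SL_m(\RR)$, and one must extract from it an \emph{orthogonal} diagonalizer. The polar-type factorization $\matr{P}=\matr{Q}\matr{D}^{1/2}$ above does exactly this, but it works precisely because the extra matrix $\matr{I}_m$ forces $\matr{P}^{\T}\matr{P}$ to be diagonal; without that, $\matr{P}^{\T}\matr{P}$ would only be symmetric positive definite and the argument would collapse. So the role of the hypothesis ``$\mathcal{C}\cup\{\matr{I}_m\}$'' rather than ``$\mathcal{C}$'' is essential and should be highlighted. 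Everything else is bookkeeping with products of diagonal matrices and the determinant normalization.
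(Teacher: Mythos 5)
Your proposal is correct. Note that the paper itself does not prove this lemma --- it is quoted from \cite[Theorem 9]{jiang2016simultaneous} --- so there is no internal proof to compare against; your argument, which reduces the statement to the classical fact that a family of real symmetric matrices is SDO if and only if its members pairwise commute, is the natural route and is exactly the content the paper exploits right after the lemma (its Corollary on the equivalence of SDO of $\mathcal{C}$ and SD of $\mathcal{C}\cup\{\matr{I}_m\}$). Your key step is sound: adjoining $\matr{I}_m$ forces $\matr{P}^{\T}\matr{P}=\matr{D}$ to be diagonal positive definite, and $\matr{Q}=\matr{P}\matr{D}^{-1/2}$ is orthogonal (indeed $\det\matr{Q}=1$ automatically, since $\det\matr{D}=\det(\matr{P})^2=1$, so the sign adjustment you mention is not even needed). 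One trivial slip: you wrote $\matr{Q}^{\T}\matr{A}_i\matr{Q}=\matr{D}^{1/2}(\matr{P}^{\T}\matr{A}_i\matr{P})\matr{D}^{1/2}$, whereas it should be $\matr{D}^{-1/2}(\matr{P}^{\T}\matr{A}_i\matr{P})\matr{D}^{-1/2}$; this does not affect the argument, since either way the product of diagonal matrices is diagonal.
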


Moreover, the proof therein also gives a procedure to obtain the nonsingular matrix diagonalizing them. A similar result and an algorithm for Hermitian matrices are given in \cite[Theorem 3]{le2020simultaneous} and \cite[Algorithm 1]{le2020simultaneous}.
A direct consequence of the above \cref{lem:SDO-community} is that \( \mathcal{C} \cup \{ \matr{I}_m \} \) is SD if and only if $\mathcal{C}$ is SDO. Above all, we have the following result.

\begin{corollary} Let the set $\mathcal{C}$ be as in \eqref{set_C}. Then the following statements are equivalent:\\
(i) \( \mathcal{C} \) is SDO; \\
(ii) \( \mathcal{C} \cup \{ \matr{I}_m \} \) is SD; \\
(iii) \( \mathcal{C} \cup \{  \matr{I}_m\}  \) is SDO.
\end{corollary}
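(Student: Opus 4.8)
The plan is to establish the cycle of implications (i) $\Rightarrow$ (iii) $\Rightarrow$ (ii) $\Rightarrow$ (i), using \cref{lem:SDO-community} together with the classical commutativity characterization of SDO recalled just above it (namely, that $\mathcal{C}$ is SDO if and only if the $\matr{A}_i$ pairwise commute).

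First I would prove (i) $\Rightarrow$ (iii). Suppose $\matr{P}\in\SON_{m}$ makes $\matr{P}^{\T}\matr{A}_{i}\matr{P}$ diagonal for every $1\le i\le L$. Since $\matr{P}$ is orthogonal, $\matr{P}^{\T}\matr{I}_{m}\matr{P}=\matr{I}_{m}$ is also diagonal, so the same $\matr{P}$ simultaneously diagonalizes $\mathcal{C}\cup\{\matr{I}_{m}\}$ on $\SON_{m}$; hence (iii) holds. Next, (iii) $\Rightarrow$ (ii) is immediate from the inclusion $\SON_{m}\subseteq\SL_{m}(\RR)$: any $\matr{P}\in\SON_{m}$ witnessing (iii) also witnesses (ii).

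Finally I would prove (ii) $\Rightarrow$ (i). By \cref{lem:SDO-community}, statement (ii) is equivalent to $[\matr{A}_{i},\matr{A}_{j}]=\matr{0}$ for all $1\le i\neq j\le L$, i.e.\ the matrices in $\mathcal{C}$ pairwise commute. Since $\mathcal{C}\subseteq\textbf{symm}(\RR^{m\times m})$ by \eqref{set_C}, the classical commutativity criterion then gives that $\mathcal{C}$ is SDO, which is (i). This closes the cycle and proves the equivalence of (i), (ii), (iii).

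There is no substantial obstacle here: essentially all of the content is carried by \cref{lem:SDO-community} and the classical commutativity criterion, and the two remaining implications are one-line observations — that orthogonal conjugation fixes $\matr{I}_{m}$, and that $\SON_{m}$ is a subgroup of $\SL_{m}(\RR)$. The only minor point worth stating explicitly is that the classical SDO criterion requires a family of symmetric matrices, which is guaranteed by \eqref{set_C}, so it applies to $\mathcal{C}$ without further hypotheses.
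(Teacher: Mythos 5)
Your proposal is correct and follows essentially the same route as the paper, which presents the corollary as a direct consequence of \cref{lem:SDO-community} combined with the classical commutativity characterization of SDO; your cycle (i) $\Rightarrow$ (iii) $\Rightarrow$ (ii) $\Rightarrow$ (i) merely spells out the trivial implications (orthogonal conjugation fixes $\matr{I}_m$, and $\SON_m\subseteq\SL_m(\RR)$) that the paper leaves implicit.
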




We now recall an equivalent characterization of SD for a nonsingular matrix pair by the Jordan normal form of a matrix. 

\begin{lemma}[{\cite[Corollary 1.3]{uhlig1973SimultaneousBlockDiagonalizationa}}]\label{lem:nonsingular-pair-SD}
Let $\mathcal{C} = \{\matr{A},\matr{B} \} \subseteq \textbf{symm}(\RR^{m\times m})$, and \( \matr{A} \) be nonsingular.
Then \( \mathcal{C} \) is SD if and only if the real Jordan normal form of \( \matr{A}^{-1}\matr{B} \) is diagonal.
\end{lemma}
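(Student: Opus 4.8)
The plan is to prove the two directions of \cref{lem:nonsingular-pair-SD} by passing through the similarity $\matr{A}^{-1}\matr{B}$ and exploiting the symmetry of $\matr{A}$ and $\matr{B}$. First I would settle the easy direction. Suppose $\mathcal{C}$ is SD, so there is $\matr{P}\in\SL_m(\RR)$ (more generally $\matr{P}\in\GL_m(\RR)$, after rescaling) with $\matr{P}^\T\matr{A}\matr{P}=\matr{D}_{\matr{A}}$ and $\matr{P}^\T\matr{B}\matr{P}=\matr{D}_{\matr{B}}$ both diagonal, and $\matr{D}_{\matr{A}}$ nonsingular since $\matr{A}$ is. Then
\begin{equation*}
\matr{P}^{-1}\matr{A}^{-1}\matr{B}\matr{P} = (\matr{P}^\T\matr{A}\matr{P})^{-1}(\matr{P}^\T\matr{B}\matr{P}) = \matr{D}_{\matr{A}}^{-1}\matr{D}_{\matr{B}},
\end{equation*}
which is diagonal; hence $\matr{A}^{-1}\matr{B}$ is similar to a (real) diagonal matrix, so its real Jordan normal form is diagonal.

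The substantive direction is the converse. Assume the real Jordan normal form of $\matr{M}\eqdef\matr{A}^{-1}\matr{B}$ is diagonal, say $\matr{M}=\matr{T}\matr{\Lambda}\matr{T}^{-1}$ with $\matr{\Lambda}$ real diagonal. The first key observation is that $\matr{M}$ is self-adjoint with respect to the (possibly indefinite) inner product defined by $\matr{A}$: indeed $\matr{A}\matr{M} = \matr{B}$ is symmetric, so $(\matr{A}\matr{M})^\T = \matr{M}^\T\matr{A} = \matr{A}\matr{M}$. Since $\matr{M}$ is diagonalizable over $\RR$, its eigenspaces $V_{\lambda_1},\dots,V_{\lambda_r}$ (for the distinct real eigenvalues) span $\RR^m$, and $\matr{A}$-self-adjointness forces these eigenspaces to be mutually $\matr{A}$-orthogonal: if $\matr{M}\vect{u}=\lambda\vect{u}$, $\matr{M}\vect{v}=\mu\vect{v}$ with $\lambda\ne\mu$, then $\lambda\,\vect{u}^\T\matr{A}\vect{v} = (\matr{M}\vect{u})^\T\matr{A}\vect{v} = \vect{u}^\T\matr{A}\matr{M}\vect{v} = \mu\,\vect{u}^\T\matr{A}\vect{v}$, so $\vect{u}^\T\matr{A}\vect{v}=0$. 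Thus $\matr{A}$ restricts to a nondegenerate symmetric bilinear form on each $V_{\lambda_j}$ (nondegenerate because $\matr{A}$ is nonsingular and the $V_{\lambda_j}$ are $\matr{A}$-orthogonal and span the space). Applying a version of Sylvester's law / simultaneous diagonalization of a single symmetric form, I can choose a basis of each $V_{\lambda_j}$ that is $\matr{A}$-orthonormal (up to signs), and collecting these bases into a matrix $\matr{P}_0\in\GL_m(\RR)$ gives $\matr{P}_0^\T\matr{A}\matr{P}_0$ diagonal (entries $\pm1$) and $\matr{P}_0^\T\matr{B}\matr{P}_0 = \matr{P}_0^\T\matr{A}\matr{M}\matr{P}_0 = (\matr{P}_0^\T\matr{A}\matr{P}_0)(\matr{P}_0^{-1}\matr{M}\matr{P}_0)$, where $\matr{P}_0^{-1}\matr{M}\matr{P}_0$ is block-diagonal with blocks $\lambda_j\matr{I}$; hence this product is diagonal too. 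Finally, rescaling one column of $\matr{P}_0$ by $1/\det(\matr{P}_0)$ produces $\matr{P}\in\SL_m(\RR)$ still diagonalizing both matrices, so $\mathcal{C}$ is SD.

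The main obstacle is the middle step of the converse: one must be careful that the real Jordan normal form of $\matr{M}$ being diagonal genuinely means $\matr{M}$ is $\RR$-diagonalizable with only real eigenvalues (no $2\times2$ rotation blocks, which would not be ``diagonal''), and then that the indefinite form $\matr{A}$ can still be diagonalized \emph{within each real eigenspace} — this is fine because the restriction of $\matr{A}$ to $V_{\lambda_j}$ is a nondegenerate symmetric form and any such form admits an orthogonal basis, but it is exactly here that the nonsingularity hypothesis on $\matr{A}$ is used (to guarantee nondegeneracy of the restrictions). An alternative, perhaps cleaner, route is to quote \cref{lem:SDO-community}: reduce to the case $\matr{A}=\matr{I}_m$ by a congruence (possible after handling the signature, i.e.\ working with $|\matr{A}|$ via its symmetric square root on each eigenspace), whereupon the problem becomes simultaneous diagonalization of a commuting pair; however the signature bookkeeping makes the direct eigenspace argument above the more transparent choice, so that is the one I would write up.
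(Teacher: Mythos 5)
Your proof is correct. Note, though, that the paper does not prove this lemma at all: it is imported verbatim as \cite[Corollary 1.3]{uhlig1973SimultaneousBlockDiagonalizationa}, where it is obtained as a consequence of the canonical form of a nonsingular symmetric pair (essentially \cref{lem:Uhlig-canonical-nonsingular} in this paper: $\mathcal{C}$ is SD exactly when every block in the canonical pair form is $1\times 1$, i.e.\ when the Jordan form of $\matr{A}^{-1}\matr{B}$ is diagonal). Your route is genuinely different and more self-contained: the forward direction via $\matr{P}^{-1}\matr{A}^{-1}\matr{B}\matr{P}=(\matr{P}^{\T}\matr{A}\matr{P})^{-1}(\matr{P}^{\T}\matr{B}\matr{P})$ is exactly right, and the converse via the $\matr{A}$-self-adjointness $\matr{M}^{\T}\matr{A}=\matr{A}\matr{M}$ of $\matr{M}=\matr{A}^{-1}\matr{B}$, the $\matr{A}$-orthogonality of distinct real eigenspaces, nondegeneracy of $\matr{A}$ restricted to each eigenspace (this is where nonsingularity enters, as you say), and diagonalization of the restricted form on each $V_{\lambda_j}$ is sound; the factorization $\matr{P}_0^{\T}\matr{B}\matr{P}_0=(\matr{P}_0^{\T}\matr{A}\matr{P}_0)(\matr{P}_0^{-1}\matr{M}\matr{P}_0)$ with the second factor equal to $\lambda_j\matr{I}$ on each block closes the argument, and the single-column rescaling legitimately lands you in $\SL_m(\RR)$ as the paper's definition of SD requires. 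What each approach buys: yours is elementary and avoids the full pair-canonical-form machinery, while Uhlig's canonical form yields extra structural data (the signs $\sigma_s$ and uniqueness statements) that this paper actually needs elsewhere, e.g.\ in the proofs of \cref{thm:better-decomposition} and \cref{thm:TWSD-B-nonsingular-pair}, so the citation route is the natural one in context even though your direct proof suffices for the lemma itself.
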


The above \cref{lem:nonsingular-pair-SD} can be extended to the case that $\mathcal{C}$ in \eqref{set_C} is nonsingular with $L > 2$.
\begin{lemma}[{\cite[Proposition 1]{wang2021new}}]\label{lem:SD-multiple-nonsingular}
Let the set $\mathcal{C}$ be as in \eqref{set_C}, and \( \matr{S} \in \operatorname{span}(\mathcal{C}) \) be nonsingular.
Then \( \mathcal{C} \) is SD if and only if the real Jordan normal form of \( \matr{S}^{-1}\matr{A}_i \) is diagonal for all \( 1 \leq i \leq L\), and
$[\matr{A_{i}}, \matr{A}_{j}]_{\matr{S}}=\matr{0}$ for all \( 1 \leq i\neq j \leq L\).
\end{lemma}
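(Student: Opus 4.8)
The plan is to reduce the multi‑matrix statement to the pair case handled by \cref{lem:nonsingular-pair-SD} together with the commutativity criterion of \cref{lem:SDO-community}. The key observation is that \( \matr{S} \in \operatorname{span}(\mathcal{C}) \) being nonsingular lets us ``normalize'' the problem: if \( \matr{S} = \sum_i \alpha_i \matr{A}_i \), then \( \mathcal{C} \) is SD if and only if \( \mathcal{C} \cup \{\matr{S}\} \) is SD (since \( \matr{S} \) is a linear combination of the \( \matr{A}_i \)), and the latter, being a nonsingular set containing the nonsingular matrix \( \matr{S} \), is more convenient to analyze.

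First I would prove the ``only if'' direction: if there is \( \matr{P} \in \SL_m(\RR) \) with \( \matr{P}^\T \matr{A}_i \matr{P} = \matr{D}^{(i)} \) diagonal for all \( i \), then \( \matr{P}^\T \matr{S} \matr{P} = \sum_i \alpha_i \matr{D}^{(i)} \eqdef \matr{D}_{\matr{S}} \) is diagonal and nonsingular, so \( \matr{S}^{-1}\matr{A}_i = \matr{P}\matr{D}_{\matr{S}}^{-1}\matr{P}^\T \matr{P}^{-\T}\matr{D}^{(i)}\matr{P}^{-1} \cdots \) — more cleanly, \( \matr{P}^{-1}(\matr{S}^{-1}\matr{A}_i)\matr{P} = (\matr{P}^\T\matr{S}\matr{P})^{-1}(\matr{P}^\T\matr{A}_i\matr{P}) = \matr{D}_{\matr{S}}^{-1}\matr{D}^{(i)} \), which is diagonal; hence the real Jordan normal form of \( \matr{S}^{-1}\matr{A}_i \) is diagonal. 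The vanishing of \( [\matr{A}_i,\matr{A}_j]_{\matr{S}} \) follows by direct substitution: \( \matr{S}^{-1}\matr{A}_i\matr{S}^{-1}\matr{A}_j = \matr{P}\matr{D}_{\matr{S}}^{-1}\matr{D}^{(i)}\matr{D}_{\matr{S}}^{-1}\matr{D}^{(j)}\matr{P}^{-1} \), and diagonal matrices commute, so the expression is symmetric in \( i,j \).

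For the ``if'' direction — which I expect to be the main obstacle — I would argue as follows. Set \( \matr{B}_i \eqdef \matr{S}^{-1}\matr{A}_i \). The hypothesis says each \( \matr{B}_i \) is diagonalizable (over \( \RR \), with real spectrum, since its real Jordan form is diagonal) and that \( \matr{B}_i \matr{B}_j = \matr{B}_j \matr{B}_i \) for all \( i,j \) (this is exactly \( [\matr{A}_i,\matr{A}_j]_{\matr{S}} = \matr{0} \)). A commuting family of real‑diagonalizable matrices is simultaneously diagonalizable by some \( \matr{X} \in \GL_m(\RR) \): \( \matr{X}^{-1}\matr{B}_i\matr{X} = \matr{\Lambda}^{(i)} \) diagonal for all \( i \). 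The remaining work is to upgrade this \( \matr{X} \) to a congruence that simultaneously diagonalizes the \( \matr{A}_i \) themselves. The standard device is to use the nonsingular symmetric matrix \( \matr{S} \): from \( \matr{X}^{-1}\matr{S}^{-1}\matr{A}_i\matr{X} = \matr{\Lambda}^{(i)} \) we get \( \matr{A}_i = \matr{S}\matr{X}\matr{\Lambda}^{(i)}\matr{X}^{-1} \); since \( \matr{A}_i^\T = \matr{A}_i \) and \( \matr{S}^\T = \matr{S} \), one derives that \( \matr{X}^\T\matr{S}\matr{X} \) commutes with every \( \matr{\Lambda}^{(i)} \), and then a block‑diagonal argument on the common eigenspaces of the \( \matr{\Lambda}^{(i)} \) (using that \( \matr{X}^\T\matr{S}\matr{X} \) is symmetric nonsingular and block diagonal with respect to the joint eigenspace decomposition, hence orthogonally diagonalizable block by block) produces an orthogonal \( \matr{Q} \) with \( \matr{Q}^\T\matr{X}^\T\matr{S}\matr{X}\matr{Q} \) diagonal while keeping each \( \matr{Q}^\T\matr{\Lambda}^{(i)}\matr{Q} = \matr{\Lambda}^{(i)} \). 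Then \( \matr{P}_0 \eqdef \matr{X}\matr{Q} \) congruence‑diagonalizes all \( \matr{A}_i \) (and \( \matr{S} \)), and finally rescaling \( \matr{P}_0 \) by \( \det(\matr{P}_0)^{-1/m} \) — handling the sign with a diagonal \( \pm 1 \) matrix if the determinant is negative, which is harmless since congruence by a diagonal \( \pm 1 \) matrix preserves diagonality — puts it in \( \SL_m(\RR) \). Alternatively, one can simply cite \cref{lem:SDO-community} applied to the set \( \{\matr{B}_i\} \cup \{\matr{I}_m\} \) after the congruence normalization, which is the route the paper's own earlier lemmas suggest.

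The subtle point to get right is exactly this last normalization: commuting real‑diagonalizable matrices give a \emph{similarity} that diagonalizes \( \matr{S}^{-1}\matr{A}_i \), but SD requires a \emph{congruence} diagonalizing \( \matr{A}_i \); the bridge is the symmetry of \( \matr{S} \) and of the \( \matr{A}_i \), which forces the auxiliary matrix \( \matr{X}^\T\matr{S}\matr{X} \) to respect the joint eigenspace decomposition and thus be further orthogonally diagonalizable without disturbing the already‑diagonal \( \matr{\Lambda}^{(i)} \). I would present this as the heart of the proof and keep the determinant‑normalization to \( \SL_m(\RR) \) as a short concluding remark.
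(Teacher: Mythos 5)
Your argument is correct. Note, however, that the paper does not prove this lemma at all: it is quoted verbatim from \cite[Proposition 1]{wang2021new}, so there is no internal proof to compare against; what you have written is a self-contained substitute. Your ``only if'' direction is the standard computation $\matr{P}^{-1}(\matr{S}^{-1}\matr{A}_i)\matr{P}=(\matr{P}^\T\matr{S}\matr{P})^{-1}(\matr{P}^\T\matr{A}_i\matr{P})$, and it correctly uses $\matr{S}\in\operatorname{span}(\mathcal{C})$ to know that $\matr{P}^\T\matr{S}\matr{P}$ is diagonal. Your ``if'' direction is also sound, and you identify the genuinely delicate step: commutativity plus real diagonalizability gives only a similarity $\matr{X}$ with $\matr{X}^{-1}\matr{S}^{-1}\matr{A}_i\matr{X}=\matr{\Lambda}^{(i)}$, and the symmetry of $\matr{S}$ and the $\matr{A}_i$ forces $\matr{M}=\matr{X}^\T\matr{S}\matr{X}$ to commute with every $\matr{\Lambda}^{(i)}$, hence to be supported on the joint eigenspaces, where an orthogonal block-diagonal $\matr{Q}$ diagonalizes $\matr{M}$ without disturbing the $\matr{\Lambda}^{(i)}$ (each being scalar on a block); then $\matr{P}_0^\T\matr{A}_i\matr{P}_0=(\matr{Q}^\T\matr{M}\matr{Q})\matr{\Lambda}^{(i)}$ is diagonal, and the rescaling into $\SL_m(\RR)$ is harmless. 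Two small remarks: the closing ``alternative'' of citing \cref{lem:SDO-community} for $\{\matr{S}^{-1}\matr{A}_i\}\cup\{\matr{I}_m\}$ does not work as stated, since that lemma is about symmetric matrices and $\matr{S}^{-1}\matr{A}_i$ is generally not symmetric (and normalizing $\matr{S}$ to $\matr{I}_m$ by congruence would require $\matr{S}\succ 0$); and when invoking simultaneous diagonalization of the commuting family you should say explicitly that it is over $\RR$ because each $\matr{S}^{-1}\matr{A}_i$ has real spectrum and is $\RR$-diagonalizable by hypothesis. Neither remark affects the validity of your main line of proof.
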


If the set $\mathcal{C}$ in \eqref{set_C} is positive definite, then we have the following result. 

\begin{lemma}[{\cite[Theorem 10]{jiang2016simultaneous}}]
Let the set $\mathcal{C}$ be as in \eqref{set_C} and suppose there exist \(\alpha_{i} \in \mathbb{R}, 1 \leq i \leq L\) such that \( \matr{S}=\sum_{i=1}^L \alpha_{i} \matr{A}_{i} \succ 0 \). We assume that \(\alpha_{L}\neq 0\) without loss of generality, and choose $\matr{P}\in\GL_{m}(\RR)$ such that  \(\matr{P}^{\top}\matr{S} \matr{P} = \matr{I}_m \).
Then \( \mathcal{C} \) is SD if and only if 
$[\matr{P}^{\top} \matr{A}_i \matr{P}, \matr{P}^{\top} \matr{A}_j \matr{P}]=\matr{0}$ for all \( 1 \leq i\neq j \leq L \).
\end{lemma}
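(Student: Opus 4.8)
The plan is to reduce the positive-definite case to the commuting case (Weierstrass) by a congruence that simultaneously normalizes the definite pencil. First I would note that $\matr{P}$ is nonsingular, so $\mathcal{C}$ is SD if and only if the congruent set $\widetilde{\mathcal{C}} \eqdef \{\matr{P}^{\top}\matr{A}_i\matr{P}\}_{1\leq i\leq L}$ is SD: indeed, if $\matr{Q}\in\SL_m(\RR)$ diagonalizes $\mathcal{C}$, then any scalar multiple of $\matr{P}^{-1}\matr{Q}$ with the right determinant diagonalizes $\widetilde{\mathcal{C}}$, and conversely; this uses only that $\GL_m(\RR) = \RR^{*}\cdot\SL_m(\RR)$ and that diagonality is preserved under scaling. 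So it suffices to characterize when $\widetilde{\mathcal{C}}$ is SD.

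Next I would observe that $\sum_{i=1}^{L}\alpha_i(\matr{P}^{\top}\matr{A}_i\matr{P}) = \matr{P}^{\top}\matr{S}\matr{P} = \matr{I}_m \in \operatorname{span}(\widetilde{\mathcal{C}})$, so $\widetilde{\mathcal{C}}$ is a nonsingular set containing (a pencil equal to) the identity in its span. Now apply \cref{lem:SD-multiple-nonsingular} with the choice $\matr{S}' = \matr{I}_m$: $\widetilde{\mathcal{C}}$ is SD if and only if the real Jordan normal form of $\matr{I}_m^{-1}(\matr{P}^{\top}\matr{A}_i\matr{P}) = \matr{P}^{\top}\matr{A}_i\matr{P}$ is diagonal for every $i$, and $[\matr{P}^{\top}\matr{A}_i\matr{P},\matr{P}^{\top}\matr{A}_j\matr{P}]_{\matr{I}_m} = [\matr{P}^{\top}\matr{A}_i\matr{P},\matr{P}^{\top}\matr{A}_j\matr{P}] = \matr{0}$ for all $i\neq j$. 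The commuting condition is precisely the stated one, so the only remaining task is to show that the Jordan-form condition is automatic here and can be dropped.

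The key step — and the one I expect to be the main (though mild) obstacle — is exactly this: showing that each $\matr{P}^{\top}\matr{A}_i\matr{P}$ is automatically real-diagonalizable once the commuting relations hold, so that \cref{lem:SD-multiple-nonsingular} collapses to the single commutator condition. The point is that each $\matr{P}^{\top}\matr{A}_i\matr{P}$ is a \emph{real symmetric} matrix (congruence preserves symmetry), hence it is orthogonally diagonalizable; in particular its real Jordan normal form is automatically diagonal, so that hypothesis of \cref{lem:SD-multiple-nonsingular} is vacuously satisfied. Equivalently, one can argue directly: pairwise-commuting real symmetric matrices are simultaneously orthogonally diagonalizable by Weierstrass' theorem, which gives a $\matr{Q}\in\ON_m$ diagonalizing $\widetilde{\mathcal{C}}$, and rescaling $\matr{Q}$ (or swapping two columns if $\det\matr{Q}=-1$, which keeps diagonality) puts it in $\SL_m(\RR)$. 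Either route closes the argument; I would present the symmetry observation first since it makes the reduction to \cref{lem:SD-multiple-nonsingular} completely transparent, then note the converse direction (SD $\Rightarrow$ the congruent matrices are simultaneously diagonal $\Rightarrow$ they commute) is immediate.
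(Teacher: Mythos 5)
The paper does not prove this lemma at all --- it is imported verbatim as \cite[Theorem 10]{jiang2016simultaneous} --- so there is no internal proof to compare against; what you have written is a self-contained derivation, and it is essentially correct. Your primary route (pass to $\widetilde{\mathcal{C}}=\{\matr{P}^{\top}\matr{A}_i\matr{P}\}$, note $\matr{I}_m=\sum_i\alpha_i\matr{P}^{\top}\matr{A}_i\matr{P}\in\operatorname{span}(\widetilde{\mathcal{C}})$, apply \cref{lem:SD-multiple-nonsingular} with $\matr{S}'=\matr{I}_m$, and observe that the real-Jordan-form hypothesis is vacuous because each $\matr{P}^{\top}\matr{A}_i\matr{P}$ is real symmetric) is complete in both directions, precisely because \cref{lem:SD-multiple-nonsingular} is an equivalence. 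The alternative direct route (Weierstrass for commuting symmetric matrices) is presumably closer to the original proof in \cite{jiang2016simultaneous}, so both approaches are legitimate; the lemma-based one has the advantage of making the ``only if'' direction come for free.

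Two small repairs are needed. First, the identity $\GL_{m}(\RR)=\RR^{*}\cdot\SL_{m}(\RR)$ is false when $m$ is even (scaling cannot change the sign of the determinant); the correct fix is the one you yourself mention later --- negate or swap a column, which preserves diagonality of the congruenced matrices --- so state that at the point where you reduce SD on $\GL_m(\RR)$ to SD on $\SL_m(\RR)$. Second, in the direct route the parenthetical ``SD $\Rightarrow$ the congruent matrices are simultaneously diagonal $\Rightarrow$ they commute'' is too quick: commutativity is not invariant under general congruence, so the fact that some $\matr{Q}^{\top}\matr{A}_i\matr{Q}$ are diagonal does not by itself show that the matrices $\matr{P}^{\top}\matr{A}_i\matr{P}$ commute. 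The converse genuinely uses the positive definite pencil: if $\matr{R}^{\top}(\matr{P}^{\top}\matr{A}_i\matr{P})\matr{R}$ is diagonal for all $i$, then $\matr{R}^{\top}\matr{R}=\matr{R}^{\top}\matr{I}_m\matr{R}$ is diagonal and positive definite, so $\matr{R}\matr{\Lambda}^{-1}$ with $\matr{\Lambda}=(\matr{R}^{\top}\matr{R})^{1/2}$ is orthogonal and still diagonalizes $\widetilde{\mathcal{C}}$, whence the $\matr{P}^{\top}\matr{A}_i\matr{P}$ commute. Since your primary route already delivers the converse through the ``if and only if'' of \cref{lem:SD-multiple-nonsingular}, this is a presentational blemish rather than a fatal gap, but the word ``immediate'' should be replaced by the argument above (or simply dropped in favor of the lemma-based route).
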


\subsection{Canonical form of a symmetric matrix pair}
Let $\matr{A},\matr{B}\in\textbf{symm}(\RR^{m\times m})$. 
The canonical form of the symmetric matrix pair $\{\matr{A},\matr{B}\}$ can be regarded as the simplest form of \(\matr{P}^{\T}\matr{A}\matr{P}\) and \( \matr{P}^{\T}\matr{B}\matr{P} \) among all \(\matr{P}\in\GL_{m}(\RR)\) \cite{uhlig1976canonical,lancaster2005CanonicalFormsHermitian, thompson1991PencilsComplexReal}.
It is an important tool by which we can reduce all the pairs of symmetric matrices to a special class, especially when the property we are studying is invariant under any congruence transformation.
We now recall the following two theorems about the canonical form, where \cref{lem:Uhlig-canonical-nonsingular} is for a nonsingular pair, and \cref{lem:lancaster-canonical-general-pair} is for a general pair.
Although \cref{lem:lancaster-canonical-general-pair} actually covers \cref{lem:Uhlig-canonical-nonsingular}, we still present both of them here for the convenience of proof later.

\begin{lemma}[{\cite[Theorem 1]{uhlig1976canonical}}]\label{lem:Uhlig-canonical-nonsingular}
Let $\matr{A},\matr{B}\in\textbf{symm}(\RR^{m\times m})$, and \( \matr{A} \) be nonsingular. 
Suppose the real Jordan normal form of \( \matr{A}^{-1} \matr{B} \) is
\begin{equation}\label{eq:Jord_nonsin-AB}
\Diag{\matr{J}(\lambda_1, m_1), \matr{J}(\lambda_2, m_2), \ldots , \matr{J}(\lambda_r, m_r), \matr{J}(\lambda_{r+1}, m_{r+1}), \ldots, \matr{J}(\lambda_p, m_p)}, 
\end{equation}
where \( \lambda_1, \lambda_2 , \ldots , \lambda_r \in \mathbb{R} \) and \( \lambda_{r+1} , \ldots , \lambda_p \in \mathbb{C}\backslash\mathbb{R}\). Then there exists \(\matr{P}\in\GL_{m}(\RR)\) such that
\begin{align*}
{\small \matr{P}^{\T}\matr{A}\matr{P}} &{\small= \Diag{\sigma_1 \matr{E}(m_{1}), \ldots, \sigma_r\matr{E}(m_{r}), \matr{E}(m_{r+1}), \ldots \matr{E}(m_{p})},}\\
{\small\matr{P}^{\T}\matr{B}\matr{P}} &{\small= \textbf{Diag}\{\sigma_1 \matr{E}(m_{1})\matr{J}(\lambda_{1}, m_{1}), \ldots, \sigma_r\matr{E}(m_{r})\matr{J}(\lambda_{r}, m_{r})},\notag\\
&\ \ \ \ \ \ \ \ \ \   {\small\matr{E}(m_{r+1})\matr{J}(\lambda_{r+1}, m_{r+1}), \ldots \matr{E}(m_{p})\matr{J}(\lambda_{p}, m_{p})\}},
\end{align*}
where the sign \( \sigma_s = \pm 1 \) for \( 1 \leq s \leq m \). They are unique (up to permutations) for each set of indices \( s \) that are associated with a set of identical Jordan blocks.
\end{lemma}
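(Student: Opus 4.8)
The statement is classical (it is \cite[Theorem 1]{uhlig1976canonical}); the plan below sketches a self-contained route. The first move is to recast everything as a simultaneous reduction of the pair $(\matr{A},\matr{C})$, where $\matr{C}\eqdef\matr{A}^{-1}\matr{B}$. Since $\matr{A}$ and $\matr{B}=\matr{A}\matr{C}$ are symmetric, we get $\matr{A}\matr{C}=\matr{C}^{\T}\matr{A}$, i.e.\ $\matr{C}$ is \emph{self-adjoint} with respect to the nondegenerate symmetric bilinear form $\langle x,y\rangle_{\matr{A}}\eqdef x^{\T}\matr{A}y$. Conversely, if one finds $\matr{P}\in\GL_{m}(\RR)$ with $\matr{P}^{-1}\matr{C}\matr{P}$ equal to the prescribed real Jordan form $\matr{J}$ \emph{and} $\matr{P}^{\T}\matr{A}\matr{P}=\Diag{\sigma_1\matr{E}(m_1),\ldots}$, then automatically $\matr{P}^{\T}\matr{B}\matr{P}=(\matr{P}^{\T}\matr{A}\matr{P})(\matr{P}^{-1}\matr{C}\matr{P})$ has exactly the claimed block form, because each diagonal block is $\sigma_s\matr{E}(m_s)\matr{J}(\lambda_s,m_s)$ (resp.\ $\matr{E}(m_s)\matr{J}(\lambda_s,m_s)$). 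So the whole content is: an $\matr{A}$-self-adjoint $\matr{C}$ can be brought, by a single $\matr{P}$ acting by congruence on $\matr{A}$ and by similarity on $\matr{C}$, to this normal form.

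Next I would decompose $\RR^{m}$ into the primary (generalized eigenspace) components of $\matr{C}$: one component $V_{\lambda}=\ker(\matr{C}-\lambda\matr{I}_m)^{m}$ for each real eigenvalue $\lambda$, and one real component for each conjugate pair $\lambda,\bar\lambda\in\CC\setminus\RR$. Using self-adjointness, distinct primary components are mutually $\langle\cdot,\cdot\rangle_{\matr{A}}$-orthogonal: for eigenvectors this is $\lambda\langle x,y\rangle_{\matr{A}}=\langle\matr{C}x,y\rangle_{\matr{A}}=\langle x,\matr{C}y\rangle_{\matr{A}}=\mu\langle x,y\rangle_{\matr{A}}$, and one bootstraps to the generalized eigenspaces because $\matr{C}-\mu\matr{I}_m$ is invertible on $V_{\lambda}$ when $\mu\neq\lambda$. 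Hence $\matr{A}$ restricts to a nondegenerate form on each component and the problem splits as an orthogonal direct sum; it suffices to treat one component at a time.

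On a component $V_{\lambda}$ with $\lambda\in\RR$, put $\matr{N}\eqdef(\matr{C}-\lambda\matr{I}_m)|_{V_{\lambda}}$, which is nilpotent and still $\matr{A}$-self-adjoint. I would build an $\matr{A}$-orthogonal decomposition of $V_{\lambda}$ into $\matr{N}$-cyclic subspaces: pick a generator $v$ of a Jordan chain of maximal length $k$; in the basis $v,\matr{N}v,\ldots,\matr{N}^{k-1}v$ the Gram matrix of $\langle\cdot,\cdot\rangle_{\matr{A}}$ is the anti-triangular Hankel matrix with entries $\langle\matr{N}^{i+j}v,v\rangle_{\matr{A}}$, whose anti-diagonal entry $\langle\matr{N}^{k-1}v,v\rangle_{\matr{A}}$ can be taken nonzero (otherwise $\matr{N}^{k-1}V_{\lambda}$ lies in the radical of $\matr{A}|_{V_{\lambda}}$, contradicting nondegeneracy). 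Then the cyclic subspace $\langle v\rangle$ is $\matr{A}$-nondegenerate, $V_{\lambda}=\langle v\rangle\oplus\langle v\rangle^{\perp_{\matr{A}}}$ with the complement $\matr{N}$-invariant and nondegenerate, and we induct. On each cyclic block of length $k$, a further change of generator $v\mapsto p(\matr{N})v$ with $p(0)\neq0$ (transition matrix upper-triangular Toeplitz, commuting with $\matr{N}$, hence the Jordan form of $\matr{C}$ is preserved) together with a rescaling lets us solve, recursively for the coefficients of $p$, a triangular system killing all sub-anti-diagonal Hankel entries and normalizing the anti-diagonal to $\pm1$; the residual sign $\sigma\eqdef\sign\langle\matr{N}^{k-1}v,v\rangle_{\matr{A}}$ is forced, producing the block $\sigma\matr{E}(k)$ for $\matr{A}$ and $\matr{N}$ the nilpotent Jordan block, i.e.\ $\matr{C}=\matr{J}(\lambda,k)$ there. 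For a conjugate pair $\lambda,\bar\lambda\in\CC\setminus\RR$ the same cyclic argument runs with the $2\times2$ blocks (or after complexifying); the only difference is that the rotation-type $2\times2$ blocks absorb the sign, so each block normalizes to $\matr{E}(m_s)$ with no $\sigma_s$. Assembling the components yields $\matr{P}$. Uniqueness up to permutation follows because the Jordan data of $\matr{C}$ is a similarity invariant, while for a fixed real pair $(\lambda,k)$ the attached sign is a congruence invariant of $\matr{A}$ restricted to the appropriate $\matr{N}$-invariant subspaces, read off from signatures via Sylvester's law of inertia (the classical ``sign characteristic'').

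The main obstacle I expect is the cyclic decomposition inside a single generalized eigenspace carrying several Jordan blocks of the same eigenvalue: one must show the $\matr{N}$-cyclic pieces can be chosen mutually $\matr{A}$-orthogonal and individually nondegenerate, and then that each Hankel Gram matrix can be normalized to $\pm\matr{E}(k)$ without disturbing the Jordan form of $\matr{C}$ — equivalently, that a nondegenerate $\matr{N}$-self-adjoint symmetric form on an $\RR[x]$-cyclic module is unique up to an overall sign. Verifying that the resulting signs are genuine invariants (the uniqueness claim) is the other delicate point.
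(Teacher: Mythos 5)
This lemma is quoted from Uhlig (\cite[Theorem 1]{uhlig1976canonical}); the paper gives no proof of its own, so there is nothing internal to compare against. Your sketch is correct and is essentially the classical argument behind that theorem: view $\matr{C}=\matr{A}^{-1}\matr{B}$ as self-adjoint for the nondegenerate form induced by $\matr{A}$, split into primary components, extract $\matr{A}$-nondegenerate cyclic subspaces, and normalize the Hankel Gram matrix on each cyclic block to $\pm\matr{E}(m_s)$, the residual sign being the sign characteristic — this is the same content as Uhlig's original route (his 1973 simultaneous block diagonalization plus the analysis of striped symmetrizers of a Jordan block) and the Gohberg--Lancaster--Rodman treatment. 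One small point to tighten: to get a single chain generator $v$ of maximal height with $\langle\matr{N}^{k-1}v,v\rangle_{\matr{A}}\neq 0$, nondegeneracy only gives a pair $u,w$ with $\langle\matr{N}^{k-1}u,w\rangle_{\matr{A}}\neq 0$; you need the polarization step (valid over $\RR$) to replace the pair by one vector, rather than the one-line radical remark. With that and the routine verification that the generator change $v\mapsto p(\matr{N})v$ preserves the Jordan structure, your outline fills in to a complete proof.
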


\begin{lemma}[{\cite[Theorem 9.2]{lancaster2005CanonicalFormsHermitian}}]\label{lem:lancaster-canonical-general-pair}
Let \( \matr{A}, \matr{B} \in \mathbf{symm}(\mathbb{R}^{m \times m}) \). Then there exists \( \matr{P} \in\GL_{m}(\RR)\) such that
\begin{equation}\label{eq:blocks_p}
\matr{P}^{\T}\matr{A}\matr{P} = \Diag{\matr{X}_1, \ldots, \matr{X}_p}\ \ \text{and}\ \ \ \matr{P}^{\T}\matr{B}\matr{P} = \Diag{\matr{Y}_1, \ldots, \matr{Y}_p} 
\end{equation}
are both block diagonal matrices with compatible block structure. 
Here, corresponding to \(p = p_1+p_2+p_3+p_4 +p_5 \) with $p_1,p_2,p_3, p_4\in\mathbb{N}_0$ and $p_5 \in \{ 0,1 \}$, the $p$ blocks in \eqref{eq:blocks_p} can be divided to five different types.
\begin{itemize}
\item The first \( p_1 \)-many blocks of \eqref{eq:blocks_p} have the form
\[
\matr{X}_s = \sigma_s\matr{E}(m_s),\ \ \matr{Y}_s = \sigma_s(\lambda_s\matr{E}(m_s)+\matr{F}(m_s)),
\]
where $ 1 \leq s \leq p_1, m_s \in \mathbb{N}$, $\sigma_s \in \{ \pm 1 \}$, and $\lambda_s \in \mathbb{R}$.
\item The next \( p_2 \)-many blocks of \eqref{eq:blocks_p} have the form
\[
\matr{X}_s = \eta_s\matr{F}(m_s),\ \ \matr{Y}_s = \eta_s\matr{E}(m_s),
\]
where $p_1+ 1 \leq s \leq p_1+p_2, m_s \in \mathbb{N}$ and $\eta_s \in \{ \pm 1 \}$.
\item The next $p_3$-many blocks of \eqref{eq:blocks_p} have the form
\[
\matr{X}_s = \matr{E}(2m_s),\ \ \matr{Y}_s = \mu_s\matr{E}(2m_s)+v_s\matr{H}(2m_s) + \Diag{\matr{E}(2m_s-2), \matr{0}_{2\times 2}},
\]
where $ p_1+p_2 + 1 \leq s \leq p_1 + p_2 + p_3, m_s \in \mathbb{N}$ and \( \mu_s, v_s \in \mathbb{R} \), $v_s \neq 0$.
\item The next $p_4$-blocks of \eqref{eq:blocks_p} have the form
\[
\matr{X}_s = \begin{pmatrix}
& & \matr{E}(m_s) \\
& 0 & \\
\matr{E}(m_s) & &
\end{pmatrix},\ \ 
\matr{Y}_s = \matr{F}(2m_s+1)
\]
where \( p_1+ p_2 + p_3 + 1 \leq s \leq p_1 + p_2 + p_3 + p_4, m_s \in \mathbb{N} \).
\item If $p_5 = 1$, then the last block of \eqref{eq:blocks_p} has the form \( \matr{X}_p = \matr{Y}_p = \matr{0}_{m_p\times m_p} \) for some $m_p \in \mathbb{N}$.
\end{itemize}
\end{lemma}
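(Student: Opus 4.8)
This is the classical congruence canonical form of a real symmetric matrix pencil, so the plan is to recall how it is assembled rather than reprove the whole theory from scratch: I would reduce the pencil $\matr{A}+t\matr{B}$ blockwise, combining the Kronecker theory of singular pencils with \cref{lem:Uhlig-canonical-nonsingular}. Invoking \cref{lem:Uhlig-canonical-nonsingular} is only superficially circular, since Uhlig's proof in \cite{uhlig1976canonical} is independent of the general statement. The argument splits naturally into three stages.

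First I would split off the singular part. Bringing $\matr{A}+t\matr{B}$ to Kronecker canonical form under strict equivalence, the singular blocks are of two kinds: the common kernel $\ker\matr{A}\cap\ker\matr{B}$, which produces the zero block $\matr{X}_p=\matr{Y}_p=\matr{0}_{m_p\times m_p}$ of type $p_5$; and the minimal-index (Kronecker) blocks. The crucial point for a \emph{symmetric} pencil is that its left and right minimal indices coincide as multisets, so these blocks pair up as $\matr{L}_{m_s}\oplus\matr{L}_{m_s}^{\T}$, and each such pair of combined size $2m_s+1$ can be normalized, by a congruence, to the type-$p_4$ pair $\bigl(\begin{smallmatrix}&&\matr{E}(m_s)\\&0&\\\matr{E}(m_s)&&\end{smallmatrix}\bigr)$, $\matr{F}(2m_s+1)$. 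After deleting the type-$p_4$ and type-$p_5$ summands I may assume the remaining pencil is regular.

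Next I would treat the regular part through \cref{lem:Uhlig-canonical-nonsingular}. Since a regular pencil satisfies $\det(\matr{A}+\mu_0\matr{B})\neq 0$ for all but finitely many $\mu_0\in\RR$, I fix such a $\mu_0$ and set $\Atilde\eqdef\matr{A}+\mu_0\matr{B}$, which is nonsingular and symmetric; \cref{lem:Uhlig-canonical-nonsingular} then supplies $\matr{P}\in\GL_m(\RR)$ simultaneously block-diagonalizing $\{\Atilde,\matr{B}\}$, with blocks indexed by the real Jordan structure of $\Atilde^{-1}\matr{B}$. Because $\matr{P}^{\T}\matr{A}\matr{P}=\matr{P}^{\T}\Atilde\matr{P}-\mu_0\,\matr{P}^{\T}\matr{B}\matr{P}$, the pair $\{\matr{A},\matr{B}\}$ inherits exactly this block structure, so it only remains to renormalize each block: using the identity $\matr{E}(m)\matr{J}(\lambda,m)=\lambda\matr{E}(m)+\matr{F}(m)$ together with the fact that $\matr{E}(m)+\alpha\matr{F}(m)$ is congruent to $\pm\matr{E}(m)$ for every $\alpha\in\RR$ (it stays nonsingular, hence its inertia is independent of $\alpha$), a real-eigenvalue block of $\Atilde^{-1}\matr{B}$ becomes a type-$p_1$ block when the associated pencil eigenvalue $\lambda_s$ is finite and the type-$p_2$ block $\eta_s\matr{F}(m_s),\ \eta_s\matr{E}(m_s)$ in the single exceptional case where it is infinite, while a complex-conjugate Jordan pair is reorganized, by an explicit real congruence, into the type-$p_3$ form with $v_s\neq 0$. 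Finally I would assemble the blocks from both stages in the order $p_1,p_2,p_3,p_4,p_5$, which yields the asserted decomposition.

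I expect the hard part to be the two genuinely structural steps, as opposed to the routine renormalizations: showing that the minimal-index part of a symmetric pencil really pairs up and admits the stated \emph{congruence} normal form (this is the step most sensitive to working with congruence rather than strict equivalence, and it is also where the sign characteristics $\sigma_s,\eta_s\in\{\pm 1\}$ are forced on us, controlled by Sylvester's law of inertia on each root subspace), and carrying out the explicit reduction of a complex-conjugate Jordan pair to the type-$p_3$ shape $\mu_s\matr{E}(2m_s)+v_s\matr{H}(2m_s)+\Diag{\matr{E}(2m_s-2),\matr{0}_{2\times 2}}$. Both are worked out in full detail in \cite[Chapter 9]{lancaster2005CanonicalFormsHermitian}; see also \cite{thompson1991PencilsComplexReal,uhlig1976canonical}.
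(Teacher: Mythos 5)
This lemma is not proved in the paper at all: it is imported verbatim as a quoted classical result, \cite[Theorem 9.2]{lancaster2005CanonicalFormsHermitian}, so there is no in-paper argument to compare against, and your proposal — Kronecker splitting of the singular part, \cref{lem:Uhlig-canonical-nonsingular} for a regular shift $\matr{A}+\mu_0\matr{B}$, then blockwise congruence renormalization — is a faithful outline of the standard proof in that same source, with the genuinely hard steps (pairing of minimal indices under congruence, the complex-pair reduction to the type-$p_3$ shape) correctly identified and deferred to it. Two small touch-ups: since $\matr{E}(m)+\alpha\matr{F}(m)$ is anti-triangular with unit anti-diagonal, the continuity-of-inertia argument gives congruence to $+\matr{E}(m)$ (for odd $m$ the sign is not free, though this is harmless because the statement allows $\sigma_s=\pm1$); and the infinite-eigenvalue case is cleanest not as a renormalization of $\matr{E}+\alpha\matr{F}$ but by applying \cref{lem:Uhlig-canonical-nonsingular} to the block pair with the roles of the two matrices swapped, which lands directly on the type-$p_2$ form $\{\eta_s\matr{F}(m_s),\eta_s\matr{E}(m_s)\}$.
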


\section{Transformation based simultaneously diagonalizable matrices}\label{sec:trans_based_SD}
In this section, we will extend the notions SDO and SD using the function $\varphi_{\rm T}$ in \eqref{eq:func_T}, and propose the notions \(\mathbf{T}_{m,n}\)-SDO, \(\mathbf{T}_{m,n}\)-SD, TWSD and TWSD-B in \Cref{table-example-3-0}. For the notions \(\mathbf{T}_{m,n}\)-SDO and \( \mathbf{T}_{m,n} \)-SD, we will show that they are actually equivalent to SDO and SD respectively. 
Then we will focus on TWSD-B, and bring out several necessary and/or sufficient conditions of TWSD-B under various assumptions. 

\subsection{Transformation based projectively simultaneously diagonalizable matrices}
In this subsection, we start from the following two definitions, and then prove that they cover no more matrices than SDO and SD.
\begin{definition}\label{def:TSDO-SD}
Let the set $\mathcal{C}$ be as in \eqref{set_C}, and $n\geq m$.\\
(i) The set $\mathcal{C}$ is \emph{transformation based projectively simultaneously diagonalizable on $\St(m,n)$} ($\textbf{T}_{m,n}$-SDO), if there exist $\matr{P}\in\St(m,n)$ and $ \mathcal{D} = \{ \matr{D}^{(i)}\}_{1 \leq i \leq L} \subseteq \mathbf{D}_{n}$, such that
$\varphi_{\rm T}(\matr{P}, \mathcal{D}) = 0$, {\it i.e.}, \( \matr{P} \matr{A}_{i} \matr{P}^{\T} \) is diagonal for all \( 1 \leq i \leq L \). 
We denote the class of \( \mathbf{T}_{m,n} \)-SDO sets by $\set{T}_{m,n}\textrm{-}\set{SDO}$. \\
(ii) The set $\mathcal{C}$ is \emph{transformation based projectively simultaneously diagonalizable on $\textbf{RSL}(m,n)$} ($\textbf{T}_{m,n}$-SD), if there exist $\matr{P}\in\textbf{RSL}(m,n)$ and $ \mathcal{D} = \{ \matr{D}^{(i)}\}_{1 \leq i \leq L} \subseteq \mathbf{D}_{n}$, such that
$\varphi_{\rm T}(\matr{P}, \mathcal{D}) = 0$, {\it i.e.}, \( \matr{P} \matr{A}_{i} \matr{P}^{\T} \) is diagonal for all \( 1 \leq i \leq L \). We denote the class of \( \mathbf{T}_{m,n} \)-SD sets by $\set{T}_{m,n}\textrm{-}\set{SD}$.
\end{definition}

It is obvious that $\set{SDO}\subseteq\set{T}_{m,n}\textrm{-}\set{SDO}$ and $\set{SD}\subseteq\set{T}_{m,n}\textrm{-}\set{SD}$ for \( n \geq m \), {\it i.e.}, \( \mathbf{T}_{m,n} \)-SDO and \(\mathbf{T}_{m,n} \)-SD are weaker than SDO and SD, respectively.
On the other hand, since the rank of \( \matr{P}\matr{A} \matr{P}^{\T} \) is always less or equal to \( m \), which is the dimension of \( \matr{A}\), it is natural to guess that the set of \( \matr{P} \matr{A} \matr{P}^{\T}\) doesn't expand no matter how large \( n \) is.  
Here, in \cref{thm:TSDO-SDO}, we will demonstrate that these two notions are essentially equivalent to SDO and SD, respectively.

\begin{theorem}\label{thm:TSDO-SDO}
Let the set $\mathcal{C}$ be as in \eqref{set_C}.\\
(i) For any \( n \geq m \), the set \( \mathcal{C} \) is \( \mathbf{T}_{m,n} \)-SDO if and only if it is SDO, \emph{i.e.}, $\set{T}_{m,n}\textrm{-}\set{SDO}=\set{SDO}$.\\
(ii) For any \( n \geq m \), the set \( \mathcal{C} \) is \( \mathbf{T}_{m,n} \)-SD if and only if it is SD, \emph{i.e.}, $\set{T}_{m,n}\textrm{-}\set{SD}=\set{SD}$.
\end{theorem}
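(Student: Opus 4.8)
The plan is to reduce both statements to the observation that for $\matr{P}\in\St(m,n)$ or $\matr{P}\in\RSL(m,n)$, the matrix $\matr{P}\matr{A}_i\matr{P}^{\T}$ has rank at most $m$ and is supported (after a suitable change of orthonormal basis of $\RR^n$) on an $m$-dimensional coordinate subspace; forcing $\matr{P}\matr{A}_i\matr{P}^{\T}$ to be genuinely diagonal in the standard basis of $\RR^n$ then pins down the structure tightly enough to build a square diagonalizer. The ``only if'' directions are the content; the ``if'' directions are immediate from $\set{SDO}\subseteq\set{T}_{m,n}\textrm{-}\set{SDO}$ and $\set{SD}\subseteq\set{T}_{m,n}\textrm{-}\set{SD}$ already noted in the text.

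For part (i), suppose $\matr{P}\in\St(m,n)$ with $\matr{P}\matr{A}_i\matr{P}^{\T}=\matr{D}^{(i)}\in\mathbf{D}_n$ for all $i$. Since $\matr{P}^{\T}\matr{P}=\matr{I}_m$, left-multiplying by $\matr{P}^{\T}$ and right-multiplying by $\matr{P}$ gives $\matr{A}_i=\matr{P}^{\T}\matr{D}^{(i)}\matr{P}$. So it suffices to show that, after a common orthogonal change of basis, all the $\matr{D}^{(i)}$ can be taken to be supported on the same $m$ coordinates and that the corresponding $m\times n$ restriction of $\matr{P}$ is (orthogonally) a square orthogonal matrix. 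Concretely: let $K\subseteq\{1,\dots,n\}$ be the union of the supports (nonzero diagonal positions) of the $\matr{D}^{(i)}$; the rows of $\matr{P}$ indexed outside $K$ contribute nothing to any $\matr{A}_i$, and since the columns of $\matr{P}^{\T}$ (rows of $\matr{P}$) are orthonormal, we can argue $|K|\le m$ — more carefully, restrict to $\matr{P}_K$, the submatrix of rows of $\matr{P}$ in $K$, and note $\matr{A}_i=\matr{P}_K^{\T}\matr{D}^{(i)}_K\matr{P}_K$, so $\Span{\matr{A}_i}$ for all $i$ lies in the column space of $\matr{P}_K^{\T}$. If $|K|<m$ this only helps; if $|K|=m$ then $\matr{P}_K$ is $m\times m$ with orthonormal rows, hence $\matr{P}_K\in\ON_m$. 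Taking $\matr{Q}=\matr{P}_K$ (or a sign-adjusted version to land in $\SON_m$) gives $\matr{Q}^{\T}\matr{A}_i\matr{Q}=\matr{D}^{(i)}_K$ diagonal, i.e. $\mathcal{C}$ is SDO. The case $|K|<m$ is handled by extending $\matr{P}_K$'s rows to an orthonormal basis of $\RR^m$; the extra rows annihilate every $\matr{A}_i$ only if those $\matr{A}_i$ are degenerate in those directions, which is automatic since $\matr{A}_i$ already equals $\matr{P}_K^{\T}\matr{D}^{(i)}_K\matr{P}_K$. I would write this as: choose $\matr{Q}\in\ON_m$ whose rows are the rows of $\matr{P}_K$ completed to an orthonormal basis, adjust one sign to get $\matr{Q}\in\SON_m$, and verify $\matr{Q}^{\T}\matr{A}_i\matr{Q}$ is diagonal.

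For part (ii), use the factorization $\RSL(m,n)=\St(m,n)\times\SL_m(\RR)$ from \eqref{eq:rela-equivalent}: write $\matr{P}=\matr{Y}\matr{X}$ with $\matr{Y}\in\St(m,n)$ and $\matr{X}\in\SL_m(\RR)$. Then $\matr{P}\matr{A}_i\matr{P}^{\T}=\matr{Y}(\matr{X}\matr{A}_i\matr{X}^{\T})\matr{Y}^{\T}=\matr{D}^{(i)}$, so the set $\mathcal{C}'=\{\matr{X}\matr{A}_i\matr{X}^{\T}\}_{1\le i\le L}$ is $\mathbf{T}_{m,n}$-SDO, hence by part (i) it is SDO, i.e. there is $\matr{Q}\in\SON_m$ with $\matr{Q}^{\T}\matr{X}\matr{A}_i\matr{X}^{\T}\matr{Q}$ diagonal for all $i$. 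Now $\matr{X}^{\T}\matr{Q}\in\GL_m(\RR)$ diagonalizes $\mathcal{C}$; to land in $\SL_m(\RR)$ as required by the definition of SD, rescale: $\matr{P}_0=\det(\matr{X}^{\T}\matr{Q})^{-1/m}\,\matr{X}^{\T}\matr{Q}$ when the determinant is positive, and otherwise absorb a sign via a diagonal $\pm1$ matrix (noting that scaling a diagonalizer by any invertible diagonal matrix preserves diagonality of $\matr{P}_0^{\T}\matr{A}_i\matr{P}_0$). This shows $\mathcal{C}\in\set{SD}$.

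The main obstacle is the careful bookkeeping in part (i) when the supports $K$ of the diagonal matrices $\matr{D}^{(i)}$ do not all coincide and $|K|$ may be strictly less than $m$: one must check that discarding rows of $\matr{P}$ outside $K$ and completing to a square orthogonal matrix genuinely produces a simultaneous orthogonal diagonalizer, rather than just a block-triangular reduction, and that the sign adjustment to reach $\SON_m$ (rather than $\ON_m$) is always available. Everything else — the rank bound, the factorization in (ii), and the determinant rescaling — is routine.
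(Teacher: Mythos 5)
Your part (ii) is essentially the paper's own argument: factor $\matr{P}=\matr{U}\matr{V}$ with $\matr{U}\in\St(m,n)$, $\matr{V}\in\SL_m(\RR)$ via \eqref{eq:rela-equivalent}, apply part (i) to $\{\matr{V}\matr{A}_i\matr{V}^{\T}\}$, and conjugate back; the only remark is that your rescaling step is unnecessary, since $\det(\matr{X}^{\T}\matr{Q})=\det(\matr{X})\det(\matr{Q})=1$ already, so $\matr{X}^{\T}\matr{Q}\in\SL_m(\RR)$ automatically. Part (i) is where you depart from the paper, and it is where your proposal has a genuine gap.

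Your support bookkeeping rests on the assertion that the rows of $\matr{P}$ (the columns of $\matr{P}^{\T}$) are orthonormal. That is false for $\matr{P}\in\St(m,n)$ with $n>m$: such a $\matr{P}$ has $m$ orthonormal columns but $n$ rows, e.g.\ $n=2$, $m=1$, $\matr{P}=(1/\sqrt2,\,1/\sqrt2)^{\T}$. Since this claim is what you use to get $|K|\le m$, to conclude $\matr{P}_K\in\ON_m$ when $|K|=m$, and to extend the $K$-rows to an orthonormal basis when $|K|<m$, the very step you flag as ``the main obstacle'' is resolved by a false statement; your alternative remark that the span of the $\matr{A}_i$ lies in the column space of $\matr{P}_K^{\T}$ only bounds $\operatorname{rank}(\matr{A}_i)$ by $|K|$, not $|K|$ by $m$. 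The missing ingredient is a column-space argument: $\operatorname{col}(\matr{D}^{(i)})=\operatorname{col}(\matr{P}\matr{A}_i\matr{P}^{\T})\subseteq\operatorname{col}(\matr{P})$, so $\vect{e}_j\in\operatorname{col}(\matr{P})$ for every $j\in K$; writing $\vect{e}_j=\matr{P}\vect{c}_j$ gives that row $j$ of $\matr{P}$ equals $\vect{c}_j^{\T}$ with $\vect{c}_j^{\T}\vect{c}_l=\delta_{jl}$, hence exactly the $K$-rows of $\matr{P}$ are orthonormal and $|K|\le m$. With that lemma in place your construction does go through (one gets $\matr{Q}\matr{A}_i\matr{Q}^{\T}=\Diag{\matr{D}^{(i)}_K,\matr{0}_{(m-|K|)\times(m-|K|)}}$, and a sign flip, which conjugates by a diagonal $\pm1$ matrix and so preserves diagonality, lands $\matr{Q}$ in $\SON_m$), but as written the argument is not complete. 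Note also that the paper proves (i) by a much shorter route that avoids all of this: from $\matr{P}\matr{A}_i\matr{P}^{\T}=\matr{D}^{(i)}$ and $\matr{P}^{\T}\matr{P}=\matr{I}_m$ it deduces $\matr{P}(\matr{A}_i\matr{A}_j-\matr{A}_j\matr{A}_i)\matr{P}^{\T}=\matr{0}$, hence $[\matr{A}_i,\matr{A}_j]=\matr{0}$ by full column rank, and then invokes \cref{lem:SDO-community}; your structural route, once repaired, has the mild advantage of producing the orthogonal diagonalizer explicitly without appealing to that lemma.
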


\begin{proof}
(i) We only need to prove that, if \( \mathcal{C} \) is \( \mathbf{T}_{m,n} \)-SDO, then it is SDO. By \Cref{def:TSDO-SD}(i), there exists \( \matr{P} \in \St(m,n) \) such that \( \matr{P}\matr{A}_{i}\matr{P}^{\T} = \matr{D}^{(i)} \), where \( \matr{D}^{(i)} \in \mathbf{D}_n \) for all $1\leq i\leq L$.
Then, for all \(1\leq i \neq j \leq L\), we have
\begin{equation*} \matr{P}\matr{A}_i\matr{P}^{\T}\matr{P}\matr{A}_j\matr{P}^{\T} = \matr{D}^{(i)}\matr{D}^{(j)} = \matr{D}^{(j)}\matr{D}^{(i)} =  \matr{P}\matr{A}_j\matr{P}^{\T}\matr{P}\matr{A}_i\matr{P}^\T. 
\end{equation*}
Note that \( \matr{P}^{\T}\matr{P} = \matr{I}_m \). It follows that 
\[
\matr{P}(\matr{A}_i\matr{A}_j - \matr{A}_j\matr{A}_i)\matr{P}^{\T} = 0.
\]
Since \( \matr{P} \) has full column rank, we have that \( \matr{A}_i\matr{A}_j = \matr{A}_j\matr{A}_i\). 
Therefore, the set \( \mathcal{C}\) is SDO by \cref{lem:SDO-community}.\\
(ii) We only need to prove that, if \( \mathcal{C} \) is \( \mathbf{T}_{m,n} \)-SD, then it is SD. By \Cref{def:TSDO-SD}(ii), there exists \( \matr{P} \in \mathbf{RSL}(m,n) \) such that \( \matr{P}\matr{A}_{i}\matr{P}^{\T} = \matr{D}^{(i)} \), where \( \matr{D}^{(i)} \in \mathbf{D}_n \) for all $1\leq i\leq L$.
Note that there exist \( \matr{U} \in \St(m,n) \) and \( \matr{V} \in \SL_m(\mathbb{R}) \) such that \( \matr{P} = \matr{U}\matr{V} \) by equation \eqref{eq:rela-equivalent}. 
We have \( \matr{U}\matr{V}\matr{A}_{i}\matr{V}^{\T}\matr{U}^{\T} = \matr{P}\matr{A}_{i}\matr{P}^{\T} = \matr{D}^{(i)} \) for all $1\leq i\leq L$, which means that the set  $\{\matr{V}\matr{A}_i\matr{V}^\T\}_{1\leq i\leq L}$ is \TSDO.
By part (i), this set is SDO. Thus, there exists a matrix $\matr{Q} \in \SON_m$ such that $\matr{Q}\matr{V}\matr{A}_{i}\matr{V}^{\T}\matr{Q}^{\T}$ is diagonal for all \( 1 \leq i \leq L \). Note that \( \matr{Q}\matr{V} \in \SL_m(\mathbb{R})\). It follows that the set \( \mathcal{C}\) is SD. The proof is complete.
\end{proof}

\subsection{Transformation based weakly simultaneously diagonalizable matrices}
In this subsection, we define two notions TWSD and TWSD-B, which are both weaker than SD. The characterizations of these two new notions will be given in the later subsections.
\begin{definition}\label{def:twsd}
Let the set $\mathcal{C}$ be as in \eqref{set_C}.\\
(i) The set $\mathcal{C}$ is \emph{transformation based weakly simultaneously diagonalizable} (TWSD), if there exist $\matr{P}_k \in \SL_{m}(\RR)$ and $ \mathcal{D}_{k} = \{ \matr{D}_k^{(i)}\}_{1 \leq i \leq L} \subseteq \mathbf{D}_{m}$ for $k\geq 1$, such that
\begin{align*}
\lim_{k\rightarrow\infty} \varphi_{\rm T}(\matr{P}_k, \mathcal{D}_{k}) = 0.
\end{align*}
We denote the class of TWSD sets by $\set{TWSD}$.\\
(ii) The set $\mathcal{C}$ is \emph{bounded TWSD} (TWSD-B), if there exist $\matr{P}_k \in \SL_{m}(\RR)$ and $\mathcal{D}_{k} = \{ \matr{D}_k^{(i)}\}_{1 \leq i \leq L} \subseteq \mathbf{D}_{m}$ for $k\geq 1$, such that
\begin{align*}
\lim_{k\rightarrow\infty} \varphi_{\rm T}(\matr{P}_k, \mathcal{D}_k) = 0,
\end{align*}
and $\|\matr{D}_k^{(i)}\|$ is uniformly bounded.
We denote the class of TWSD-B sets by $\set{TWSD}\textrm{-}\set{B}$.\\
\end{definition}

\begin{remark}\label{defini:TWSD-B}
(i) Although the definition of \( \varphi_T \) in  \eqref{eq:func_T} is the sum of \( \|\matr{P} \matr{A}_{i} \matr{P}^{\top} - \matr{D}^{(i)} \|^2 \), for convenience, we will also sometimes use \( \| \matr{P}^{\top} \matr{A}_{i} \matr{P} - \matr{D}^{(i)} \|^{2} \) to replace it in this paper when we talk about TWSD and TWSD-B notions.
In these two cases, since \( \matr{P}\in\SL_m(\RR) \) is square, they are equivalent to each other. \\
(ii) It is clear that the set  $\mathcal{C}$ is TWSD-B, if and only if there exists $\matr{P}_k \subseteq \SL_{m}(\RR)$, such that
\begin{align*}\label{eq:offdiag_zero-limits}
\lim_{k\rightarrow\infty} \|\textbf{offdiag}(  \matr{P}_k^{\T}\matr{A}_i \matr{P}_k)\|= 0
\end{align*}
for all $1\leq i\leq L$, and there exists $M>0$ such that the diagonal elements satisfy
\begin{align*}
\|\textbf{diag}(\matr{P}_k^\T \matr{A}_{i} \matr{P}_k)\| \leq M
\end{align*}
for all $1\leq i\leq L$ and $k\geq 1$, if and only if there exist $\{\matr{P}_k\}_{k \geq 1} \subseteq \SL_{m}(\RR)$ and \( \{\matr{D}^{(i)}\}_{1\leq i\leq L} \subseteq \mathbf{D}_m \)  such that
\[
\lim_{k \to \infty}\matr{P}_k^{\T}\matr{A}_{i}\matr{P}_k = \matr{D}^{(i)}
\]
for all \( 1 \leq i \leq L \).\\
(iii) In \Cref{def:twsd}, if the constraint \( \matr{P}_k \in \SL_m(\mathbb{R}) \) is changed to that \( \det(\matr{P}_k)=c \), where $c$ is a nonzero constant, then TWSD and TWSD-B notions remain the same. 
It is the same case with the above remarks. 
\end{remark}


It is obvious by \cref{def:twsd} that \(\set{SD}\subseteq   \set{TWSD}\textrm{-}\set{B}\subseteq \set{TWSD}\). 
It will be shown in \cref{exa:TWSD-not-TWSD-B} that \(\set{TWSD}\textrm{-}\set{B}\subsetneqq \set{TWSD}\).
We now present two examples to show that \(\set{SD}\subsetneqq   \set{TWSD}\textrm{-}\set{B}\), and $\set{TWSD}$ doesn't include all the sets of symmetric matrices.

\begin{example}[A set which is TWSD-B, but not SD]
Let $\matr{A}=\left[ \begin{array}{cc} 0 & 1 \\ 1 & 0 \end{array} \right] $ and $\matr{B}=\left[ \begin{array}{cc} 1 & 0 \\ 0 & 0 \end{array} \right]$.
Then, by setting $\matr{P}_k=\left[ \begin{array}{cc} \frac{1}{k} & \frac{1}{2k} \\ -k & \frac{k}{2} \end{array} \right]$ for $k\geq 1$, we see that
\[
\matr{P}_k^{\T}\matr{A}\matr{P}_k = \left[ \begin{array}{cc}-2 & 0\\0 & \frac{1}{2}\end{array} \right],\
\matr{P}_k^{\T}\matr{B}\matr{P}_k = \left[ \begin{array}{cc} \frac{1}{k^{2}} & \frac{1}{2 k^{2}}\\\frac{1}{2 k^{2}} & \frac{1}{4 k^{2}} \end{array} \right].
\]
Note that \( \matr{P}_k^{\T}\matr{A} \matr{P}_k\) and \(  \matr{P}_k^{\T}\matr{B} \matr{P}_k\) both converge to diagonal matrices when $k\rightarrow\infty$.
The set \( \{ \matr{A}, \matr{B} \} \) is TWSD-B by \Cref{defini:TWSD-B}(ii). However, since \( \matr{A}^{-1}\matr{B} = \left[ \begin{array}{cc} 0 & 0 \\ 1 & 0 \end{array} \right]  \), its Jordan normal form is itself, and thus this set is not SD by \cref{lem:nonsingular-pair-SD}.

\end{example}


\begin{example}[A set which is not TWSD]\label{exa:not-TWSD}
Let $\matr{A}=\left[ \begin{array}{cc} 0 & 1 \\ 1 & 0 \end{array} \right] $ and $\matr{B}=\left[ \begin{array}{cc} 1 & 0 \\ 0 & -1 \end{array} \right] $.
Then the set \( \{ \matr{A}, \matr{B} \} \) is not TWSD. 
We now prove it by contradiction. 
Assume that this set is TWSD. Then there exist a sequence $\matr{P}_k=\left[ \begin{array}{cc} p^{(k)}_{11} & p^{(k)}_{12} \\ p^{(k)}_{21} & p^{(k)}_{22} \end{array} \right]$ for $k\geq 1$, such that
\begin{align}
&p_{11}^{(k)}p_{22}^{(k)} - p_{12}^{(k)} p_{21}^{(k)} = 1,\, \forall k, \label{eq:notTWSD-1} \\
&\lim_{k\rightarrow \infty}  \left( p_{11}^{(k)}p_{22}^{(k)} + p_{12}^{(k)} p_{21}^{(k)} \right) = 0 , \label{eq:notTWSD-2}\\
&\lim_{k\rightarrow \infty}  \left( p_{11}^{(k)}p_{12}^{(k)} - p_{21}^{(k)} p_{22}^{(k)} \right) = 0.\label{eq:notTWSD-3}
\end{align}
It follows by equations \eqref{eq:notTWSD-1} and \eqref{eq:notTWSD-2} that
\[
\lim_{k\rightarrow \infty}   p_{11}^{(k)}p_{22}^{(k)}  = 0.5, \quad \lim_{k\rightarrow \infty}   p_{12}^{(k)}p_{21}^{(k)}  = -0.5.
\]
We now consider the following cases:
\begin{description}
\item[Case 1]
$p_{11}^{(k)}>0$, $p_{22}^{(k)}>0$, $p_{12}^{(k)}>0$ and $p_{21}^{(k)}<0$ for infinitely many $k$'s (the other case of $p_{12}^{(k)}<0$ and $p_{21}^{(k)}>0$ is similar).
In this case, there exists $K_1>0$ such that 
\[p_{11}^{(k)} > 0.25 / p_{22}^{(k)}, \quad p_{21}^{(k)} < -0.25 / p_{12}^{(k)}\]
for infinitely many indices $k>K_1$.
Thus,
\[p_{11}^{(k)}p_{12}^{(k)} - p_{21}^{(k)} p_{22}^{(k)} > 0.25 \times \frac{p_{12}^{(k)}}{p_{22}^{(k)}} + 0.25 \times \frac{p_{22}^{(k)}}{p_{12}^{(k)}} \geq 0.5,\]
which is in contradiction with equation \eqref{eq:notTWSD-3}.

\item[Case 2]
$p_{11}^{(k)}<0$, $p_{22}^{(k)}<0$, $p_{12}^{(k)}>0$ and $p_{21}^{(k)}<0$ for infinitely many $k$'s (the other case of $p_{12}^{(k)}<0$ and $p_{21}^{(k)}>0$ is similar).
In this case, there exists $K_2$ such that 
\[p_{11}^{(k)} < 0.25 / p_{22}^{(k)}, \quad -p_{21}^{(k)} > 0.25 / p_{12}^{(k)}\]
for infinitely many indices $k>K_2$.
Thus,
\[  p_{11}^{(k)}p_{12}^{(k)} - p_{21}^{(k)} p_{22}^{(k)} < 0.25 \times \frac{p_{12}^{(k)}}{p_{22}^{(k)}} + 0.25 \times \frac{p_{22}^{(k)}}{p_{12}^{(k)}}  \leq -0.5,\]
which is also in contradiction with equation \eqref{eq:notTWSD-3}.
\end{description}
\end{example}

\subsection{Characterizations of TWSD-B for a nonsingular pair}

In this subsection, we mainly prove \cref{thm:better-decomposition} and \cref{thm:TWSD-B-nonsingular-pair}, which can be seen as extensions of \cref{lem:Uhlig-canonical-nonsingular} and \cref{lem:nonsingular-pair-SD}, respectively.
The proofs are both postponed to \Cref{proofs-1}. 

\begin{lemma}\label{thm:better-decomposition}
Let $\matr{A},\matr{B}\in\textbf{symm}(\RR^{m\times m})$, and \( \matr{A} \) be nonsingular. 
Let \( \matr{A}^{-1} \matr{B} \) have the real Jordan normal form \eqref{eq:Jord_nonsin-AB}.  
Then there exists a sequence \( \{  \matr{P}_{k}\}_{k\geq 1}\subseteq\GL_{m}(\RR) \) with constant determinant, such that
\begin{align*}
{\small\matr{P}_k^{\T}\matr{A}\matr{P}_k} &{\small = \Diag{\sigma_1\matr{G}(m_1), \ldots,  \sigma_r\matr{G}(m_r), \matr{E}(m_{r+1}), \ldots, \matr{E}(m_{p})}, \ \forall k,} \\
{\small\lim_{k \to \infty} \matr{P}_k^{\T}\matr{B}\matr{P}_k} &{\small= \Diag{\lambda_1\sigma_1\matr{G}(m_1), \ldots,  \lambda_r\sigma_r\matr{G}(m_r), \matr{E}(m_{r+1})\matr{J}(\lambda_{r+1}, m_{r+1}), \ldots, \matr{E}(m_{p})\matr{J}(\lambda_p, m_p)},}
\end{align*}
where \( \sigma_i = \pm 1 \) for \( 1 \leq s \leq r \) are as in \cref{lem:Uhlig-canonical-nonsingular}.
\end{lemma}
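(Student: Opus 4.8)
The plan is to start from the canonical form of Lemma~\ref{lem:Uhlig-canonical-nonsingular} and then, on each block, replace the discrete congruence transformation that sends $\matr{E}(m_s)\matr{J}(\lambda_s,m_s)$ to its Jordan form by a \emph{sequence} of congruences that achieves the limit $\lambda_s\matr{G}(m_s)$ while fixing $\matr{E}(m_s)$ up to the orthogonal change $\matr{E}(m_s)\mapsto\matr{G}(m_s)$. Concretely, by Lemma~\ref{lem:Uhlig-canonical-nonsingular} choose $\matr{P}_0\in\GL_m(\RR)$ with $\matr{P}_0^{\T}\matr{A}\matr{P}_0$ and $\matr{P}_0^{\T}\matr{B}\matr{P}_0$ equal to the stated block-diagonal forms $\Diag{\sigma_s\matr{E}(m_s)}$ and $\Diag{\sigma_s\matr{E}(m_s)\matr{J}(\lambda_s,m_s)}$ (with the last $p-r$ blocks carrying the complex Jordan pairs and $\sigma_s=1$ there). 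It then suffices to treat each diagonal block separately, i.e. to exhibit, for a real eigenvalue block, a sequence $\matr{T}_k^{(s)}$ with $\matr{T}_k^{(s)\T}\big(\sigma_s\matr{E}(m_s)\big)\matr{T}_k^{(s)}=\sigma_s\matr{G}(m_s)$ for all $k$ and $\matr{T}_k^{(s)\T}\big(\sigma_s\matr{E}(m_s)\matr{J}(\lambda_s,m_s)\big)\matr{T}_k^{(s)}\to\lambda_s\sigma_s\matr{G}(m_s)$; on the complex-eigenvalue blocks we simply take $\matr{T}_k^{(s)}=\matr{I}$, leaving $\matr{E}(m_s)$ and $\matr{E}(m_s)\matr{J}(\lambda_s,m_s)$ untouched (consistent with the statement, which does not simplify those blocks). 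Assembling the blocks gives $\matr{P}_k\eqdef\matr{P}_0\,\Diag{\matr{T}_k^{(1)},\dots,\matr{T}_k^{(p)}}$, and one adjusts a single scalar factor so that $\det(\matr{P}_k)$ is the same constant for every $k$.

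For the per-block construction I would combine two ingredients already supplied by the excerpt. First, there is an orthogonal $\matr{Q}\in\SON_{m_s}$ with $\matr{Q}^{\T}\matr{E}(m_s)\matr{Q}=\matr{G}(m_s)$ (stated right after \eqref{def:matrix-G-R-k}); since $\matr{Q}$ is orthogonal it also conjugates $\matr{J}(\lambda_s,m_s)$ without changing norms. Second, the diagonal scaling $\matr{R}_k(m_s)\in\SL_{m_s}(\RR)$ of \eqref{def:matrix-G-R-k} satisfies $\matr{R}_k(m_s)^{-1}\matr{J}(\lambda_s,m_s)\matr{R}_k(m_s)\to\lambda_s\matr{I}_{m_s}$ by \eqref{eq:R-sequence-J-limit}, and — this is the point that must be checked — because $\matr{R}_k(m_s)$ is built from the symmetric exponent pattern $\delta_s=(m_s+1)/2-s$, it commutes appropriately with the antidiagonal matrix $\matr{E}(m_s)$: one has $\matr{R}_k(m_s)^{\T}\matr{E}(m_s)\matr{R}_k(m_s)=\matr{E}(m_s)$, i.e. $\matr{E}(m_s)$ is invariant under this congruence. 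Granting that identity, set $\matr{T}_k^{(s)}\eqdef\matr{R}_k(m_s)\matr{Q}$ (or $\matr{Q}\matr{R}_k(m_s)$, after a harmless relabeling); then $\matr{T}_k^{(s)\T}\matr{E}(m_s)\matr{T}_k^{(s)}=\matr{Q}^{\T}\matr{E}(m_s)\matr{Q}=\matr{G}(m_s)$ for every $k$, while
\[
\matr{T}_k^{(s)\T}\big(\matr{E}(m_s)\matr{J}(\lambda_s,m_s)\big)\matr{T}_k^{(s)}
=\matr{Q}^{\T}\matr{E}(m_s)\big(\matr{R}_k(m_s)^{-1}\matr{J}(\lambda_s,m_s)\matr{R}_k(m_s)\big)\matr{Q}
\longrightarrow \lambda_s\,\matr{Q}^{\T}\matr{E}(m_s)\matr{Q}=\lambda_s\matr{G}(m_s),
\]
using $\matr{E}\matr{R}_k = (\matr{R}_k^{\T})^{-1}\matr{E} = \matr{R}_k^{-1}\matr{E}$ (the same invariance, rewritten) to move the scaling past $\matr{E}$. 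Multiplying by $\sigma_s$ throughout yields exactly the block asserted in the lemma.

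The main obstacle is the bookkeeping around determinants and the precise commutation relation between $\matr{R}_k(m_s)$ and $\matr{E}(m_s)$: one must verify that the exponent sequence $\delta_1,\dots,\delta_{m_s}$ is \emph{anti-symmetric} about the center ($\delta_s+\delta_{m_s+1-s}=0$), which is what makes $\matr{R}_k(m_s)^{\T}\matr{E}(m_s)\matr{R}_k(m_s)=\matr{E}(m_s)$ hold, and similarly that $\det\matr{R}_k(m_s)=1$ so the determinant of $\matr{P}_k$ does not drift with $k$ — any residual constant from $\matr{P}_0$, the $\matr{Q}$'s, and the $\sigma_s$'s is absorbed once and for all into a fixed scalar multiple of $\matr{P}_k$. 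The remaining steps (diagonal reassembly, taking the limit blockwise, and the observation that uniform convergence on finitely many blocks gives convergence of the whole matrix) are routine. Since the detailed verification is somewhat technical, the full argument is deferred to \Cref{proofs-1} as announced.
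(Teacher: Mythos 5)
Your argument is correct and takes essentially the same route as the paper's proof: both pass to the Uhlig canonical form, act on each real-eigenvalue block by the congruence $\matr{R}_k(m_s)\matr{Q}$ (using $\matr{R}_k(m_s)^{\T}\matr{E}(m_s)\matr{R}_k(m_s)=\matr{E}(m_s)$ and $\det\matr{R}_k(m_s)=1$), leave the complex blocks untouched, and assemble blockwise; the paper merely phrases the limit through $\matr{E}(m_s)\matr{J}(\lambda_s,m_s)=\lambda_s\matr{E}(m_s)+\matr{F}(m_s)$ together with $\matr{R}_k^{\T}\matr{F}\matr{R}_k=\tfrac{1}{k}\matr{F}$, which is the same computation as your use of \eqref{eq:R-sequence-J-limit}. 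One small correction: drop the parenthetical alternative $\matr{Q}\matr{R}_k(m_s)$, since in that order $\matr{T}_k^{\T}\matr{E}(m_s)\matr{T}_k=\matr{R}_k\matr{G}(m_s)\matr{R}_k\neq\matr{G}(m_s)$; the choice $\matr{R}_k(m_s)\matr{Q}$ you actually use is the correct one.
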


\begin{theorem}\label{thm:TWSD-B-nonsingular-pair}
Let 
$\matr{A}, \matr{B}\in\textbf{symm}(\RR^{m\times m})$, and \( \matr{A} \) be nonsingular. 
Then \( \matr{A} \) and \( \matr{B} \) are TWSD-B if and only if \( \matr{A}^{-1}\matr{B} \)  has only real eigenvalues.
\end{theorem}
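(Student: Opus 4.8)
The plan is to prove both implications by reducing to the canonical form of the pair $\{\matr{A},\matr{B}\}$ and then exploiting \cref{thm:better-decomposition} for the ``if'' direction and a limiting/commutator argument for the ``only if'' direction.

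For the \textbf{sufficiency} direction, suppose $\matr{A}^{-1}\matr{B}$ has only real eigenvalues. Then in the real Jordan normal form \eqref{eq:Jord_nonsin-AB} we have $r=p$, so \cref{thm:better-decomposition} produces a sequence $\{\matr{P}_k\}\subseteq\GL_m(\RR)$ of constant determinant with $\matr{P}_k^\T\matr{A}\matr{P}_k = \Diag{\sigma_1\matr{G}(m_1),\ldots,\sigma_p\matr{G}(m_p)}$ for all $k$ and $\lim_k \matr{P}_k^\T\matr{B}\matr{P}_k = \Diag{\lambda_1\sigma_1\matr{G}(m_1),\ldots,\lambda_p\sigma_p\matr{G}(m_p)}$. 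Since each $\matr{G}(m_s)$ is itself diagonal (by \eqref{def:matrix-G-R}), the matrix $\matr{P}_k^\T\matr{A}\matr{P}_k$ is already a fixed diagonal matrix and $\matr{P}_k^\T\matr{B}\matr{P}_k$ converges to a diagonal matrix. By rescaling $\matr{P}_k$ by a constant (allowed by \cref{defini:TWSD-B}(iii)) we may assume $\det\matr{P}_k=1$, i.e. $\matr{P}_k\in\SL_m(\RR)$. The diagonal parts $\textbf{diag}(\matr{P}_k^\T\matr{A}\matr{P}_k)$ and $\textbf{diag}(\matr{P}_k^\T\matr{B}\matr{P}_k)$ are then uniformly bounded (one is constant, the other convergent), and the off-diagonal parts tend to $\matr{0}$; hence $\{\matr{A},\matr{B}\}$ is TWSD-B by \cref{defini:TWSD-B}(ii).

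For the \textbf{necessity} direction, suppose $\{\matr{A},\matr{B}\}$ is TWSD-B, so there are $\matr{P}_k\in\SL_m(\RR)$ with $\matr{P}_k^\T\matr{A}\matr{P}_k\to\matr{D}^{(1)}$ and $\matr{P}_k^\T\matr{B}\matr{P}_k\to\matr{D}^{(2)}$ for diagonal $\matr{D}^{(1)},\matr{D}^{(2)}$. I would argue that $\matr{D}^{(1)}$ must be nonsingular: since $\det(\matr{P}_k^\T\matr{A}\matr{P}_k)=\det(\matr{A})\neq 0$ is constant, passing to the limit gives $\det(\matr{D}^{(1)})=\det(\matr{A})\neq 0$. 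Now $(\matr{P}_k^\T\matr{A}\matr{P}_k)^{-1}(\matr{P}_k^\T\matr{B}\matr{P}_k) = \matr{P}_k^{-1}\matr{A}^{-1}\matr{B}\matr{P}_k$, so $\matr{A}^{-1}\matr{B}$ is similar to $\matr{P}_k^\T\matr{A}\matr{P}_k)^{-1}(\matr{P}_k^\T\matr{B}\matr{P}_k)$ for every $k$, which converges to $(\matr{D}^{(1)})^{-1}\matr{D}^{(2)}$, a real diagonal matrix. The eigenvalues of a matrix depend continuously on its entries, so the (multiset of) eigenvalues of $\matr{A}^{-1}\matr{B}$ — which is constant in $k$ — equals the limit of the eigenvalues of $\matr{P}_k^{-1}\matr{A}^{-1}\matr{B}\matr{P}_k$, namely the diagonal entries of $(\matr{D}^{(1)})^{-1}\matr{D}^{(2)}$, all real. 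Hence $\matr{A}^{-1}\matr{B}$ has only real eigenvalues.

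The \textbf{main obstacle} is making the necessity argument fully rigorous: one must be careful that $\matr{P}_k^\T\matr{A}\matr{P}_k$ being convergent with nonsingular limit really does control $\matr{A}^{-1}\matr{B}$, which I handle via the similarity $\matr{P}_k^{-1}\matr{A}^{-1}\matr{B}\matr{P}_k = (\matr{P}_k^\T\matr{A}\matr{P}_k)^{-1}(\matr{P}_k^\T\matr{B}\matr{P}_k)$ together with continuity of eigenvalues (e.g. via continuity of the characteristic polynomial coefficients and continuity of roots). An alternative, cleaner route for necessity would invoke the canonical form \cref{lem:Uhlig-canonical-nonsingular}: if $\matr{A}^{-1}\matr{B}$ had a non-real eigenvalue pair $a\pm b\ui$ with $b\neq 0$, the corresponding canonical block pair $(\matr{E}(m_s)\matr{J}, \matr{E}(m_s))$ is congruence-invariant, and one shows directly that no sequence of congruences can drive its off-diagonal entries to $0$ while keeping the diagonal bounded — essentially because such a $2\times2$-structured block encodes a genuine rotation that cannot be asymptotically diagonalized by real congruences with bounded diagonal. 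I expect the similarity-plus-continuity argument to be the shortest, so that is what I would write up, relegating the canonical-form viewpoint to a remark.
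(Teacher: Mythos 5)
Your proof is correct and follows essentially the same route as the paper: sufficiency via \cref{thm:better-decomposition} (all Jordan blocks real, so the canonical limits are diagonal with bounded entries), and necessity by rewriting $(\matr{P}_k^{\T}\matr{A}\matr{P}_k)^{-1}(\matr{P}_k^{\T}\matr{B}\matr{P}_k)=\matr{P}_k^{-1}\matr{A}^{-1}\matr{B}\matr{P}_k$ and using continuity of the characteristic polynomial. The steps you verify inline (nonsingularity of the limit of $\matr{P}_k^{\T}\matr{A}\matr{P}_k$, convergence of its inverse, and the real-roots conclusion) are exactly the content of the paper's \cref{lem:bounded-inverse} and \cref{lem:weak-Jordan-form}.
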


By \cref{thm:TWSD-B-nonsingular-pair} and the proof of \cref{thm:better-decomposition}, we are now able to propose \cref{alg:two-matrices-nonsingular} to test whether two symmetric matrices are TWSD-B or not, if at least one of them is nonsingular.
The congruent matrices $\matr{P}_k$ can be obtained as well.

\begin{algorithm}[h]
\caption{Check whether matrices $\matr{A}, \matr{B}$ are TWSD-B, where \( \matr{A} \) is nonsingular.}
\label{alg:two-matrices-nonsingular}
\begin{algorithmic}
\REQUIRE A pair of matrices $\matr{A}, \matr{B}\in\textbf{symm}(\RR^{m\times m})$, where \( \matr{A} \) is nonsingular. 
\IF{$\matr{A}^{-1}\matr{B}$ has an eigenvalue which is not real}
\STATE \textbf{Return} the set \(\{ \matr{A}, \matr{B} \}\) is not TWSD-B.
\ELSIF{\( \matr{A}^{-1}\matr{B} \) has only real eigenvalues}
\STATE Find \( \matr{\tilde{P}} \) such that \( \matr{\tilde{P}}^{\T}\matr{A}\matr{\tilde{P}} \) and \( \matr{\tilde{P}}^{\T}\matr{B}\matr{\tilde{P}} \) are in the canonical form as in \cref{lem:Uhlig-canonical-nonsingular}.
\STATE Define \( \matr{P}_{k}^{(i)}\) as in the proof of \cref{thm:better-decomposition} for each Jordan block in the Jordan normal form of \( \matr{A}^{-1} \matr{B} \).
\STATE \textbf{Return} the congruent matrices \( \matr{P}_k = \matr{\tilde{P}} \Diag{\matr{P}_k^{(1)} , \ldots , \matr{P}_k^{(r)}} \).
\ENDIF
\end{algorithmic}
\end{algorithm}

\subsection{Characterizations of TWSD-B for a singular pair}
In this subsection, we show that all singular pairs are TWSD-B in \Cref{thm:TWSD-B-singular-pair}.
The proof is postponed to \Cref{proofs-1}. 
\begin{theorem}\label{thm:TWSD-B-singular-pair}
Let $\{\matr{A}, \matr{B}\} \subseteq  \textbf{symm}(\RR^{m\times m})$ be a singular pair. Then it is TWSD-B.
\end{theorem}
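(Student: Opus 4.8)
The plan is to reduce everything to the canonical form of a symmetric matrix pair given in \cref{lem:lancaster-canonical-general-pair}. Since the property of being TWSD-B is invariant under congruence (if $\matr{P}_k$ witnesses TWSD-B for $\{\matr{A},\matr{B}\}$ and $\matr{Q}\in\GL_m(\RR)$, then, after rescaling $\matr{Q}$ to have determinant $1$, the sequence $\matr{Q}^{-1}\matr{P}_k$ — or rather $\matr{Q}\matr{P}_k$ acting on the transformed pair — witnesses it for $\{\matr{Q}^\T\matr{A}\matr{Q},\matr{Q}^\T\matr{B}\matr{Q}\}$; note \cref{defini:TWSD-B}(iii) lets us ignore the determinant normalization), it suffices to prove the statement when $\{\matr{A},\matr{B}\}$ is already in the block-diagonal canonical form $\Diag{\matr{X}_1,\ldots,\matr{X}_p}$, $\Diag{\matr{Y}_1,\ldots,\matr{Y}_p}$ of \cref{lem:lancaster-canonical-general-pair}. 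Moreover, TWSD-B is closed under taking block-diagonal direct sums: if each pair $\{\matr{X}_s,\matr{Y}_s\}$ is TWSD-B via a sequence $\matr{P}_k^{(s)}$ with uniformly bounded images, then $\matr{P}_k\eqdef\Diag{\matr{P}_k^{(1)},\ldots,\matr{P}_k^{(p)}}$ works for the whole pair (again adjusting the determinant by a global scalar, harmless by \cref{defini:TWSD-B}(iii)). Hence it is enough to check each of the five block types separately.

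The key observation driving the construction is that the pair is \emph{singular}, so at least one of the five types must contribute a block that is itself singular-as-a-pair, but in fact we will show every individual block type of \cref{lem:lancaster-canonical-general-pair} is TWSD-B (singular or not — the only genuinely new content is for blocks involving $\matr{F}$, $\matr{H}$, or the type-4 and type-5 structures). The main tool is the scaling matrix $\matr{R}_k(m)$ from \eqref{def:matrix-G-R-k}, which satisfies $\matr{R}_k(m)\in\SL_m(\RR)$ and drives the superdiagonal of a Jordan block to zero as in \eqref{eq:R-sequence-J-limit}; composed with the orthogonal $\matr{Q}$ taking $\matr{E}(m)$ to $\matr{G}(m)$, it simultaneously diagonalizes $\matr{E}(m)$ and asymptotically diagonalizes $\lambda\matr{E}(m)+\matr{F}(m)$, exactly as in \cref{thm:better-decomposition}. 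I would treat the types in turn:
\begin{itemize}
\item Type 1 ($\matr{X}_s=\sigma_s\matr{E}(m_s)$, $\matr{Y}_s=\sigma_s(\lambda_s\matr{E}(m_s)+\matr{F}(m_s))$): this is precisely the situation handled in the proof of \cref{thm:better-decomposition} with $\matr{J}(\lambda_s,m_s)$ a real Jordan block, so the sequence $\matr{Q}\matr{R}_k(m_s)$ works and the limits are the bounded diagonal matrices $\sigma_s\matr{G}(m_s)$ and $\lambda_s\sigma_s\matr{G}(m_s)$.
\item Type 2 ($\matr{X}_s=\eta_s\matr{F}(m_s)$, $\matr{Y}_s=\eta_s\matr{E}(m_s)$): here $\matr{X}_s$ is \emph{singular} (it has a zero row/column). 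I would swap the roles of $\matr{A}$ and $\matr{B}$ conceptually — $\matr{E}(m_s)$ is nonsingular — and observe $\matr{E}(m_s)^{-1}\matr{F}(m_s)$ is nilpotent, hence has only the real eigenvalue $0$; then apply essentially \cref{thm:TWSD-B-nonsingular-pair}/\cref{thm:better-decomposition} to $\{\matr{Y}_s,\matr{X}_s\}$ to get a bounded witnessing sequence, and use that the witnessing sequence for $\{\matr{Y}_s,\matr{X}_s\}$ is automatically one for $\{\matr{X}_s,\matr{Y}_s\}$.
\item Type 3 (the $\matr{E}(2m_s)$, $\mu_s\matr{E}+v_s\matr{H}+\Diag{\matr{E}(2m_s-2),\matr{0}}$ block): $\matr{X}_s=\matr{E}(2m_s)$ is nonsingular, so I compute $\matr{E}(2m_s)^{-1}\matr{Y}_s$ and check its real Jordan form — this is the block that encodes a genuinely complex eigenvalue pair, so generically it will \emph{not} have only real eigenvalues, and by \cref{thm:TWSD-B-nonsingular-pair} such a block is \emph{not} TWSD-B. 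This is the crux.
\item Types 4 and 5: type 5 is the zero pair $\matr{0}_{m_p\times m_p}$, trivially TWSD-B with $\matr{P}_k=\matr{I}$; type 4 has $\matr{X}_s$ nonsingular (antidiagonal-block of $\matr{E}$'s) with $\matr{X}_s^{-1}\matr{Y}_s$ nilpotent, so it is handled like type 2.
\end{itemize}

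The main obstacle — and the reason the theorem is stated only for \emph{singular} pairs — is type 3. A generic type-3 block is not TWSD-B, so the proof cannot simply say ``every canonical block is TWSD-B.'' The resolution must use singularity of the \emph{whole} pencil: I claim that if $\{\matr{A},\matr{B}\}$ is singular, then $p_3=0$ and $p_1$ contributes no blocks with $\sigma_s\matr{E}(m_s)$ nonsingular unless... — more precisely, every pencil $\alpha\matr{A}+\beta\matr{B}$ restricted to a type-3 block is nonsingular for generic $(\alpha,\beta)$ (since $\matr{E}(2m_s)$ is nonsingular and the pencil $\matr{E}(2m_s)^{-1}\matr{Y}_s - t\,\matr{I}$ has full rank off finitely many $t$), and likewise for type-1 blocks; so the existence of even one type-1 or type-3 block forces the whole pencil $\det(\alpha\matr{A}+\beta\matr{B})$ to be a nonzero polynomial in $(\alpha,\beta)$, contradicting singularity — \emph{unless} there is also a type-5 block or a singular type-2/type-4 block pulling the determinant identically to zero. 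Wait: that is not quite right, since a direct sum of a nonsingular block and a $\matr{0}$ block is singular. The correct statement is: singularity of $\{\matr{A},\matr{B}\}$ forces $p_5=1$ OR one of $p_2,p_4\ge 1$ — it does \emph{not} kill type-3 blocks. So the honest plan is: handle type-3 blocks by showing a singular pencil's type-3 blocks must \emph{also} be accompanied by a nullity-increasing block, and the congruence that zeroes the off-diagonals of a type-3 block $\matr{Y}_s$ while keeping $\|\matr{P}_k^\T\matr{Y}_s\matr{P}_k\|$ bounded must exploit \emph{cross-block} mixing with the $\matr{0}$ block — e.g. embed the type-3 block plus a $\matr{0}_{2\times 2}$ block into a larger space and use a non-block-diagonal $\matr{P}_k$ that rotates the spurious imaginary part into the null directions. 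This non-block-diagonal, cross-block construction for the type-3-plus-null case is where the real work lies, and I would develop it carefully, modelling it on \eqref{eq:R-sequence-J-limit} but in a $\matr{2m_s+2}$-dimensional ambient space; all other blocks reduce cleanly to the nonsingular-pair results already proved.
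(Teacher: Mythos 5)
Your reduction to the canonical form of \cref{lem:lancaster-canonical-general-pair} and your diagnosis of the difficulty are both correct: a type-3 block is not TWSD-B on its own (by \cref{thm:TWSD-B-nonsingular-pair}), and singularity of the pair only guarantees $p_4+p_5>0$; it does not exclude type-3 blocks. But precisely at that point the proposal stops: the ``non-block-diagonal, cross-block construction for the type-3-plus-null case'' is announced but never carried out, so the crucial case --- a singular pair that also contains type-3 (or even type-1 with awkward structure) blocks --- is left unproven, and that case is the entire content of the theorem. There are also two concrete slips along the way: for a type-4 block, $\matr{X}_s$ is \emph{not} nonsingular (its middle row and column are zero; in fact $\alpha\matr{X}_s+\beta\matr{Y}_s$ is singular for every $\alpha,\beta$, which is exactly why singularity forces $p_4+p_5>0$), so ``handled like type 2'' via \cref{thm:TWSD-B-nonsingular-pair} does not apply to it; and type-2 blocks have nonsingular pencils, so the alternative ``$p_2\ge 1$'' in your singularity dichotomy is not right either.

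The paper resolves the sticking point in a way that is simpler than the cross-block rotation you sketch: it does not attempt to asymptotically diagonalize any block. Since $p_4+p_5>0$, it uses the type-4 or type-5 block as a sink for the determinant and scales \emph{everything else to zero}. If $p_5=1$, take $\matr{P}_k=\bar{\matr{P}}\,\Diag{1/k,\ldots,1/k,k^{m-1}}$ with the large entry placed inside the zero block; then $\matr{P}_k^{\T}\matr{A}\matr{P}_k$ and $\matr{P}_k^{\T}\matr{B}\matr{P}_k$ both converge to $\matr{0}_{m\times m}$, which is diagonal and bounded, so the pair is TWSD-B with limit zero --- complex eigenvalues in type-3 blocks are irrelevant because every block is simply crushed. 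If $p_4\ge 1$ and $p_5=0$, the same idea works after first applying an $\matr{R}_k$-type scaling inside the type-4 block (preserving $\matr{X}_p$ and shrinking $\matr{Y}_p$ to $\matr{Y}_p/k$) and then blowing up the middle (null) coordinate of that block by $k^{1/2}$ while multiplying all remaining coordinates by $k^{-1/(2(m-1))}$; again both matrices tend to $\matr{0}_{m\times m}$ while $\det\matr{P}_k$ stays constant. This ``crush everything and hide the determinant in the null structure'' device is the missing construction your proposal needs; once you adopt it, the per-block diagonalization of types 1--3 (and your proposed rotation of the imaginary part into the null directions) becomes unnecessary.
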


Based on \cref{thm:TWSD-B-nonsingular-pair}, \cref{thm:TWSD-B-singular-pair} and their proofs, we now propose \cref{alg:two_matrix} to check whether a general pair of matrices \(\{ \matr{A},  \matr{B} \}\) is TWSD-B or not.
If it is TWSD-B, the congruent matrices $\matr{P}_{k}$ can be calculated as well.

\begin{algorithm}[h]
\caption{Check whether general pair of matrices \(\{ \matr{A},  \matr{B} \}\) is TWSD-B.}
\label{alg:two_matrix}
\begin{algorithmic}
\REQUIRE A general pair of matrices $\{\matr{A}, \matr{B}\} \subseteq  \textbf{symm}(\RR^{m\times m})$. 
\STATE Find a congruent matrix \( \matr{\tilde{P}} \) such that \( \matr{\tilde{P}}^{\T}\matr{A}\matr{\tilde{P}} \) and \( \matr{\tilde{P}}^{\T}\matr{B}\matr{\tilde{P}} \) are in the form of \cref{lem:lancaster-canonical-general-pair}.
\IF {\(p_4 + p_5 > 0\)}
\STATE \textbf{Return} \( \{ \matr{A}, \matr{B} \} \) is TWSD-B and the congruent matrices \( \matr{P_k} \) are as in the proof of \cref{thm:TWSD-B-singular-pair}.
\ELSIF{\(p_3 = 0\)}
\STATE For \( 1 \leq i \leq r \), define matrices \( \matr{P}_k^{(i)} = \matr{R}_k(m_i) \matr{Q}_i \) for the corresponding block as in the proof of \cref{lem:E-F-simul}.
\STATE \textbf{Return} \( \{ \matr{A}, \matr{B} \} \) is TWSD-B and the congruent matrices \( \matr{P_k} = \matr{\tilde{P}} \Diag{\matr{P}_k^{(1)} , \ldots , \matr{P}_k^{(p)}} \).
\ELSE
\STATE \textbf{Return} \( \{ \matr{A}, \matr{B} \} \) is not TWSD-B.
\ENDIF
\end{algorithmic}
\end{algorithm}

\subsection{Characterizations of TWSD-B for a set of finitely-many matrices}
In this subsection, we begin to consider the case when the set \( \mathcal{C} \) in \eqref{set_C} contains not only two matrices. We first show that, if the set \( \mathcal{C} \) has a positive definite pencil, then TWSD-B is equivalent to SD. 
\begin{theorem}\label{thm:PD-equivalent}
Let the set $\mathcal{C}$ be as in \eqref{set_C}.
If $\mathcal{C}$ is positive definite, then it is TWSD-B if and only if it is SD. 
\end{theorem}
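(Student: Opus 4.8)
The plan is to exploit the positive definite pencil to reduce $\mathcal{C}$ to a set of commuting-up-to-limit matrices and then argue that the limiting behaviour forces exact commutativity. Since $\mathcal{C}$ is positive definite, pick $\alpha_i\in\RR$ with $\matr{S}=\sum_{i=1}^L\alpha_i\matr{A}_i\succ 0$; without loss of generality $\alpha_L\neq 0$. Choose $\matr{P}_0\in\GL_m(\RR)$ with $\matr{P}_0^\top\matr{S}\matr{P}_0=\matr{I}_m$; rescaling by a scalar we may take $\det(\matr{P}_0)=1$, which by \Cref{defini:TWSD-B}(iii) does not affect the TWSD-B property. Replacing each $\matr{A}_i$ by $\matr{B}_i\eqdef\matr{P}_0^\top\matr{A}_i\matr{P}_0$, the set $\{\matr{B}_i\}$ is SD (resp.\ TWSD-B) iff $\mathcal{C}$ is, and now $\sum_i\alpha_i\matr{B}_i=\matr{I}_m$. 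In particular $\matr{B}_L=\frac{1}{\alpha_L}(\matr{I}_m-\sum_{i<L}\alpha_i\matr{B}_i)$ is a linear combination of $\matr{I}_m$ and the other $\matr{B}_i$, so SD of $\{\matr{B}_i\}$ is equivalent to SD of $\{\matr{B}_1,\dots,\matr{B}_{L-1},\matr{I}_m\}$, and by the corollary after \Cref{lem:SDO-community} (with $\matr{I}_m$ adjoined) this is equivalent to $[\matr{B}_i,\matr{B}_j]=\matr{0}$ for all $i\neq j$. So it suffices to prove: if $\{\matr{B}_i\}$ is TWSD-B and $\sum_i\alpha_i\matr{B}_i=\matr{I}_m$, then the $\matr{B}_i$ pairwise commute.

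For the reduction to commutativity I would use the bounded diagonalizing sequence together with a compactness argument. By \Cref{defini:TWSD-B}(ii) there are $\matr{P}_k\in\SL_m(\RR)$ and $\matr{D}^{(i)}\in\mathbf{D}_m$ with $\matr{P}_k^\top\matr{B}_i\matr{P}_k\to\matr{D}^{(i)}$ for every $i$. Taking the same combination, $\matr{P}_k^\top\matr{P}_k=\matr{P}_k^\top(\sum_i\alpha_i\matr{B}_i)\matr{P}_k\to\sum_i\alpha_i\matr{D}^{(i)}\eqdef\matr{D}^{(S)}$. Since each $\matr{P}_k^\top\matr{P}_k\succ 0$, the limit $\matr{D}^{(S)}\succeq 0$; moreover $\det(\matr{P}_k^\top\matr{P}_k)=\det(\matr{P}_k)^2=1$, so $\det(\matr{D}^{(S)})=1$ and hence $\matr{D}^{(S)}\succ 0$. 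Now write the polar-type factorization $\matr{P}_k=\matr{Q}_k\matr{R}_k$ where $\matr{R}_k=(\matr{P}_k^\top\matr{P}_k)^{1/2}\succ 0$ and $\matr{Q}_k=\matr{P}_k\matr{R}_k^{-1}\in\ON_m$; then $\matr{R}_k^2\to\matr{D}^{(S)}$ forces $\matr{R}_k\to(\matr{D}^{(S)})^{1/2}\eqdef\matr{R}_\infty\succ 0$, a fixed invertible diagonal matrix. By compactness of $\ON_m$, pass to a subsequence with $\matr{Q}_k\to\matr{Q}_\infty\in\ON_m$. Then $\matr{P}_k=\matr{Q}_k\matr{R}_k\to\matr{Q}_\infty\matr{R}_\infty\eqdef\matr{P}_\infty$, which is invertible (product of an orthogonal matrix and an invertible diagonal one). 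Passing to the limit in $\matr{P}_k^\top\matr{B}_i\matr{P}_k\to\matr{D}^{(i)}$ gives $\matr{P}_\infty^\top\matr{B}_i\matr{P}_\infty=\matr{D}^{(i)}$ for all $i$, so in fact $\mathcal{C}$ is already SD (with congruence $\matr{P}_0\matr{P}_\infty$, up to a determinant rescaling), and in particular the $\matr{B}_i$ commute. This actually proves the stronger statement that the TWSD-B limit is attained, from which $\set{TWSD}\text{-}\set{B}=\set{SD}$ on positive definite sets follows; combined with $\set{SD}\subseteq\set{TWSD}\text{-}\set{B}$ this is the theorem.

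The main obstacle is controlling the unbounded congruences $\matr{P}_k$: a priori $\|\matr{P}_k\|\to\infty$ is allowed, and the content of the positive definite hypothesis is precisely that $\matr{P}_k^\top\matr{P}_k$ (a specific pencil evaluated at $\matr{P}_k$) stays convergent, which pins down the scale. The delicate point is to be sure that $\matr{D}^{(S)}$ is genuinely positive definite, not merely semidefinite — this is where $\det(\matr{P}_k)$ being constant and nonzero is essential, and it is exactly the hypothesis that survives under the $\SL_m(\RR)$ (or constant-determinant) normalization; if $\matr{D}^{(S)}$ were singular the polar argument would collapse and one could genuinely have TWSD-B without SD, consistent with the singular-pencil examples elsewhere in the paper. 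A secondary technical care is the subsequence extraction: since we only need to produce one congruence witnessing SD, working along a subsequence is harmless. I would also double-check that the scalar rescaling needed to put $\matr{P}_0$ and the final congruence into $\SL_m(\RR)$ is legitimate, which is guaranteed by \Cref{defini:TWSD-B}(iii) and the standing remark that determinant-$c$ normalizations leave the notions unchanged.
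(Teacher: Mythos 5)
Your proposal is correct and follows essentially the same route as the paper: after congruencing the positive definite pencil to (a multiple of) the identity, the convergence of $\matr{P}_k^{\T}\matr{P}_k$ to a diagonal matrix of constant nonzero determinant forces the congruence sequence to have a convergent subsequence with nonsingular limit, and passing to the limit yields exact simultaneous diagonalization. Your polar-decomposition step and the preliminary reduction to commutativity are harmless but unnecessary refinements — the paper simply notes that $\|\matr{P}_k\|^2=\operatorname{tr}(\matr{P}_k^{\T}\matr{P}_k)$ is bounded, extracts a convergent subsequence, and uses continuity of the determinant to see the limit lies in $\SL_m(\RR)$.
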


\begin{proof}
We only need to prove that, if $\mathcal{C}$ is TWSD-B, then it is SD.
Without loss of generality, we let \( \matr{A}_1 \) be positive definite.
Then there exists a matrix \( \matr{P}\in\GL_{m}(\RR)\), such that \( \matr{P}^{\T}\matr{A}_1\matr{P} = \matr{I}_{m} \).
Let \( \matr{\bar{A}}_{i} = \matr{P}^{\T}\matr{A}_{i}\matr{P}\) for \(1 \leq i \leq L \). It is clear that the set  \(\{\matr{\bar{A}}_i\}_{1\leq i\leq L}\) is also TWSD-B.
Therefore, there exists a sequence \( \{\matr{V}_k\}_{k \geq 1}\subseteq\SL_{m}(\RR)\) such that \( \matr{V}_k^{\T}\matr{\bar{A}}_{i}\matr{V}_k\) converges to a diagonal matrix for all \( 1 \leq i \leq L \).
In particular, since \(\matr{\bar{A}}_1 = \matr{I}_{m}\), we see that \( \matr{V}_k^{\T}\matr{V}_k = \matr{V}_{k}^{\T} \matr{\bar{A}}_{1} \matr{V}_k \) converges to a diagonal matrix, and thus is bounded.
It follows that $\|\matr{V}_k\|^2=\textrm{tr}(\matr{V}_k^{\T}\matr{V}_k)$ is also bounded, and thus the sequence $\{\matr{V}_k\}_{k\geq 1}$ has a convergent subsequence.
Denote the convergent subsequence by $\{\matr{V}_{k_p}\}_{p\geq 1}$ and its limit by \( \matr{V} \). Then $\det (\matr{V}) =1$, since the determinant is a continuous function of matrix.
Note that \( \matr{V}^{\T} \matr{\bar{A}}_{i} \matr{V} = \lim_{p \to \infty} \matr{V}_{k_p}^{\T} \matr{\bar{A}}_{i} \matr{V}_{k_p} \) is diagonal for $1\leq i\leq L$.
So the set \(\{\matr{\bar{A}}_i\}_{1\leq i\leq L}\) is SD.
Therefore, the set \( \mathcal{C} \) is also SD, and the proof is complete. 
\end{proof}

Now we give a necessary condition for TWSD-B. 
The following theorem can be regarded as an extension of the necessary condition in \cref{lem:SD-multiple-nonsingular}.

\begin{theorem}\label{thm:TWSD-B-multiple-necessary}
Let the set $\mathcal{C}$ be as in \eqref{set_C},
and \(\matr{S} \in \operatorname{span}(\mathcal{C}) \) be nonsingular.  
If \(\mathcal{C}\) is TWSD-B, then \( \matr{S}^{-1}\matr{A}_{i} \) has only real eigenvalues for all \( 1 \leq i \leq L \), and $[\matr{A}_i, \matr{A}_j]_{\matr{S}}$ is nilpotent for all \( 1\leq i\neq j\leq L \).
\end{theorem}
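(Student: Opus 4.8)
The plan is to reduce the multi-matrix statement to the pairwise case handled by Theorem \ref{thm:TWSD-B-nonsingular-pair}. Since $\mathcal{C}$ is TWSD-B, there exist $\matr{P}_k \in \SL_m(\RR)$ with $\matr{P}_k^{\T}\matr{A}_i\matr{P}_k \to \matr{D}^{(i)} \in \mathbf{D}_m$ for all $1 \leq i \leq L$. Writing $\matr{S} = \sum_i \alpha_i \matr{A}_i$, linearity gives $\matr{P}_k^{\T}\matr{S}\matr{P}_k \to \matr{D} := \sum_i \alpha_i \matr{D}^{(i)} \in \mathbf{D}_m$; moreover $\matr{D}$ is nonsingular because $\det(\matr{P}_k^{\T}\matr{S}\matr{P}_k) = \det(\matr{S})$ is a fixed nonzero constant and determinant is continuous, so $\det(\matr{D}) = \det(\matr{S}) \neq 0$. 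Thus each pair $\{\matr{S}, \matr{A}_i\}$ with $\matr{S}$ nonsingular is itself TWSD-B (using the \emph{same} sequence $\matr{P}_k$).

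First I would apply Theorem \ref{thm:TWSD-B-nonsingular-pair} to the pair $\{\matr{S}, \matr{A}_i\}$: since it is TWSD-B and $\matr{S}$ is nonsingular, $\matr{S}^{-1}\matr{A}_i$ has only real eigenvalues. That disposes of the first conclusion. For the nilpotency of $[\matr{A}_i, \matr{A}_j]_{\matr{S}}$, the key observation is that $[\matr{A}_i,\matr{A}_j]_{\matr{S}}$ is a similarity-invariant object under the congruences in question: a direct computation shows that for $\matr{Q} \in \GL_m(\RR)$ and $\bar{\matr{X}} := \matr{Q}^{\T}\matr{X}\matr{Q}$ one has $[\bar{\matr{A}}_i, \bar{\matr{A}}_j]_{\bar{\matr{S}}} = \matr{Q}^{-1}[\matr{A}_i,\matr{A}_j]_{\matr{S}}\matr{Q}$, so $[\matr{A}_i,\matr{A}_j]_{\matr{S}}$ and $[\bar{\matr{A}}_i,\bar{\matr{A}}_j]_{\bar{\matr{S}}}$ have the same characteristic polynomial; in particular nilpotency is preserved. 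Hence, setting $\bar{\matr{A}}_i^{(k)} := \matr{P}_k^{\T}\matr{A}_i\matr{P}_k$ and $\bar{\matr{S}}^{(k)} := \matr{P}_k^{\T}\matr{S}\matr{P}_k$, it suffices to show $[\bar{\matr{A}}_i^{(k)}, \bar{\matr{A}}_j^{(k)}]_{\bar{\matr{S}}^{(k)}}$ is nilpotent for each $k$, or rather that its characteristic polynomial equals that of a nilpotent matrix.

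The natural route is a limiting argument: as $k \to \infty$ we have $\bar{\matr{A}}_i^{(k)} \to \matr{D}^{(i)}$, $\bar{\matr{S}}^{(k)} \to \matr{D}$, and since $\matr{D}$ is invertible, $(\bar{\matr{S}}^{(k)})^{-1} \to \matr{D}^{-1}$ (matrix inversion is continuous on $\GL_m$). Therefore $[\bar{\matr{A}}_i^{(k)}, \bar{\matr{A}}_j^{(k)}]_{\bar{\matr{S}}^{(k)}} \to \matr{D}^{-1}\matr{D}^{(i)}\matr{D}^{-1}\matr{D}^{(j)} - \matr{D}^{-1}\matr{D}^{(j)}\matr{D}^{-1}\matr{D}^{(i)} = \matr{0}$, because products of diagonal matrices commute. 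So $[\matr{A}_i,\matr{A}_j]_{\matr{S}}$ is similar (via $\matr{P}_k$) to a matrix converging to $\matr{0}$; since all these similar matrices share the fixed characteristic polynomial $\chi(t)$ of $[\matr{A}_i,\matr{A}_j]_{\matr{S}}$, and the coefficients of the characteristic polynomial are continuous, we get $\chi(t) = t^m$, i.e. $[\matr{A}_i,\matr{A}_j]_{\matr{S}}$ is nilpotent.

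The main obstacle I anticipate is purely bookkeeping: verifying the congruence-covariance identity for the $\matr{S}$-commutator cleanly (it needs $\bar{\matr{S}}^{-1} = \matr{Q}^{-1}\matr{S}^{-1}\matr{Q}^{-\T}$, so the $\matr{Q}^{-\T}$ and $\matr{Q}^{\T}$ factors telescope correctly through the product), and making sure the invertibility of the limit $\matr{D}$ — which is what licenses passing $(\bar{\matr{S}}^{(k)})^{-1}$ to the limit — is nailed down from $\det(\matr{S}) \neq 0$. Neither is deep, but both must be stated carefully; everything else is continuity plus the already-proved Theorem \ref{thm:TWSD-B-nonsingular-pair}.
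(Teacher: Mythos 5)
Your proposal is correct and follows essentially the same route as the paper's proof: the paper also uses the same sequence $\matr{P}_k$, notes via \cref{lem:bounded-inverse}(ii) that $(\matr{P}_k^{\T}\matr{S}\matr{P}_k)^{-1}$ converges to a diagonal matrix so that $\matr{P}_k^{-1}\matr{S}^{-1}\matr{A}_i\matr{P}_k$ converges to a diagonal matrix (giving real eigenvalues via \cref{lem:weak-Jordan-form}, which is exactly the content of \cref{thm:TWSD-B-nonsingular-pair} you invoke), and then observes that $\matr{P}_k^{-1}[\matr{A}_i,\matr{A}_j]_{\matr{S}}\matr{P}_k\to\matr{0}$ because the diagonal limits commute, concluding nilpotency from the invariance and continuity of the characteristic polynomial. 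Your packaging of the first claim through the pair theorem and your explicit congruence-covariance identity are only cosmetic differences.
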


\begin{proof}
Since \(
\mathcal{C}\) is TWSD-B, there exists a sequence \( \{\matr{P}_k\}_{k \geq 1}\subseteq\SL_{m}(\RR)\) such that \( \matr{P}_k^{\T}\matr{S}\matr{P}_k\) and \(\matr{P}_k^{\T}\matr{A}_{i}\matr{P}_k  \) all converge to diagonal matrices for all \( 1 \leq i \leq L \). Then \((\matr{P}_k^{\T}\matr{S}\matr{P}_{k})^{-1}\) converges to a diagonal matrix by \cref{lem:bounded-inverse}(ii), and thus \( \matr{P}_k^{-1}\matr{S}^{-1}\matr{A}_{i}\matr{P}_k = (\matr{P}_k^{\T}\matr{S}\matr{P}_{k})^{-1}(\matr{P}_k^{\T}\matr{A}_i\matr{P}_k)\) also converges to a diagonal matrix. It follows by \cref{lem:weak-Jordan-form} that \( \matr{S}^{-1}\matr{A}_{i} \) has only real eigenvalues for all \( 1 \leq i \leq L \).

Note that \( \matr{P}_k^{-1}\matr{S}^{-1}\matr{A}_i\matr{P}_k\) and \(\matr{P}_k^{-1}\matr{S}^{-1}\matr{A}_j\matr{P}_k \) both converge to diagonal matrices for \( 1\leq i\neq j\leq L \).
We have that 
\[
\lim_{k \to \infty}(\matr{P}_k^{-1}\matr{S}^{-1}\matr{A}_i\matr{P}_k) (\matr{P}_k^{-1}\matr{S}^{-1}\matr{A}_j\matr{P}_k) =  \lim_{k \to \infty}(\matr{P}_k^{-1}\matr{S}^{-1}\matr{A}_j\matr{P}_k) (\matr{P}_k^{-1}\matr{S}^{-1}\matr{A}_i\matr{P}_k),
\]
which implies that
\[
\lim_{k \to \infty}\matr{P}_k^{-1}(\matr{S}^{-1}\matr{A}_i\matr{S}^{-1}\matr{A}_j - \matr{S}^{-1}\matr{A}_j\matr{S}^{-1}\matr{A}_i)\matr{P}_k = \lim_{k \to \infty} \matr{P}_k^{-1} [\matr{A}_i, \matr{A}_j]_{\matr{S}} \matr{P} = 0.
\]
Therefore, the eigenvalues of $[\matr{A}_i, \matr{A}_j]_{\matr{S}}$ are all \( 0 \) by the proof of \cref{lem:weak-Jordan-form}. 
The proof is complete.
\end{proof}

Now we give an example to show that the condition in \cref{thm:TWSD-B-multiple-necessary} is not sufficient.
\begin{example}\label{exa:TWSD-B-multiple-necessary-not-sufficient}
Consider \( \mathcal{C} = \{ \matr{A}_1, \matr{A}_2, \matr{A}_3 \} \) with
\[
\matr{A}_1 = \matr{I}_2, \quad \matr{A}_2 = \begin{bmatrix}
  1 & 1\\
  0 & 1
\end{bmatrix}, \quad
\matr{A}_3 = \begin{bmatrix}
  0 & 0\\
  0 & 1
\end{bmatrix}.
\]
Let \(\matr{S}=\matr{A}_1\) be nonsingular. 
Then
\[ [ \matr{A}_2, \matr{A}_3 ]_{\matr{S}}= \matr{A}_2 \matr{A}_3 - \matr{A}_3 \matr{A}_2 =
\begin{bmatrix}
  0 & 1\\
  0 & 1
\end{bmatrix}-
\begin{bmatrix}
  0 & 0\\
  0 & 1
\end{bmatrix}=\begin{bmatrix}
  0 & 1\\
  0 & 0
\end{bmatrix}
\]
is nilpotent. 
However, since \( \matr{A}_2 \) and \( \matr{A}_3 \) don't commute with each other, \( \mathcal{C} \) is not SD by \cref{lem:SDO-community}. 
Thus, \( \mathcal{C} \) is not TWSD-B by \cref{thm:PD-equivalent}.
\end{example}

Finally, similar as the sufficient condition in \cref{lem:SD-multiple-nonsingular}, we close this subsection by giving two sufficient conditions for TWSD-B.
The proofs are both postponed to \Cref{proofs-1}. 

\begin{theorem}\label{thm:TWSD-B-multiple-sufficient-condition}
Let the set $\mathcal{C}$ be as in \eqref{set_C},
and \(\matr{S} \in \operatorname{span}(\mathcal{C}) \) be nonsingular. 
If \( \matr{S}^{-1} \matr{A}_{i}\) has only real eigenvalues for all $1 \leq i \leq L$,  \( [\matr{A}_i, \matr{A}_j]_{\matr{S}} = \matr{0} \) for all \( 1\leq i\neq j\leq L \) and there exists \( i \) such that there does not exist two Jordan blocks in the Jordan normal form of \( \matr{S}^{-1} \matr{A}_i \) with the same eigenvalue and size, then \( \mathcal{C} \) is TWSD-B.
\end{theorem}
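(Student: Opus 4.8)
The plan is to single out the index $i_{0}$ furnished by the hypothesis, bring the pair $\{\matr{S},\matr{A}_{i_{0}}\}$ into the canonical form of \cref{lem:Uhlig-canonical-nonsingular}, and then run one block-wise sequence of $\matr{R}_{k}$-scalings that diagonalizes every $\matr{A}_{\ell}$ in the limit. Fix $i_{0}$ so that no two Jordan blocks of $\matr{S}^{-1}\matr{A}_{i_{0}}$ share both eigenvalue and size. Since $\matr{S}$ is nonsingular and $\matr{S}^{-1}\matr{A}_{i_{0}}$ has only real eigenvalues, \cref{lem:Uhlig-canonical-nonsingular} yields $\tilde{\matr{P}}\in\GL_{m}(\RR)$ with $\tilde{\matr{P}}^{\T}\matr{S}\tilde{\matr{P}}=\Diag{\sigma_{1}\matr{E}(m_{1}),\ldots,\sigma_{p}\matr{E}(m_{p})}$ and $\tilde{\matr{P}}^{\T}\matr{A}_{i_{0}}\tilde{\matr{P}}=\Diag{\sigma_{1}\matr{E}(m_{1})\matr{J}(\lambda_{1},m_{1}),\ldots,\sigma_{p}\matr{E}(m_{p})\matr{J}(\lambda_{p},m_{p})}$, where all $\lambda_{s}\in\RR$ and, by the choice of $i_{0}$, the pairs $(\lambda_{s},m_{s})$ are pairwise distinct. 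Both $[\matr{A}_{i_{0}},\matr{A}_{j}]_{\matr{S}}=\matr{0}$ and being TWSD-B are unaffected by replacing every $\matr{A}_{\ell}$ by $\tilde{\matr{P}}^{\T}\matr{A}_{\ell}\tilde{\matr{P}}$ (the transforming matrix has a fixed nonzero determinant; see \cref{defini:TWSD-B}(iii)), so I may assume $\matr{S}$ and $\matr{A}_{i_{0}}$ are already in the displayed form, and then $\matr{N}:=\matr{S}^{-1}\matr{A}_{i_{0}}=\Diag{\matr{J}(\lambda_{1},m_{1}),\ldots,\matr{J}(\lambda_{p},m_{p})}$.

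Next I would determine the block structure of $\matr{A}_{j}$ for $j\neq i_{0}$. Since the $\matr{S}$-commutator is congruence invariant, $\matr{M}_{j}:=\matr{S}^{-1}\matr{A}_{j}$ commutes with $\matr{N}$, so by the classical description of the commutant of a Jordan matrix: the $(s,s')$-block of $\matr{M}_{j}$ is $\matr{0}$ whenever $\lambda_{s}\neq\lambda_{s'}$; each diagonal block $(\matr{M}_{j})_{ss}$ is a polynomial in $\matr{J}(0,m_{s})$; and for $s\neq s'$ with $\lambda_{s}=\lambda_{s'}$ --- hence $m_{s}\neq m_{s'}$ by the choice of $i_{0}$ --- the block $(\matr{M}_{j})_{ss'}$ intertwines $\matr{J}(0,m_{s})$ and $\matr{J}(0,m_{s'})$ and is therefore a Toeplitz matrix supported on a ``corner'' whose location and width are governed by $|m_{s}-m_{s'}|$. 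As $\matr{S}=\Diag{\sigma_{s}\matr{E}(m_{s})}$, the $(s,s')$-block of $\matr{A}_{j}=\matr{S}\matr{M}_{j}$ is $\sigma_{s}\matr{E}(m_{s})(\matr{M}_{j})_{ss'}$, with the same support pattern.

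Finally I would exhibit the sequence. Choose $\matr{Q}_{s}\in\SON_{m_{s}}$ with $\matr{Q}_{s}^{\T}\matr{E}(m_{s})\matr{Q}_{s}=\matr{G}(m_{s})$, set $\matr{P}_{k}^{(s)}=\matr{R}_{k}(m_{s})\matr{Q}_{s}$ and $\matr{P}_{k}=\Diag{\matr{P}_{k}^{(1)},\ldots,\matr{P}_{k}^{(p)}}$; then $\det\matr{P}_{k}=1$, since each $\det\matr{R}_{k}(m_{s})=1$ (its diagonal exponents sum to $0$) and $\matr{Q}_{s}\in\SON_{m_{s}}$. Using the two identities $\matr{R}_{k}(m)^{\T}\matr{E}(m)\matr{R}_{k}(m)=\matr{E}(m)$ and $\matr{R}_{k}(m)^{-1}\matr{J}(0,m)\matr{R}_{k}(m)=k^{-1}\matr{J}(0,m)$, the $(s,s')$-block of $\matr{P}_{k}^{\T}\matr{A}_{\ell}\matr{P}_{k}$ equals $\sigma_{s}\matr{Q}_{s}^{\T}\matr{E}(m_{s})\bigl(\matr{R}_{k}(m_{s})^{-1}(\matr{M}_{\ell})_{ss'}\matr{R}_{k}(m_{s'})\bigr)\matr{Q}_{s'}$ for all $\ell$. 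For $s=s'$ it converges to $\alpha_{\ell,s}\sigma_{s}\matr{G}(m_{s})$, a diagonal matrix, where $\alpha_{\ell,s}$ is the constant term of the polynomial $(\matr{M}_{\ell})_{ss}$ (only that term survives the scaling). For $s\neq s'$ with $\lambda_{s}=\lambda_{s'}$, each nonzero entry of $\matr{R}_{k}(m_{s})^{-1}(\matr{M}_{\ell})_{ss'}\matr{R}_{k}(m_{s'})$ acquires a power $k^{e}$ with $e\leq -|m_{s}-m_{s'}|/2<0$ (this is where $m_{s}\neq m_{s'}$ is essential), so the block tends to $\matr{0}$; for $\lambda_{s}\neq\lambda_{s'}$ it is $\matr{0}$. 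Hence $\matr{P}_{k}^{\T}\matr{A}_{\ell}\matr{P}_{k}\to\Diag{\alpha_{\ell,1}\sigma_{1}\matr{G}(m_{1}),\ldots,\alpha_{\ell,p}\sigma_{p}\matr{G}(m_{p})}$ for every $1\leq\ell\leq L$, and $\mathcal{C}$ is TWSD-B by \cref{defini:TWSD-B}(ii),(iii).

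The main obstacle is the interaction of the last two steps: correctly pinning down the Toeplitz-corner shape of the off-diagonal commutant blocks between two Jordan blocks with the same eigenvalue but different sizes, and then checking that conjugation by $\matr{R}_{k}(m_{s})^{-1}(\cdot)\matr{R}_{k}(m_{s'})$ sends exactly those blocks to $\matr{0}$ while each diagonal block retains a diagonal limit. This is the one place where the hypothesis ``no two Jordan blocks of $\matr{S}^{-1}\matr{A}_{i_{0}}$ with the same eigenvalue and size'' is used: if such a pair of blocks had equal size, the associated corner block would contain a nonzero multiple of the identity, whose scaling exponent is $0$, so the off-diagonal term would survive in the limit and the construction would break.
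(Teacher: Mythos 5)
Your proposal is correct and follows essentially the same route as the paper's proof: bring $\{\matr{S},\matr{A}_{i_0}\}$ to the canonical form of \cref{lem:Uhlig-canonical-nonsingular}, use the commutant structure of a Jordan matrix (the paper's \cref{lem:matrices-commute-with-jordan-norma-form}, your ``Toeplitz corner'' blocks) to constrain every $\matr{S}^{-1}\matr{A}_\ell$, and then apply block-diagonal scalings $\matr{R}_k(m_s)\matr{Q}_s$ so that same-eigenvalue blocks of different sizes vanish in the limit while each diagonal block tends to a multiple of $\matr{G}(m_s)$. Your entrywise exponent bound $e\leq -|m_s-m_{s'}|/2$ is exactly the content of the paper's \cref{lem:R-lower-stripped-R} (the paper merely organizes the computation as two cases and scales the flipped blocks $\matr{E}(m_s)\matr{\bar X}$ via $\matr{R}_k^{\T}(\cdot)\matr{R}_k$ instead of using $\matr{R}_k\matr{E}=\matr{E}\matr{R}_k^{-1}$), so no substantive difference remains.
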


\begin{theorem}\label{thm:TWSD-B-three-multiple-sufficient-condition}
Let $\mathcal{C} = \{ \matr{S}, \matr{A}_1, \matr{A}_2 \} $ and \(\matr{S} \) be nonsingular.
If \( \matr{S}^{-1} \matr{A}_{i}\) has only real eigenvalues for all $1 \leq i \leq 2$, and \( [\matr{A}_1, \matr{A}_2]_{\matr{S}} = \matr{0} \), then \( \mathcal{C} \) is TWSD-B.
\end{theorem}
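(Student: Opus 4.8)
The plan is to reduce, via the canonical form of the nonsingular pair $\{\matr{S},\matr{A}_1\}$, to a block normal form and then run a two–stage congruence–and–scaling argument: a first stage that diagonalizes $\matr{S}$ and $\matr{A}_1$ in the limit and brings $\matr{A}_2$ to a block form, and a second stage that cleans up the remaining blocks. Having exactly two matrices $\matr{A}_1,\matr{A}_2$ besides $\matr{S}$ is precisely what makes the second stage a \emph{pair} problem, so that it can be settled by \cref{thm:TWSD-B-nonsingular-pair} without the genericity assumption of \cref{thm:TWSD-B-multiple-sufficient-condition}. Concretely, since $\matr{S}^{-1}\matr{A}_1$ has only real eigenvalues, \cref{lem:Uhlig-canonical-nonsingular} provides $\matr{\tilde{P}}\in\GL_m(\RR)$ with $\matr{\tilde{S}}\eqdef\matr{\tilde{P}}^{\T}\matr{S}\matr{\tilde{P}}=\Diag{\sigma_1\matr{E}(m_1),\dots,\sigma_p\matr{E}(m_p)}$ and $\matr{\tilde{P}}^{\T}\matr{A}_1\matr{\tilde{P}}=\Diag{\sigma_1\matr{E}(m_1)\matr{J}(\lambda_1,m_1),\dots,\sigma_p\matr{E}(m_p)\matr{J}(\lambda_p,m_p)}$, all $\lambda_s\in\RR$. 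Because the $\matr{S}$-commutator transforms by conjugation under congruence, $[\matr{\tilde{A}}_1,\matr{\tilde{A}}_2]_{\matr{\tilde{S}}}=\matr{0}$ with $\matr{\tilde{A}}_2\eqdef\matr{\tilde{P}}^{\T}\matr{A}_2\matr{\tilde{P}}$, so $M\eqdef\matr{\tilde{S}}^{-1}\matr{\tilde{A}}_2$ commutes with the Jordan matrix $J\eqdef\matr{\tilde{S}}^{-1}\matr{\tilde{A}}_1=\Diag{\matr{J}(\lambda_1,m_1),\dots,\matr{J}(\lambda_p,m_p)}$. For the first stage I would take $\matr{P}_k=\matr{\tilde{P}}\,\Diag{\matr{R}_k(m_1)\matr{Q}_1,\dots,\matr{R}_k(m_p)\matr{Q}_p}$, where $\matr{Q}_s\in\SON_{m_s}$ satisfies $\matr{Q}_s^{\T}\matr{E}(m_s)\matr{Q}_s=\matr{G}(m_s)$ and $\matr{Q}_s=\matr{Q}_t$ whenever $m_s=m_t$; the identities $\matr{R}_k(m)^{\T}\matr{E}(m)\matr{R}_k(m)=\matr{E}(m)$ and $\matr{R}_k(m)^{-1}\matr{J}(\lambda,m)\matr{R}_k(m)\to\lambda\matr{I}_m$ then yield that $\matr{S}_\infty\eqdef\matr{P}_k^{\T}\matr{S}\matr{P}_k=\Diag{\sigma_1\matr{G}(m_1),\dots,\sigma_p\matr{G}(m_p)}$ is constant and that $\matr{P}_k^{\T}\matr{A}_1\matr{P}_k$ converges to the diagonal matrix $\matr{A}_1^\infty\eqdef\Diag{\lambda_1\sigma_1\matr{G}(m_1),\dots,\lambda_p\sigma_p\matr{G}(m_p)}$, exactly as in \cref{thm:better-decomposition}.

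The heart of the proof is to analyze $\matr{P}_k^{\T}\matr{A}_2\matr{P}_k$. Its $(s,t)$-block equals $\matr{Q}_s^{\T}\,\sigma_s\matr{E}(m_s)\bigl(\matr{R}_k(m_s)^{-1}M_{st}\matr{R}_k(m_t)\bigr)\matr{Q}_t$, so everything hinges on $\matr{R}_k(m_s)^{-1}M_{st}\matr{R}_k(m_t)$. Since $M$ commutes with $J$, one has $M_{st}=\matr{0}$ unless $\lambda_s=\lambda_t$, and when $\lambda_s=\lambda_t$ the block $M_{st}$ is the standard Toeplitz–triangular intertwiner of the nilpotent shifts. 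Reading off the effect of $\matr{R}_k$ on the $(i,j)$-entry (multiplication by $k^{\delta^{(m_t)}_j-\delta^{(m_s)}_i}$) and using the triangular support of $M_{st}$, I would check that all exponents attached to nonzero entries are nonpositive, and strictly negative unless $m_s=m_t$ and $i=j$; hence $\matr{R}_k(m_s)^{-1}M_{st}\matr{R}_k(m_t)\to\matr{0}$ if $m_s\neq m_t$ and $\to c^{(s,t)}\matr{I}_{m_s}$ (the leading Toeplitz coefficient) if $m_s=m_t$. Therefore $\matr{P}_k^{\T}\matr{A}_2\matr{P}_k$ converges to a symmetric matrix $\matr{B}_\infty$ that is block-diagonal along the partition of the Jordan blocks by (eigenvalue, size); on the group of blocks of a fixed eigenvalue $\mu$ and size $m$, the limits of $\matr{S}$, $\matr{A}_1$, $\matr{A}_2$ restrict to $\matr{S}_0\eqdef\Diag{\sigma_s\matr{G}(m)}$ (invertible and diagonal), to $\mu\matr{S}_0$, and to some symmetric matrix $\matr{B}$, respectively.

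It then suffices to diagonalize each residual pair $\{\matr{S}_0,\matr{B}\}$ in the limit. Since $\matr{P}_k^{-1}\matr{S}^{-1}\matr{A}_2\matr{P}_k=(\matr{P}_k^{\T}\matr{S}\matr{P}_k)^{-1}(\matr{P}_k^{\T}\matr{A}_2\matr{P}_k)\to\matr{S}_\infty^{-1}\matr{B}_\infty$ and every matrix on the left is similar to $\matr{S}^{-1}\matr{A}_2$, hence has only real eigenvalues, a limit-of-real-rooted-polynomials argument (the roots of the characteristic polynomials are bounded and real, so any subsequential limit of root tuples is real) shows that $\matr{S}_\infty^{-1}\matr{B}_\infty$, and therefore its diagonal block $\matr{S}_0^{-1}\matr{B}$, has only real eigenvalues; by \cref{thm:TWSD-B-nonsingular-pair} the pair $\{\matr{S}_0,\matr{B}\}$ is TWSD-B, and the same sequence diagonalizes $\mu\matr{S}_0$ in the limit as well. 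Assembling the per-group sequences gives $\{\matr{W}_l\}\subseteq\GL_m(\RR)$ of constant determinant such that $\matr{W}_l^{\T}\matr{S}_\infty\matr{W}_l$, $\matr{W}_l^{\T}\matr{A}_1^\infty\matr{W}_l$ and $\matr{W}_l^{\T}\matr{B}_\infty\matr{W}_l$ all converge to diagonal matrices, and I would finish by a standard diagonal argument: since each $\matr{W}_l$ has (finite, in fact polynomially bounded) norm for fixed $l$ and since $\matr{P}_k^{\T}\matr{S}\matr{P}_k,\matr{P}_k^{\T}\matr{A}_1\matr{P}_k,\matr{P}_k^{\T}\matr{A}_2\matr{P}_k$ converge, one can choose $l=l(k)\to\infty$ slowly enough that all three of $(\matr{P}_k\matr{W}_{l(k)})^{\T}\matr{S}(\matr{P}_k\matr{W}_{l(k)})$, $(\matr{P}_k\matr{W}_{l(k)})^{\T}\matr{A}_1(\matr{P}_k\matr{W}_{l(k)})$ and $(\matr{P}_k\matr{W}_{l(k)})^{\T}\matr{A}_2(\matr{P}_k\matr{W}_{l(k)})$ converge to diagonal matrices; as this sequence has constant determinant, \cref{defini:TWSD-B}(iii) gives that $\mathcal{C}$ is TWSD-B.

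The main obstacle is the analysis in the second paragraph: pinning down the block structure of matrices commuting with a real Jordan form and tracking it through the diagonal scaling $\matr{R}_k$, in particular recognizing that the only off-diagonal terms of $\matr{P}_k^{\T}\matr{A}_2\matr{P}_k$ that survive in the limit are the cross-blocks between Jordan blocks of equal eigenvalue and equal size, that these cannot be eliminated by any further orthogonal change of variables, and that they instead assemble into a genuine symmetric pair $\{\matr{S}_0,\matr{B}\}$ to which \cref{thm:TWSD-B-nonsingular-pair} applies — which is exactly the place where having only two matrices beyond $\matr{S}$ enters.
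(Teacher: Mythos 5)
Your proposal is correct and takes essentially the same route as the paper's proof: reduce $\{\matr{S},\matr{A}_1\}$ to the Uhlig canonical form, apply the $\matr{R}_k$-scaling so that only the couplings between Jordan blocks of equal eigenvalue and equal size survive, and then settle the resulting residual nonsingular symmetric pair (with real eigenvalues) by \cref{thm:TWSD-B-nonsingular-pair} before combining the two sequences. The only differences are bookkeeping: you realize the residual pair as an explicit limit $\{\matr{S}_0,\matr{B}\}$, get realness of its spectrum from characteristic-polynomial continuity, and interleave the two stages by a slowly-growing diagonal argument, whereas the paper compresses to the stripe-coefficient pair $\{\matr{\Sigma},\matr{\tilde{X}}\}$ via \cref{lem-tri-st-eigen} and controls the rates explicitly by rescaling the exponents in $\matr{R}_k$.
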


\section{Characterizations of TWSD}\label{sec:TWSD}

In this section, for the notion TWSD proposed in \Cref{sec:trans_based_SD}, we will prove several sufficient conditions for TWSD under different assumptions, as well as its relationship with TWSD-B.

\subsection{Sufficient conditions of TWSD}

It is clear that $\set{TWSD}\textrm{-}\set{B}\subseteq\set{TWSD}$ by  \cref{def:twsd}.
We now first present an example to show that they are not equivalent in general.
\begin{example}[A set which is TWSD, but not TWSD-B]\label{exa:TWSD-not-TWSD-B}
Let
\[
\matr{A} = \begin{bmatrix}
1 & 0 & 0 \\
0 & 0 & 1 \\
0 & 1 & 0
\end{bmatrix},\
\matr{B} = \begin{bmatrix}
1 & 0 & 0 \\
0 & 1 & 0 \\
0 & 0 & -1
\end{bmatrix}.
\]
Then \( \matr{A}^{-1}\matr{B} = \begin{bmatrix}
1 & 0 & 0 \\
0 & 0 & -1 \\
0 & 1 & 0
\end{bmatrix}
\), whose eigenvalues are \( 1, \pm i\). 
Therefore, the set \(\{\matr{A}, \matr{B}\}\) is not TWSD-B by \cref{thm:TWSD-B-nonsingular-pair}.
However, if \( \matr{P}_k = \Diag{k^2, 1/k, 1/k}  \) for $k\geq 1$, we have
\[
\matr{P}_k^{\T}\matr{A}\matr{P}_k = \begin{bmatrix}
k^4& 0 & 0 \\
0 & 0 &1/k^{2}\\
0 &1/k^{2}& 0
\end{bmatrix},\
\matr{P}_k^{\T}\matr{B} \matr{P}_{k} = \begin{bmatrix}
k^4& 0 & 0 \\
0 &1/k^{2}& 0 \\
0 & 0 &-1/k^{2}
\end{bmatrix}.
\]
It follows that the set \(\{\matr{A}, \matr{B}\}\) is TWSD.
\end{example}

It is well-known that a sufficient condition for two symmetric matrices to be SD is that they have a positive definite pencil.
This is also a corollary of \cref{lem:nonsingular-pair-SD}. For TWSD, we now have a similar result as shown below.
The proof is postponed to 
\Cref{proofs-2}. 

\begin{theorem}\label{thm:PSD-sufficient}
Let $\matr{A}, \matr{B}\in\textbf{symm}(\RR^{m\times m})$. If there exist $\alpha,\beta\in\RR$ (not both zero) such that
\begin{equation}\label{eq:positive_semid_TWSD}
\alpha \matr{A} + \beta \matr{B} \succeq 0, 
\end{equation}
then the set $\{\matr{A}, \matr{B}\}$ is TWSD.
\end{theorem}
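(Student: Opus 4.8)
My plan is to deduce the statement from the pairwise characterizations of TWSD-B already obtained in \cref{thm:TWSD-B-nonsingular-pair} and \cref{thm:TWSD-B-singular-pair}, together with the elementary observation that whether a pair is TWSD-B depends only on the linear span $\operatorname{span}\{\matr{A},\matr{B}\}$ and not on the chosen basis: if $\{\matr{P}_k\}\subseteq\SL_m(\RR)$ is such that $\matr{P}_k^{\T}\matr{X}\matr{P}_k$ and $\matr{P}_k^{\T}\matr{Y}\matr{P}_k$ both converge to diagonal matrices, then for every $\matr{Z}\in\operatorname{span}\{\matr{X},\matr{Y}\}$ the sequence $\matr{P}_k^{\T}\matr{Z}\matr{P}_k$ converges to a diagonal matrix, being a linear combination of convergent diagonal sequences. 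Thus it will suffice to produce one basis of $\operatorname{span}\{\matr{A},\matr{B}\}$ that is TWSD-B. I will write $\matr{S}\eqdef\alpha\matr{A}+\beta\matr{B}\succeq\matr{0}$ and distinguish cases according to the rank of $\matr{S}$ and the (non)singularity of the pair.

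First I would dispose of the trivial case: if $\matr{A}$ and $\matr{B}$ are linearly dependent, orthogonally diagonalizing the nonzero one diagonalizes both, so $\{\matr{A},\matr{B}\}$ is SD and in particular TWSD; otherwise $\operatorname{span}\{\matr{A},\matr{B}\}$ is two-dimensional and $\matr{S}\neq\matr{0}$. If $\matr{S}$ is nonsingular then $\matr{S}\succ\matr{0}$, and for any pencil member $\matr{B}'$ not proportional to $\matr{S}$ (one exists, the span being two-dimensional) the matrix $\matr{S}^{-1}\matr{B}'$ has only real eigenvalues, since it is similar to the symmetric matrix $\matr{S}^{-1/2}\matr{B}'\matr{S}^{-1/2}$; by \cref{thm:TWSD-B-nonsingular-pair} the pair $\{\matr{S},\matr{B}'\}$ — which is a basis of the span — is TWSD-B, hence so is $\{\matr{A},\matr{B}\}$. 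If $\matr{S}$ is singular and the pair $\{\matr{A},\matr{B}\}$ is itself singular (no nonsingular pencil member), then \cref{thm:TWSD-B-singular-pair} already gives that $\{\matr{A},\matr{B}\}$ is TWSD-B. In the remaining case $\matr{S}$ is singular but some pencil member $\matr{C}$ is nonsingular; then $\matr{C}$ and $\matr{S}$ are non-proportional, hence a basis of the span, and it remains to show $\{\matr{C},\matr{S}\}$ is TWSD-B, i.e., by \cref{thm:TWSD-B-nonsingular-pair}, that $\matr{C}^{-1}\matr{S}$ has only real eigenvalues.

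This last point is the only real computation, and it is where the positive semidefiniteness enters. If $\matr{C}^{-1}\matr{S}\vect{v}=\lambda\vect{v}$ with $\vect{v}\neq\matr{0}$ (possibly complex), equivalently $\matr{S}\vect{v}=\lambda\matr{C}\vect{v}$, then $\vect{v}^{*}\matr{S}\vect{v}=\lambda\,\vect{v}^{*}\matr{C}\vect{v}$ where $\vect{v}^{*}\matr{S}\vect{v}\geq 0$ and $\vect{v}^{*}\matr{C}\vect{v}\in\RR$ because $\matr{S}\succeq\matr{0}$ and $\matr{C}$ is real symmetric. If $\vect{v}^{*}\matr{C}\vect{v}\neq 0$ then $\lambda\in\RR$; if $\vect{v}^{*}\matr{C}\vect{v}=0$ then $\vect{v}^{*}\matr{S}\vect{v}=0$, which for a positive semidefinite $\matr{S}$ forces $\matr{S}\vect{v}=\matr{0}$, hence $\lambda\matr{C}\vect{v}=\matr{0}$ and $\lambda=0$ (as $\matr{C}\vect{v}\neq\matr{0}$). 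So $\matr{C}^{-1}\matr{S}$ has real spectrum and the proof is complete; note it in fact yields the stronger conclusion that $\{\matr{A},\matr{B}\}$ is TWSD-B. I expect no analytic difficulty here — the only thing requiring care is bookkeeping, namely checking that the case split on $\matr{S}$ (nonsingular / singular with singular pair / singular with a nonsingular member) is exhaustive and that one may freely change the basis of the span. Should one prefer a proof not invoking \cref{thm:TWSD-B-nonsingular-pair} and \cref{thm:TWSD-B-singular-pair}, a self-contained alternative is to normalize a semidefinite pencil member to $\Diag{\matr{I}_d,\matr{0}}$ by a fixed congruence, remove the nonsingular part of the complementary diagonal block by a shear and the kernel of the residual coupling block by an orthogonal change, and finish with an anisotropic scaling $\matr{P}_k=\Diag{t_k\matr{I}_d,\,t_k^{-d/r}\matr{I}_r}$ or, in the square-coupling case, a direct sum of explicit $\SL_2(\RR)$ sequences; this route is longer and more computational.
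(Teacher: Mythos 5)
Your proof is correct, but it follows a genuinely different route from the paper's. The paper proves \cref{thm:PSD-sufficient} by a direct construction: it regularizes the semidefinite pencil member $\matr{C}=\alpha\matr{A}+\beta\matr{B}$ to $\tilde{\matr{C}}_k=(\matr{C}+\tfrac{1}{k}\matr{I}_m)^{-1/2}$, orthogonally diagonalizes $\tilde{\matr{C}}_k\matr{B}\tilde{\matr{C}}_k$, rescales by $\det(\tilde{\matr{C}}_k)^{-1/m}$ to stay in a fixed-determinant class, and checks that the off-diagonal part of $\matr{P}_k^{\T}\matr{A}\matr{P}_k$ vanishes; this is self-contained (only spectral decomposition is used) and produces explicit sequences, but the diagonal entries may blow up along the kernel of $\matr{C}$, which is why the paper only concludes TWSD. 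You instead reduce to the pairwise TWSD-B characterizations: linear dependence is trivial; a nonsingular $\matr{S}\succeq 0$ is positive definite so $\matr{S}^{-1}\matr{B}'$ is similar to a symmetric matrix; a singular pair is handled by \cref{thm:TWSD-B-singular-pair}; and in the remaining case your quadratic-form argument ($\vect{v}^{*}\matr{S}\vect{v}=\lambda\vect{v}^{*}\matr{C}\vect{v}$, with $\vect{v}^{*}\matr{S}\vect{v}=0\Rightarrow\matr{S}\vect{v}=\matr{0}$ for $\matr{S}\succeq 0$) correctly shows $\matr{C}^{-1}\matr{S}$ has real spectrum, so \cref{thm:TWSD-B-nonsingular-pair} applies; the passage from a basis $\{\matr{C},\matr{S}\}$ of the pencil to $\{\matr{A},\matr{B}\}$ is legitimate by the fixed-limit formulation in \cref{defini:TWSD-B}(ii), since limits of $\matr{P}_k^{\T}(\cdot)\matr{P}_k$ respect linear combinations. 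There is no circularity, as \cref{thm:TWSD-B-nonsingular-pair,thm:TWSD-B-singular-pair} are proved independently of this result. What each approach buys: the paper's proof is elementary and constructive but weaker in conclusion; yours leans on the canonical-form machinery behind the cited theorems but actually establishes the strictly stronger statement that a pair with a positive semidefinite pencil is TWSD-B (consistent with, e.g., the paper's first example and \cref{lem:2times2_equiv}), which sharpens \cref{thm:PSD-sufficient} as stated.
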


\begin{example}
Let $a,b,c\in\RR$ satisfy $a\not=0$ and $b\not=c$.
Let
\[
\matr{A}=\left[ \begin{array}{cc} a & b \\ b & 0 \end{array} \right] \mbox{ and } \matr{B}=\left[ \begin{array}{cc} a & c \\ c & 0 \end{array} \right].
\]
Note that
\begin{align*}
\frac{-\mbox{sign}(a) c}{b-c} \cdot \left[ \begin{array}{cc} a & b \\ b & 0 \end{array} \right]  + \frac{\mbox{sign}(a) b}{b-c} \cdot \left[ \begin{array}{cc} a & c \\ c & 0 \end{array} \right] 
=\left[ \begin{array}{cc} |a| & 0 \\ 0 & 0 \end{array} \right] \succeq \matr{0}. 
\end{align*}
It follows by \cref{thm:PSD-sufficient} that $\matr{A}$ and $\matr{B}$ are TWSD.
As a special example, we see that 
\[
\matr{A}=\left[ \begin{array}{cc} 1 & -1 \\ -1 & 0 \end{array} \right] \mbox{ and } \matr{B}=\left[ \begin{array}{cc} 1 & 1 \\ 1 & 0 \end{array} \right]
\]
are TWSD. 
\end{example}
We now present an example to show that the condition \eqref{eq:positive_semid_TWSD} in \cref{thm:PSD-sufficient} is not necessary.
\begin{example}
Let
\(\matr{A} = \begin{bmatrix}
-1 & 0 & 0\\
0 & 1 & 1 \\
0 & 1 & -1
\end{bmatrix}
\mbox{ and  }
\matr{B} = \begin{bmatrix}
-1 & 0 & 0 \\
0 & 1 & 0 \\
0 & 0 & 0
\end{bmatrix}.
\)
Note that
\[
\matr{A}^{-1}\matr{B} = \begin{bmatrix}
-1 & 0 & 0 \\
0 & 1/2 & 1/2 \\
0 & 1/2 & -1/2
\end{bmatrix}
\begin{bmatrix}
-1 & 0 & 0 \\
0 & 1 & 0 \\
0 & 0 & 0
\end{bmatrix}
=
\begin{bmatrix}
1 & 0 & 0 \\
0 & 1/2 & 0 \\
0 & 1/2 & 0
\end{bmatrix},\]
whose eigenvalues are \( 1, 1/2\) and \( 0 \). By \cref{thm:TWSD-B-nonsingular-pair}, we see that \(\matr{A}\) and \(\matr{B}\) are TWSD-B, and thus TWSD.
Assume that there exist $\alpha,\beta\in\RR$ (not both zero) such that 
\begin{equation}\label{eq:al_bet_psd}
\alpha\matr{A} + \beta\matr{B} =
\begin{bmatrix}
-(\alpha + \beta) & 0 & 0 \\
0 & \alpha + \beta & \alpha \\
0 & \alpha & -\alpha
\end{bmatrix}\succeq 0.
\end{equation}
Then all of its diagonal elements are non-negative, \emph{i.e.,} $-(\alpha + \beta)\geq 0, \alpha + \beta\geq 0, -\alpha\geq 0$. 
It follows that $\alpha + \beta= 0$ and $\alpha\leq 0$. 
Since $\alpha$ and $\beta$ are not both zero, we see that $\alpha< 0$ and $\beta>0$.
Then the determinant of the submatrix \( \begin{bmatrix}
\alpha + \beta & \alpha \\
\alpha & -\alpha
\end{bmatrix} \)
of $\alpha\matr{A} + \beta\matr{B}$ in \eqref{eq:al_bet_psd} is \( - \alpha^2 < 0 \), which contradicts the assumption \eqref{eq:al_bet_psd}. 
\end{example}

Now we give a sufficient condition for multiple matrices to be TWSD, if they are all block diagonal matrices.

\begin{theorem}\label{thm:TWSD-block-TWSD-B-suffic}
Let the set $\mathcal{C}$ be as in \eqref{set_C}, where 
\(\matr{A}_{i} = \Diag{\bar{\matr{A}}_{i}, \tilde{\matr{A}}_{i}} \) for all \( 1 \leq i \leq L \), with \( \bar{\matr{A}}_{i} \in \mathbb{R}^{r \times r}\) and \(\tilde{\matr{A}}_{i} \in \mathbb{R}^{(m-r) \times (m-r)}  \). If the set \( \{ \bar{\matr{A}}_i\}_{1\leq i\leq L}\subseteq \textbf{symm}(\RR^{r\times r}) \) is TWSD-B, then $\mathcal{C}$ is TWSD.
\end{theorem}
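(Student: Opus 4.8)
The plan is to exploit the fact that the notion TWSD, unlike TWSD-B, imposes no boundedness on the diagonal parts $\matr{D}_k^{(i)}$, so we are free to use unbounded diagonal rescalings. I would take a TWSD-B witness for the upper block $\{\bar{\matr{A}}_i\}$, magnify it by a scalar factor $c_k$, and shrink the lower block $\{\tilde{\matr{A}}_i\}$ by a scalar factor $d_k$, with $c_k$ and $d_k$ coupled so that the determinant of the whole transformation stays constant while $c_k$ does not outrun the decay of the upper block's off-diagonal error. Since the resulting $\matr{P}_k$ will be block diagonal, the two blocks never interact, and the off-diagonal norm of $\matr{P}_k^{\T}\matr{A}_i\matr{P}_k$ splits as the sum of the two blocks' contributions.

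First I would dispose of the trivial case $r=m$ (then the statement is the hypothesis itself), so assume $1\le r<m$ and set $\gamma\eqdef 2(m-r)/r>0$. By \Cref{defini:TWSD-B}(ii), TWSD-B of $\{\bar{\matr{A}}_i\}_{1\le i\le L}$ provides $\{\bar{\matr{P}}_k\}_{k\ge1}\subseteq\SL_r(\RR)$ and diagonal matrices $\bar{\matr{D}}^{(i)}$ with $\bar{\matr{P}}_k^{\T}\bar{\matr{A}}_i\bar{\matr{P}}_k\to\bar{\matr{D}}^{(i)}$; in particular $\varepsilon_k\eqdef\max_{1\le i\le L}\|\textbf{offdiag}(\bar{\matr{P}}_k^{\T}\bar{\matr{A}}_i\bar{\matr{P}}_k)\|\to0$. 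I would then put $e_k\eqdef\varepsilon_k+1/k>0$ (so $e_k\to0$), and define $d_k\eqdef e_k^{1/(2\gamma)}$, $c_k\eqdef d_k^{-(m-r)/r}$, and $\matr{P}_k\eqdef\Diag{c_k\bar{\matr{P}}_k,\,d_k\matr{I}_{m-r}}$. A one-line determinant computation gives $\det(\matr{P}_k)=c_k^{\,r}\det(\bar{\matr{P}}_k)\,d_k^{\,m-r}=c_k^{\,r}d_k^{\,m-r}=1$, so $\matr{P}_k\in\SL_m(\RR)$ for every $k$.

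It then remains to verify near-diagonality. Since $\matr{A}_i$ and $\matr{P}_k$ are both block diagonal, $\matr{P}_k^{\T}\matr{A}_i\matr{P}_k=\Diag{c_k^2\bar{\matr{P}}_k^{\T}\bar{\matr{A}}_i\bar{\matr{P}}_k,\ d_k^2\tilde{\matr{A}}_i}$, whose off-diagonal part has Frobenius norm at most $c_k^2\varepsilon_k+d_k^2\max_i\|\textbf{offdiag}(\tilde{\matr{A}}_i)\|$. Using $c_k^2=d_k^{-\gamma}=e_k^{-1/2}$, the first term is bounded by $c_k^2e_k=e_k^{1/2}\to0$, and the second tends to $0$ since $d_k\to0$; hence $\textbf{offdiag}(\matr{P}_k^{\T}\matr{A}_i\matr{P}_k)\to\matr{0}$ for each $i$. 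Taking $\matr{D}_k^{(i)}\eqdef\textbf{diag}(\matr{P}_k^{\T}\matr{A}_i\matr{P}_k)$ and using the $\matr{P}^{\T}\matr{A}\matr{P}$-form of $\varphi_{\rm T}$ permitted by \Cref{defini:TWSD-B}(i), we get $\varphi_{\rm T}(\matr{P}_k,\mathcal{D}_k)=\sum_i\|\textbf{offdiag}(\matr{P}_k^{\T}\matr{A}_i\matr{P}_k)\|^2\to0$, so $\mathcal{C}$ is TWSD. As expected, the diagonal entries $c_k^2\textbf{diag}(\bar{\matr{P}}_k^{\T}\bar{\matr{A}}_i\bar{\matr{P}}_k)$ of the upper block typically blow up, which is precisely why the argument delivers only TWSD and not TWSD-B, consistent with \Cref{exa:TWSD-not-TWSD-B}.

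The hard part will be nothing more than the rate matching in the choice of $c_k,d_k$: the amplification $c_k^2$ must still beat the a priori unknown decay rate $\varepsilon_k$ of the upper block's off-diagonal error while the product $c_k^{\,r}d_k^{\,m-r}$ is pinned to $1$. Tying $d_k$ to $e_k=\varepsilon_k+1/k$ rather than to a fixed power of $k$ makes this automatic, and the extra $1/k$ conveniently keeps $\matr{P}_k$ invertible even along indices where $\{\bar{\matr{A}}_i\}$ happens to be exactly diagonalized. The determinant identity and the block-diagonal product identity are then just bookkeeping.
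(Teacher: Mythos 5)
Your proposal is correct and follows essentially the same route as the paper's own proof: amplify the TWSD-B witness of the upper block inside a block-diagonal $\matr{P}_k$ at a rate ($c_k^2=e_k^{-1/2}$) chosen to be slower than the decay of that block's off-diagonal error, while shrinking the lower block so its fixed off-diagonal part also vanishes. Your explicit coupling $c_k^{\,r}d_k^{\,m-r}=1$ even pins $\det(\matr{P}_k)$ to exactly $1$, which is a slightly cleaner normalization than the scaling written in the paper's proof.
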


\begin{proof}
Since the set \( \{ \bar{\matr{A}}_i\}_{1\leq i\leq L} \) is TWSD-B, there exists a sequence \(\{\matr{P}_k\}_{k\geq 1}\subseteq\SL_{r}(\RR)\) such that all the off-diagonal elements of \( \matr{P}_k^{\T}\bar{\matr{A}}_{i}\matr{P}_k \) converge to \( 0 \).
Denote \(a_{p,q}^{({i},k)}=(\matr{P}_k^{\T}\bar{\matr{A}}_{i}\matr{P}_k)_{p,q}\).
Since there are finitely many off-diagonal elements, we can find a sequence $\{\epsilon_k\}_{k\geq 1}\subseteq\RR$ such that \( \lim_{k \to \infty} \epsilon_k = 0 \) and \( \lim_{k \to \infty}a_{p,q}^{({i},k)}/\epsilon_k^2 = 0 \). 
Let \( \matr{V}_k = \Diag{\matr{P}_k /\epsilon_k,\epsilon_k^{rk/(m-r)}\matr{I}_{m-r}} \).
It can be seen that the off-diagonal elements of $\matr{V}_k^{\T}\matr{A}_{i}\matr{V}_k$ are \(  a_{p,q}^{({i},k)}/\epsilon_k^2 \) and \( \epsilon_k^{2rk/(m-r)} a_{p,q}^{({i},k)} \), which all converge to \( 0 \). 
Therefore, the set \( \mathcal{C} \) is TWSD, and the proof is complete. 
\end{proof}

\begin{corollary}\label{cor:TWSD-nonsingular}
Let $\matr{A}, \matr{B}\in\textbf{symm}(\RR^{m\times m})$, and \( \matr{A} \) be nonsingular.
If \( \matr{A}^{-1}\matr{B} \) has a real eigenvalue, then \( \matr{A} \) and \( \matr{B} \) are TWSD.
\end{corollary}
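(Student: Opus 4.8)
The plan is to treat this as a short corollary of \cref{thm:TWSD-B-nonsingular-pair} and \cref{thm:TWSD-block-TWSD-B-suffic}, using the canonical form of \cref{lem:Uhlig-canonical-nonsingular} to isolate the block corresponding to a real eigenvalue. Since TWSD is congruence-invariant (up to a nonzero constant determinant, by \cref{defini:TWSD-B}(iii)), I would first replace $\{\matr{A},\matr{B}\}$ by its canonical pair. Concretely, let the real Jordan normal form of $\matr{A}^{-1}\matr{B}$ be as in \eqref{eq:Jord_nonsin-AB}; the hypothesis that $\matr{A}^{-1}\matr{B}$ has a real eigenvalue means $r \geq 1$, and after reordering the Jordan blocks we may assume $\lambda_1 \in \mathbb{R}$. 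By \cref{lem:Uhlig-canonical-nonsingular} there is $\matr{P}\in\GL_m(\RR)$ with $\matr{P}^{\T}\matr{A}\matr{P} = \Diag{\bar{\matr{A}},\tilde{\matr{A}}}$ and $\matr{P}^{\T}\matr{B}\matr{P} = \Diag{\bar{\matr{B}},\tilde{\matr{B}}}$, where $\bar{\matr{A}} = \sigma_1\matr{E}(m_1)$ and $\bar{\matr{B}} = \sigma_1\matr{E}(m_1)\matr{J}(\lambda_1, m_1)$.

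The second step is to note that the leading block pair is TWSD-B. Since $\matr{E}(m_1)$ is the exchange matrix, $\bar{\matr{A}} = \sigma_1\matr{E}(m_1)$ is nonsingular with $\bar{\matr{A}}^{-1} = \sigma_1\matr{E}(m_1)$, so $\bar{\matr{A}}^{-1}\bar{\matr{B}} = \matr{J}(\lambda_1, m_1)$, whose only eigenvalue is the real number $\lambda_1$. Hence $\{\bar{\matr{A}},\bar{\matr{B}}\}$ is TWSD-B by \cref{thm:TWSD-B-nonsingular-pair}.

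The third step splits into two cases according to whether this block is everything. If $m_1 = m$, then $\matr{A}^{-1}\matr{B}$ itself has only real eigenvalues, and \cref{thm:TWSD-B-nonsingular-pair} already yields that $\{\matr{A},\matr{B}\}$ is TWSD-B, hence TWSD. If $m_1 < m$, then in the block decomposition above we have $\bar{\matr{A}},\bar{\matr{B}}\in\RR^{m_1\times m_1}$ with $1 \leq m_1 < m$, so the hypotheses of \cref{thm:TWSD-block-TWSD-B-suffic} are met (with $L=2$, $r = m_1$), and the block-diagonal pair $\{\matr{P}^{\T}\matr{A}\matr{P},\matr{P}^{\T}\matr{B}\matr{P}\}$ is TWSD. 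To transfer this back to $\{\matr{A},\matr{B}\}$, if $\{\matr{Q}_k\}_{k\geq1}\subseteq\SL_m(\RR)$ witnesses TWSD of the transformed pair, then $\matr{P}\matr{Q}_k$ has constant determinant $\det(\matr{P})\neq 0$ and $(\matr{P}\matr{Q}_k)^{\T}\matr{A}(\matr{P}\matr{Q}_k) = \matr{Q}_k^{\T}(\matr{P}^{\T}\matr{A}\matr{P})\matr{Q}_k$, likewise for $\matr{B}$, converge to diagonal matrices, so $\{\matr{A},\matr{B}\}$ is TWSD by \cref{defini:TWSD-B}(iii).

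I do not expect a serious obstacle here: the statement is essentially a packaging of the two theorems, and the only points requiring care are checking that the pulled-out block has positive size strictly below $m$ in the nontrivial case (so \cref{thm:TWSD-block-TWSD-B-suffic} is legitimately applicable) and invoking the congruence-invariance remark to undo the initial reduction to canonical form.
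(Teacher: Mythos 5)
Your proposal is correct and follows essentially the same route as the paper: reduce to the Uhlig canonical form of \cref{lem:Uhlig-canonical-nonsingular}, observe that the block pair $\{\sigma_1\matr{E}(m_1),\sigma_1\matr{E}(m_1)\matr{J}(\lambda_1,m_1)\}$ associated with the real eigenvalue is TWSD-B by \cref{thm:TWSD-B-nonsingular-pair}, and conclude TWSD via \cref{thm:TWSD-block-TWSD-B-suffic}. Your extra care about the case $m_1=m$ and the explicit congruence-transfer step are fine refinements of details the paper leaves implicit, but they do not change the argument.
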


\begin{proof}
If \( \matr{A}^{-1}\matr{B} \) has a real eigenvalue, by \cref{lem:Uhlig-canonical-nonsingular}, there exists \(\matr{P}\in\GL_{m}(\RR)\) such that
\begin{align*} 
{\small \matr{P}^{\T}\matr{A}\matr{P}} &{\small= \Diag{a_1 \matr{E}(m_{1}), \ldots, a_r\matr{E}(m_{r}), \matr{E}(m_{r+1}), \ldots \matr{E}(m_{p})},}\\ 
{\small\matr{P}^{\T}\matr{B}\matr{P}} &{\small= \textbf{Diag}\{a_1 \matr{E}(m_{1})\matr{J}(\lambda_{1}, m_{1}), \ldots, a_r\matr{E}(m_{r})\matr{J}(\lambda_{r}, m_{r})},\\
&\ \ \ \ \ \ \ \ \ \   {\small\matr{E}(m_{r+1})\matr{J}(\lambda_{r+1}, m_{r+1}), \ldots \matr{E}(m_{p})\matr{J}(\lambda_{p}, m_{p})\}},
\end{align*}
where \( a_s = \pm 1 \) for $1 \leq s \leq r$, as in \cref{lem:Uhlig-canonical-nonsingular}.
Without loss of generality, we suppose that \( \matr{J}(\lambda_{1}, m_{1}) \) is the Jordan block with real eigenvalue.
Then \( a_1\matr{E}(m_{1}) \) and \( a_1\matr{E}(m_{1})\matr{J}(\lambda_{1}, m_{1}) \) are TWSD-B by \cref{thm:TWSD-B-nonsingular-pair}.
It follows that \(\matr{A} \) and \( \matr{B} \) are TWSD by \cref{thm:TWSD-block-TWSD-B-suffic}. The proof is complete. 
\end{proof}

\subsection{Equivalence of TWSD and TWSD-B with a totally diagonal nonsingular matrix}
By \cref{def:twsd}, there is a big difference between TWSD and TWSD-B. 
If the set $\mathcal{C}$ in \eqref{set_C} is TWSD-B, there exists $\{\matr{P}_k\}_{k \geq 1} \subseteq \SL_{m}(\RR)$ such that $\matr{P}_k^{\top} \matr{A}_i\matr{P}_k$ converge to diagonal matrices for all $1\leq i \leq L$. However, if the set $\mathcal{C}$ is TWSD, we only require that the off-diagonal elements of $\matr{P}_k^{\top} \matr{A}_i\matr{P}_k$ converge to 0, while the diagonal elements of $\matr{P}_k^{\top} \matr{A}_i\matr{P}_k$ may be unbounded.
Then a natural question is whether TWSD is equivalent to TWSD-B, if we further require that the diagonal elements of one matrix, \emph{e.g.},  $\matr{P}_k^{\top} \matr{A}_1\matr{P}_k$,  are bounded, not for all $1\leq i\leq L$.
In the following \Cref{thm:equi_A1_bounded}, we will show that it is true, if the matrix $\matr{A}_1$ is nonsingular and \( \matr{P}_k^{\top} \matr{A}_1 \matr{P}_k \) is bounded and diagonal for all \( k \geq 1 \).
The proof is postponed to 
\Cref{proofs-2}. 

\begin{theorem}\label{thm:equi_A1_bounded}
Let the set $\mathcal{C}$ be as in \eqref{set_C}, and \( \matr{A}_1 \) be nonsingular.
Suppose that \( \mathcal{C} \) is TWSD, \emph{i.e.,} there exists a sequence $\{\matr{P}_k\}_{k \geq 1} \subseteq \mathbf{SL}_{m}(\mathbb{R})$ such that $\lim_{k\rightarrow\infty}\textbf{offdiag}(\matr{P}_k^{\top} \matr{A}_i\matr{P}_k)=\matr{0}$ for all $1\leq i\leq L$.
If $\matr{P}_k^{\top} \matr{A}_1\matr{P}_k$ is diagonal and  uniformly bounded for $k\geq 1$, then 
\( \mathcal{C} \) is TWSD-B.
\end{theorem}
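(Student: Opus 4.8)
The plan is to prove that the \emph{given} sequence $\{\matr{P}_k\}_{k\ge 1}$ already certifies that $\mathcal{C}$ is TWSD-B. By \Cref{defini:TWSD-B}(ii), since $\textbf{offdiag}(\matr{P}_k^{\top}\matr{A}_i\matr{P}_k)\to\matr{0}$ is assumed, it suffices to show that $\textbf{diag}(\matr{P}_k^{\top}\matr{A}_i\matr{P}_k)$ is uniformly bounded for every $1\le i\le L$.

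First I would record a uniform bound on $(\matr{D}_k^{(1)})^{-1}$, where $\matr{D}_k^{(1)}\eqdef\matr{P}_k^{\top}\matr{A}_1\matr{P}_k$. This matrix is diagonal with $\|\matr{D}_k^{(1)}\|\le M$, and since $\det(\matr{P}_k)=1$ we have $\det\matr{D}_k^{(1)}=\det\matr{A}_1\ne 0$ (this is where the nonsingularity of $\matr{A}_1$ enters); hence each diagonal entry of $\matr{D}_k^{(1)}$ has modulus at least $|\det\matr{A}_1|\,M^{1-m}$, so $\{(\matr{D}_k^{(1)})^{-1}\}_{k\ge1}$ is uniformly bounded. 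Next, for each $i$ put $\matr{M}_k^{(i)}\eqdef(\matr{D}_k^{(1)})^{-1}\matr{P}_k^{\top}\matr{A}_i\matr{P}_k$. Using $(\matr{D}_k^{(1)})^{-1}=\matr{P}_k^{-1}\matr{A}_1^{-1}(\matr{P}_k^{\top})^{-1}$ one gets $\matr{M}_k^{(i)}=\matr{P}_k^{-1}\bigl(\matr{A}_1^{-1}\matr{A}_i\bigr)\matr{P}_k$, so $\matr{M}_k^{(i)}$ is similar to the \emph{fixed} matrix $\matr{N}_i\eqdef\matr{A}_1^{-1}\matr{A}_i$ and therefore has the same characteristic polynomial $p_i$ for all $k$. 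Since $(\matr{D}_k^{(1)})^{-1}$ is diagonal and bounded, $\textbf{offdiag}(\matr{M}_k^{(i)})=(\matr{D}_k^{(1)})^{-1}\textbf{offdiag}(\matr{P}_k^{\top}\matr{A}_i\matr{P}_k)\to\matr{0}$.

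The crux is to show that $\{\matr{M}_k^{(i)}\}_{k\ge1}$ is bounded, and I would argue this by contradiction. If it is unbounded, pass to a subsequence along which $t_k\eqdef\|\matr{M}_k^{(i)}\|\to\infty$ and, by compactness of the unit sphere in $\RR^{m\times m}$, along which $\matr{M}_k^{(i)}/t_k\to\matr{W}$ with $\|\matr{W}\|=1$. Since $\textbf{offdiag}(\matr{M}_k^{(i)})$ is bounded and $t_k\to\infty$, the limit $\matr{W}$ is a nonzero diagonal matrix. On the other hand, the characteristic polynomial of $\matr{M}_k^{(i)}/t_k$ equals $t_k^{-m}p_i(t_k x)\to x^m$, so by continuity of the characteristic polynomial in the matrix entries, the characteristic polynomial of $\matr{W}$ is $x^m$, i.e. $\matr{W}$ is nilpotent — which is impossible for a nonzero diagonal matrix. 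Hence $\sup_k\|\matr{M}_k^{(i)}\|<\infty$.

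Finally, $\matr{P}_k^{\top}\matr{A}_i\matr{P}_k=\matr{D}_k^{(1)}\matr{M}_k^{(i)}$ is a product of two uniformly bounded sequences, hence uniformly bounded; in particular $\textbf{diag}(\matr{P}_k^{\top}\matr{A}_i\matr{P}_k)$ is uniformly bounded for all $i$, and together with $\textbf{offdiag}(\matr{P}_k^{\top}\matr{A}_i\matr{P}_k)\to\matr{0}$ this yields, via \Cref{defini:TWSD-B}(ii), that $\mathcal{C}$ is TWSD-B. I expect the boundedness of $\{\matr{M}_k^{(i)}\}_k$ to be the only genuine difficulty; the remaining steps are determinant bookkeeping and unwinding the similarity $\matr{M}_k^{(i)}=\matr{P}_k^{-1}\matr{N}_i\matr{P}_k$.
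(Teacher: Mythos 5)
Your proposal is correct, and its skeleton is the same as the paper's: bound the inverse of $\matr{D}_k^{(1)}=\matr{P}_k^{\top}\matr{A}_1\matr{P}_k$, pass to $\matr{M}_k^{(i)}=\matr{P}_k^{-1}\matr{A}_1^{-1}\matr{A}_i\matr{P}_k$, show this sequence is bounded, and multiply back by $\matr{D}_k^{(1)}$ to bound $\matr{P}_k^{\top}\matr{A}_i\matr{P}_k$. The difference lies in how the two key ingredients are established. For the inverse bound the paper cites \cref{lem:bounded-inverse}(i), while you give a direct determinant-and-entries estimate for a bounded diagonal matrix of fixed nonzero determinant; both are fine. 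For the crucial boundedness step the paper invokes \cref{lem:weak-sim-diag-bounded}, proved by tracking the sum of all $r\times r$ principal minors (a coefficient of the similarity-invariant characteristic polynomial) and arguing that one diverging term dominates; you instead run a rescaling/compactness argument: if $\|\matr{M}_k^{(i)}\|=t_k\to\infty$, then $\matr{M}_k^{(i)}/t_k$ subconverges to a unit-norm diagonal matrix $\matr{W}$ whose characteristic polynomial, being the limit of $t_k^{-m}p_i(t_k x)$, is $x^m$, so $\matr{W}$ would be a nonzero nilpotent diagonal matrix, a contradiction. Both arguments rest on the same invariant (the fixed characteristic polynomial of $\matr{A}_1^{-1}\matr{A}_i$), but your version is more streamlined: it avoids identifying which diagonal entries blow up and the dominance bookkeeping among principal minors, and it directly yields uniform boundedness of the whole matrix sequence rather than only of its diagonal. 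I see no gaps.
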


\begin{remark}
In \cref{thm:equi_A1_bounded}, the condition that $\matr{A}_1$ is nonsingular is necessary. 
For example, if $\matr{A}, \matr{B}, \matr{P}_k\in \textbf{symm}(\RR^{m\times m})$
are the matrices defined as in \cref{exa:TWSD-not-TWSD-B}, then 
it is clear that \( \mathcal{C}=\{\matr{A}, \matr{B}, \matr{0}_{3\times 3}\} \) is TWSD, and \( \matr{P}_k^{\top} \matr{0}_{3\times 3} \matr{P}_k \) is bounded. However, the set \( \mathcal{C} \) is not TWSD-B.
\end{remark}

\subsection{Equivalence of TWSD and TWSD-B for two matrices in \( \mathbf{symm}(\mathbb{R}^{2 \times 2}) \)}
It has been shown in \cref{exa:TWSD-not-TWSD-B} that $\set{TWSD}\textrm{-}\set{B}\subsetneqq\set{TWSD}$ in general. 
Now we prove an interesting fact that they are actually equivalent to each other for a pair of matrices \( \{ \matr{A}, \matr{B} \} \subseteq \mathbf{symm}(\mathbb{R}^{2 \times 2}) \). 
\begin{lemma}\label{lem:2times2_equiv}
Let $\matr{A},\matr{B}\in\textbf{symm}(\RR^{2\times 2})$.
Then the set \(\{ \matr{A}, \matr{B} \}\) is TWSD-B if and only if it is TWSD.
\end{lemma}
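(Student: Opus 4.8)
The plan is to show that for a pair in $\textbf{symm}(\RR^{2\times 2})$, being TWSD already forces enough structure to upgrade the diagonalizing sequence to a bounded one. Since $\set{TWSD}\textrm{-}\set{B}\subseteq\set{TWSD}$ always holds, I only need the converse. The natural first move is to dispose of the singular case: if $\{\matr{A},\matr{B}\}$ is a singular pair, then \Cref{thm:TWSD-B-singular-pair} already gives that it is TWSD-B, with no hypothesis needed, so there is nothing to prove. Hence I may assume the pair is nonsingular, say (after relabelling) that $\matr{A}$ is nonsingular. Now \Cref{thm:TWSD-B-nonsingular-pair} tells me that $\{\matr{A},\matr{B}\}$ is TWSD-B exactly when $\matr{A}^{-1}\matr{B}$ has only real eigenvalues. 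So the whole lemma reduces to a single implication: if $\{\matr{A},\matr{B}\}$ is TWSD and $\matr{A}$ is nonsingular, then $\matr{A}^{-1}\matr{B}$ has no non-real eigenvalue.

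To get a contradiction, suppose $\matr{A}^{-1}\matr{B}$ has a non-real eigenvalue. Since it is a $2\times2$ real matrix, its eigenvalues are then a genuine complex-conjugate pair $a\pm b\ui$ with $b\neq 0$, and the real Jordan form of $\matr{A}^{-1}\matr{B}$ is the single $2\times2$ block with $c=\left[\begin{smallmatrix}a&-b\\b&a\end{smallmatrix}\right]$. By \Cref{lem:Uhlig-canonical-nonsingular} (with $p=r+1=1$, i.e. no real blocks), there is $\matr{P}\in\GL_{2}(\RR)$ with $\matr{P}^{\T}\matr{A}\matr{P}=\matr{E}(2)$ and $\matr{P}^{\T}\matr{B}\matr{P}=\matr{E}(2)\matr{J}(a\pm b\ui,1)$. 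Conjugating the TWSD property by $\matr{P}$ (which, as in \Cref{defini:TWSD-B}(iii), only changes determinants by a fixed nonzero constant and so preserves TWSD), I may as well assume $\matr{A}=\matr{E}(2)=\left[\begin{smallmatrix}0&1\\1&0\end{smallmatrix}\right]$ and $\matr{B}=\matr{E}(2)\matr{J}(a\pm b\ui,1)=\left[\begin{smallmatrix}b&a\\a&-b\end{smallmatrix}\right]$ after an additional elementary congruence and rescaling. The point is that, up to congruence, the non-real case is exactly the pair in \Cref{exa:not-TWSD} (possibly after absorbing the real part $a$ and scaling $b$), and that example shows precisely that such a pair is \emph{not} TWSD.

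So the cleanest route is: reduce by congruence to the canonical pair, identify it (after a further linear change of the pencil, replacing $\matr{B}$ by $\frac{1}{b}(\matr{B}-a\matr{A})$, which changes neither the TWSD status nor the $2\times2$ congruence orbit) with $\left\{\left[\begin{smallmatrix}0&1\\1&0\end{smallmatrix}\right],\left[\begin{smallmatrix}1&0\\0&-1\end{smallmatrix}\right]\right\}$, and then invoke \Cref{exa:not-TWSD} to conclude this pair is not TWSD, contradicting the hypothesis. Therefore $\matr{A}^{-1}\matr{B}$ has only real eigenvalues, so by \Cref{thm:TWSD-B-nonsingular-pair} the pair is TWSD-B. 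The main thing to be careful about is bookkeeping: verifying that each reduction step (congruence by $\matr{P}$, scaling of the pencil, shifting $\matr{B}\mapsto\matr{B}-a\matr{A}$) genuinely preserves ``is TWSD'' and maps the problem onto the pair handled in \Cref{exa:not-TWSD}; the analytic heart — that the pair $\left\{\left[\begin{smallmatrix}0&1\\1&0\end{smallmatrix}\right],\left[\begin{smallmatrix}1&0\\0&-1\end{smallmatrix}\right]\right\}$ admits no TWSD sequence — is already done in that example, so no new estimates are required.
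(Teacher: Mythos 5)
Your proposal is correct and takes essentially the same route as the paper: the only substantive case (non-real eigenvalues of $\matr{A}^{-1}\matr{B}$ for a nonsingular pair) is handled, exactly as in the paper's proof, by passing to the Uhlig canonical form and reducing, via a congruence and a pencil shift, to the pair $\left\{\matr{E}(2),\Diag{1,-1}\right\}$, which \cref{exa:not-TWSD} shows is not TWSD. The remaining differences are cosmetic: you dispatch the singular case by citing \cref{thm:TWSD-B-singular-pair} instead of the Lancaster canonical form, and you merge the two real-eigenvalue cases into a single appeal to \cref{thm:TWSD-B-nonsingular-pair}.
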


\begin{proof}
If \( \{ \matr{A}, \matr{B} \}\subseteq \textbf{symm}(\RR^{2\times 2}) \) is a nonsingular pair, we assume that \( \matr{A} \) is nonsingular without loss of generality. 
Denote by \( \matr{J} \) the Jordan normal form of \( \matr{A}^{-1}\matr{B} \). 
By \cref{lem:Uhlig-canonical-nonsingular}, we only need to consider the following three cases.

\textbf{Case 1:} \( \matr{A}^{-1}\matr{B} \) has two different real eigenvalues, and \( \matr{J}=\begin{bmatrix}
\lambda_1 & 0 \\
0 & \lambda_2
\end{bmatrix} \).  
In this case, they are TWSD-B by \cref{thm:TWSD-B-nonsingular-pair}, which also implies they are TWSD.

\textbf{Case 2:} \( \matr{A}^{-1}\matr{B} \) has one real eigenvalue, and \( \matr{J}=\begin{bmatrix}
\lambda & 1 \\
0 & \lambda
\end{bmatrix} \).
In this case, they are TWSD-B by \cref{thm:TWSD-B-nonsingular-pair}, which also implies they are TWSD.

\textbf{Case 3:} \( \matr{A}^{-1}\matr{B} \) has a pair of complex eigenvalues \( a \pm b i \), and
\( \matr{J} =\left[ \begin{array}{cc} a & -b\\ b & a \end{array} \right]\).  
In this case, there exists \( \matr{P}\in\GL_2(\RR) \) such that
\[
\matr{P}^{\T}\matr{A} \matr{P} = \begin{bmatrix}
0 & 1\\
1 & 0
\end{bmatrix}, \
\matr{P}^{\T}\matr{B} \matr{P} = \begin{bmatrix}
  -b & a \\
  a & b
\end{bmatrix} =  a\matr{P}^{\T}\matr{A} \matr{P} - b \begin{bmatrix}
  1 & 0 \\
  0 & -1
\end{bmatrix}.
\]
Thus, they are TWSD if and only if \( \begin{bmatrix}
0 & 1\\
1 & 0
\end{bmatrix} \) and \( \begin{bmatrix}
1 & 0 \\
0 & -1
\end{bmatrix} \) are TWSD.
It has be shown in \cref{exa:not-TWSD} that this set is not TWSD. Thus, it is also not TWSD-B.

If \( \{ \matr{A}, \matr{B}\}  \) is a singular pair, by \cref{lem:lancaster-canonical-general-pair}, we only need to consider the following case.

\textbf{Case 4:}  There exists \( \matr{P}\in\GL_2(\RR) \) such that
\[
\matr{P}^{\T}\matr{A} \matr{P} = \begin{bmatrix}
a & 0\\
0 & 0
\end{bmatrix}, \
\matr{P}^{\T}\matr{B} \matr{P} = \begin{bmatrix}
  b & 0 \\
  0 & 0
\end{bmatrix}.    \]
Since they are already diagonal, they are both TWSD and TWSD-B.
\end{proof}

\section{Decomposition based simultaneously diagonalizable matrices}\label{sec:decom_based_SD}

In this section, we will extend the SDO and SD notions using the function $\varphi_{\rm D}$ in \eqref{eq:func_D}, and propose the notions \(\mathbf{D}_{m,n}\)-SDO, \(\mathbf{D}_{m,n}\)-SD and DWSD in \Cref{table-example-3-0}.  
For the notions \(\mathbf{D}_{m,n}\)-SD and DWSD, we will show that they are exactly the notions  $d$-\emph{RSDC} and \emph{ASDC} proposed in~\cite{wang2021new}. 
For the new notion \(\mathbf{D}_{m,n}\)-SDO, we will prove an interesting result in \Cref{thm:trival-Gmn-SDO}, which will be applied to ICA in \Cref{sec:application-approximate-diagonalization}. 


\subsection{Decomposition based projectively simultaneously diagonalizable matrices}

\begin{definition}\label{def:D-SDO-SD}
Let the set $\mathcal{C}$ be as in \eqref{set_C}, and $n\geq m$.\\
(i) The set $\mathcal{C}$ is \emph{decomposition based projectively simultaneously diagonalizable on $\St(m,n)$} ($\textbf{D}_{m,n}$-SDO), if there exist $\matr{P}\in\St(m,n)$ and $\mathcal{D} = \{ \matr{D}^{(i)}\}_{1 \leq i \leq L} \subseteq \mathbf{D}_{n}$, such that
$\varphi_{\rm D}(\matr{P},\mathcal{D}) = 0$.
We denote the class of $\textbf{D}_{m,n}\textrm{-}$SDO sets by $\set{D}_{m,n}\textrm{-}\set{SDO}$. \\
(ii) The set $\mathcal{C}$ is \emph{decomposition based projectively simultaneously diagonalizable} on $\textbf{RSL}(m,n)$($\textbf{D}_{m,n}$-SD), if there exist $\matr{P}\in\textbf{RSL}(m,n)$ and $\mathcal{D} = \{ \matr{D}^{(i)}\}_{1 \leq i \leq L} \subseteq \mathbf{D}_{n}$, such that
$\varphi_{\rm D}(\matr{P},\mathcal{D}) = 0$. 
We denote the class of \( \mathbf{D}_{m,n} \)-SD sets by $\set{D}_{m,n}\textrm{-}\set{SD}$.
\end{definition}

For $\matr{X}\in\RR^{n\times m}$, we define a mapping
\begin{equation*}\label{eq:func_rho}
\rho_{\matr{X}}:\RR^{n\times n}\rightarrow\RR^{m\times m},\  \matr{B}\mapsto\matr{X}^{\T}\matr{B}\matr{X}.
\end{equation*}
By this mapping, we have the following equivalent characterizations of \(\mathbf{D}_{m,n}\)-SDO.

\begin{lemma}\label{lem:equiv_D_SDO}
Let the set $\mathcal{C}$ be as in \eqref{set_C}. 
Then the following statements are equivalent:\\
(i) $\mathcal{C}$ is $\textbf{D}_{m,n}$-SDO.\\
(ii) there exists $\matr{X}\in\St(m,n)$ and a set $\mathcal{S}\subseteq\textbf{symm}(\RR^{n\times n})$ such that $\mathcal{S}$ is SDO, and $\rho_{\matr{X}}(\mathcal{S})=\mathcal{C}$.\\
(iii) there exists a set $\mathcal{S}\subseteq\textbf{symm}(\RR^{n\times n})$ such that $\mathcal{S}$ is SDO, and $\rho_{\matr{I}_{m,n}}(\mathcal{S})=\mathcal{C}$.
\end{lemma}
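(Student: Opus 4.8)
The plan is to prove the cycle of implications (i) $\Rightarrow$ (iii) $\Rightarrow$ (ii) $\Rightarrow$ (i), since (iii) is the most concrete statement and (ii) $\Rightarrow$ (i) should be essentially a definitional unfolding. First I would unfold what \(\mathbf{D}_{m,n}\)-SDO means: there exist \(\matr{P}\in\St(m,n)\) and diagonal \(\matr{D}^{(i)}\in\mathbf{D}_n\) with \(\matr{A}_i = \matr{P}^\T \matr{D}^{(i)} \matr{P}\) for all \(i\).

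For (i) $\Rightarrow$ (iii): given such \(\matr{P}\) and \(\{\matr{D}^{(i)}\}\), I would complete \(\matr{P}\) to an orthogonal matrix. Since \(\matr{P}\in\St(m,n)\) has orthonormal columns (recall \(\matr{P}^\T\matr{P}=\matr{I}_m\)), there is \(\matr{Q}\in\ON_n\) whose first \(m\) columns are the columns of \(\matr{P}\); equivalently \(\matr{P} = \matr{Q}\matr{I}_{m,n}\) where \(\matr{I}_{m,n}=[\vect{e}_1,\dots,\vect{e}_m]\in\RR^{n\times m}\). Then set \(\matr{S}_i \eqdef \matr{Q}^\T \matr{D}^{(i)} \matr{Q}\). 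The set \(\mathcal{S}=\{\matr{S}_i\}\) is SDO because all \(\matr{S}_i\) are simultaneously orthogonally diagonalized by \(\matr{Q}\) (with the caveat that SDO in this paper is defined via \(\SON_n\), not \(\ON_n\) — so if \(\det\matr{Q}=-1\) I would flip the sign of one column of \(\matr{Q}\) outside the first \(m\), which does not affect \(\matr{P}=\matr{Q}\matr{I}_{m,n}\) and only permutes/negates the corresponding diagonal entry of each \(\matr{D}^{(i)}\)). Finally \(\rho_{\matr{I}_{m,n}}(\matr{S}_i) = \matr{I}_{m,n}^\T \matr{Q}^\T \matr{D}^{(i)} \matr{Q} \matr{I}_{m,n} = \matr{P}^\T \matr{D}^{(i)} \matr{P} = \matr{A}_i\), giving (iii).

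For (iii) $\Rightarrow$ (ii): this is immediate by taking \(\matr{X}=\matr{I}_{m,n}\), which indeed lies in \(\St(m,n)\). For (ii) $\Rightarrow$ (i): suppose \(\matr{X}\in\St(m,n)\) and \(\mathcal{S}=\{\matr{S}_i\}\subseteq\textbf{symm}(\RR^{n\times n})\) is SDO with \(\rho_{\matr{X}}(\matr{S}_i)=\matr{A}_i\). Since \(\mathcal{S}\) is SDO there is \(\matr{R}\in\SON_n\) with \(\matr{R}^\T \matr{S}_i \matr{R} = \matr{D}^{(i)}\) diagonal for all \(i\); hence \(\matr{S}_i = \matr{R}\matr{D}^{(i)}\matr{R}^\T\) and \(\matr{A}_i = \matr{X}^\T \matr{R} \matr{D}^{(i)} \matr{R}^\T \matr{X} = (\matr{R}^\T\matr{X})^\T \matr{D}^{(i)} (\matr{R}^\T\matr{X})\). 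Setting \(\matr{P}\eqdef \matr{R}^\T \matr{X}\), one checks \(\matr{P}^\T\matr{P} = \matr{X}^\T\matr{R}\matr{R}^\T\matr{X} = \matr{X}^\T\matr{X} = \matr{I}_m\), so \(\matr{P}\in\St(m,n)\) and \(\varphi_{\rm D}(\matr{P},\{\matr{D}^{(i)}\})=0\), which is exactly (i).

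The only mild obstacle is the bookkeeping around \(\SON_n\) versus \(\ON_n\) in the definition of SDO — i.e. ensuring the completed orthogonal matrix has determinant \(1\) — but as noted this is handled by negating one of the \(n-m\) "free" columns. Everything else is a direct substitution; no deep argument is needed, so I would keep the write-up short.
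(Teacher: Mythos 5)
Your proposal is correct and uses essentially the same ingredients as the paper's proof (completing \(\matr{P}\in\St(m,n)\) to an orthogonal matrix via \(\matr{P}=\matr{Q}\matr{I}_{m,n}\), and conjugating by the orthogonal diagonalizer of the SDO set), merely traversing the cycle in the opposite order (i)\(\Rightarrow\)(iii)\(\Rightarrow\)(ii)\(\Rightarrow\)(i) instead of (i)\(\Rightarrow\)(ii)\(\Rightarrow\)(iii)\(\Rightarrow\)(i). Your extra care with the \(\SON_n\) versus \(\ON_n\) determinant issue is a minor refinement the paper glosses over, and it is handled correctly.
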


\begin{proof}
(i) \(\Rightarrow\)(ii): By \Cref{def:D-SDO-SD}(i), there exist \( \matr{P} \in \St(m,n) \) and diagonal matrices \( \matr{D}^{(i)} \) such that \( \matr{A}_{i}= \matr{P}^{\top} \matr{D}^{(i)} \matr{P} \) for all \( 1 \leq i \leq L \). Let \( \matr{X} = \matr{P} \) and \( \mathcal{S} = \{ \matr{D}^{(i)}\}_{1\leq i\leq L}\). Then the set \( \mathcal{S} \) is SDO and \( \rho_{\matr{X}}(\mathcal{S}) = \mathcal{C} \).\\
(ii) \(\Rightarrow\) (iii): Since \( \matr{X} \in \St(m,n) \), there exists an orthogonal matrix \( \matr{Q} \in \mathbb{R}^{n \times n} \) such that \( \matr{X} = \matr{Q} \matr{I}_{m,n} \). Let \( \mathcal{\bar{S}} = \{ \matr{Q}^{\top}\matr{S}^{(i)} \matr{Q} \mid \matr{S}^{(i)} \in \mathcal{S}\} \).
Then $\mathcal{\bar{S}}$ is SDO, since \( \mathcal{S} \) is SDO and \( \matr{Q} \) is orthogonal. Note that \( \matr{D}^{(i)} = \matr{X}^{\top} \matr{S}^{(i)} \matr{X} = \matr{I}_{m,n}^{\top} \matr{Q}^{\top} \matr{S}^{(i)} \matr{Q} \matr{I}_{m,n} \). It follows that \( \rho_{\matr{I}_{m,n}}(\mathcal{\bar{S}}) = \mathcal{C} \).\\
(iii) \( \Rightarrow \) (i): Since \( \mathcal{S} \) is SDO, there exists an orthonormal matrix \( \matr{Q} \) and diagonal matrices \( \matr{D}^{(i)} \) for \( 1 \leq i \leq L \) such that \( \mathcal{S} = \{ \matr{Q}^{\top} \matr{D}^{(i)} \matr{Q} \}_{1 \leq i \leq L} \). Since \( \rho_{\matr{I}_{m,n}}(\mathcal{S}) = \mathcal{C} \), for all \( 1 \leq i \leq L \), we have \( \matr{I}_{m,n}^{\top} \matr{Q}^{\top} \matr{D}^{(i)} \matr{Q} \matr{I}_{m,n} = \matr{A}_{i} \). 
Let \( \matr{P} = \matr{Q} \matr{I}_{m,n} \).  Then \( \matr{P} \in \St(m,n) \) and \( \matr{A}_{i} = \matr{P}^{\top} \matr{D}^{(i)} \matr{P} \). It follows that \( \mathcal{C} \) is \( \mathbf{D}_{m,n} \)-SDO.
\end{proof}



\begin{lemma}\label{lem:equiv_D_SD}
Let the set $\mathcal{C}$ be as in \eqref{set_C}. 
Then the following statements are equivalent:\\
(i) $\mathcal{C}$ is $\textbf{D}_{m,n}$-SD.\\
(ii) there exists $\matr{X}\in\RSL(m,n)$ and a set $\mathcal{S}\subseteq\textbf{symm}(\RR^{n\times n})$ such that $\mathcal{S}$ is SD, and $\rho_{\matr{X}}(\mathcal{S})=\mathcal{C}$.\\
(iii) there exists a set $\mathcal{S}\subseteq\textbf{symm}(\RR^{n\times n})$ such that $\mathcal{S}$ is SD, and $\rho_{\matr{I}_{m,n}}(\mathcal{S})=\mathcal{C}$.
\end{lemma}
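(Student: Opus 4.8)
The plan is to transcribe the proof of \cref{lem:equiv_D_SDO} almost line for line, replacing the orthogonal group by $\GL_n(\RR)$ (or $\SL_n(\RR)$), the Stiefel manifold $\St(m,n)$ by $\RSL(m,n)$, and SDO by SD throughout; the only genuinely new ingredient is bookkeeping for the determinant normalizations, which were automatic in the orthogonal setting. For (i)$\Rightarrow$(ii): \cref{def:D-SDO-SD}(ii) supplies $\matr{P}\in\RSL(m,n)$ and diagonal $\matr{D}^{(i)}$ with $\matr{A}_i=\matr{P}^\top\matr{D}^{(i)}\matr{P}$, so I take $\matr{X}=\matr{P}$ and $\mathcal{S}=\{\matr{D}^{(i)}\}_{1\le i\le L}$; a family of diagonal matrices is SDO (it is diagonalized by $\matr{I}_n\in\SON_n$), hence SD, and $\rho_{\matr{X}}(\mathcal{S})=\mathcal{C}$.

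For (ii)$\Rightarrow$(iii): since $\matr{X}\in\RSL(m,n)$ has full column rank, its columns extend to a basis of $\RR^n$, which yields $\matr{R}\in\GL_n(\RR)$ whose first $m$ columns coincide with those of $\matr{X}$, i.e.\ $\matr{X}=\matr{R}\matr{I}_{m,n}$. (In the spirit of the paper one may instead build $\matr{R}$ from \eqref{eq:rela-equivalent}, writing $\matr{X}=\matr{Q}\matr{I}_{m,n}\matr{Z}=\matr{Q}\,\Diag{\matr{Z},\matr{I}_{n-m}}\,\matr{I}_{m,n}$ with $\matr{Q}$ orthogonal and $\matr{Z}\in\SL_m(\RR)$, which additionally gives $\det\matr{R}=\pm1$.) Conjugation by an invertible matrix preserves SD: if $\matr{W}\in\SL_n(\RR)$ diagonalizes $\mathcal{S}$, then $\matr{R}^{-1}\matr{W}$, after scaling by a suitable positive constant and, when its determinant is negative, post-multiplying by a diagonal $\pm1$ matrix of determinant $-1$, diagonalizes $\mathcal{\bar{S}}:=\{\matr{R}^\top\matr{S}^{(i)}\matr{R}\}_{1\le i\le L}$; so $\mathcal{\bar{S}}$ is SD, and $\rho_{\matr{I}_{m,n}}(\mathcal{\bar{S}})=\{\matr{X}^\top\matr{S}^{(i)}\matr{X}\}=\mathcal{C}$.

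For (iii)$\Rightarrow$(i): since $\mathcal{S}$ is SD, write $\matr{S}^{(i)}=(\matr{W}^{-1})^\top\matr{D}^{(i)}\matr{W}^{-1}$ with $\matr{W}\in\SL_n(\RR)$ and $\matr{D}^{(i)}$ diagonal; then $\matr{A}_i=\rho_{\matr{I}_{m,n}}(\matr{S}^{(i)})=\matr{P}_0^\top\matr{D}^{(i)}\matr{P}_0$, where $\matr{P}_0:=\matr{W}^{-1}\matr{I}_{m,n}$ is the matrix of the first $m$ columns of $\matr{W}^{-1}$ and has full column rank. Unlike the orthogonal case, $\matr{P}_0^\top\matr{P}_0$ need not be $\matr{I}_m$, so I set $c:=\det(\matr{P}_0^\top\matr{P}_0)>0$ and $\matr{P}:=c^{-1/(2m)}\matr{P}_0$, which lies in $\RSL(m,n)$, and absorb the compensating scalar into the diagonals: $\matr{A}_i=\matr{P}^\top(c^{1/m}\matr{D}^{(i)})\matr{P}$ with $c^{1/m}\matr{D}^{(i)}$ still diagonal. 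Hence $\mathcal{C}$ is $\mathbf{D}_{m,n}$-SD.

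The main (and only real) obstacle is precisely this normalization bookkeeping: in \cref{lem:equiv_D_SDO} the identities $\matr{Q}^\top\matr{Q}=\matr{I}_n$ and $\det\matr{Q}=\pm1$ held for free, whereas here one must separately ensure $\det=1$ for the $\SL_n(\RR)$-transformations and $\det(\matr{P}^\top\matr{P})=1$ for the $\RSL(m,n)$-transformation. Both are repaired by rescaling, pushing the leftover scalar into the diagonal matrices, which \cref{def:D-SDO-SD} permits, together with a diagonal reflection when a sign is off, the same device recorded in \cref{defini:TWSD-B}(iii). Everything else is a verbatim copy of the previous argument.
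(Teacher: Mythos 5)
Your proof is correct and follows essentially the same route as the paper: factor the $\RSL(m,n)$ matrix as an invertible $n\times n$ matrix times $\matr{I}_{m,n}$ for (ii)$\Rightarrow$(iii), and read off $\matr{P}=\matr{U}\matr{I}_{m,n}$ (your $\matr{P}_0$) from the SD decomposition for (iii)$\Rightarrow$(i). The only difference is that you carry out the determinant normalizations (rescaling into $\SL_n(\RR)$ and enforcing $\det(\matr{P}^{\T}\matr{P})=1$ by absorbing a scalar into the diagonal factors) explicitly, whereas the paper leaves these routine adjustments implicit.
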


\begin{proof}
(i) \(\Rightarrow\)(ii): By \Cref{def:D-SDO-SD}(ii), there exist \( \matr{P} \in \RSL(m,n) \) and diagonal matrices \( \matr{D}^{(i)} \) such that \( \matr{A}_{i}= \matr{P}^{\top} \matr{D}^{i} \matr{P} \) for all \( 1 \leq i \leq L \). Let \( \matr{X} = \matr{P} \) and \( \mathcal{S} = \{ \matr{D}^{(i)}\}_{1\leq i\leq L}\). Then, the set \( \mathcal{S} \) is SD and \( \rho_{\matr{X}}(\mathcal{S}) = \mathcal{C} \).\\
(ii) \(\Rightarrow\) (iii): 
Since \( \matr{X} \in \RSL(m,n) \), it has full column rank. There exists a nonsingular matrix \( \matr{U} \in \mathbb{R}^{n \times n} \) such that \( \matr{X} = \matr{U} \matr{I}_{m,n} \). Let \( \mathcal{\bar{S}} = \{ \matr{U}^{\top}\matr{S}^{(i)} \matr{U} | \matr{S}^{(i)} \in \mathcal{S}\} \). Then \( \mathcal{\bar{S}}\) is SD,  since \( \mathcal{S} \) is SD and \( \matr{U} \) is nonsingular. 
Note that \( \matr{D}^{(i)} = \matr{X}^{\top} \matr{S}^{(i)} \matr{X} = \matr{I}_{m,n}^{\top} \matr{U}^{\top}\) \(\matr{S}^{(i)} \matr{U} \matr{I}_{m,n} \). 
It follows that \( \rho_{\matr{I}_{m,n}}(\mathcal{\bar{S}}) = \mathcal{C} \).\\
(iii) \( \Rightarrow \) (i): Since \( \mathcal{S} \) is SD, there exists a nonsingular matrix \( \matr{U} \) and diagonal matrices \( \matr{D}^{(i)} \) such that \( \mathcal{S} = \{ \matr{U}^{\top} \matr{D}^{(i)} \matr{U} \}_{1 \leq i \leq L} \). Since \( \rho_{\matr{I}_{m,n}}(\mathcal{S}) = \mathcal{C} \), for all \( 1 \leq i \leq L \), we have \( \matr{I}_{m,n}^{\top} \matr{U}^{\top} \matr{D}^{(i)} \matr{U} \matr{I}_{m,n} = \matr{A}_{i} \). 
Let \( \matr{P} = \matr{U} \matr{I}_{m,n} \). Then \( \matr{P} \in \RSL(m,n) \) and \( \matr{A}_{i} = \matr{P}^{\top} \matr{D}^{(i)} \matr{P} \). 
It follows that \( \mathcal{C} \) is \( \mathbf{D}_{m,n} \)-SD.
\end{proof}

\begin{remark}
For a matrix $\matr{B}\in\RR^{n\times n}$, it is easy to see that $\rho_{\matr{I}_{m,n}}(\matr{B})$ is the top-left \( m \times m \) submatrix of \( \matr{B} \).
Therefore, the equivalent characterization in  \cref{lem:equiv_D_SD}(iii) is actually the ($n-m$)-RSDC proposed in \cite[Definition 13]{wang2021new}.
\end{remark}


We now show that the classes $\set{D}_{m,n}\textrm{-}\set{SDO}$ and $\set{D}_{m,n}\textrm{-}\set{SD}$ both become larger when \( n \) grows.

\begin{lemma}\label{lem:D-SD-larger}
For any $n\geq m$, we have that
\begin{equation*}
\set{D}_{m,n}\textrm{-}\set{SDO}\subseteq \set{D}_{m,n+1}\textrm{-}\set{SDO}, \ \ \  \set{D}_{m,n}\textrm{-}\set{SD}\subseteq \set{D}_{m,n+1}\textrm{-}\set{SD}.
\end{equation*}
\end{lemma}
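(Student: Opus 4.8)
The plan is to use the equivalent characterizations established in \cref{lem:equiv_D_SDO}(iii) and \cref{lem:equiv_D_SD}(iii), namely that $\mathcal{C}$ is $\mathbf{D}_{m,n}$-SDO (resp.\ $\mathbf{D}_{m,n}$-SD) precisely when there is an SDO (resp.\ SD) set $\mathcal{S}\subseteq\textbf{symm}(\RR^{n\times n})$ with $\rho_{\matr{I}_{m,n}}(\mathcal{S})=\mathcal{C}$, i.e.\ each $\matr{A}_i$ is the top-left $m\times m$ block of the corresponding matrix in $\mathcal{S}$. Given such an $\mathcal{S}=\{\matr{S}^{(i)}\}$ in dimension $n$, I would embed each $\matr{S}^{(i)}$ into dimension $n+1$ by padding with a zero last row and column, setting $\tilde{\matr{S}}^{(i)}=\Diag{\matr{S}^{(i)},0}\in\textbf{symm}(\RR^{(n+1)\times(n+1)})$. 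The top-left $m\times m$ block is unchanged, so $\rho_{\matr{I}_{m,n+1}}(\tilde{\mathcal{S}})=\mathcal{C}$; it remains only to check that $\tilde{\mathcal{S}}=\{\tilde{\matr{S}}^{(i)}\}$ is still SDO (resp.\ SD).

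For the SDO case, if $\matr{Q}\in\SON_{n}$ diagonalizes all $\matr{S}^{(i)}$ simultaneously, then $\Diag{\matr{Q},1}\in\SON_{n+1}$ diagonalizes all $\tilde{\matr{S}}^{(i)}$ simultaneously, so $\tilde{\mathcal{S}}$ is SDO. For the SD case, if $\matr{U}\in\SL_{n}(\RR)$ (or, more conveniently via the determinant-is-irrelevant-up-to-scaling discussion, any nonsingular $\matr{U}$) congruence-diagonalizes all $\matr{S}^{(i)}$, then $\Diag{\matr{U},1}$ is nonsingular and congruence-diagonalizes all $\tilde{\matr{S}}^{(i)}$; rescaling to determinant $1$ if needed, $\tilde{\mathcal{S}}$ is SD. Hence by \cref{lem:equiv_D_SDO} and \cref{lem:equiv_D_SD} we conclude $\mathcal{C}\in\set{D}_{m,n+1}\textrm{-}\set{SDO}$ and $\mathcal{C}\in\set{D}_{m,n+1}\textrm{-}\set{SD}$ respectively.

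There is essentially no hard obstacle here; the statement is a routine monotonicity fact. The only mild subtlety is bookkeeping around the $\SL$ (determinant-one) constraint in the SD case, which is handled either by scaling the block-diagonal nonsingular matrix to have determinant $1$ or by invoking the observation (already used implicitly elsewhere in the paper) that a nonzero constant determinant gives the same notion. Alternatively, one could bypass the characterization lemmas and argue directly: given $\matr{P}\in\St(m,n)$ with $\matr{A}_i=\matr{P}^{\T}\matr{D}^{(i)}\matr{P}$, form $\matr{P}'=[\matr{P}\mid\matr{0}]\cdot(\text{reindex})$ — more precisely append a zero column is wrong since columns are indexed by $m$; instead append a zero \emph{row} to get $\matr{P}'\in\RR^{(n+1)\times m}$ with $\matr{P}'^{\T}\matr{P}'=\matr{P}^{\T}\matr{P}$, and take $\matr{D}'^{(i)}=\Diag{\matr{D}^{(i)},0}$, so that $\matr{P}'^{\T}\matr{D}'^{(i)}\matr{P}'=\matr{P}^{\T}\matr{D}^{(i)}\matr{P}=\matr{A}_i$ and $\matr{P}'\in\St(m,n+1)$ (resp.\ $\RSL(m,n+1)$). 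Either route makes the inclusions immediate; I would present the second, self-contained version for brevity.
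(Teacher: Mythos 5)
Your proposal is correct, and the "self-contained" version you say you would present — append a zero row to $\matr{P}$ so that $\matr{P}'^{\T}\matr{P}'=\matr{P}^{\T}\matr{P}$ (hence $\matr{P}'\in\St(m,n+1)$, resp.\ $\RSL(m,n+1)$) and pad each $\matr{D}^{(i)}$ with a zero diagonal entry — is exactly the paper's proof. The alternative route through \cref{lem:equiv_D_SDO} and \cref{lem:equiv_D_SD} is also sound but unnecessary here.
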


\begin{proof}
If the set \( \mathcal{C} \) is \( \mathbf{D}_{m,n} \)-SDO, there exist \( \matr{P} \in \St(m,n) \) and diagonal matrices \( \matr{D}^{(i)} \) such that \( \matr{A}_{i} = \matr{P}^{\top} \matr{D}^{(i)} \matr{P} \) for all \( 1 \leq i \leq L \). Now we define a matrix \( \matr{\tilde{P}} \in \mathbb{R}^{(n+1) \times m} \) by
\[ \tilde{P}_{ij} = \left\{
\begin{array}{cl}
 P_{ij} & i \leq n, \\
  0 & i = n+1,
\end{array}
\right.\]
for $\ 1 \leq i \leq n+1, 1 \leq j \leq m$. It is the case that \(\matr{\tilde{P}}\in \St(m, n+1) \). Let \( \matr{\tilde{D}}^{(i)} = \Diag{\matr{D}^{(i)}, 0} \) for all \( 1 \leq i \leq L \). Then \( \matr{A}_{i} = \matr{P}^{\top} \matr{D}^{(i)} \matr{P} = \matr{\tilde{P}}^{\top} \matr{\tilde{D}}^{(i)} \matr{\tilde{P}}\), and thus \( \mathcal{C}\) is \(\mathbf{D}_{m,n+1} \)-SDO. 
The other case for $\mathbf{D}_{m,n}$-SD can be proved similarly, and the proof is complete.
\end{proof}




An interesting fact is that, if $n$ is large enough in \( \mathbf{D}_{m,n} \)-SDO, \emph{e.g.}, $n = Lm$, then $\set{D}_{m,n}\textrm{-}\set{SDO}$ will include all sets of symmetric matrices in $\textbf{symm}(\RR^{m\times m})$.

\begin{theorem}\label{thm:trival-Gmn-SDO}
Let the set $\mathcal{C}$ be as in \eqref{set_C}. 
Then $\mathcal{C}$ is $\textbf{D}_{m,Lm}$-SDO.
\end{theorem}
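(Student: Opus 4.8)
The plan is to give an explicit witness: produce one matrix $\matr{P}\in\St(m,Lm)$ and diagonal matrices $\{\matr{D}^{(i)}\}_{1\le i\le L}\subseteq\mathbf{D}_{Lm}$ with $\varphi_{\rm D}(\matr{P},\mathcal{D})=0$, i.e.\ $\matr{A}_i=\matr{P}^{\T}\matr{D}^{(i)}\matr{P}$ for all $i$. The guiding idea is that the only genuine obstruction to $\SON$-diagonalizability is the failure of the $\matr{A}_i$ to commute; if each $\matr{A}_i$ is ``spread out'' into its own $m\times m$ diagonal slot of an $Lm\times Lm$ matrix, the resulting matrices have pairwise disjoint supports, hence automatically commute, and they can be pulled back to $\RR^{m\times m}$ by a single isometry. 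Concretely, by \cref{lem:equiv_D_SDO}(ii) it suffices to find $\matr{X}\in\St(m,Lm)$ and an SDO set $\mathcal{S}\subseteq\textbf{symm}(\RR^{Lm\times Lm})$ with $\rho_{\matr{X}}(\mathcal{S})=\mathcal{C}$, but it is just as quick to verify the witness directly.

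First I would invoke the spectral theorem for real symmetric matrices: write $\matr{A}_i=\matr{Q}_i\matr{\Lambda}_i\matr{Q}_i^{\T}$ with $\matr{Q}_i\in\ON_m$ and $\matr{\Lambda}_i\in\mathbf{D}_m$, for $1\le i\le L$. Then set
\[
\matr{P}\eqdef\frac{1}{\sqrt{L}}\begin{bmatrix}\matr{Q}_1^{\T}\\[2pt]\vdots\\[2pt]\matr{Q}_L^{\T}\end{bmatrix}\in\RR^{Lm\times m},\qquad
\matr{D}^{(i)}\eqdef\Diag{\matr{0}_{m\times m},\ldots,L\matr{\Lambda}_i,\ldots,\matr{0}_{m\times m}}\in\mathbf{D}_{Lm},
\]
where $L\matr{\Lambda}_i$ occupies the $i$-th of the $L$ diagonal blocks of size $m$. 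One checks $\matr{P}^{\T}\matr{P}=\frac1L\sum_{j=1}^{L}\matr{Q}_j\matr{Q}_j^{\T}=\matr{I}_m$, so $\matr{P}\in\St(m,Lm)$, and since only the $i$-th diagonal block of $\matr{D}^{(i)}$ is nonzero,
\[
\matr{P}^{\T}\matr{D}^{(i)}\matr{P}=\frac{1}{L}\,\matr{Q}_i\,(L\matr{\Lambda}_i)\,\matr{Q}_i^{\T}=\matr{A}_i
\]
for every $1\le i\le L$. Hence $\varphi_{\rm D}(\matr{P},\mathcal{D})=0$ and $\mathcal{C}$ is $\textbf{D}_{m,Lm}$-SDO.

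Equivalently, via \cref{lem:equiv_D_SDO}, one may take $\matr{S}^{(i)}\eqdef\Diag{\matr{0}_{m\times m},\ldots,L\matr{A}_i,\ldots,\matr{0}_{m\times m}}$ (the block $L\matr{A}_i$ in the $i$-th slot): these are symmetric with disjoint supports, so $\matr{S}^{(i)}\matr{S}^{(j)}=\matr{0}=\matr{S}^{(j)}\matr{S}^{(i)}$ for $i\neq j$, whence $\mathcal{S}=\{\matr{S}^{(i)}\}_{1\le i\le L}$ is SDO (commuting symmetric matrices are simultaneously orthogonally diagonalizable; a sign flip of one column of the diagonalizing orthogonal matrix puts it in $\SON_{Lm}$), and with $\matr{X}\eqdef\frac{1}{\sqrt L}[\matr{I}_m;\ldots;\matr{I}_m]\in\St(m,Lm)$ one gets $\rho_{\matr{X}}(\matr{S}^{(i)})=\matr{A}_i$. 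There is essentially no obstacle here: the entire content is the normalization $1/\sqrt L$ together with the decoupling of the $\matr{A}_i$ into separate blocks, which makes the commutation constraint behind SDO vacuous. The only point worth flagging is that the naive attempt fails — one cannot use $\matr{X}=\matr{I}_{m,Lm}$, since that would force every $\matr{S}^{(i)}$ to share the same top-left block $\matr{A}_i$ and hence to commute — and this is precisely why a nontrivial isometry $\matr{X}$, as permitted by \cref{lem:equiv_D_SDO}(ii), is needed.
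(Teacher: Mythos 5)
Your proposal is correct and is essentially the paper's own proof: both take the spectral decompositions $\matr{A}_i=\matr{Q}_i\matr{\Lambda}_i\matr{Q}_i^{\T}$, stack the orthogonal factors scaled by $1/\sqrt{L}$ to form $\matr{P}\in\St(m,Lm)$, and place $L\matr{\Lambda}_i$ in the $i$-th diagonal block of $\matr{D}^{(i)}$, so that $\matr{P}^{\T}\matr{D}^{(i)}\matr{P}=\matr{A}_i$. Only your closing aside is misleading: by the equivalence (i)$\Leftrightarrow$(iii) in \cref{lem:equiv_D_SDO}, one \emph{can} always realize the same conclusion with $\matr{X}=\matr{I}_{m,Lm}$ after conjugating $\mathcal{S}$ by a suitable orthogonal matrix (having identical top-left blocks does not prevent the full $Lm\times Lm$ matrices from commuting); what fails is only the naive choice $\matr{S}^{(i)}=\Diag{\matr{A}_i,\matr{0},\ldots,\matr{0}}$.
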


\begin{proof}
Suppose the orthogonal decomposition of $\matr{A}_{i}$ is $\matr{A}_{i} = \matr{Q}_{i}^{\T}\matr{X}_{i}\matr{Q}_{i}$, where $\matr{Q}_{i}\in\SON_{m}$ and  $\matr{X}_{i} \in \mathbb{R}^{m \times m}$ is a diagonal matrix for \( 1 \leq i \leq L \). 
Define matrices \( \matr{P} \in \mathbb{R}^{Lm \times m}, \matr{D}^{(i)} \in \mathbb{R}^{Lm \times Lm} \) by
\[
\matr{P} = \frac{1}{\sqrt{L}}\begin{bmatrix}
\matr{Q}_1 \\
\matr{Q}_2 \\
\vdots \\
\matr{Q}_L
\end{bmatrix}, \ \ 
\matr{D}^{(i)} = \Diag{\matr{0}_{m \times m}, \matr{0}_{m \times m} , \ldots , L\matr{X}_{i} , \ldots , \matr{0}_{m \times m}}, 
\]
for $1\leq i\leq L$, 
where the $i$-th block of $\matr{D}^{(i)}$ is $L\matr{X}_{i}$ and others are all \( \matr{0}_{m \times m} \).
Then 
\[
\matr{P}^{\T}\matr{D}^{(i)}\matr{P} = \matr{Q}_{i}^{\T}\matr{X}_{i}\matr{Q}_{i} = \matr{A}_{i}, 
\]
for $1\leq i\leq L$. 
Note that \( \matr{P} \in \St(m,Lm) \). 
The set $\{\matr{A}_i\}_{1\leq i\leq L}$ is $\textbf{D}_{m,Lm}$-SDO, and thus the proof is complete. 
\end{proof}

\begin{corollary}
Let the set $\mathcal{C}$ be as in \eqref{set_C}.  
Then $\mathcal{C}$ is $\textbf{D}_{m,m^2(m+1)/2}$-SDO.
\end{corollary}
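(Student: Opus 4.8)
The plan is to show that any set $\mathcal{C}$ of $L$ symmetric matrices in $\textbf{symm}(\RR^{m\times m})$ is $\textbf{D}_{m,m^2(m+1)/2}$-SDO by combining the stacking idea in the proof of \cref{thm:trival-Gmn-SDO} with the observation that each symmetric $m\times m$ matrix can be written using far fewer than $m$ ``rank-one diagonal directions'' when we are allowed to pad. More precisely, since $\dim\textbf{symm}(\RR^{m\times m}) = m(m+1)/2$, I would first reduce to the case where $\mathcal{C}$ is a \emph{basis} of a subspace of dimension at most $m(m+1)/2$: it suffices to treat $L \leq m(m+1)/2$, because if $L > m(m+1)/2$ then, by \cref{lem:D-SD-larger} (monotonicity of $\set{D}_{m,n}\textrm{-}\set{SDO}$ in $n$) applied after expressing every $\matr{A}_i$ in a fixed basis of $\operatorname{span}(\mathcal{C})$ of size $\leq m(m+1)/2$, it is enough to realize that smaller spanning set inside $\textbf{D}_{m,n}$-SDO and then pull back. (Concretely: if $\{\matr{B}_1,\dots,\matr{B}_t\}$ with $t\le m(m+1)/2$ spans $\operatorname{span}(\mathcal{C})$ and each $\matr{A}_i=\sum_j c_{ij}\matr{B}_j=\matr{P}^\T(\sum_j c_{ij}\matr{E}^{(j)})\matr{P}$ where each $\matr{E}^{(j)}\in\mathbf{D}_n$ and all $\matr{E}^{(j)}$ are simultaneously diagonal — hence so are their linear combinations — we are done with $n=t\,m\le m^2(m+1)/2$.)

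So the core step is: \textbf{any} set of at most $t:=m(m+1)/2$ symmetric matrices is $\textbf{D}_{m,tm}$-SDO. Here I would invoke \cref{thm:trival-Gmn-SDO} \emph{verbatim} with $L$ replaced by $t$: that theorem says any set of $t$ symmetric $m\times m$ matrices is $\textbf{D}_{m,tm}$-SDO, and $tm = m^2(m+1)/2$. Thus the corollary is literally \cref{thm:trival-Gmn-SDO} applied to a spanning subset of size $\le m(m+1)/2$, together with \cref{lem:D-SD-larger} to handle $L$ possibly larger or smaller than $t$ (if $L<t$ just augment $\mathcal{C}$ with zero matrices; if $L>t$ use the linear-combination argument above, noting the class is closed under taking linear combinations of a realized spanning set because a fixed $\matr{P}$ and simultaneously-diagonal $\matr{D}^{(j)}$ give simultaneously-diagonal combinations).

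I would write the argument in two short paragraphs: first fix a basis $\{\matr{B}_1,\dots,\matr{B}_t\}$, $t\le m(m+1)/2$, of $\operatorname{span}(\mathcal{C})$; apply \cref{thm:trival-Gmn-SDO} to $\{\matr{B}_1,\dots,\matr{B}_t\}$ to get $\matr{P}\in\St(m,tm)$ and simultaneously-diagonal $\matr{D}^{(j)}\in\mathbf{D}_{tm}$ with $\matr{B}_j=\matr{P}^\T\matr{D}^{(j)}\matr{P}$; then for each $i$ write $\matr{A}_i=\sum_j c_{ij}\matr{B}_j=\matr{P}^\T\big(\sum_j c_{ij}\matr{D}^{(j)}\big)\matr{P}$, and observe $\sum_j c_{ij}\matr{D}^{(j)}$ is diagonal. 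Since $tm\le m^2(m+1)/2$, \cref{lem:D-SD-larger} upgrades this to $\textbf{D}_{m,m^2(m+1)/2}$-SDO. The only mild subtlety — and the ``main obstacle,'' though it is minor — is making sure the padding in \cref{lem:D-SD-larger} is applied correctly (appending zero rows to $\matr{P}$ and zero blocks to each $\matr{D}^{(i)}$ preserves both the Stiefel constraint and simultaneous diagonality), and that when $\dim\operatorname{span}(\mathcal{C})$ is strictly less than $m(m+1)/2$ we still only need $n=m^2(m+1)/2$, which is immediate since the bound is an upper bound on $tm$. Everything else is bookkeeping.
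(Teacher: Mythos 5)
Your proposal is correct and follows essentially the same route as the paper: apply \cref{thm:trival-Gmn-SDO} to a basis and use that each $\matr{A}_i$ is a linear combination of the basis elements, so the fixed $\matr{P}$ and diagonal linear combinations of the $\matr{D}^{(j)}$ diagonalize $\mathcal{C}$. The only cosmetic difference is that the paper takes the canonical basis of all of $\textbf{symm}(\RR^{m\times m})$ (so $n=m^2(m+1)/2$ exactly and no padding is needed), whereas you take a basis of $\operatorname{span}(\mathcal{C})$ and invoke \cref{lem:D-SD-larger} to pad up — which in fact gives the slightly sharper bound $n=m\cdot\dim\operatorname{span}(\mathcal{C})$.
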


\begin{proof}
Consider a basis of \( \mathbf{symm}(\mathbb{R}^{m \times m}) \), for example, $\mathcal{T} = \{ \matr{T}^{(i,j)}, 1 \leq i \leq j \leq m \}$, where $\matr{T}^{(i,j)}$ is a symmetric matrix whose \((i,j), (j,i)\)-th entries are \( 1 \), and others are \( 0 \). 
By \cref{thm:trival-Gmn-SDO}, there exists a matrix \( \matr{P} \in \St(m,m^2(m+1)/2) \) such that $\mathcal{T}$ is  $\mathbf{D}_{m,m^2(m+1)/2}$-SDO by \( \matr{P} \). 
Since \( \mathcal{T} \) is a basis of \( \mathbf{symm}(\mathbb{R}^{m \times m}) \), any matrix of \( \mathcal{C} \) can be expressed as a linear combination of \( \mathcal{T} \). Thus, the matrix \( \matr{P} \) also diagonalizes \( \mathcal{C} \). The proof is complete. 
\end{proof}

Correspondingly, it was proved in~\cite{wang2021new} that almost all the matrix pairs in \( \mathbf{symm}(\mathbb{R}^{m \times m}) \) are \( \mathbf{D}_{m,m+1} \)-SD (equivalently, 1-RSDC). 
We present them here for the convenience of readers. 

\begin{lemma}[{\cite[Theorem 12]{wang2021new}}]
Let $\matr{A}, \matr{B} \in \mathbf{symm}(\mathbb{R}^{m \times m})$. Then for any $\epsilon>0$, there exist $\tilde{\matr{A}}, \tilde{\matr{B}} \in \mathbf{symm}(\mathbb{R}^{m \times m})$ satisfying $\|\matr{A}-\tilde{\matr{A}}\|<\epsilon$ and $\|\matr{B}-\tilde{\matr{B}}\|<\epsilon$, such that $\{\tilde{\matr{A}}, \tilde{\matr{B}}\}$ is \( \mathbf{D}_{m, m+1} \)-SD. Furthermore, if $\matr{A}$ is nonsingular and $\matr{A}^{-1} \matr{B}$ has simple eigenvalues, then $\{\matr{A}, \matr{B}\}$ is itself \( \mathbf{D}_{m, m+1} \)-SD.
\end{lemma}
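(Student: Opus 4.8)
The plan is to establish the ``furthermore'' clause first, since it carries all the content, and then obtain the density statement from it by a standard perturbation. For the ``furthermore'' clause I would work through the characterization of \cref{lem:equiv_D_SD}(iii): $\{\matr{A},\matr{B}\}$ is $\mathbf{D}_{m,m+1}$-SD exactly when some SD pair $\{\bar{\matr{A}},\bar{\matr{B}}\}\subseteq\textbf{symm}(\RR^{(m+1)\times(m+1)})$ has $\matr{A}$ and $\matr{B}$ as its leading $m\times m$ principal submatrices. Thus the problem becomes: choose one extra ``border'' row/column for each of $\matr{A},\matr{B}$ so that the bordered pair is SD. I would take the special border $\bar{\matr{A}}=\Diag{\matr{A},\alpha}$ with $\alpha\neq 0$ and $\bar{\matr{B}}=\bigl[\begin{smallmatrix}\matr{B}&\matr{b}\\ \matr{b}^\T&\beta\end{smallmatrix}\bigr]$, with $\matr{b}\in\RR^m$ and $\alpha,\beta\in\RR$ free. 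Because $\matr{A}$ is nonsingular and $\matr{A}^{-1}\matr{B}$ has simple eigenvalues $\lambda_1,\dots,\lambda_m$, the symmetric pencil admits the spectral resolution $(\lambda\matr{A}-\matr{B})^{-1}=\sum_j\kappa_j(\lambda-\lambda_j)^{-1}\matr{u}_j\matr{u}_j^\T$ over $\CC$, where $\matr{u}_j$ is an eigenvector of $\lambda_j$ and $\kappa_j=1/(\matr{u}_j^\T\matr{A}\matr{u}_j)$, with $\matr{u}_j^\T\matr{A}\matr{u}_j\neq 0$ forced by simplicity; expanding the Schur complement yields
\[
\det(\lambda\bar{\matr{A}}-\bar{\matr{B}})=\det(\lambda\matr{A}-\matr{B})\Bigl[(\alpha\lambda-\beta)-\sum_j\frac{\kappa_j(\matr{u}_j^\T\matr{b})^2}{\lambda-\lambda_j}\Bigr].
\]

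The decisive step is to choose $\matr{b},\alpha,\beta$ so that this degree-$(m+1)$ polynomial has $m+1$ distinct real roots. I would set $\matr{u}_j^\T\matr{b}=0$ at every real $\lambda_j$, so those eigenvalues simply survive in the extension, and for each complex conjugate pair $\{\nu_l,\bar\nu_l\}$ (write $\nu_l=a_l+ib_l$, $b_l\neq0$) put $w_l:=\kappa_l(\matr{u}_l^\T\matr{b})^2$; since $\matr{u}_l^\T\matr{b}$ sweeps all of $\CC$ as $\matr{b}$ varies over $\RR^m$ and the joint map $\matr{b}\mapsto(\matr{u}_j^\T\matr{b})_j$ is an $\RR$-isomorphism onto conjugation-symmetric tuples, each $w_l$ is an independent free complex parameter. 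After clearing $\det(\lambda\matr{A}-\matr{B})$, the remaining factor becomes $R(\lambda)=(\alpha\lambda-\beta)\prod_l((\lambda-a_l)^2+b_l^2)-\sum_l N_l(\lambda)\prod_{k\neq l}((\lambda-a_k)^2+b_k^2)$, where each $N_l$ is an \emph{arbitrary} real polynomial of degree $\le1$ (its two coefficients depend invertibly on $w_l$ because $b_l\neq0$). The crux is that the polynomials $\lambda F,F$ and $\lambda F_l,F_l$ — with $F=\prod_l((\lambda-a_l)^2+b_l^2)$ and $F_l=\prod_{k\neq l}((\lambda-a_k)^2+b_k^2)$ — are linearly independent: evaluating a supposed dependence at $\lambda=\nu_l$ kills $F$ and every $F_k$ with $k\neq l$ while $F_l(\nu_l)\neq0$, and $N_l(\nu_l)=0$ with $\nu_l\notin\RR$ forces $N_l\equiv0$, after which $F$'s coefficient vanishes. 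Hence $R$ realizes every real polynomial of the relevant degree, and I would take $R$ to be a product of distinct real linear factors with leading coefficient $\alpha=1$ and no root equal to any $\lambda_j$. Then $\bar{\matr{A}}^{-1}\bar{\matr{B}}$ has $m+1$ distinct real eigenvalues, so its real Jordan form is diagonal and $\{\bar{\matr{A}},\bar{\matr{B}}\}$ is SD by \cref{lem:nonsingular-pair-SD}; recovering $\matr{b}$ from the $w_l$ (a complex square root, then the isomorphism above) gives a bona fide real border, and \cref{lem:equiv_D_SD}(iii) completes this clause.

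For the density statement I would observe that the pairs $(\matr{A},\matr{B})$ with $\matr{A}$ nonsingular and $\matr{A}^{-1}\matr{B}$ of simple spectrum are dense: $\matr{A}+t\matr{I}_m$ is nonsingular for all but finitely many small $t$, and for such a fixed matrix the discriminant of the characteristic polynomial of $(\matr{A}+t\matr{I}_m)^{-1}\matr{B}'$ is a polynomial in $\matr{B}'$ that is not identically zero — by the canonical form \cref{lem:Uhlig-canonical-nonsingular} one exhibits a $\matr{B}'$ whose pencil has simple eigenvalues — so an arbitrarily small perturbation of $\matr{B}$ lands in this dense set, whence the ``furthermore'' clause applies. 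The main obstacle is entirely in the ``furthermore'' clause: a single extra coordinate must resolve \emph{all} the complex conjugate pairs of $\matr{A}^{-1}\matr{B}$ simultaneously, and it is the linear-independence fact above, combined with the bookkeeping on real versus complex eigenvalues, the reality of $\matr{b}$, the distinctness of the assigned eigenvalues, and the condition $\alpha\neq0$, that makes the one-dimensional border enough.
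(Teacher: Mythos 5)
This lemma is imported by the paper verbatim from \cite[Theorem 12]{wang2021new} and is stated without proof (``we present them here for the convenience of readers''), so there is no in-paper argument to compare against; the only question is whether your construction stands on its own, and it does. The bordering route through \cref{lem:equiv_D_SD}(iii) is sound: the Schur-complement identity for $\det(\lambda\bar{\matr{A}}-\bar{\matr{B}})$, the spectral resolution of $(\lambda\matr{A}-\matr{B})^{-1}$ (where $\matr{u}_j^{\T}\matr{A}\matr{u}_j\neq 0$ indeed follows from simplicity, $\matr{A}$-orthogonality of eigenvectors for distinct eigenvalues, and nondegeneracy of the bilinear form $\vect{x}^{\T}\matr{A}\vect{y}$), the linear-independence argument showing that $R$ sweeps all real polynomials of degree at most $2t+1$, and the final choice of $R$ with distinct real roots avoiding the surviving real $\lambda_j$ together give $m+1$ distinct real eigenvalues for $\bar{\matr{A}}^{-1}\bar{\matr{B}}$; moreover $\alpha$ is forced to be the leading coefficient of $R$, hence nonzero for a monic choice, so $\bar{\matr{A}}$ is nonsingular and \cref{lem:nonsingular-pair-SD} applies (after rescaling the congruence to determinant one, which preserves diagonality). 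Two points should be made explicit in a final write-up: the identification of real $\matr{b}$ with conjugation-symmetric tuples $(\matr{u}_j^{\T}\matr{b})_j$ presupposes the normalization that the eigenvector attached to $\bar{\nu}_l$ is $\overline{\matr{u}_l}$, and the invertibility of $w_l\mapsto N_l$ rests on the computation that the corresponding $2\times 2$ real map has determinant $-4b_l\neq 0$, which you assert but should record. The density half is also correct: $\matr{A}+t\matr{I}_m$ is nonsingular for all small $t$ outside a finite set, and for such a fixed nonsingular symmetric matrix the discriminant of the characteristic polynomial of its inverse times $\matr{B}'$ is a polynomial in $\matr{B}'$ that does not vanish identically on $\mathbf{symm}(\RR^{m\times m})$ (a witness is obtained by congruence to a $\pm 1$ diagonal form), so simple-spectrum pairs are dense and the first clause follows from the second.
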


\begin{lemma}[{\cite[Corollary 5]{wang2021new}}]\label{lem:d-sd-almost-sure}
Let $\{\matr{A}, \matr{B}\}$ be a pair of matrices jointly sampled according to an absolutely continuous probability measure on $\mathbf{symm}(\mathbb{R}^{m \times m}) \times \mathbf{symm}(\mathbb{R}^{m \times m})$. Then, the set $\{\matr{A}, \matr{B}\}$ is $\mathbf{D}_{m,m+1}$-SD almost surely.
\end{lemma}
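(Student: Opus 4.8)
The plan is to deduce this corollary directly from the preceding Lemma (\cite[Theorem 12]{wang2021new}), which asserts that whenever \( \matr{A} \) is nonsingular and \( \matr{A}^{-1}\matr{B} \) has simple eigenvalues, the pair \( \{ \matr{A}, \matr{B} \} \) is \( \mathbf{D}_{m,m+1} \)-SD. Thus it suffices to show that the ``bad set''
\[
\set{B} \eqdef \left\{ (\matr{A}, \matr{B}) \in \textbf{symm}(\RR^{m\times m})\times\textbf{symm}(\RR^{m\times m}) : \matr{A} \text{ singular, or } \matr{A}^{-1}\matr{B} \text{ has a repeated eigenvalue} \right\}
\]
has measure zero with respect to any absolutely continuous probability measure; equivalently (since absolute continuity is with respect to Lebesgue measure on the product space, which is a finite-dimensional real vector space), that \( \set{B} \) has Lebesgue measure zero. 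The standard route is to exhibit \( \set{B} \) as contained in the zero set of a not-identically-zero polynomial in the entries of \( (\matr{A}, \matr{B}) \), because the zero set of a nonzero polynomial on \( \RR^N \) is a proper algebraic subvariety and hence Lebesgue-null.

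First I would handle the singular part: \( \{ \matr{A} : \det(\matr{A}) = 0 \} \) is the zero set of the polynomial \( \det(\matr{A}) \), which is not identically zero (e.g.\ \( \matr{A} = \matr{I}_m \) gives \( 1 \)), hence null; so outside a null set \( \matr{A} \) is invertible. Next, on the open dense set where \( \matr{A} \) is invertible, consider the characteristic polynomial \( \chi(t) = \det(t\matr{I}_m - \matr{A}^{-1}\matr{B}) \). The matrix \( \matr{A}^{-1}\matr{B} \) has a repeated eigenvalue precisely when \( \chi \) has a repeated root, i.e.\ when the discriminant \( \operatorname{disc}(\chi) \) vanishes. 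Clearing denominators, \( \det(\matr{A})^{2m-2}\operatorname{disc}(\chi) \) — or more simply the discriminant of \( \det(t\matr{A} - \matr{B}) \) viewed as a degree-\( m \) polynomial in \( t \) — is a polynomial in the entries of \( \matr{A} \) and \( \matr{B} \). The key point is that this polynomial is \emph{not identically zero}: taking \( \matr{B} = \operatorname{\mathbf{Diag}}\{1, 2, \ldots, m\} \) and \( \matr{A} = \matr{I}_m \), the pencil polynomial is \( \prod_{j=1}^m (t - j) \), which has \( m \) distinct real roots and therefore nonzero discriminant. Hence the locus where the discriminant vanishes is a proper algebraic subvariety, thus Lebesgue-null, and so is its intersection with the region where \( \matr{A} \) is invertible.

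Combining the two pieces, \( \set{B} \) is contained in a finite union of Lebesgue-null sets and is therefore null; since the sampling measure is absolutely continuous, \( \set{B} \) has probability zero. On the complement, the hypotheses of \cite[Theorem 12]{wang2021new} are met, so \( \{ \matr{A}, \matr{B} \} \) is \( \mathbf{D}_{m,m+1} \)-SD, which gives the claim almost surely. The only genuinely substantive step is the non-vanishing of the discriminant polynomial, i.e.\ producing an explicit pair with \( \matr{A} \) nonsingular and \( \matr{A}^{-1}\matr{B} \) having simple eigenvalues; the diagonal example above settles it, so there is no real obstacle, only the bookkeeping of expressing ``repeated eigenvalue of \( \matr{A}^{-1}\matr{B} \)'' as a polynomial condition in \( (\matr{A},\matr{B}) \) after clearing \( \det(\matr{A}) \).
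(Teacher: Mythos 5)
Your argument is correct. Note that the paper itself offers no proof of this statement: it is imported verbatim from \cite{wang2021new} (Corollary 5 there), stated only "for the convenience of readers" right after \cite[Theorem 12]{wang2021new}. Your derivation is exactly the natural route from that theorem: the exceptional set is contained in the union of the zero locus of $\det(\matr{A})$ and the zero locus of the resultant of $\det(t\matr{A}-\matr{B})$ and its $t$-derivative, both polynomials in the independent entries of $(\matr{A},\matr{B})$ that are not identically zero on $\textbf{symm}(\RR^{m\times m})\times\textbf{symm}(\RR^{m\times m})$ (your example $\matr{A}=\matr{I}_m$, $\matr{B}=\Diag{1,\ldots,m}$ settles this), hence Lebesgue-null and therefore of probability zero under any absolutely continuous measure.
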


\subsection{Decomposition based weakly simultaneously diagonalizable matrices}

\begin{definition}\label{def:DWSD}
Let the set $\mathcal{C}$ be as in \eqref{set_C}.
The set $\mathcal{C}$ is \emph{decomposition based weakly simultaneously diagonalizable} (DWSD), if there exist $\{\matr{P}_k\}_{k \geq 1} \subseteq \SL_{m}(\RR)$ and $\mathcal{D}_{k} = \{ \matr{D}_k^{(i)}\}_{1 \leq i \leq L} \subseteq \mathbf{D}_{m}$ for $k\geq 1$, such that
\begin{align*}
\lim_{k\rightarrow\infty} \varphi_{\rm D}(\matr{P}_k, \mathcal{D}_{k}) = 0.
\end{align*}
We denote the class of DWSD sets by $\set{DWSD}.$  
\end{definition}

It is easy to see that the notion DWSD in \cref{def:DWSD} is exactly the notion ASDC proposed in \cite{wang2021new}. 
We now recall several important results about DWSD in~\cite{wang2021new}, which will be used in \cref{sec:relationship}.

\begin{lemma}[{\cite[Theorem 7]{wang2021new}}]\label{lem:DWSD-nonsingular-pair}
Let $\matr{A}, \matr{B} \in \mathbf{symm}(\mathbb{R}^{m \times m})$ and $\matr{A}$ be nonsingular. 
Then the set $\{\matr{A}, \matr{B}\}$ is DWSD if and only if $\matr{A}^{-1} \matr{B}$ has only real eigenvalues.
\end{lemma}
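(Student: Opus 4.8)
\textbf{Proof plan for \cref{lem:DWSD-nonsingular-pair}.}

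The plan is to reduce the statement to the canonical form of a nonsingular symmetric pair and then analyze the two directions separately. Since $\matr{A}$ is nonsingular, by \cref{lem:Uhlig-canonical-nonsingular} there exists $\matr{P}\in\GL_m(\RR)$ bringing $\{\matr{A},\matr{B}\}$ into a block-diagonal canonical form whose blocks are governed by the real Jordan normal form of $\matr{A}^{-1}\matr{B}$; moreover the property ``DWSD'' is invariant under the congruence $\matr{A}\mapsto\matr{P}^\T\matr{A}\matr{P}$, $\matr{B}\mapsto\matr{P}^\T\matr{B}\matr{P}$ (absorbing a scalar to keep determinants fixed, as noted in \cref{defini:TWSD-B}(iii)). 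Hence it suffices to treat a single Jordan block pair $\sigma\matr{E}(m_s)$, $\sigma(\lambda_s\matr{E}(m_s)+\matr{F}(m_s))$ — or, equivalently after multiplying by $\sigma\matr{E}(m_s)$ on the appropriate side, the pair $\matr{E}(m_s)$ and $\matr{E}(m_s)\matr{J}(\lambda_s,m_s)$ — and then combine blocks as in \cref{thm:TWSD-block-TWSD-B-suffic}.

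For the ``if'' direction, suppose all eigenvalues of $\matr{A}^{-1}\matr{B}$ are real. I would use the scaling matrices $\matr{R}_k(m)$ from \eqref{def:matrix-G-R-k}: for a real Jordan block $\matr{J}(\lambda,m)$ we have $\matr{R}_k(m)^{-1}\matr{J}(\lambda,m)\matr{R}_k(m)\to\lambda\matr{I}_m$ by \eqref{eq:R-sequence-J-limit}, and $\matr{R}_k(m)\in\SL_m(\RR)$. The goal is to exhibit $\matr{P}_k\in\SL_m(\RR)$ and diagonal $\matr{D}_k^{(i)}$ with $\matr{A}_i-\matr{P}_k^\T\matr{D}_k^{(i)}\matr{P}_k\to\matr{0}$; equivalently (taking inverses, since $\matr{A}$ is fixed and nonsingular), that $\matr{P}_k^{-1}\matr{A}_i\matr{P}_k^{-\T}$ converges to diagonal matrices. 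Working in the canonical form, $\matr{A}$ is already block-antidiagonal ($\sigma\matr{E}(m_s)$) and $\matr{A}^{-1}\matr{B}$ is block-diagonal with Jordan blocks; conjugating $\matr{A}^{-1}\matr{B}$ by $\matr{R}_k$ drives the off-diagonal (superdiagonal) entries to zero while $\matr{E}(m_s)$ transforms in a controlled way. The main work is to check that the resulting approximations $\matr{D}_k^{(i)}$ can indeed be chosen diagonal in the $\varphi_{\rm D}$ sense; here one can simply take $\matr{D}_k^{(i)}$ to be the diagonal part of the relevant transformed matrix and verify the off-diagonal part vanishes in the limit. Alternatively — and this is likely the cleaner route for the authors — invoke \cref{lem:DWSD-nonsingular-pair}'s counterpart \cref{thm:TWSD-B-nonsingular-pair} and the fact that $\set{TWSD}\textrm{-}\set{B}\subseteq\set{DWSD}$: if $\matr{A}^{-1}\matr{B}$ has only real eigenvalues then $\{\matr{A},\matr{B}\}$ is TWSD-B by \cref{thm:TWSD-B-nonsingular-pair}, and TWSD-B for a nonsingular pair implies DWSD by \cref{defini:TWSD-B}(i)/(ii) combined with taking inverses of the (bounded, convergent-to-diagonal) congruences.

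For the ``only if'' direction, suppose $\{\matr{A},\matr{B}\}$ is DWSD, so there exist $\matr{P}_k\in\SL_m(\RR)$ and diagonal $\matr{D}_k^{(i)}$ with $\matr{A}-\matr{P}_k^\T\matr{D}_k^{(1)}\matr{P}_k\to\matr{0}$ and $\matr{B}-\matr{P}_k^\T\matr{D}_k^{(2)}\matr{P}_k\to\matr{0}$. Since $\matr{A}$ is nonsingular, $\matr{P}_k^\T\matr{D}_k^{(1)}\matr{P}_k$ is eventually nonsingular, hence $\matr{D}_k^{(1)}$ is eventually nonsingular; then $\matr{P}_k^{-1}(\matr{D}_k^{(1)})^{-1}\matr{P}_k^{-\T}=(\matr{P}_k^\T\matr{D}_k^{(1)}\matr{P}_k)^{-1}\to\matr{A}^{-1}$ (using continuity of inversion on the eventually-nonsingular sequence). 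Multiplying, $(\matr{D}_k^{(1)})^{-1}\matr{D}_k^{(2)}$ is diagonal and $\matr{P}_k^{-1}\big((\matr{D}_k^{(1)})^{-1}\matr{D}_k^{(2)}\big)\matr{P}_k\to\matr{A}^{-1}\matr{B}$ — that is, $\matr{A}^{-1}\matr{B}$ is a limit of matrices each conjugate to a diagonal (hence diagonalizable with real spectrum) matrix. The main obstacle is concluding that the limit therefore has only real eigenvalues; this is exactly the content of the auxiliary lemma the paper calls \cref{lem:weak-Jordan-form} (a diagonalizable-with-real-spectrum sequence cannot converge to a matrix with a genuine complex eigenvalue, since the characteristic polynomial depends continuously on the matrix and a non-real root would have to appear in the limit while being absent from every term). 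Invoking that lemma finishes the argument. I expect the ``only if'' direction to be the more delicate one, with the key point being the careful passage to the limit through continuity of matrix inversion on an eventually-nonsingular sequence together with the closedness-type statement of \cref{lem:weak-Jordan-form}.
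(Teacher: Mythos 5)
The statement you are proving is not actually proved in this paper: it is quoted verbatim from \cite[Theorem 7]{wang2021new}, and the paper only recalls (in \cref{sec:conclusi}) the idea of that proof, namely to \emph{perturb} $\matr{B}$ so that $\matr{A}^{-1}\matr{B}$ acquires distinct real eigenvalues; the perturbed pair is then exactly SD by \cref{lem:nonsingular-pair-SD}, and since $\varphi_{\rm D}$ measures the error in the original coordinates, the DWSD error is just the size of the perturbation, with no amplification by the (generally unbounded) congruence. Your ``only if'' direction is essentially sound and matches the standard argument: continuity of inversion at the nonsingular limit $\matr{A}$, convergence of $\matr{P}_k^{-1}(\matr{D}_k^{(1)})^{-1}\matr{D}_k^{(2)}\matr{P}_k$ to $\matr{A}^{-1}\matr{B}$, and closedness of the set of characteristic polynomials with only real roots (the content of \cref{lem:weak-Jordan-form}).

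The ``if'' direction, however, has a genuine gap. The claimed equivalence between $\matr{A}_i-\matr{P}_k^{\T}\matr{D}_k^{(i)}\matr{P}_k\to\matr{0}$ and ``$\matr{P}_k^{-1}\matr{A}_i\matr{P}_k^{-\T}$ converges to a diagonal matrix'' is false when $\matr{P}_k$ is unbounded, because $\matr{A}_i-\matr{P}_k^{\T}\matr{D}_k^{(i)}\matr{P}_k=\matr{P}_k^{\T}\bigl(\matr{P}_k^{-\T}\matr{A}_i\matr{P}_k^{-1}-\matr{D}_k^{(i)}\bigr)\matr{P}_k$, and conjugation can amplify the error; for the same reason the shortcut ``TWSD-B $\Rightarrow$ DWSD by taking inverses of the bounded, convergent-to-diagonal congruences'' is unjustified: the bounded objects are $\matr{P}_k^{\T}\matr{A}_i\matr{P}_k$, not $\matr{P}_k$, and for a pair that is TWSD-B but not SD the $\matr{P}_k$ must be unbounded. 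Concretely, on a canonical block $\{\sigma\matr{E}(m),\ \sigma\matr{E}(m)\matr{J}(\lambda,m)\}$ with $m\geq 2$, the TWSD-B sequence $\matr{P}_k=\matr{R}_k(m)\matr{Q}$ from \cref{lem:E-F-simul} gives $\matr{P}_k^{\T}\matr{B}\matr{P}_k-\sigma\lambda\matr{G}(m)=\tfrac{\sigma}{k}\matr{Q}^{\T}\matr{F}(m)\matr{Q}\to\matr{0}$, yet transforming back one finds $\matr{P}_k^{-\T}\bigl(\sigma\lambda\matr{G}(m)\bigr)\matr{P}_k^{-1}=\sigma\lambda\matr{E}(m)$, so that $\matr{B}-\matr{P}_k^{-\T}\bigl(\sigma\lambda\matr{G}(m)\bigr)\matr{P}_k^{-1}=\sigma\matr{F}(m)$ for every $k$: the $1/k$ decay is exactly cancelled and the $\varphi_{\rm D}$-error never vanishes. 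So the $\varphi_{\rm T}$-construction of \cref{thm:better-decomposition,thm:TWSD-B-nonsingular-pair} does not deliver DWSD; one really needs the perturbation device of \cite{wang2021new} (or some substitute). This is precisely the point the paper stresses: the equality $\set{TWSD}\textrm{-}\set{B}=\set{DWSD}$ for nonsingular pairs is a \emph{consequence} of proving the two characterizations separately, not a tool one may use to deduce one from the other.
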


\begin{lemma}[{\cite[Theorem 8]{wang2021new}}]\label{lem:DWSD-singular-pair}
Let $\matr{A}, \matr{B} \in \mathbf{symm}(\mathbb{R}^{m \times m})$.
If the set $\{\matr{A}, \matr{B}\}$ is singular, then it is DWSD.
\end{lemma}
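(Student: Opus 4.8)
The plan is to reduce the singular pair $\{\matr{A},\matr{B}\}$ to the canonical form of \cref{lem:lancaster-canonical-general-pair} and then handle each block type separately. Since $\matr{A}$ and $\matr{B}$ are simultaneously congruent to $\Diag{\matr{X}_1,\ldots,\matr{X}_p}$ and $\Diag{\matr{Y}_1,\ldots,\matr{Y}_p}$, and since DWSD is clearly invariant under simultaneous congruence (if $\matr{A}_i=\matr{P}^\T\matr{D}^{(i)}\matr{P}$ approximately, then $\matr{R}^\T\matr{A}_i\matr{R}=(\matr{P}\matr{R})^\T\matr{D}^{(i)}(\matr{P}\matr{R})$; one must track the determinant, but a fixed congruence only rescales it by a constant, which is harmless by the analogue of \cref{defini:TWSD-B}(iii) for DWSD), it suffices to show that each block pair $\{\matr{X}_s,\matr{Y}_s\}$ is DWSD, and then assemble the block-diagonal approximations. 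Actually, more care is needed: DWSD of each block does not immediately give DWSD of the direct sum because the determinant constraint couples the blocks. So I would instead argue that a \emph{singular} canonical form must contain at least one block of Type 2, 4, or 5 (the types where $\matr{X}_s$ is singular), and use the extra freedom in that singular block to absorb the determinant corrections coming from the other blocks, exactly as in the proof strategy hinted at in \cref{alg:two_matrix}.

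The key steps, in order: First, invoke \cref{lem:lancaster-canonical-general-pair} to write $\matr{A},\matr{B}$ in canonical form, and observe that singularity of the pencil $\operatorname{span}\{\matr{A},\matr{B}\}$ forces the canonical form to contain at least one block of Type 2 ($\matr{X}_s=\eta_s\matr{F}(m_s)$, which is singular), Type 4, or Type 5 — since in Types 1 and 3 every nontrivial linear combination $\alpha\matr{X}_s+\beta\matr{Y}_s$ is nonsingular for generic $(\alpha,\beta)$, so a pair built only from Type 1 and Type 3 blocks would be nonsingular. Second, for each block type, construct an explicit sequence $\matr{P}_k^{(s)}$ (of the form $\matr{R}_k(m_s)$ times a fixed orthogonal matrix, as in \eqref{def:matrix-G-R-k}) such that $(\matr{P}_k^{(s)})^\T\matr{X}_s\matr{P}_k^{(s)}$ and $(\matr{P}_k^{(s)})^\T\matr{Y}_s\matr{P}_k^{(s)}$ both approach diagonal matrices; for the off-diagonal-killing behaviour this is the same computation as in \eqref{eq:R-sequence-J-limit}, noting $\matr{E}(m)$, $\matr{F}(m)$, $\matr{H}(m)$ each have their nonzero entries on a single anti-diagonal or two adjacent ones, which $\matr{R}_k$ conjugation scales to zero. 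Third, set $\matr{P}_k=\Diag{\matr{P}_k^{(1)},\ldots,\matr{P}_k^{(p)}}$ but rescale the factor sitting on a chosen singular block (Type 2/4/5) by a scalar $c_k$ so that $\det(\matr{P}_k)=1$ for all $k$; because that block's $\matr{X}_s$ and $\matr{Y}_s$ entries can be made to still tend to diagonal matrices (in fact to $\matr{0}$ for Types 2,4,5 after suitable scaling, or to a nonzero diagonal limit), the rescaling does not destroy convergence. Fourth, conclude that the assembled sequence witnesses DWSD of the canonical pair, hence of $\{\matr{A},\matr{B}\}$.

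The main obstacle will be the bookkeeping in the third step: one must verify that the determinant-normalizing scalar $c_k$ applied to the singular block neither blows up the diagonal part of that block's image nor prevents its off-diagonal part from vanishing. For Type 2, $\matr{X}_s=\eta_s\matr{F}(m_s)$ and $\matr{Y}_s=\eta_s\matr{E}(m_s)$: conjugating by $c_k\matr{R}_k(m_s)$ sends $\matr{F}(m_s)$'s single superdiagonal-anti-diagonal entry to something like $c_k^2 k^{-1}$ and $\matr{E}(m_s)$'s anti-diagonal entries to $c_k^2$ times constants, so one needs $c_k$ bounded (say $c_k\to$ const) — which is exactly what happens if the other blocks' determinants already have a finite nonzero product limit, as they do. For Types 4 and 5 the block is even more degenerate ($\matr{Y}_p=\matr{0}$ in Type 5), giving more slack. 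I would also double-check the reduction to the block case handles the determinant scaling from the fixed congruence $\matr{R}$ via the DWSD analogue of \cref{defini:TWSD-B}(iii) (that replacing $\det=1$ by $\det=$const leaves DWSD unchanged), which I would state and prove as a one-line remark before the main argument if it is not already available. Given \cref{lem:DWSD-nonsingular-pair} and the canonical-form machinery already in the excerpt, everything else is routine.
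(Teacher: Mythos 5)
You should first note that the paper itself does not prove this statement—it is quoted verbatim from \cite{wang2021new}—so your proposal has to stand on its own, and it has a genuine gap: it establishes the wrong property. DWSD (\cref{def:DWSD}) requires $\|\matr{A}-\matr{P}_k^{\T}\matr{D}_k^{(1)}\matr{P}_k\|+\|\matr{B}-\matr{P}_k^{\T}\matr{D}_k^{(2)}\matr{P}_k\|\to 0$, i.e.\ the \emph{original} pair must be approximated in norm by exactly SD pairs. Your steps 2--4 instead construct $\matr{P}_k$ so that $\matr{P}_k^{\T}\matr{A}\matr{P}_k$ and $\matr{P}_k^{\T}\matr{B}\matr{P}_k$ tend to diagonal matrices; that is the transformation-based property (\cref{def:twsd}), essentially a re-proof of \cref{thm:TWSD-B-singular-pair}. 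The two do not transfer: writing $\matr{Q}_k=\matr{P}_k^{-1}$, one has $\matr{A}-\matr{Q}_k^{\T}\matr{D}_k\matr{Q}_k=\matr{Q}_k^{\T}\bigl(\matr{P}_k^{\T}\matr{A}\matr{P}_k-\matr{D}_k\bigr)\matr{Q}_k$, and the sequences you build (products of $\matr{R}_k(m_s)$ with determinant-fixing scalars) are unbounded, so vanishing of the middle factor gives no control on the error; no rates are tracked anywhere. The paper treats TWSD-B and DWSD as genuinely distinct notions whose agreement even for nonsingular pairs is a theorem, not a formality, so this conflation is the central missing idea, not bookkeeping. (Your side remark that the determinant normalization is harmless for DWSD is correct; that part is not the problem.)

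Moreover, the block-by-block plan cannot be repaired, because a singular pair may contain Type 3 (complex-eigenvalue) blocks: for instance $\matr{A}=\Diag{\matr{E}(2),0}$, $\matr{B}=\Diag{\mu\matr{E}(2)+v\matr{H}(2),0}$ with $v\neq 0$ is a singular pair. For such a block your computation is wrong—$\matr{H}(2m_s)$ has its $+1$ entries on the super-anti-diagonal $i+j=2m_s$, which conjugation by $\matr{R}_k(2m_s)$ multiplies by $k$, so they blow up rather than vanish—and no repair is possible: the standalone pair $\{\matr{E}(2),\mu\matr{E}(2)+v\matr{H}(2)\}$ is not TWSD (\cref{exa:not-TWSD}, Case 3 of \cref{lem:2times2_equiv}) and not DWSD (\cref{lem:DWSD-nonsingular-pair}, since its eigenvalues $\mu\pm v\ui$ are non-real). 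Consequently, any correct proof must use perturbations that couple the Type 3 blocks with the singular (Type 4/5) part through off-block-diagonal entries; this cross-block interaction is precisely the nontrivial content of the cited theorem and is absent from your outline. A smaller slip in step 1: a Type 2 block $\{\eta\matr{F}(m_s),\eta\matr{E}(m_s)\}$ is itself a nonsingular pair, since $\det(\alpha\matr{F}(m_s)+\beta\matr{E}(m_s))=\pm\beta^{m_s}$, so singularity of the pencil forces a Type 4 or Type 5 block specifically, not merely one of Types 2, 4, 5.
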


\begin{lemma}[{\cite[Theorem 9]{wang2021new}}]\label{lem:DWSD-triple-necessary-sufficient-condition}
Let $\matr{A}, \matr{B}, \matr{C} \in \mathbf{symm}(\mathbb{R}^{m \times m})$ and $\matr{A}$ be nonsingular. 
Then, the set $\{\matr{A}, \matr{B}, \matr{C}\}$ is DWSD if and only if $\left\{\matr{A}^{-1} \matr{B}, \matr{A}^{-1} \matr{C}\right\}$ is a pair of commuting matrices with real eigenvalues.
\end{lemma}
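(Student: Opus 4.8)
We start with two simple reformulations. By \Cref{def:DWSD} and \eqref{eq:func_D}, the set $\mathcal{C}=\{\matr{A},\matr{B},\matr{C}\}$ is DWSD if and only if there exist $\matr{P}_k\in\SL_m(\RR)$ and diagonal matrices $\matr{D}_k^{(1)},\matr{D}_k^{(2)},\matr{D}_k^{(3)}$ with $\matr{P}_k^{\T}\matr{D}_k^{(i)}\matr{P}_k\to\matr{A}_i$ for each $i$; since $\matr{P}_k^{-1}\in\SL_m(\RR)$ diagonalizes $\matr{P}_k^{\T}\matr{D}_k^{(i)}\matr{P}_k$, this says exactly that $\mathcal{C}$ is a limit of SD sets. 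Also, arguing as in \Cref{defini:TWSD-B}(iii) (using $\matr{R}^{\T}\matr{D}\matr{R}=\matr{D}$ for any diagonal $\matr{D}$ and any diagonal sign matrix $\matr{R}$), the notion DWSD is unchanged if one only requires $\det\matr{P}_k=c$ for a fixed $c\neq0$; hence DWSD is invariant under congruence $\matr{A}_i\mapsto\matr{T}^{\T}\matr{A}_i\matr{T}$, because one may replace $\matr{P}_k$ by $\matr{P}_k\matr{T}$.

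\emph{Necessity.} Suppose $\mathcal{C}$ is DWSD, with $\matr{P}_k,\matr{D}_k^{(i)}$ as above. Since $\matr{P}_k^{\T}\matr{D}_k^{(1)}\matr{P}_k\to\matr{A}$ and $\matr{A}$ is nonsingular, $\matr{D}_k^{(1)}$ is invertible for large $k$ and
\[
(\matr{P}_k^{\T}\matr{D}_k^{(1)}\matr{P}_k)^{-1}(\matr{P}_k^{\T}\matr{D}_k^{(2)}\matr{P}_k)=\matr{P}_k^{-1}(\matr{D}_k^{(1)})^{-1}\matr{D}_k^{(2)}\matr{P}_k\ \longrightarrow\ \matr{A}^{-1}\matr{B},
\]
and likewise, with $\matr{D}_k^{(3)}$ in place of $\matr{D}_k^{(2)}$, the left-hand side tends to $\matr{A}^{-1}\matr{C}$. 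Each prelimit matrix is conjugate to a diagonal matrix, hence has only real eigenvalues, and the two of them commute, being conjugates by the common $\matr{P}_k$ of two (commuting) diagonal matrices. Since having only real eigenvalues and commuting are both closed conditions, it follows that $\matr{A}^{-1}\matr{B}$ and $\matr{A}^{-1}\matr{C}$ have only real eigenvalues and commute.

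\emph{Sufficiency.} Now assume $\matr{M}:=\matr{A}^{-1}\matr{B}$ and $\matr{N}:=\matr{A}^{-1}\matr{C}$ commute and have only real eigenvalues. Both are self-adjoint for the nondegenerate symmetric form defined by $\matr{A}$, so their common generalized eigenspaces yield an $\matr{A}$-orthogonal decomposition of $\RR^m$; restricting to one summand and using congruence-invariance, I may assume $\matr{M}=\lambda\matr{I}_m+\matr{M}_0$, $\matr{N}=\mu\matr{I}_m+\matr{N}_0$ with $\matr{M}_0,\matr{N}_0$ commuting nilpotents, and then subtracting $\lambda$ (resp.\ $\mu$) times $\matr{D}_k^{(1)}$ from $\matr{D}_k^{(2)}$ (resp.\ $\matr{D}_k^{(3)}$) reduces the problem to the case $\matr{M}=\matr{M}_0$, $\matr{N}=\matr{N}_0$ nilpotent. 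The plan is to produce, for each small $\epsilon$, symmetric $\matr{B}_\epsilon\to\matr{B}$ and $\matr{C}_\epsilon\to\matr{C}$ such that $\matr{A}^{-1}\matr{B}_\epsilon$ and $\matr{A}^{-1}\matr{C}_\epsilon$ commute and each has diagonal real Jordan form; then, since $[\matr{B}_\epsilon,\matr{C}_\epsilon]_{\matr{A}}=[\matr{A}^{-1}\matr{B}_\epsilon,\matr{A}^{-1}\matr{C}_\epsilon]$, \Cref{lem:SD-multiple-nonsingular} with pencil $\matr{A}$ shows $\{\matr{A},\matr{B}_\epsilon,\matr{C}_\epsilon\}$ is SD, and $\epsilon\to0$ gives the DWSD sequence. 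When $\matr{N}_0=p(\matr{M}_0)$ for a polynomial $p$ — in particular whenever $\matr{M}_0$ is non-derogatory — this follows directly from the pair result: \Cref{lem:DWSD-nonsingular-pair} supplies $\matr{P}_k,\matr{D}_k^{(1)},\matr{D}_k^{(2)}$ with $\matr{P}_k^{\T}\matr{D}_k^{(1)}\matr{P}_k\to\matr{A}$ and $\matr{P}_k^{\T}\matr{D}_k^{(2)}\matr{P}_k\to\matr{B}$, and putting $\matr{D}_k^{(3)}:=\matr{D}_k^{(1)}\,p\big((\matr{D}_k^{(1)})^{-1}\matr{D}_k^{(2)}\big)$ (diagonal, well defined for large $k$) gives
\[
\matr{P}_k^{\T}\matr{D}_k^{(3)}\matr{P}_k=(\matr{P}_k^{\T}\matr{D}_k^{(1)}\matr{P}_k)\,p\big((\matr{P}_k^{\T}\matr{D}_k^{(1)}\matr{P}_k)^{-1}(\matr{P}_k^{\T}\matr{D}_k^{(2)}\matr{P}_k)\big)\ \longrightarrow\ \matr{A}\,p(\matr{A}^{-1}\matr{B})=\matr{A}\matr{N}_0=\matr{C}.
\]

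\emph{The main obstacle} is the remaining case, in which $\matr{M}_0$ is derogatory and $\matr{N}_0$ is not a polynomial in it, so that $\RR^m$ is not cyclic over the commutative algebra generated by $\matr{M}_0$ and $\matr{N}_0$ and the pair cannot be produced from a single $\matr{A}$-self-adjoint operator. Here I would set up a simultaneous canonical form for a commuting pair of $\matr{A}$-self-adjoint nilpotents — starting from the canonical form of $\{\matr{A},\matr{B}_0\}$ in \Cref{lem:Uhlig-canonical-nonsingular} (or \Cref{lem:lancaster-canonical-general-pair}) and then describing the matrices commuting with $\bigoplus_s\matr{J}(0,m_s)$ — and build $(\matr{B}_\epsilon,\matr{C}_\epsilon)$ block by block, combining the diagonal scalings $\matr{R}_k(m_s)$ of \eqref{eq:R-sequence-J-limit} (which handle the ``scalar-block'' part as in the proof of \Cref{thm:better-decomposition}) with genuinely non-conjugating perturbations that break each nilpotent Jordan structure, in the spirit of the proof of \Cref{lem:DWSD-nonsingular-pair}. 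Keeping the two perturbed families simultaneously commuting and $\matr{A}$-self-adjoint across blocks of differing sizes is the delicate point, and is where I expect most of the work to lie.
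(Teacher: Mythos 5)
First, a point of reference: the paper does not prove this statement at all --- it is quoted verbatim from \cite[Theorem 9]{wang2021new} (the paper only extracts the necessity half of its proof as \cref{lem:DWSD-general-necessary-condition}), so your attempt has to stand on its own. The necessity direction you give is correct and is essentially the standard argument: from $\matr{P}_k^{\T}\matr{D}_k^{(1)}\matr{P}_k\to\matr{A}$ nonsingular you get invertibility of $\matr{D}_k^{(1)}$ for large $k$, the conjugated ratios converge to $\matr{A}^{-1}\matr{B}$ and $\matr{A}^{-1}\matr{C}$, and real spectrum and commutativity pass to the limit. Your preliminary reductions in the sufficiency direction (congruence/determinant invariance of DWSD, the $\matr{A}$-orthogonal splitting into joint generalized eigenspaces, subtracting $\lambda\matr{A}$ and $\mu\matr{A}$ to reduce to commuting nilpotents) are also sound, and the special case $\matr{N}_0=p(\matr{M}_0)$ via $\matr{D}_k^{(3)}:=\matr{D}_k^{(1)}\,p\bigl((\matr{D}_k^{(1)})^{-1}\matr{D}_k^{(2)}\bigr)$ on top of \cref{lem:DWSD-nonsingular-pair} is a nice and correct observation.

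However, the sufficiency direction is not proved. The case you flag yourself --- $\matr{M}_0$ derogatory and $\matr{N}_0$ not a polynomial in $\matr{M}_0$ --- is not a corner case to be deferred; it is precisely the substance of \cite[Theorem 9]{wang2021new}. A commuting pair of $\matr{A}$-self-adjoint nilpotents has no simple simultaneous canonical form: the matrices commuting with $\bigoplus_s\matr{J}(0,m_s)$ form the algebra of triangularly striped block matrices (cf.\ \cref{lem:matrices-commute-with-jordan-norma-form}), and the genuinely hard step is to construct perturbations $\matr{B}_\epsilon,\matr{C}_\epsilon$ that simultaneously (a) keep $\matr{A}^{-1}\matr{B}_\epsilon$ and $\matr{A}^{-1}\matr{C}_\epsilon$ commuting, (b) keep both $\matr{A}$-self-adjoint, and (c) make both diagonalizable with real spectrum, across Jordan blocks of different sizes where off-diagonal striped blocks couple the summands. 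Your closing paragraph describes a plan for this (``I would set up a simultaneous canonical form \dots where I expect most of the work to lie'') rather than an argument, so the ``if'' half of the lemma remains unestablished. As written, the proposal proves necessity and a special case of sufficiency, but not the theorem; to complete it you would either have to carry out the block-by-block construction you sketch (which is essentially reproving Wang--Jiang's Theorem 9) or simply cite that result, as the paper does.
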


\begin{lemma}[{\cite[Corollary 1]{wang2021new}}]\label{lem:PD-DWSD-equiv-SD}
Let the set $\mathcal{C}$ be as in \eqref{set_C}, and $\matr{S} \in \operatorname{span}(\mathcal{C})$ be positive definite. 
Then \( \mathcal{C}\) is SD if and only if it is DWSD.
\end{lemma}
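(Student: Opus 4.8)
The plan is to follow the strategy used for \cref{thm:PD-equivalent}, transporting it from the transformation-based notion TWSD-B to the decomposition-based notion DWSD. The implication ``SD $\Rightarrow$ DWSD'' is immediate: if $\matr{Q}\in\SL_{m}(\RR)$ diagonalizes $\mathcal{C}$, then $\matr{A}_{i}=(\matr{Q}^{-1})^{\T}\bigl(\matr{Q}^{\T}\matr{A}_{i}\matr{Q}\bigr)\matr{Q}^{-1}$, so the constant sequence $\matr{P}_{k}=\matr{Q}^{-1}$ together with $\matr{D}^{(i)}=\matr{Q}^{\T}\matr{A}_{i}\matr{Q}$ gives $\varphi_{\rm D}(\matr{P}_{k},\mathcal{D})=0$. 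For the converse I would first normalize: since $\matr{S}=\sum_{i}\alpha_{i}\matr{A}_{i}\succ 0$, choose $\matr{R}\in\GL_{m}(\RR)$ with $\matr{R}^{\T}\matr{S}\matr{R}=\matr{I}_{m}$ and replace $\mathcal{C}$ by $\{\matr{R}^{\T}\matr{A}_{i}\matr{R}\}_{1\le i\le L}$. Both SD and DWSD are preserved by a fixed nonsingular congruence and by rescaling the $\matr{P}_{k}$ by a nonzero scalar (the scalar is absorbed into the $\matr{D}_{k}^{(i)}$, and a coordinate sign flip restores determinant $1$ if necessary), so it suffices to treat the case $\matr{S}=\matr{I}_{m}$, which I assume from now on.

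Assume then that $\mathcal{C}$ is DWSD with $\sum_{i}\alpha_{i}\matr{A}_{i}=\matr{I}_{m}$, say $\matr{P}_{k}^{\T}\matr{D}_{k}^{(i)}\matr{P}_{k}\to\matr{A}_{i}$ for $\matr{P}_{k}\in\SL_{m}(\RR)$ and diagonal $\matr{D}_{k}^{(i)}$. Put $\matr{E}_{k}=\sum_{i}\alpha_{i}\matr{D}_{k}^{(i)}$; this is diagonal and $\matr{P}_{k}^{\T}\matr{E}_{k}\matr{P}_{k}\to\matr{I}_{m}$. For large $k$ the latter is positive definite, so by Sylvester's law of inertia $\matr{E}_{k}\succ 0$ and $\matr{E}_{k}^{1/2}$ is a well-defined diagonal matrix; setting $\matr{W}_{k}=\matr{E}_{k}^{1/2}\matr{P}_{k}$ we get $\matr{W}_{k}^{\T}\matr{W}_{k}=\matr{P}_{k}^{\T}\matr{E}_{k}\matr{P}_{k}\to\matr{I}_{m}$. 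Hence $\{\matr{W}_{k}\}$ is bounded and, by the Bolzano--Weierstrass theorem, some subsequence $\matr{W}_{k_{p}}$ converges to a matrix $\matr{W}$ with $\matr{W}^{\T}\matr{W}=\matr{I}_{m}$, i.e., $\matr{W}$ orthogonal. Now $\matr{F}_{k}^{(i)}\eqdef\matr{E}_{k}^{-1/2}\matr{D}_{k}^{(i)}\matr{E}_{k}^{-1/2}$ is diagonal and satisfies $\matr{W}_{k}^{\T}\matr{F}_{k}^{(i)}\matr{W}_{k}=\matr{P}_{k}^{\T}\matr{D}_{k}^{(i)}\matr{P}_{k}\to\matr{A}_{i}$; since $\matr{W}_{k_{p}}\to\matr{W}$ is invertible, continuity of matrix inversion (\cref{lem:bounded-inverse}) yields $\matr{F}_{k_{p}}^{(i)}=\matr{W}_{k_{p}}^{-\T}\bigl(\matr{W}_{k_{p}}^{\T}\matr{F}_{k_{p}}^{(i)}\matr{W}_{k_{p}}\bigr)\matr{W}_{k_{p}}^{-1}\to\matr{W}\matr{A}_{i}\matr{W}^{\T}$. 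As each $\matr{F}_{k_{p}}^{(i)}$ is diagonal and the set of diagonal matrices is closed, $\matr{W}\matr{A}_{i}\matr{W}^{\T}$ is diagonal for every $i$. Flipping one coordinate if $\det\matr{W}=-1$, we may take $\matr{W}\in\SON_{m}$; then $\mathcal{C}$ is SDO (equivalently, the $\matr{A}_{i}$ commute pairwise, cf.\ \cref{lem:SDO-community}), hence SD, and reversing the normalization by $\matr{R}$ shows the original set is SD.

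The main difficulty is that a DWSD witness need not be bounded: the congruences $\matr{P}_{k}$ and the diagonal factors $\matr{D}_{k}^{(i)}$ may both diverge in mutually compensating ways, so one cannot pass to the limit naively. The positive-definite pencil is exactly the tool that resolves this---after normalizing the pencil to $\matr{I}_{m}$, the relation $\matr{P}_{k}^{\T}\matr{E}_{k}\matr{P}_{k}\to\matr{I}_{m}$ together with the rescaling $\matr{W}_{k}=\matr{E}_{k}^{1/2}\matr{P}_{k}$ turns the unbounded data into a bounded, asymptotically orthogonal sequence, and a compactness argument extracts a genuine (orthogonal) simultaneous diagonalizer. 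The remaining ingredients---equivalence of the $\SL_{m}(\RR)$- and $\GL_{m}(\RR)$-versions of SD and DWSD, positivity of $\matr{E}_{k}$ via inertia, and the determinant-sign adjustment---are routine and mirror the bookkeeping already done in the proof of \cref{thm:PD-equivalent}.
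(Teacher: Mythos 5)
Your argument is correct. Note that the paper itself gives no proof of this lemma---it is imported directly as Corollary~1 of \cite{wang2021new}---so the natural comparison is with the paper's internal analogue, \cref{thm:PD-equivalent} for TWSD-B, whose strategy you explicitly transport. The transport is not automatic: in the TWSD-B proof the normalization $\bar{\matr{A}}_1=\matr{I}_m$ immediately makes $\matr{V}_k^{\T}\matr{V}_k$ converge, hence the witnesses are bounded, whereas a DWSD witness lets $\matr{P}_k$ and the $\matr{D}_k^{(i)}$ diverge in compensating ways. Your extra step---forming the diagonal pencil $\matr{E}_k=\sum_i\alpha_i\matr{D}_k^{(i)}$, observing $\matr{P}_k^{\T}\matr{E}_k\matr{P}_k\to\matr{I}_m$, invoking Sylvester's law of inertia to get $\matr{E}_k\succ 0$ for large $k$, and rescaling to $\matr{W}_k=\matr{E}_k^{1/2}\matr{P}_k$ with $\matr{W}_k^{\T}\matr{W}_k\to\matr{I}_m$---is exactly the missing ingredient, and the rest (extracting $\matr{W}_{k_p}\to\matr{W}$ orthogonal, transporting the limits to $\matr{F}_{k_p}^{(i)}\to\matr{W}\matr{A}_i\matr{W}^{\T}$, closedness of the set of diagonal matrices, then \cref{lem:SDO-community}) is sound, as is the routine reduction modulo a fixed congruence, scalar rescaling absorbed into the diagonal factors, and a sign flip to stay in $\SL_m(\RR)$. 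One cosmetic point: your appeal to \cref{lem:bounded-inverse} is not quite apt, since that lemma concerns constant-determinant sequences converging to a diagonal limit; what you actually use is ordinary continuity of matrix inversion at the nonsingular limit $\matr{W}$ (the determinants $\det\matr{W}_{k_p}=\det(\matr{E}_{k_p})^{1/2}$ are not constant), which is elementary and should simply be stated as such. With that substitution the proof stands as a self-contained alternative to citing \cite{wang2021new}, and it in fact shows slightly more: after normalizing the positive definite pencil to the identity, the limiting diagonalizer can be taken orthogonal, so the normalized family is SDO and the original family is SD.
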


Although the following necessary condition for DWSD is not formally stated in~\cite{wang2021new}, it can be derived easily  from the proof of \cite[Theorem 9]{wang2021new}.  

\begin{lemma}\label{lem:DWSD-general-necessary-condition}
Let the set $\mathcal{C}$ be as in \eqref{set_C}, and $\matr{S} \in \operatorname{span}(\mathcal{C})$ be nonsingular. 
If \( \mathcal{C} \) is DWSD, then \( \matr{S}^{-1}\matr{A}_i \) has only real eigenvalues for $1\leq i\leq   L$, and $[\matr{A}_i, \matr{A}_j]_{\matr{S}}=\matr{0}$ for all \( 1 \leq i\neq  j \leq L\).
\end{lemma}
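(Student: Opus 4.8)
The plan is to unwind the definition of DWSD and transport the resulting convergence through the nonsingular pencil \(\matr{S}\), following the necessity part of the proof of \cref{lem:DWSD-triple-necessary-sufficient-condition} but paying attention to the fact that the diagonal factors commute \emph{exactly}. By \cref{def:DWSD} there are \(\{\matr{P}_k\}_{k\ge 1}\subseteq\SL_{m}(\RR)\) and diagonal matrices \(\matr{D}_k^{(i)}\), \(1\le i\le L\), with \(\varphi_{\rm D}(\matr{P}_k,\mathcal{D}_k)\to 0\); since each summand of \(\varphi_{\rm D}\) is nonnegative this gives \(\matr{P}_k^{\T}\matr{D}_k^{(i)}\matr{P}_k\to\matr{A}_i\) for every \(i\). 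Writing \(\matr{S}=\sum_i\alpha_i\matr{A}_i\) and setting \(\matr{D}_k^{(S)}\eqdef\sum_i\alpha_i\matr{D}_k^{(i)}\in\mathbf{D}_m\), linearity yields \(\matr{P}_k^{\T}\matr{D}_k^{(S)}\matr{P}_k\to\matr{S}\).

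Next I would invert. Since \(\det(\matr{P}_k)=1\), we have \(\det(\matr{D}_k^{(S)})=\det(\matr{P}_k^{\T}\matr{D}_k^{(S)}\matr{P}_k)\to\det(\matr{S})\ne 0\), so \(\matr{D}_k^{(S)}\) is invertible for all large \(k\), and by \cref{lem:bounded-inverse}(ii) \((\matr{P}_k^{\T}\matr{D}_k^{(S)}\matr{P}_k)^{-1}\to\matr{S}^{-1}\). Introducing the diagonal matrices \(\matr{\Lambda}_k^{(i)}\eqdef(\matr{D}_k^{(S)})^{-1}\matr{D}_k^{(i)}\), we obtain
\[
\matr{P}_k^{-1}\matr{\Lambda}_k^{(i)}\matr{P}_k=(\matr{P}_k^{\T}\matr{D}_k^{(S)}\matr{P}_k)^{-1}(\matr{P}_k^{\T}\matr{D}_k^{(i)}\matr{P}_k)\longrightarrow\matr{S}^{-1}\matr{A}_i
\]
for every \(i\). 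For the eigenvalue claim, each \(\matr{P}_k^{-1}\matr{\Lambda}_k^{(i)}\matr{P}_k\) is similar to the real diagonal matrix \(\matr{\Lambda}_k^{(i)}\) and hence has only real eigenvalues; since conjugation preserves the spectrum and the roots of the characteristic polynomial depend continuously on the entries, the class of real matrices with only real spectrum is closed (this is exactly the argument used in the proof of \cref{lem:weak-Jordan-form}), so the limit \(\matr{S}^{-1}\matr{A}_i\) also has only real eigenvalues. For the commutator claim, \(\matr{\Lambda}_k^{(i)}\) and \(\matr{\Lambda}_k^{(j)}\) are both diagonal, so for every \(k\)
\[
(\matr{P}_k^{-1}\matr{\Lambda}_k^{(i)}\matr{P}_k)(\matr{P}_k^{-1}\matr{\Lambda}_k^{(j)}\matr{P}_k)=\matr{P}_k^{-1}\matr{\Lambda}_k^{(i)}\matr{\Lambda}_k^{(j)}\matr{P}_k=\matr{P}_k^{-1}\matr{\Lambda}_k^{(j)}\matr{\Lambda}_k^{(i)}\matr{P}_k=(\matr{P}_k^{-1}\matr{\Lambda}_k^{(j)}\matr{P}_k)(\matr{P}_k^{-1}\matr{\Lambda}_k^{(i)}\matr{P}_k);
\]
letting \(k\to\infty\) on both ends gives \(\matr{S}^{-1}\matr{A}_i\matr{S}^{-1}\matr{A}_j=\matr{S}^{-1}\matr{A}_j\matr{S}^{-1}\matr{A}_i\), i.e.\ \([\matr{A}_i,\matr{A}_j]_{\matr{S}}=\matr{0}\).

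The point requiring the most care is this last passage to the limit, and it is worth contrasting it with the TWSD-B analogue \cref{thm:TWSD-B-multiple-necessary}: there one realizes \(\matr{S}^{-1}\matr{A}_i\) only as a limit of products \((\matr{P}_k^{\T}\matr{S}\matr{P}_k)^{-1}(\matr{P}_k^{\T}\matr{A}_i\matr{P}_k)\) of two factors that are merely \emph{asymptotically} diagonal, so the conjugated commutator is only forced to \emph{tend to} \(\matr{0}\), whence nilpotency but not vanishing. In the DWSD setting the factors \(\matr{\Lambda}_k^{(i)}\) are genuinely diagonal, so their exact commutativity survives the limit and yields the stronger statement \([\matr{A}_i,\matr{A}_j]_{\matr{S}}=\matr{0}\). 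The only other thing to verify is that nonsingularity of \(\matr{S}\) is genuinely used, which is precisely what the determinant computation above provides: it is what makes inverting \(\matr{D}_k^{(S)}\) and invoking \cref{lem:bounded-inverse}(ii) legitimate.
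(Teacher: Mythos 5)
Your proof is correct, and it is essentially the route the paper intends: the paper does not write out a proof of this lemma at all, remarking only that it ``can be derived easily from the proof of \cite[Theorem 9]{wang2021new}'' (\cref{lem:DWSD-triple-necessary-sufficient-condition}), and your argument is precisely that limiting argument carried out for general $L$ and a general nonsingular pencil $\matr{S}\in\operatorname{span}(\mathcal{C})$: pass the convergence $\matr{P}_k^{\T}\matr{D}_k^{(i)}\matr{P}_k\to\matr{A}_i$ through the pencil, form the genuinely diagonal $\matr{\Lambda}_k^{(i)}=(\matr{D}_k^{(S)})^{-1}\matr{D}_k^{(i)}$, and let exact commutativity and real spectra survive the limit. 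Two minor points of hygiene rather than gaps: (1) \cref{lem:bounded-inverse}(ii) as stated assumes a constant determinant and a diagonal limit, neither of which holds for $\matr{P}_k^{\T}\matr{D}_k^{(S)}\matr{P}_k$ (its determinant $\det(\matr{D}_k^{(S)})$ varies with $k$, and its limit $\matr{S}$ need not be diagonal); the step is nevertheless valid simply because matrix inversion is continuous at the nonsingular limit $\matr{S}$, so you should invoke that instead of the lemma. (2) Your closedness claim for the set of real matrices with only real eigenvalues is correct via continuity of the roots of the characteristic polynomial, though it is not literally ``the argument used in the proof of \cref{lem:weak-Jordan-form}'', where the characteristic polynomial is constant along the sequence; here it varies with $k$, so the continuity-of-roots formulation is the one you need. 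Your concluding contrast with \cref{thm:TWSD-B-multiple-necessary} --- exact commutativity of the diagonal factors in the DWSD setting versus only asymptotic diagonality in the TWSD-B setting, hence vanishing versus merely nilpotent $[\matr{A}_i,\matr{A}_j]_{\matr{S}}$ --- is exactly the right explanation of why the two necessary conditions differ.
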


\section{The relationships between projectively and weakly simultaneously diagonalizable matrices}\label{sec:relationship}

In \cref{sec:preliminary,sec:trans_based_SD,sec:TWSD,sec:decom_based_SD}, using the functions $\varphi_{\rm T}$ in \eqref{eq:func_T} and $\varphi_{\rm D}$ in \eqref{eq:func_D}, we have defined several classes of sets of symmetric matrices in \Cref{table-example-3-0}, and presented many sufficient and/or necessary conditions of them.
In this section, we will summarize and study the relationships between these new notions and conditions from different perspectives.

\subsection{General relationships}

The relationships between the classes of sets of symmetric matrices in \Cref{table-example-3-0} can be summarized in \cref{fig:relationship-WSD-PSD}.  In particular, the relationship between \( \mathcal{T}_{m,n}\textrm{-}\mathcal{SDO} \) and \( \set{SDO} \) was proved in \cref{thm:TSDO-SDO}(i), and the relationship between \( \mathcal{T}_{m,n}\textrm{-}\mathcal{SDO} \) and $\set{SD}$ was proved in \cref{thm:TSDO-SDO}(ii). The relationship between \( \mathcal{D}_{m,n} \textrm{-}\mathcal{SDO} \)(\( \mathcal{D}_{m,n} \textrm{-}\mathcal{SD} \)) and \( \mathcal{D}_{m,n+1}\textrm{-}\mathcal{SDO}\)(\( \mathcal{D}_{m,n+1}\textrm{-}\mathcal{SD}\)) was proved in \cref{lem:D-SD-larger}.

\begin{figure}
\begin{tikzcd}
\set{T}_{m,n}\set{\textrm{-}SDO}\arrow[d, phantom, "{\rotatebox[origin=c]{90}{$=$}}" description] \arrow[r, phantom, "\subseteq" description]& \set{T}_{m,n}\set{\textrm{-}SD} \arrow[d, phantom, "{\rotatebox[origin=c]{90}{$=$}}" description]  & & \\
\vdots\arrow[d, phantom, "{\rotatebox[origin=c]{90}{$=$}}" description] & \vdots \arrow[d, phantom, "{\rotatebox[origin=c]{90}{$=$}}" description]  & & \\
\set{T}_{m,m+1}\set{\textrm{-}SDO}\arrow[d, phantom, "{\rotatebox[origin=c]{90}{$=$}}" description] \arrow[r, phantom, "\subseteq" description]& \set{T}_{m,m+1}\set{\textrm{-}SD} \arrow[d, phantom, "{\rotatebox[origin=c]{90}{$=$}}" description]  & &  \\
\set{T}_{m,m}\set{\textrm{-}SDO}\arrow[d, phantom, "{\rotatebox[origin=c]{90}{$=$}}" description] \arrow[r, phantom, "\subseteq" description]& \set{T}_{m,m}\set{\textrm{-}SD} \arrow[d, phantom, "{\rotatebox[origin=c]{90}{$=$}}" description]  \arrow[r, phantom, "\subseteq" description]& \set{TWSD\textrm{-}B} \arrow[r, phantom, "\subseteq" description]& \set{TWSD}\\
\set{SDO}\arrow[d, phantom, "{\rotatebox[origin=c]{90}{$=$}}" description] \arrow[r, phantom, "\subseteq" description]& \set{SD} \arrow[d, phantom, "{\rotatebox[origin=c]{90}{$=$}}" description]  & \\
\set{D}_{m,m}\set{\textrm{-}SDO}\arrow[d, phantom, "{\rotatebox[origin=c]{270}{$\subseteq$}}" description] \arrow[r, phantom, "\subseteq" description]& \set{D}_{m,m}\set{\textrm{-}SD} \arrow[d, phantom, "{\rotatebox[origin=c]{270}{$\subseteq$}}" description] \arrow[r, phantom, "\subseteq" description]& \set{DWSD}\\
\set{D}_{m,m+1}\set{\textrm{-}SDO}\arrow[d, phantom, "{\rotatebox[origin=c]{270}{$\subseteq$}}" description] \arrow[r, phantom, "\subseteq" description]& \set{D}_{m,m+1}\set{\textrm{-}SD} \arrow[d, phantom, "{\rotatebox[origin=c]{270}{$\subseteq$}}" description]  & & \\
\vdots\arrow[d, phantom, "{\rotatebox[origin=c]{270}{$\subseteq$}}" description] & \vdots \arrow[d, phantom, "{\rotatebox[origin=c]{270}{$\subseteq$}}" description]  & & \\
\set{D}_{m,n}\set{\textrm{-}SDO} \arrow[r, phantom, "\subseteq" description]& \set{D}_{m,n}\set{\textrm{-}SD} & & \\
\end{tikzcd}
\caption{General relationships}\label{fig:relationship-WSD-PSD}
\end{figure}


\subsection{The relationships for a nonsingular pair}
Let matrices \(\matr{A}, \matr{B}\in\mathbf{symm}(\mathbb{R}^{m \times m}) \), and \( \matr{A} \) be nonsingular.
In \cref{sec:preliminary,sec:trans_based_SD,sec:TWSD,sec:decom_based_SD}, we have presented the following results to verify whether the set $\{\matr{A}, \matr{B}\}$ is SDO, SD, TWSD, TWSD-B or DWSD. 
Some of these results are proved in this paper, while others are from \cite{uhlig1973SimultaneousBlockDiagonalizationa,wang2021new}.

\begin{itemize}
\item \( \{ \matr{A}, \matr{B} \} \) is SDO if and only if $[\matr{A}, \matr{B}]_{\matr{I}_{m}}=\matr{0}_{m \times m}$  (\cref{lem:SDO-community}).
\item \( \{ \matr{A}, \matr{B} \} \) is SD if and only if the real Jordan normal form of \( \matr{A}^{-1} \matr{B} \) is diagonal (\cref{lem:nonsingular-pair-SD}).
\item \( \{ \matr{A}, \matr{B} \} \) is TWSD-B if and only if \( \matr{A}^{-1} \matr{B} \) has only real eigenvalues (\cref{thm:TWSD-B-nonsingular-pair}).
\item \( \{ \matr{A}, \matr{B} \} \) is DWSD if and only if \( \matr{A}^{-1} \matr{B} \) has only real eigenvalues (\cref{lem:DWSD-nonsingular-pair}).
\item \( \{ \matr{A}, \matr{B} \} \) is TWSD if \( \matr{A}^{-1} \matr{B} \) has a real eigenvalue (\cref{cor:TWSD-nonsingular}).
\end{itemize}
In particular, by \cref{thm:TWSD-B-nonsingular-pair} and \cref{lem:DWSD-nonsingular-pair}, we see that TWSD-B and DWSD are equivalent for a nonsingular pair of symmetric matrices. 
By \cref{exa:not-TWSD}, a nonsingular pair may not be TWSD.
The relationships between them can be summarized in \cref{figure-relathionships-WSD}(i).

\subsection{The relationships for a singular pair}
Let \(\{\matr{A}, \matr{B}\}\subseteq\mathbf{symm}(\mathbb{R}^{m \times m}) \) be a singular pair. 
By \cref{thm:TWSD-B-singular-pair} and \cref{lem:DWSD-singular-pair}, the set $\{\matr{A}, \matr{B}\}$ is always TWSD-B and DWSD.
The relationships between them can be summarized in \cref{figure-relathionships-WSD}(ii).  

\subsection{The relationships for a general nonsingular set}

Let the set $\mathcal{C}$ be as in \eqref{set_C}, and \(\matr{S} \in \operatorname{span}(\mathcal{C}) \) be nonsingular. 
In \cref{sec:preliminary,sec:trans_based_SD,sec:TWSD,sec:decom_based_SD}, we have presented the following results to verify whether the set $\mathcal{C}$ is SD, TWSD, TWSD-B or DWSD.

\begin{itemize}
\item \( \mathcal{C} \) is SD if and only if the Jordan normal form of \( \matr{S}^{-1}\matr{A}_i \) is diagonal for all \( 1 \leq i \leq L\), and 
$[\matr{A_{i}}, \matr{A}_{j}]_{\matr{S}}=\matr{0}$ for all \( 1 \leq i, j \leq L\) (\cref{lem:SD-multiple-nonsingular}).
\item If \( \mathcal{C} \) is DWSD, then \( \matr{S}^{-1}\matr{A}_i \) has only real eigenvalues for all $1\leq i\leq   L$, and $[\matr{A}_i, \matr{A}_j]_{\matr{S}}=\matr{0}$ for all \( 1 \leq i\neq  j \leq L\)  (\cref{lem:DWSD-general-necessary-condition}).
\item If \( \mathcal{C} \) is TWSD-B, then \( \matr{S}^{-1} \matr{A}_i \) has only real eigenvalues and \( \matr{S}^{-1} \matr{A}_i - \matr{S}^{-1} \matr{A}_j \) is nilpotent for all \( 1 \leq i\neq j \leq L\) (\cref{thm:TWSD-B-multiple-necessary}).
\item \( \mathcal{C} \) is TWSD-B, if \( \matr{S}^{-1} \matr{A}_{i}\) has only real eigenvalues for all $1 \leq i \leq L$, \( [\matr{A}_i, \matr{A}_j]_{\matr{S}} = \matr{0} \) for all \( 1 \leq i\neq j \leq L\) and there exists \( l \) such that the there are no Jordan blocks with the same eigenvalues and sizes in the real Jordan normal form of \( \matr{S}^{-1} \matr{A}_l \)  (\cref{thm:TWSD-B-multiple-sufficient-condition}).
\end{itemize}
In particular, by \cref{lem:DWSD-general-necessary-condition} and \cref{thm:TWSD-B-multiple-sufficient-condition}, we see that, if $\mathcal{C}$ is nonsingular and DWSD, then it is TWSD-B. 
The relationships between them can be summarized in \cref{figure-relathionships-WSD}(iii).

\subsection{The relationships for a general positive definite set}

Let the set $\mathcal{C}$ be as in \eqref{set_C}, and \(\matr{S} \in \operatorname{span}(\mathcal{C}) \) be positive definite. 
Then, for the set $\mathcal{C}$, TWSD-B and DWSD both reduce to SD by \cref{thm:PD-equivalent} and \cref{lem:PD-DWSD-equiv-SD}. 
Now, in \cref{exa:PD-TWSD-not-SD}, we will show that $\set{SD}\subsetneqq\set{TWSD}$.

\begin{example}\label{exa:PD-TWSD-not-SD}
Let \( \mathcal{C} = \{ \matr{A}_1, \matr{A}_2, \matr{A}_3 \}\subseteq\mathbf{symm}(\mathbb{R}^{3 \times 3}) \) with
\[
\matr{A}_1= \matr{I}_3, \ \
\matr{A}_2 = \begin{bmatrix}
  1 & 0 & 0\\
  0 & 1 & 0\\
  0 & 0 & -1
\end{bmatrix}, \ \
\matr{A}_3 = \begin{bmatrix}
  1 & 0 & 0\\
  0 & 0 & 1\\
  0 & 1 & 0
\end{bmatrix}.
\]
Then \( \mathcal{C} \) has a positive definite pencil since \( \matr{A}_1 \succ 0\). Note that \( \matr{A}_2 \) does not commute with \( \matr{A}_3 \). \( \mathcal{C} \) is not SD by \cref{lem:SDO-community}. However, it is TWSD by \cref{thm:TWSD-block-TWSD-B-suffic} since they are block diagonal matrices \( \matr{A}_i = \Diag{1, \matr{\tilde{A}}_i}\) for all $1\leq i\leq 3$, and the set \( \{ [1], [1], [1] \}\subseteq\RR^{1\times 1} \) is SD.
\end{example}

The relationships between them can be summarized in \cref{figure-relathionships-WSD}(iv).

\begin{figure}[htb!]
\begin{minipage}[b]{0.45\linewidth}
  \centering
\begin{tikzpicture}
  \node[draw,
  rectangle,
  fill=color6,
  minimum width=6cm,
  minimum height=5cm,
  label={[shift={(0,-0.65)}]Set of Nonsingular Pairs}] at (0,0.25){};
\node [draw,
    ellipse,
    fill=color5,
    minimum width =5.7cm,
    minimum height =4.3cm,
label={[shift={(0.0,-0.63)}]\(\set{TWSD}\)}
    ] at (-0.05,0){};
\node [draw,
    ellipse,
    fill=color4,
    minimum width =5cm,
    minimum height =3.5cm,
label={[shift={(0.0,-1)}]\(\set{TWSD}\textrm{-}\set{B} = \set{DWSD}\)}
    ] at (-0.3,-0.15){};
\node [draw,
    ellipse,
    fill=color2,
    minimum width =3cm,
    minimum height =2.5cm,
label={[shift={(0.0,-0.7)}]\(\set{SD}\)}
    ] at (-0.3,-0.6){};
\node [draw,
    ellipse,
    fill=color1,
    minimum width =2cm,
    minimum height =1cm,
label={[shift={(0.0,-0.8)}]\(\set{SDO}\)}
    ] at (-0.5,-0.75){};
\end{tikzpicture}
 \centerline{(i) For a nonsingular pair.}\medskip
\end{minipage}
\begin{minipage}[b]{0.45\linewidth}
  \centering
\begin{tikzpicture}
  \node[draw,
  rectangle,
  fill=color6,
  minimum width=6cm,
  minimum height=5cm,
  label={[shift={(0,-1)}]Set of Nonsingular Pairs}] at (0,0.25){};
\node[] at (0,1.5) {\( = \set{TWSD} = \set{TWSD}\)-\(\set{B} = \set{DWSD} \)};
\node [draw,
    ellipse,
    fill=color2,
    minimum width =3cm,
    minimum height =2.5cm,
label={[shift={(0.0,-0.7)}]\(\set{SD}\)}
    ] at (-0.3,-0.6){};
\node [draw,
    ellipse,
    fill=color1,
    minimum width =2cm,
    minimum height =1cm,
label={[shift={(0.0,-0.8)}]\(\set{SDO}\)}
    ] at (-0.5,-0.75){};
\end{tikzpicture}
\centerline{(ii) For a singular pair.}\medskip
\end{minipage}
\begin{minipage}[b]{0.45\linewidth}
 \centering
\begin{tikzpicture}
  \node[draw,
  rectangle,
  fill=color6,
  minimum width=6cm,
  minimum height=5cm,
  label={[shift={(0,-0.65)}]Set of Nonsingular Sets}] at (0,0.25){};
\node [draw,
    ellipse,
    fill=color5,
    minimum width =5.7cm,
    minimum height =4.3cm,
label={[shift={(0.0,-0.63)}]\(\set{TWSD}\)}
    ] at (-0.05,0){};
\node [draw,
    ellipse,
    fill=color4,
    minimum width =4.5cm,
    minimum height =3.5cm,
label={[shift={(0.0,-0.7)}]\(\set{TWSD}\textrm{-}\set{B}\)}
    ] at (-0.3,-0.15){};
\node [draw,
    ellipse,
    fill=color3,
    minimum width =4.3cm,
    minimum height =2.8cm,
label={[shift={(0.0,-0.7)}]\(\set{DWSD}\)}
    ] at (0.2,-0.5){};
\node [draw,
    ellipse,
    minimum width =4.5cm,
    minimum height =3.5cm,
    ] at (-0.3,-0.15){};
\node [draw,
    ellipse,
    fill=color2,
    minimum width =3cm,
    minimum height =1.8cm,
label={[shift={(0.0,-0.6)}]\(\set{SD}\)}
    ] at (-0.3,-0.6){};
\node[] at (-2.2,0) {?};
\node[] at (2.1,-0.5) {?};
\node [draw,
    ellipse,
    fill=color1,
    minimum width =2cm,
    minimum height =1cm,
label={[shift={(0.0,-0.8)}]\(\set{SDO}\)}
    ] at (-0.5,-0.75){};
\end{tikzpicture}
 \centerline{(iii) For a general nonsingular set.}\medskip
\end{minipage}
\begin{minipage}[b]{0.45\linewidth}
 \centering
\begin{tikzpicture}
  \node[draw,
  rectangle,
  fill=color6,
  minimum width=6cm,
  minimum height=5cm,
  label={[shift={(0,-0.65)}]{\small Set of General Positive Definite Sets}}] at (0,0.25){};
\node [draw,
    ellipse,
    fill=color5,
    minimum width =5.7cm,
    minimum height =4.3cm,
label={[shift={(0.0,-0.7)}]{\small \(\set{TWSD}\)}}
    ] at (-0.05,0){};
\node [draw,
    ellipse,
    fill=color4,
    minimum width =5cm,
    minimum height =3.1cm,
label={[shift={(0.0,-1)}]\(\set{TWSD}\textrm{-}\set{B} = \set{DWSD}\)}
    ] at (-0.3,-0.15){};
\node [draw,
    ellipse,
    fill=color2,
    minimum width =3cm,
    minimum height =1.8cm,
label={[shift={(0.0,-0.6)}]\(\set{SD}\)}
    ] at (-0.3,-0.6){};
\node [draw,
    ellipse,
    fill=color1,
    minimum width =2cm,
    minimum height =1cm,
label={[shift={(0.0,-0.8)}]\(\set{SDO}\)}
    ] at (-0.5,-0.75){};
\end{tikzpicture}
 \centerline{(iv) For a general positive definite set.}\medskip
\end{minipage}
\caption{Relationships under different assumptions.}
\label{figure-relathionships-WSD}
\end{figure}
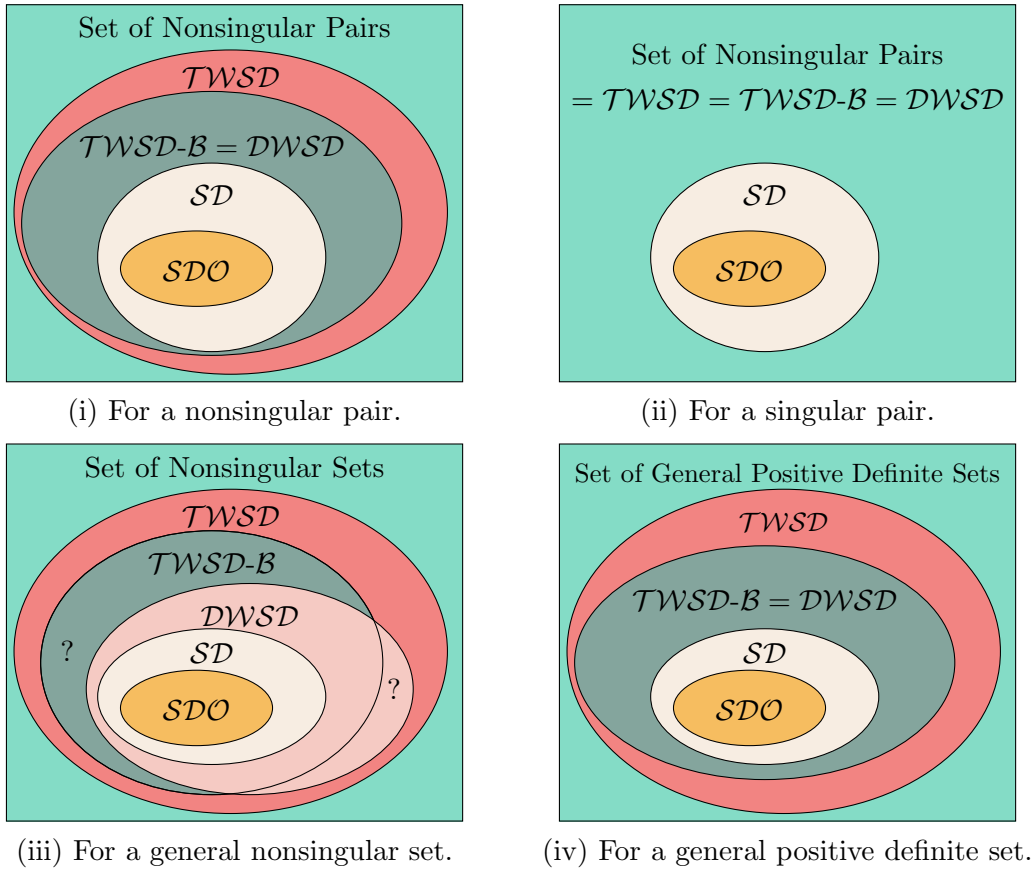
\section{Applications to quadratically constrained quadratic programming}\label{sec:application-QCQP}

In this section, we mainly consider the following \emph{quadratically constrained quadratic programming} (QCQP) model
\begin{align}\label{eq:problem-QCQP-gene}
\begin{array}{ll}
\min & \frac{1}{2}\vect{x}^\T \matr{A}_0 \vect{x} + \vect{a}_{0}^{\T}\vect{x} \\
\mbox{s.t.} & \frac{1}{2}\vect{x}^\T \matr{A}_{\ell} \vect{x} + \vect{a}_{\ell}^{\T}\vect{x} + c_{\ell} \le 0,\, 1\leq\ell\leq L,
\end{array}
\end{align}
where $\vect{x}\in\RR^{m}$, $\matr{A}_{\ell} \in \textbf{symm}(\RR^{m\times m})$ and $\vect{a}_{\ell} \in \RR^{m}$ for $0\leq\ell\leq L$, and $c_{\ell}\in\RR$ for $1\leq\ell\leq L$.
The matrices $\matr{A}_{\ell}$ here are not necessarily definite or semi-definite, and thus this is not necessarily a convex optimization problem. 
We will discuss the applications of TWSD and TWSD-B to problem \eqref{eq:problem-QCQP-gene}.





\subsection{Solve problem \eqref{eq:problem-QCQP-gene} using TWSD matrices}\label{subsec:qcqp_twsd}

\subsubsection{Case $\vect{a}_{\ell}=\vect{0}$}
In model \eqref{eq:problem-QCQP-gene}, if $\vect{a}_{\ell}=\vect{0}$ for $0\leq \ell\leq L$, then it can be represented as
\begin{align}\label{eq:problem-QCQP-gene-u-0}
\begin{array}{ll}
\min & \frac{1}{2}\vect{x}^\T \matr{A}_0 \vect{x} \\
\mbox{s.t.} & \frac{1}{2}\vect{x}^\T \matr{A}_{\ell} \vect{x} + c_{\ell} \le 0,\, 1\leq\ell\leq L.
\end{array}
\end{align}
In this case, if the set $\{\matr{A}_{\ell}\}_{0\leq \ell \leq L}$ is TWSD, then for any $\epsilon>0$, we may find a nonsingular linear transformation $\matr{P}_{k} \in\mathbf{SL}_{m}(\RR)$ such that $\|\mathbf{offdiag}(\matr{P}_{k}^\T\matr{A}_{\ell}\matr{P}_{k})\|<\epsilon$.
Denote $\vect{y}=\matr{P}_{k}^{-1}\vect{x}$ and $\matr{W}^{(\ell)}_{k} = \matr{P}_{k}^\T\matr{A}_\ell\matr{P}_{k}$. It follows that
\begin{align}\label{eq:diag-offdiag}
\vect{x}^\T \matr{A}_\ell \vect{x}=\vect{y}^\T \matr{W}^{(\ell)}_{k} \vect{y}=\vect{y}^\T \mathbf{diag}(\matr{W}^{(\ell)}_{k}) \vect{y}+\vect{y}^\T \mathbf{offdiag}(\matr{W}^{(\ell)}_{k}) \vect{y},
\end{align}
for $0\leq \ell\leq L$. If $\epsilon>0$ is small enough, we drop the off-diagonal elements in \eqref{eq:diag-offdiag}, and then formulate a new optimization problem
\begin{align}\label{eq:problem-QCQP-gene-u-0-variant}
\begin{array}{ll}
\min & \frac{1}{2}\vect{y}^\T \mathbf{diag}(\matr{W}^{(0)}_{k}) \vect{y} \\
\mbox{s.t.} & \frac{1}{2}\vect{y}^\T \mathbf{diag}(\matr{W}^{(\ell)}_{k}) \vect{y} + c_{\ell} \le 0,\, 1\leq\ell\leq L,
\end{array}
\end{align}
where $\vect{y}\in\RR^{m}$, $\matr{W}^{(\ell)}_{k} \in \textbf{symm}(\RR^{m\times m})$ and $c_{\ell}\in\RR$ for $0\leq\ell\leq L$.
Let $\lambda^{(\ell)}_{i}=(\matr{W}^{(\ell)}_{k})_{ii}$ for $1\leq i\leq m$ and $0\leq \ell\leq L$. Denote $u_i=y_i^2$ for $1\leq i\leq m$. Then problem \eqref{eq:problem-QCQP-gene-u-0-variant} can be further represented as a linear programming problem
\begin{align}\label{eq:problem-QCQP-gene-u-0-variant-linear}
\begin{array}{ll}
\min & \frac{1}{2}\sum_{i=1}^{m}\lambda^{(0)}_i u_i \\
\mbox{s.t.} & \frac{1}{2}\sum_{i=1}^{m}\lambda^{(\ell)}_i u_i + c_{\ell} \le 0,\, 1\leq\ell\leq L,\\
& u_i\geq 0,\ 1\leq i\leq n.
\end{array}
\end{align}

\subsubsection{Case $\vect{a}_{\ell}\neq\vect{0}$}
In model \eqref{eq:problem-QCQP-gene}, if $\vect{a}_{\ell}$ is not necessarily equal to $\vect{0}$ for $0\leq \ell\leq L$, we can still reformulate it as a homogeneous QCQP by introducing new variables \(x_{m+1} = \pm 1\). Then \eqref{eq:problem-QCQP-gene} is equivalent to the following problem:
\begin{align}\label{eq:problem-QCQP-gene-after-homo}
\begin{array}{ll}
\min & \frac{1}{2}\vect{x}^\T \matr{A}_0 \vect{x} + \vect{a}_{0}^{\T}\vect{x}x_{m+1} \\
\mbox{s.t.} & \frac{1}{2}\vect{x}^\T \matr{A}_{\ell} \vect{x} + \vect{a}_{\ell}^{\T}\vect{x}x_{m+1} + c_{\ell}x_{m+1}^2 \le 0,\, 1\leq\ell\leq L.
\end{array}
\end{align}
Let \( \bar{\matr{A}}_i = \begin{pmatrix}\matr{A}_i & \vect{a}_i\\ \vect{a}_i^{\top} & c_i\end{pmatrix}\) for \( 0 \leq i \leq L \), \( \bar{\matr{A}}_{L+1} = \begin{pmatrix}\matr{0}_{m \times m} & 0\\ 0 & 1\end{pmatrix} \) and \( \bar{\vect{x}}^{\T} = (\vect{x}^{\T}, 1)^{\T} \). Then \eqref{eq:problem-QCQP-gene-after-homo} can be rewritten as:
\begin{align}\label{eq:problem-QCQP-gene-after-homo-2}
\begin{array}{ll}
\min & \frac{1}{2}\vect{\bar{x}}^\T \matr{\bar{A}}_0 \vect{\bar{x}}\\
  \mbox{s.t.} & \frac{1}{2}\vect{\bar{x}}^\T \matr{\bar{A}}_{\ell} \vect{\bar{x}} \le 0,\, 1\leq\ell\leq L,\\
                &  \vect{ \bar{x}}^{\top} \matr{ \bar{A}}_{L+1} \vect{ \bar{x}} = 1,
\end{array}
\end{align}
which is a homogeneous QCQP. 
Similarly, if the set \(\{\matr{\bar{A}}_i\}_{1\leq \ell\leq L+1} \) is TWSD, we can also approximately reformulate \eqref{eq:problem-QCQP-gene-after-homo-2} as a linear programming like \eqref{eq:problem-QCQP-gene-u-0-variant-linear}.
\subsection{Solve problem \eqref{eq:problem-QCQP-gene} using TWSD-B matrices}
In \Cref{subsec:qcqp_twsd}, we directly drop the off-diagonal elements in \eqref{eq:diag-offdiag}, and approximately reformulate the QCQP model \eqref{eq:problem-QCQP-gene} as a linear programming, in which the error may be difficult to control even when \(\epsilon>0\) is very small.
In this subsection, we mainly consider the following homogeneous QCQP model with a single constraint: 
\begin{equation}\label{eq:problem-QCQP-gene-one-constraint}\tag{\( P \)}
\begin{array}{ll}
\min & \vect{x}^\T \matr{B} \vect{x}\\
\mbox{s.t.} & \vect{x}^\T \matr{A} \vect{x} \le b,
\end{array}
\end{equation}
where $\vect{x}\in\RR^{m}$ and \(\matr{A}, \matr{B}\in\textbf{symm}(\RR^{m\times m})\). 
We will consider the case where \( \matr{A} \) is nonsingular.
Without loss of generality, as in \cref{lem:Uhlig-canonical-nonsingular}, we assume \( \matr{A} \) and \( \matr{B} \) are in the following form:
\begin{equation}
  \label{eq:QCQP-A-B}
 \begin{aligned}
\matr{A} &= \Diag{\sigma_1 \matr{E}(m_{1}), \ldots, \sigma_r\matr{E}(m_{r})},\\
\matr{B} &= \Diag{\sigma_1 \matr{E}(m_{1})\matr{J}(\lambda_{1}, m_{1}), \ldots, \sigma_r\matr{E}(m_{r})\matr{J}(\lambda_{r}, m_{r})}.
\end{aligned}
\end{equation}
We will show that the approximation method is stable if \eqref{eq:problem-QCQP-gene-one-constraint} is bounded from below and \( \matr{A} \) is nonsingular. Under Slater condition, it has been shown in \cite{jiang2018SOCPReformulationGeneralized} that \( \matr{A} \) and \( \matr{B} \) must satisfy some additional requirements if \eqref{eq:problem-QCQP-gene-one-constraint} is bounded from below. 
\begin{lemma}[{\cite[Theorem 6]{jiang2018SOCPReformulationGeneralized}}]\label{lem:jiang-jordan-block-size}
  If problem \eqref{eq:problem-QCQP-gene-one-constraint} has an optimal value bounded from below and Slater condition holds, then:\\
  (i) \(\lambda_i \in \mathbb{R}\), for \(1\leq i\leq r\);\\
  (ii) \(m_i \leq 2\), for \(1\leq i\leq r\);\\
  (iii) If $m_i = 2$ for some index \(i\), then \(\sigma_i = 1\) and \(\lambda_i \leq 0\).
\end{lemma}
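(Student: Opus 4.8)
The plan is to reduce to the canonical form of \cref{lem:Uhlig-canonical-nonsingular} and then invoke the lossless S-procedure for a single quadratic inequality, which converts boundedness below into a family of positive-semidefiniteness conditions indexed by the canonical blocks.

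First, replacing $\vect{x}$ by $\matr{P}\vect{x}$ for $\matr{P}\in\GL_m(\RR)$ leaves the value $b$ unchanged and maps a Slater point to a Slater point, so by \cref{lem:Uhlig-canonical-nonsingular} we may assume $\matr{A}$ and $\matr{B}$ are block diagonal with two kinds of blocks: real blocks with $\matr{A}$-part $\sigma_s\matr{E}(m_s)$ and $\matr{B}$-part $\sigma_s\matr{E}(m_s)\matr{J}(\lambda_s,m_s)$, $\lambda_s\in\RR$; and, a priori, complex blocks $\matr{E}(m_s)$, $\matr{E}(m_s)\matr{J}(\lambda_s,m_s)$ with $\lambda_s\in\CC\setminus\RR$ and $m_s$ even. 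Since problem \eqref{eq:problem-QCQP-gene-one-constraint} has a single quadratic constraint and satisfies Slater, its boundedness below is equivalent (S-lemma / strong SDP duality for one-constraint QCQP) to the existence of $\mu\ge 0$ with $\matr{B}+\mu\matr{A}\succeq 0$. Using the identities $\matr{E}(m)\matr{J}(\lambda,m)=\lambda\matr{E}(m)+\matr{F}(m)$ and $\matr{J}(\lambda,m)+\mu\matr{I}_m=\matr{J}(\lambda+\mu,m)$, the matrix $\matr{B}+\mu\matr{A}$ is again block diagonal, with real blocks $\sigma_s\bigl(\nu_s\matr{E}(m_s)+\matr{F}(m_s)\bigr)$, where $\nu_s:=\lambda_s+\mu$, and complex blocks $\matr{E}(m_s)\matr{J}(\lambda_s+\mu,m_s)$; each block must be positive semidefinite.

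Next I would treat the real blocks. There is nothing to check for $m_s=1$. For $m_s\ge 2$, positive semidefiniteness forces $\nu_s=0$: when $m_s=2$ because the $2\times2$ block $\sigma_s(\nu_s\matr{E}(2)+\matr{F}(2))$ has determinant $-\nu_s^2\le 0$, and when $m_s\ge 3$ because its principal submatrix indexed by $\{1,m_s\}$ has zero diagonal and off-diagonal entry $\sigma_s\nu_s$. Hence $\mu=-\lambda_s$, so $\lambda_s\le 0$, and the block collapses to $\sigma_s\matr{F}(m_s)$. If $m_s=2$ this is $\sigma_s\,\Diag{0,1}$, positive semidefinite only if $\sigma_s=1$, which is (iii). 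If $m_s\ge 3$, the principal submatrix of $\matr{F}(m_s)$ indexed by $\{2,m_s\}$ has zero diagonal and off-diagonal entry $1$, hence eigenvalues $\pm1$, so neither $\sigma_s\matr{F}(m_s)$ nor its negative is positive semidefinite — a contradiction, so $m_s\le 2$, which is (ii).

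Finally, for a complex block of even size $2\ell$ (write $\lambda_s=a+b\ui$ with $b\ne 0$): when $\ell=1$ the block is the $2\times2$ symmetric matrix with diagonal $(b,-b)$ and off-diagonal $a+\mu$, whose determinant $-(a+\mu)^2-b^2<0$, so it is indefinite. When $\ell\ge 2$, tracking which entries of $\matr{J}(\lambda_s+\mu,2\ell)$ carry a factor $\pm b$ after left multiplication by $\matr{E}(2\ell)$ shows that $\matr{E}(2\ell)\matr{J}(\lambda_s+\mu,2\ell)$ has a $2\times2$ principal submatrix with zero diagonal and off-diagonal entry $b$ (take two distinct odd indices summing to $2\ell$, on which $\matr{E}(2\ell)$ — hence the $\mu$-term — contributes nothing and the diagonal of $\matr{E}(2\ell)\matr{J}$ vanishes), again indefinite. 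Either way no complex block can occur, which is (i). I expect the only genuine obstacle to be this last bookkeeping — pinning down, in the reversed matrix $\matr{E}(2\ell)\matr{J}$, a coordinate pair that carries a surviving $\pm b$ entry with zero diagonal; everything else is a routine $2\times2$ minor computation. As an alternative that sidesteps the S-lemma, one can argue directly: for each forbidden block produce a vector $\vect{v}$ supported on that block with $\vect{v}^\T\matr{A}\vect{v}\le 0$ and $\vect{v}^\T\matr{B}\vect{v}<0$, perturb it so that also $\vect{v}^\T\matr{A}\vect{v}<0$, and move from the Slater point along $\vect{v}$ to drive the objective to $-\infty$ while staying feasible, contradicting boundedness below.
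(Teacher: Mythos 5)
Your proof is correct, and it is worth noting that the paper itself gives no argument for this lemma: it is quoted directly from \cite{jiang2018SOCPReformulationGeneralized}, so what you have produced is a self-contained derivation rather than a variant of an internal proof. Your route — Slater plus boundedness below gives, via the S-lemma, some $\mu\ge 0$ with $\matr{B}+\mu\matr{A}\succeq 0$ (the homogeneous form $\vect{x}^\T(\matr{B}+\mu\matr{A})\vect{x}$ is bounded below by the constant $v+\mu b$, hence the matrix is PSD), followed by a block-by-block analysis of the canonical form using $\matr{E}(m)\matr{J}(\lambda,m)=\lambda\matr{E}(m)+\matr{F}(m)$ — is sound, and the minor computations check out: for a real block of size $m_s\ge 2$ the $\{1,m_s\}$ principal submatrix forces $\lambda_s+\mu=0$ (a PSD matrix with a zero diagonal entry has the whole corresponding row zero), the residual $\sigma_s\matr{F}(m_s)$ is PSD only when $m_s=2$ and $\sigma_s=1$ (the $\{2,m_s\}$ submatrix rules out $m_s\ge 3$), and $\mu\ge 0$ then gives $\lambda_s\le 0$, which is exactly (ii) and (iii). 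The one step you flagged as delicate, the complex block of size $2\ell$ with $\ell\ge 2$, does work: since $(\matr{E}(2\ell)\matr{J})_{ij}=J_{2\ell+1-i,\,j}$ and the nonzero entries of the real Jordan block sit at column-minus-row offsets in $\{-1,0,1,2\}$, a diagonal entry at an odd index $i$ can be nonzero only when $i=\ell$ with $\ell$ odd, and that index can never be one of two \emph{distinct} odd indices summing to $2\ell$; choosing, say, $\{1,2\ell-1\}$ yields the principal submatrix with zero diagonal and off-diagonal entry $b\neq 0$, which is indefinite, and together with your $\ell=1$ determinant computation this excludes all complex blocks, giving (i). Your closing alternative (driving the objective to $-\infty$ along a direction supported on a forbidden block) would also work but is only sketched; the S-lemma argument as written is complete.
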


Therefore, in this case, the set \( \{ \matr{A}, \matr{B} \} \) in \eqref{eq:QCQP-A-B} is TWSD-B by \cref{thm:TWSD-B-nonsingular-pair}.
Let 
$$\matr{R}_k = \Diag{\matr{R}_k(m_1) , \ldots , \matr{R}_k(m_r)},$$ 
and \( \matr{\hat{B}}_k = \matr{R}_k^{\T} \matr{B} \matr{R}_k \) for \( k \geq 1 \), where \( \matr{R}_k(m_i) \) is defined as in \eqref{def:matrix-G-R}. Then
\begin{equation}\label{eq:limit-R-A-R}
\begin{aligned}
&  \matr{R}_k^{\top} \matr{A} \matr{R}_k = \Diag{\sigma_1 \matr{E}(m_1) , \ldots , \sigma_r \matr{E}(m_r)} = \matr{A},\ \forall k \geq 1,\\
& \matr{\hat{B}} \eqdef \lim_{k \to \infty} \matr{R}_k^{\top} \matr{B} \matr{R}_k = \lim_{k\to \infty} \matr{\hat{B}}_k = \Diag{\sigma_1 \lambda_1 \matr{E}(m_1) , \ldots , \sigma_r \lambda_r \matr{E}(m_r)}.
\end{aligned}
\end{equation}
Let \( \vect{y}_k = \matr{R}_k\vect{x} \in \RR^m\) for any fixed \( k \). Then by equation \eqref{eq:limit-R-A-R}, problem \eqref{eq:problem-QCQP-gene-one-constraint} is equivalent to:
\begin{equation}\label{eq:problem-QCQP-gene-one-constraint-Pk}\tag{\( P^{(k)} \)}
  \begin{array}{ll}
    \min & \vect{y}_k^{\top}\matr{\hat{B}_k}\vect{y}_k \\
    \mbox{s.t.} & \vect{y}_k^{\top}\matr{A}\vect{y}_k \leq b.
  \end{array}
\end{equation}
In other words, they always have the same optimal value, and \(\vect{x}^{*}\) is an optimal solution of \eqref{eq:problem-QCQP-gene-one-constraint} if and only if \( \matr{R}_k \vect{x}^{*} \) is an optimal solution of \eqref{eq:problem-QCQP-gene-one-constraint-Pk}. 
When \( k\to\infty\), we obtain the following ``limit'' problem:
\begin{equation}\label{eq:problem-QCQP-gene-one-constraint-Pinf}\tag{\(P^{(\infty)}\)}
\begin{array}{ll}
    \min & \vect{y}^{\top}\matr{\hat{B}}\vect{y}\\
    \mbox{s.t.} & \vect{y}^{\top} \matr{A}\vect{y} \leq b.
  \end{array}
\end{equation}
Note that problems \eqref{eq:problem-QCQP-gene-one-constraint}, \eqref{eq:problem-QCQP-gene-one-constraint-Pk} and \eqref{eq:problem-QCQP-gene-one-constraint-Pinf} have the same feasible region but different objective functions, and \(\lim\limits_{k \to \infty} \vect{y}^{\top} \matr{\hat{B}}_k\vect{y} = \vect{y}^{\top} \matr{\hat{B}y}\) for any fixed \(\vect{y} \in \RR^m\), that is, the objective functions of \eqref{eq:problem-QCQP-gene-one-constraint-Pk} converge pointwise to the objective function of \eqref{eq:problem-QCQP-gene-one-constraint-Pinf}. For general optimization problem over a noncompact feasible set, this doesn't imply that the optimal value of \eqref{eq:problem-QCQP-gene-one-constraint} is equal to that of  \eqref{eq:problem-QCQP-gene-one-constraint-Pinf}. 
However, under some conditions, it is not difficult to prove the following result, and we omit the detailed proof here. 

\begin{lemma}\label{thm:condi_QCQP_appli}
Let \(f_k(\vect{y}):D\to\RR\) be a sequence of functions for \(k\geq 1\), where \(D\subseteq\RR^{m}\) is a noncompact set. Let \(f(\vect{y}):D\to\RR\) be a function.
If \\
(i) \(\min f_k(\vect{y})=c\), and the minimum $c$ is attainable for all $k\geq 1$, \\
(ii) \(f_k(\vect{y})\) has at most \(M\) minimizers,\\
(iii) for all \(\vect{y}\in D\), the sequence \(f_k(\vect{y})\) is decreasing or increasing, and converge to \(f(\vect{y})\),\\
then \(\min f(\vect{y}) = c\) and the minimum is also attainable.
\end{lemma}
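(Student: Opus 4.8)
The plan is to track the \emph{minimizer sets} of the $f_k$ rather than the minimizers themselves. For each $k\ge 1$ write $S_k\eqdef\{\vect{y}\in D:f_k(\vect{y})=c\}$; by hypothesis (i) every $S_k$ is nonempty, and by hypothesis (ii) every $S_k$ is finite with $|S_k|\le M$. By (iii) the sequence of functions $(f_k)$ is pointwise monotone, and I would treat the nonincreasing case, the nondecreasing case being symmetric after reversing all inequalities.

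First I would show that the minimizer sets are nested. If $\vect{y}\in S_k$ and $j\ge k$, then $f_j(\vect{y})\le f_k(\vect{y})=c$ by monotonicity, while $f_j(\vect{y})\ge\min_D f_j=c$ by (i); hence $f_j(\vect{y})=c$, i.e.\ $\vect{y}\in S_j$. Thus $S_1\subseteq S_2\subseteq\cdots$. (In the nondecreasing case the identical computation gives instead $S_1\supseteq S_2\supseteq\cdots$.) Consequently $(|S_k|)_{k\ge 1}$ is a monotone sequence of nonnegative integers bounded by $M$, so it is eventually constant; together with the nesting, this produces an index $K$ with $S_k=S_K$ for all $k\ge K$. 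Put $S_\infty\eqdef S_K$, a nonempty finite set.

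Next I would fix any $\vect{y}^{*}\in S_\infty$: then $f_k(\vect{y}^{*})=c$ for every $k\ge K$, so letting $k\to\infty$ and invoking the convergence $f_k(\vect{y}^{*})\to f(\vect{y}^{*})$ from (iii) yields $f(\vect{y}^{*})=c$. On the other hand, for an arbitrary $\vect{y}\in D$ one has $f_k(\vect{y})\ge\min_D f_k=c$ for all $k$, hence $f(\vect{y})=\lim_{k\to\infty}f_k(\vect{y})\ge c$. Therefore $c$ is a lower bound of $f$ on $D$ that is attained at $\vect{y}^{*}$, which is precisely the assertion $\min_D f=c$ with the minimum attainable.

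The one genuine obstacle is the noncompactness of $D$: the minimizers of the $f_k$ need not lie in any fixed bounded set, so one cannot extract a convergent subsequence of minimizers and pass to the limit directly. Hypothesis (ii) is exactly what circumvents this — it prevents the nested chain of minimizer sets from growing (resp.\ shrinking) indefinitely, and so forces stabilization at a common point that persists into the limit function $f$; hypothesis (iii)'s monotonicity is what makes that chain nested in the first place. Everything else is routine bookkeeping with the defining relation $\min_D f_k=c$.
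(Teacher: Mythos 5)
Your argument is correct, and in fact the paper offers nothing to compare it against: for this lemma the authors explicitly write that ``it is not difficult to prove the following result, and we omit the detailed proof here,'' so your write-up supplies a proof the paper leaves out. The mechanism you use is sound: in the nonincreasing case, a minimizer of $f_k$ satisfies $c\le f_j(\vect{y})\le f_k(\vect{y})=c$ for $j\ge k$, so minimizer sets grow and any single minimizer persists (here hypothesis (ii) is not even needed); in the nondecreasing case the sets $S_k$ form a nonincreasing chain of nonempty finite sets, which stabilizes, and a point of $S_\infty$ attains $c$ in the limit; in both cases $f(\vect{y})=\lim_k f_k(\vect{y})\ge c$ pointwise gives the matching lower bound. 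The only caveat is that hypothesis (iii) is stated pointwise, so the direction of monotonicity could in principle depend on $\vect{y}$, whereas you argue by two global cases. The patch is immediate with your own machinery: split $S_k$ into the points where the sequence is nonincreasing and those where it is nondecreasing; if the first part is ever nonempty, that minimizer persists for all larger $k$ as in your first case, and otherwise all $S_k$ consist of nondecreasing points and your nested-finite-set argument applies verbatim. (In the paper's application the sequence is nonincreasing at every feasible point, so this distinction is immaterial there.)
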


Let \(f(\vect{y}) = \vect{y}^{\top}\matr{\hat{B}}\vect{y}\) and \(f_k(\vect{y}) = \vect{y}^{\top} \matr{\hat{B}}_k\vect{y}\) for \( k \geq 1 \). It is easy to check \(f_k(\vect{y})\) is decreasing for all \( \vect{y} \) in the feasible region. Thus, we have the following corollary by \cref{thm:condi_QCQP_appli}.
\begin{corollary}
If \eqref{eq:problem-QCQP-gene-one-constraint} is bounded from below and attainable with finite minimizers, then \eqref{eq:problem-QCQP-gene-one-constraint-Pinf} has the same optimal value as \eqref{eq:problem-QCQP-gene-one-constraint}, and the minimum is also attainable.
\end{corollary}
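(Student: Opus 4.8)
The plan is to deduce the corollary as an immediate application of \Cref{thm:condi_QCQP_appli}, with $D$ taken to be the common feasible region $D=\{\vect{y}\in\RR^m:\vect{y}^{\T}\matr{A}\vect{y}\le b\}$ shared by problems \eqref{eq:problem-QCQP-gene-one-constraint}, \eqref{eq:problem-QCQP-gene-one-constraint-Pk} and \eqref{eq:problem-QCQP-gene-one-constraint-Pinf} (this region is invariant because $\matr{R}_k^{\T}\matr{A}\matr{R}_k=\matr{A}$ by \eqref{eq:limit-R-A-R}), and with $f_k(\vect{y})=\vect{y}^{\T}\matr{\hat{B}}_k\vect{y}$, $f(\vect{y})=\vect{y}^{\T}\matr{\hat{B}}\vect{y}$. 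The point of routing through \Cref{thm:condi_QCQP_appli} rather than a naive continuity argument is precisely that $D$ is noncompact: by \cref{lem:jiang-jordan-block-size}, boundedness from below (under Slater) still permits negative directions of $\matr{A}$, so pointwise convergence of the objectives does not by itself transfer the optimal value.

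The bulk of the proof is then just checking the three hypotheses of \Cref{thm:condi_QCQP_appli}. Hypothesis (i): the linear bijection of $\RR^m$ relating \eqref{eq:problem-QCQP-gene-one-constraint} and \eqref{eq:problem-QCQP-gene-one-constraint-Pk} established above (namely $\vect{x}\mapsto\matr{R}_k\vect{x}$) carries $D$ onto $D$ and the two objectives into one another, so \eqref{eq:problem-QCQP-gene-one-constraint-Pk} has the same optimal value $c$ as \eqref{eq:problem-QCQP-gene-one-constraint} for every $k\ge1$, and $c$ is attained for every $k$ since it is attained for \eqref{eq:problem-QCQP-gene-one-constraint} by assumption. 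Hypothesis (ii): the same bijection maps the minimizer set of \eqref{eq:problem-QCQP-gene-one-constraint-Pk} onto that of \eqref{eq:problem-QCQP-gene-one-constraint}, so all the $f_k$ have the same (finite, by assumption) number $M$ of minimizers on $D$. Hypothesis (iii): pointwise convergence $f_k(\vect{y})\to f(\vect{y})$ on $D$ is immediate from $\matr{\hat{B}}_k\to\matr{\hat{B}}$ in \eqref{eq:limit-R-A-R}, and monotonicity of $k\mapsto f_k(\vect{y})$ is the decreasing-ness already asserted just before the corollary. \Cref{thm:condi_QCQP_appli} then gives $\min_{\vect{y}\in D}f(\vect{y})=c$ with the minimum attained, which is exactly the claim about \eqref{eq:problem-QCQP-gene-one-constraint-Pinf}.

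The only place where any computation hides — and hence the main obstacle — is justifying the monotonicity in hypothesis (iii). Here I would use the canonical form \eqref{eq:QCQP-A-B} together with \cref{lem:jiang-jordan-block-size}: under the hypotheses each Jordan block is either $1\times1$, where $\matr{R}_k(1)=[1]$ and the corresponding block of $\matr{\hat{B}}_k$ is independent of $k$; or $2\times2$ with $\sigma_i=1$ and $\lambda_i\le0$, where a direct computation gives $\matr{R}_k(2)^{\T}\bigl(\matr{E}(2)\matr{J}(\lambda_i,2)\bigr)\matr{R}_k(2)=\begin{pmatrix}0&\lambda_i\\ \lambda_i&k^{-1}\end{pmatrix}$, so the contribution of this block to $f_k(\vect{y})$ is $2\lambda_i y_s y_{s+1}+k^{-1}y_{s+1}^2$, depending on $k$ only through the single nonnegative term $k^{-1}y_{s+1}^2$, which decreases in $k$. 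Summing over blocks shows $k\mapsto f_k(\vect{y})$ is nonincreasing for every $\vect{y}\in\RR^m$, in particular on $D$. I expect no conceptual difficulty beyond this bookkeeping; the substantive content is entirely carried by \Cref{thm:condi_QCQP_appli}, which is quoted.
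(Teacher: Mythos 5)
Your proposal is correct and follows essentially the same route as the paper: the paper also obtains the corollary directly from \cref{thm:condi_QCQP_appli} applied to $f_k(\vect{y})=\vect{y}^{\top}\matr{\hat{B}}_k\vect{y}$ and $f(\vect{y})=\vect{y}^{\top}\matr{\hat{B}}\vect{y}$ on the common feasible region, after noting that $f_k$ is decreasing there. You merely make explicit what the paper leaves as ``easy to check,'' namely the transfer of the optimal value and of the finitely many minimizers via the scaling $\matr{R}_k$ and the blockwise monotonicity computation using \cref{lem:jiang-jordan-block-size}.
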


In fact, based on \cref{lem:jiang-jordan-block-size} (\cite[Theorem 6]{jiang2018SOCPReformulationGeneralized}), 
we can easily prove a more general result.

\begin{theorem}
Let \( \matr{A} \) and \( \matr{B} \) be as in \eqref{eq:QCQP-A-B}. Then the optimal value of \eqref{eq:problem-QCQP-gene-one-constraint} is always equal to the optimal value of \eqref{eq:problem-QCQP-gene-one-constraint-Pinf} if \eqref{eq:problem-QCQP-gene-one-constraint} is bounded from below.
\end{theorem}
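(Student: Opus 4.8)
The plan is to split on whether $\matr{A}$ is positive definite, and in the remaining case to squeeze the optimal value of \eqref{eq:problem-QCQP-gene-one-constraint} between those of \eqref{eq:problem-QCQP-gene-one-constraint-Pk} and \eqref{eq:problem-QCQP-gene-one-constraint-Pinf} by a monotonicity in the positive-semidefinite order. If \eqref{eq:problem-QCQP-gene-one-constraint} is infeasible there is nothing to prove, since all three problems share the feasible region $\{\vect{x}:\vect{x}^{\T}\matr{A}\vect{x}\le b\}$; so assume it is feasible, whence its optimal value is a finite number, call it $v$. If $\matr{A}\succ 0$, then, since $\matr{E}(m)$ has a negative eigenvalue for every $m\ge 2$, the form \eqref{eq:QCQP-A-B} forces $\matr{A}=\matr{I}_m$, i.e.\ every $m_i=1$ with $\sigma_i=1$; but then $\matr{B}=\Diag{\sigma_1\lambda_1,\ldots,\sigma_r\lambda_r}=\matr{\hat{B}}$, so \eqref{eq:problem-QCQP-gene-one-constraint} and \eqref{eq:problem-QCQP-gene-one-constraint-Pinf} are literally the same problem and the claim is immediate.

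So assume from now on that $\matr{A}$ is not positive definite. As $\matr{A}$ is nonsingular it then has a negative eigenvalue, hence $\inf_{\vect{x}}\vect{x}^{\T}\matr{A}\vect{x}=-\infty$; in particular Slater's condition holds for the finite $b$, and \cref{lem:jiang-jordan-block-size} gives that all $\lambda_i$ are real, all $m_i\le 2$, and $\sigma_i=1$ whenever $m_i=2$. I would now record two facts. First, for every $k\ge 1$ the linear bijection $\vect{x}\mapsto\matr{R}_k\vect{x}$ maps the feasible region onto itself, because $\matr{R}_k^{\T}\matr{A}\matr{R}_k=\matr{A}$ by \eqref{eq:limit-R-A-R}, and it carries the objective $\vect{x}^{\T}\matr{B}\vect{x}$ to $\vect{y}^{\T}\matr{\hat{B}}_k\vect{y}$ with $\matr{\hat{B}}_k=\matr{R}_k^{\T}\matr{B}\matr{R}_k$; hence \eqref{eq:problem-QCQP-gene-one-constraint-Pk} has the same optimal value $v$ as \eqref{eq:problem-QCQP-gene-one-constraint} for every $k$. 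Second, a blockwise computation, using $\matr{E}(m)\matr{R}_k(m)=\matr{R}_k(m)^{-1}\matr{E}(m)$ together with $\matr{R}_k(m)^{-1}\matr{J}(\lambda,m)\matr{R}_k(m)=\lambda\matr{I}_m+k^{-1}\matr{N}$ (where $\matr{N}$ is the nilpotent superdiagonal shift), gives $\matr{R}_k^{\T}\matr{E}(m_i)\matr{J}(\lambda_i,m_i)\matr{R}_k=\lambda_i\matr{E}(m_i)+k^{-1}\matr{F}(m_i)$, so that
\[
\matr{\hat{B}}_k-\matr{\hat{B}}=\tfrac{1}{k}\,\Diag{\sigma_1\matr{F}(m_1),\ldots,\sigma_r\matr{F}(m_r)};
\]
under the structure just obtained, each summand is positive semidefinite ($\matr{F}(m_i)$ is the $1\times1$ zero when $m_i=1$, and $\matr{F}(2)=\Diag{0,1}$ with $\sigma_i=1$ when $m_i=2$), so $\matr{\hat{B}}_k\succeq\matr{\hat{B}}$ for all $k$.

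The proof then finishes by a two-sided estimate; write $v'$ for the optimal value of \eqref{eq:problem-QCQP-gene-one-constraint-Pinf}. On one hand, $\vect{y}^{\T}\matr{\hat{B}}_k\vect{y}\ge\vect{y}^{\T}\matr{\hat{B}}\vect{y}$ for all $\vect{y}$, so taking infima over the common feasible region yields $v\ge v'$. On the other hand, for each fixed feasible $\vect{y}$ one has $v\le\vect{y}^{\T}\matr{\hat{B}}_k\vect{y}$, and letting $k\to\infty$ (the right-hand side converging to $\vect{y}^{\T}\matr{\hat{B}}\vect{y}$ by \eqref{eq:limit-R-A-R}) gives $v\le\vect{y}^{\T}\matr{\hat{B}}\vect{y}$; taking the infimum over feasible $\vect{y}$ then gives $v\le v'$. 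Hence $v'=v$, which is exactly the claimed equality of optimal values. The one step I expect to require care is the non-Slater case: one has to observe that nonsingularity of $\matr{A}$ collapses ``$\matr{A}\succeq 0$'' to ``$\matr{A}=\matr{I}_m$'', and hence to ``$\matr{B}=\matr{\hat{B}}$''; the rest is pointwise-monotone convergence plus the sign information $\sigma_i=1$ on the $2\times2$ blocks furnished by \cref{lem:jiang-jordan-block-size}, and, unlike the corollary derived earlier from \cref{thm:condi_QCQP_appli}, this argument uses neither compactness of the feasible set nor attainability of the minima.
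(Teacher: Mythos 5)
Your proof is correct and follows essentially the same route as the paper: \cref{lem:jiang-jordan-block-size} to constrain the block structure, the invariance $\matr{R}_k^{\T}\matr{A}\matr{R}_k=\matr{A}$ to identify the optimal values of \eqref{eq:problem-QCQP-gene-one-constraint} and \eqref{eq:problem-QCQP-gene-one-constraint-Pk}, pointwise convergence for one inequality and a positive-semidefinite comparison for the other. The only cosmetic differences are that you handle the degenerate case via $\matr{A}\succ 0\Rightarrow\matr{A}=\matr{I}_m,\ \matr{B}=\matr{\hat B}$ (which also covers infeasibility) where the paper argues via failure of the Slater condition with $b=0$, and you compare $\matr{\hat B}_k\succeq\matr{\hat B}$ rather than $\matr{B}\succeq\matr{\hat B}$ directly; both are equivalent uses of the same structural fact $\matr{F}(2)=\Diag{0,1}$ with $\sigma_i=1$.
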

\begin{proof}
Denote the optimal value of \eqref{eq:problem-QCQP-gene-one-constraint} and \eqref{eq:problem-QCQP-gene-one-constraint-Pinf} by \( v \) and \( v^{(\infty)} \), respectively.
If Slater condition does not hold, then \( \matr{A} \) is positive definite and \( b = 0 \). In this case, \( v = v^{(\infty)} = 0 \).
If Slater condition holds, combining \cref{lem:jiang-jordan-block-size} with \eqref{eq:QCQP-A-B}, we have
\begin{align*}
  \matr{P}_k^{\top}\matr{A}\matr{P}_k = \matr{A} &= \Diag{\sigma_1 , \ldots , \sigma_l, \matr{E}(2) , \ldots , \matr{E}(2) }, k \geq 1,\\
  \matr{B} &= \Diag{\sigma_1 \lambda_1, \ldots, \sigma_l \lambda_l, \matr{E}(2) \matr{J}(\lambda_{l+1}, 2) , \ldots ,  \matr{E}(2) \matr{J}(\lambda_r, 2)},\\
  \matr{\hat{B}} &= \Diag{\sigma_1 \lambda_1, \ldots, \sigma_l \lambda_l,  \lambda_{l+1} \matr{E}(2) , \ldots , \lambda_r\matr{E}(2)},
\end{align*}
where \( l \) is the number of one by one blocks in \eqref{eq:QCQP-A-B}.
  Let \( \vect{x} \) be any vector satisfying \( \vect{x}^{\top}\matr{A}\vect{x} \leq b \). Then it is also a feasible point of the problem \eqref{eq:problem-QCQP-gene-one-constraint-Pk} for any \( k \). Since the optimal value of \eqref{eq:problem-QCQP-gene-one-constraint-Pk} is also \( v \), we have that \( \vect{x}^{\top} \matr{\hat{B}}_k\vect{x} \geq v \), which implies
  \[
    \vect{x}^{\top} \matr{\hat{B}} \vect{x} = \lim_{k \to \infty} \vect{x}^{\top}\matr{P}_k^{\top}\matr{B}\matr{P}_k\vect{x} \geq v.
  \]
  Thus, we have \( v^{(\infty)} \geq v \).
On the other hand, since 
  \[ \vect{z}^{\top} \matr{E}(2) \matr{J}(\lambda, 2) \vect{z} = \lambda \vect{z}^{\top} \matr{E}(2) \vect{z} + z_2^2 \geq \lambda \vect{z}^{\top} \matr{E}(2) \vect{z}, \]
 for any \( \vect{z} = (z_1, z_2)^{\top} \in \RR^2 \) and \(\lambda \in \mathbb{R}\), we always have that 
  \begin{align*}
    \vect{x}^{\top}\matr{B}\vect{x} &=
    \vect{x}^{\top} \Diag{\sigma_1 \lambda_1, \ldots, \sigma_l \lambda_l, \matr{E}(2) \matr{J}(\lambda_{l+1}, 2) , \ldots ,  \matr{E}(2) \matr{J}(\lambda_r, 2)} \vect{x} \\
    & \geq  \vect{x}^{\top} \Diag{\sigma_1 \lambda_1, \ldots, \sigma_l \lambda_l, \lambda_{l+1} \matr{E}(2) , \ldots ,  \lambda_r\matr{E}(2)} \vect{x}  = \vect{x}^{\top} \matr{\hat{B}} \vect{x}.
\end{align*}
It follows that \( v \geq v^{(\infty)} \). The proof is complete.
\end{proof}

As shown in the proof of \cref{thm:TWSD-B-nonsingular-pair}, the set \( \{ \matr{A}, \matr{\hat{B}} \}  \) is SD. Thus, problem \eqref{eq:problem-QCQP-gene-one-constraint-Pinf} can be reformulated as a linear programming, and it has a closed-form solution as well.

\section{Applications to independent component analysis}\label{sec:application-approximate-diagonalization}

As generalizations of the eigenvalue decomposition of a single symmetric matrix, the SDO and SD properties of multiple symmetric matrices can be seen as finding a set of basis on which they all have simple representations. In \emph{independent component analysis}(ICA) \cite{Cardoso93:JADE,Como94:sp,Como10:book}, since multiple symmetric matrices are often not SDO or SD, the \emph{approximate simultaneous diagonalization}(ASD) \cite{Cardoso93:JADE,Como94:sp,LUC2017globally,LUC2018,li2020gradient,ULC2019} of them has become an important approach to solve ICA.
Let the set $\mathcal{C}$ be as in \eqref{set_C}. 
This approach is to find a nonsingular or orthogonal matrix $\matr{P}\in\RR^{m\times m}$ to minimize the off-diagonal elements, \emph{i.e.}, \begin{align}\label{eq:offdiag_zero}
\min_{\matr{P}}g(\matr{P})\eqdef \sum_{i=1}^{L}\|\textbf{offdiag}(  \matr{P}^{\T}\matr{A}_i \matr{P})\|^2. 
\end{align} 
When the feasible set of \( \matr{P} \) is compact, \emph{e.g.}, the special orthogonal group $\SON_{m}$, the algorithms to solve problem \eqref{eq:offdiag_zero} have been extensively studied; see for example \cite{LUC2017globally,LUC2018,ULC2019}. 
When the feasible set of \( \matr{P} \) is not compact, \emph{e.g.}, the special linear group $\SL_{m}(\RR)$, several algorithms have been developed as well \cite{afsari2006simple,li2020gradient}.

The new notions in \Cref{table-example-3-0} we studied in this paper can be regarded as the ``weakly joint eigenvalue decomposition" or ``projectively joint eigenvalue decomposition" of multiple symmetric matrices, and have much broader scopes than SD and SDO. 
Therefore, a natural question is whether these new notions can be used to replace the approach \eqref{eq:offdiag_zero} to serve ICA. To answer this question, in this section, we take the \(\mathbf{D}_{m, n} \)-SD property as an example to illustrate its application to ICA. 

The basic linear \emph{blind source separation} (BSS) model \cite{herault1986space,comon1994tensor} was formulated as  
\begin{equation}\label{eq:BSS-linear}
\vect{x}(t)=\matr{P}^{\T}\cdot\vect{s}(t)+\vect{n}(t),
\end{equation}
where $\vect{x}(t)\in\RR^{m}$ is the \emph{observation signal vector},  $\vect{s}(t)\in\RR^{n}$ is the \emph{source signal vector}, $\vect{n}(t)\in\RR^{m}$ is a \emph{noise} and $\matr{P}\in\RR^{n\times m}$ is the linear \emph{mixing matrix}. The goal of BSS problem in \eqref{eq:BSS-linear} is to estimate the source signal vector $\vect{s}(t)\in\RR^{n}$ under two assumptions:
\begin{itemize}
\item the components of $\vect{s}(t)$ are statistically independent. 
\item at most one component of $\vect{s}(t)$ is Gaussian.
\end{itemize}

For simplicity, in this section, we neglect the noise item $\vect{n}(t)$ in model  \eqref{eq:BSS-linear}. 
In 1994, Comon \cite{Como94:sp,comon1994tensor} proved the following important result, which has been the theoretical core for almost all ICA algorithms.

\begin{theorem}
Suppose that we find a matrix $\matr{B}\in\RR^{n\times m}$ such that the components of $\vect{y}(t)=\matr{B}\cdot\vect{x}(t)$ are statistically independent. 
Then $\matr{B}\matr{P}^{\T}$  is \emph{essentially diagonal}, \emph{i.e.}, there exist an invertible diagonal matrix $\matr{\Lambda}$ and a permutation
matrix $\matr{\Pi}$ such that $\matr{B}\matr{P}^{\T}=\matr{\Lambda}\matr{\Pi}$. 
Therefore, $\vect{y}(t)$ and $\vect{s}(t)$ are essentially the same.
\end{theorem}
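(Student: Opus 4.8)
The plan is to set $\matr{M} \eqdef \matr{B}\matr{P}^{\T}\in\RR^{n\times n}$, so that $\vect{y}(t)=\matr{M}\,\vect{s}(t)$, and to show that $\matr{M}$ is essentially diagonal; the final assertion is then immediate, since $\vect{y}(t)=\matr{\Lambda}\matr{\Pi}\,\vect{s}(t)$ says precisely that each $y_i(t)$ equals a nonzero scalar multiple of exactly one $s_{\pi(i)}(t)$, with the correspondence $i\mapsto\pi(i)$ a bijection. First I would record that $\matr{M}$ is nonsingular: under the standing BSS assumptions the mixing matrix $\matr{P}$ has full column rank $n$, and the separating matrix $\matr{B}$ is chosen with full row rank $n$ (so that the components of $\vect{y}(t)$ are genuinely $n$-dimensional and nondegenerate), whence $\matr{M}$ is an invertible $n\times n$ matrix. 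A convenient reduction, which I would mention but not rely on, is to pre-whiten the observations so that $\matr{M}$ becomes orthogonal; the argument below goes through directly in the nonsingular case.

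The genuinely analytic ingredient is the \emph{Darmois--Skitovitch theorem}, which I would invoke as a black box: if $\xi_1,\dots,\xi_n$ are independent random variables and the two linear forms $\sum_k a_k\xi_k$ and $\sum_k b_k\xi_k$ are independent, then $\xi_k$ is Gaussian for every index $k$ with $a_k b_k\neq 0$ (proved via characteristic functions and the Marcinkiewicz lemma). Suppose now, for contradiction, that $\matr{M}$ is \emph{not} essentially diagonal. Since $\matr{M}$ is invertible it has no zero row and no zero column; and if every column had at most one nonzero entry, invertibility would force exactly one nonzero entry per column, lying in distinct rows (two columns with their nonzeros in a common row would be parallel), i.e.\ $\matr{M}=\matr{\Lambda}\matr{\Pi}$, a contradiction. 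Hence some column $j$ has two nonzero entries, say $\matrelem{M}_{i_1 j}\neq 0$ and $\matrelem{M}_{i_2 j}\neq 0$ with $i_1\neq i_2$. Writing $y_i(t)=\sum_k \matrelem{M}_{ik}s_k(t)$, the components $y_{i_1}(t)$ and $y_{i_2}(t)$ are independent by hypothesis and both carry $s_j(t)$ with nonzero coefficient, so the Darmois--Skitovitch theorem forces $s_j$ to be Gaussian.

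To finish I would exhibit a second Gaussian source, contradicting the ``at most one Gaussian'' hypothesis. If some other column $j'\neq j$ also has two nonzero entries, the same argument gives $s_{j'}$ Gaussian and we are done. Otherwise $j$ is the unique multiply-occupied column; then the other $n-1$ columns each have a single nonzero, in $n-1$ distinct rows, so some row $r$ is untouched by them, which forces row $r$ of $\matr{M}$ to be a nonzero multiple of $\vect{e}_j^{\T}$, hence $y_r(t)=c\,s_j(t)$ with $c\neq 0$; and any other nonzero row $i_1$ of column $j$ is the home row of exactly one column $c'\neq j$, so $y_{i_1}(t)=\beta s_j(t)+\gamma s_{c'}(t)$ with $\beta,\gamma\neq 0$. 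But then $\operatorname{Cov}(y_r,y_{i_1})=c\beta\operatorname{Var}(s_j)\neq 0$, contradicting the independence of $y_r$ and $y_{i_1}$ (if finite variances are not assumed, one appeals once more to the structural part of the Darmois--Skitovitch analysis). Therefore $\matr{M}=\matr{\Lambda}\matr{\Pi}$ for an invertible diagonal $\matr{\Lambda}$ and a permutation $\matr{\Pi}$, and the proof is complete.

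The hard part is almost entirely concentrated in the Darmois--Skitovitch theorem itself, which I would cite rather than reprove. What actually needs care in this write-up is the linear-algebraic/combinatorial bookkeeping that converts ``$\matr{M}$ nonsingular and not essentially diagonal'' into a configuration ruled out by Darmois--Skitovitch together with the single-Gaussian hypothesis — in particular the exceptional case of a unique multiply-occupied column, which is not eliminated by Darmois--Skitovitch alone and requires the extra second-moment (or structural) argument above.
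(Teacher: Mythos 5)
The paper does not actually prove this statement: it is quoted as Comon's 1994 identifiability theorem and supported only by the citations \cite{Como94:sp,comon1994tensor}, so there is no internal proof to compare against. Your argument is, in substance, the standard proof from that cited literature — reduce to $\matr{M}=\matr{B}\matr{P}^{\T}$ invertible, apply Darmois--Skitovitch to any column of $\matr{M}$ with two nonzero entries to manufacture a Gaussian source, and then handle the one configuration (a single multiply-occupied column, forced by invertibility to pair with a row proportional to $\vect{e}_j^{\T}$) that Darmois--Skitovitch alone does not exclude — and the combinatorial bookkeeping you do there is correct; in particular you rightly notice that this exceptional case needs a second-moment or characteristic-function argument rather than a second application of Darmois--Skitovitch.

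Two caveats are worth making explicit. First, your proof (necessarily) uses hypotheses that the paper's informal statement omits: $\matr{M}$ must be invertible (so $\matr{P}^{\T}$ of full column rank and $\matr{B}$ of full row rank — note your phrase ``$\matr{P}$ has full column rank $n$'' should read full \emph{row} rank of $\matr{P}$, i.e.\ full column rank of $\matr{P}^{\T}$), and the sources must be non-degenerate; without non-degeneracy the conclusion is simply false (take $s_j\equiv 0$ and put arbitrary entries in column $j$), and it is exactly this non-degeneracy, not the ``at most one Gaussian'' hypothesis, that kills your exceptional case. Second, the closing covariance step assumes $\operatorname{Cov}(y_r,y_{i_1})$ is defined, which fails if $s_{c'}$ has infinite variance; your parenthetical appeal to ``the structural part of Darmois--Skitovitch'' is too vague there — the clean fix is the characteristic-function computation showing that independence of $y_r=c\,s_j$ and $y_{i_1}=\beta s_j+\gamma s_{c'}$ forces $\phi_{s_j}(a+b)=\phi_{s_j}(a)\phi_{s_j}(b)$ near the origin, hence $|\phi_{s_j}|\equiv 1$ there and $s_j$ degenerate, contradicting non-degeneracy. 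With those points made precise, your write-up is a faithful reconstruction of the proof the paper delegates to Comon.
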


For a random vector $\vect{x}(t)\in\RR^{m}$, as a higher order generalization of the \emph{covariance matrix}, the $d$-th order \emph{cumulant tensor} $\tens{C}^{\vect{x}}_{d}\in\RR^{m\times m\times\cdots\times m}$ can be considered as a measure of independence, since, if the components of $\vect{x}(t)$ are independent, then the cumulant tensor is diagonal. 
If $\vect{x}(t)=\matr{P}^{\T}\cdot\vect{s}(t)$, the cumulant tensors have the following \emph{multilinear} property
\begin{align*}
\tens{C}^{\vect{x}}_{d} = \tens{C}^{\vect{s}}_{d}\contr{1}\matr{P}^{\T}\contr{2}\cdots\contr{d}\matr{P}^{\T}.
\end{align*}
It follows that the matrix slices have a common decomposition
\begin{align}\label{eq:matr_comm_decom}
(\tens{C}^{\vect{x}}_{d})_{:,:,i_3,\cdots,i_d} =\matr{P}^{\T} (\tens{C}^{\vect{s}}_{d}\contr{3}\vect{p}_{i_3}\contr{4}\cdots\contr{d}\vect{p}_{i_d})\matr{P}\eqdef\matr{P}^{\T} \matr{D}_{i_3,\cdots,i_d}\matr{P},
\end{align}
for all $1\leq i_3, i_4, \cdots, i_d\leq m$.
Now, instead of the canonical approach using the optimization problem \eqref{eq:offdiag_zero}, we now present the following theoretical result based on \cref{thm:trival-Gmn-SDO} for the \( \mathbf{D}_{m, n} \)-SDO property. 

\begin{theorem}
In model \eqref{eq:matr_comm_decom}, if $n\geq m^{d-1}$, then we can always find $\matr{P}\in\textbf{St}(m,n)$ such that the matrices $\matr{D}_{i_3,\cdots,i_d}$ are all diagonal. 
\end{theorem}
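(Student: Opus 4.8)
The plan is to read the statement as a direct application of the $\mathbf{D}_{m,n}$-SDO theory built in \cref{sec:decom_based_SD}. First I would set up the bookkeeping: as the tuple $(i_3,\ldots,i_d)$ ranges over $\{1,\ldots,m\}^{d-2}$, equation \eqref{eq:matr_comm_decom} produces exactly $L \eqdef m^{d-2}$ matrices $\matr{A}_{(i_3,\ldots,i_d)}\eqdef (\tens{C}^{\vect{x}}_{d})_{:,:,i_3,\cdots,i_d}$, and each of them lies in $\textbf{symm}(\RR^{m\times m})$ because the cumulant tensor $\tens{C}^{\vect{x}}_{d}$ is symmetric in all of its modes. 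Thus the assertion ``there exists $\matr{P}\in\St(m,n)$ making all the slice-matrices diagonal after the congruence in \eqref{eq:matr_comm_decom}'' is, by \cref{def:D-SDO-SD}(i), precisely the statement that the set $\{\matr{A}_{(i_3,\ldots,i_d)}\}$ is $\mathbf{D}_{m,n}$-SDO.

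Next I would invoke \cref{thm:trival-Gmn-SDO} with this value of $L$: every set of $L$ matrices in $\textbf{symm}(\RR^{m\times m})$ is $\mathbf{D}_{m,Lm}$-SDO. Since $Lm = m^{d-2}\cdot m = m^{d-1}$, this already yields the conclusion at $n = m^{d-1}$, namely a matrix $\matr{P}\in\St(m,m^{d-1})$ together with diagonal matrices $\matr{D}_{(i_3,\ldots,i_d)}\in\mathbf{D}_{m^{d-1}}$ satisfying $(\tens{C}^{\vect{x}}_{d})_{:,:,i_3,\cdots,i_d} = \matr{P}^{\T}\matr{D}_{(i_3,\ldots,i_d)}\matr{P}$. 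One should note in passing that these $\matr{D}$'s are larger, genuinely diagonal objects, distinct from the $m\times m$ contractions also denoted $\matr{D}_{i_3,\cdots,i_d}$ in \eqref{eq:matr_comm_decom}, so a half-sentence reconciling the notation is in order.

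Finally, to obtain the statement for every $n \geq m^{d-1}$ rather than only $n = m^{d-1}$, I would appeal to the monotonicity $\set{D}_{m,n}\textrm{-}\set{SDO}\subseteq\set{D}_{m,n+1}\textrm{-}\set{SDO}$ from \cref{lem:D-SD-larger}, iterated $n-m^{d-1}$ times; concretely this just pads $\matr{P}$ with zero rows and each $\matr{D}_{(i_3,\ldots,i_d)}$ with extra zero diagonal entries, exactly as in the proof of that lemma. I do not anticipate any genuine difficulty here: the entire content of the theorem is the dictionary between the ICA/BSS setup and the classes in \cref{table-example-3-0}. The only two points that need care are getting the slice count right, so that $Lm$ collapses to the advertised bound $m^{d-1}$, and explicitly citing the full symmetry of the cumulant tensor, since that is what licenses applying \cref{thm:trival-Gmn-SDO} to the slices in the first place.
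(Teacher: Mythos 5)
Your proposal is correct and follows exactly the route the paper intends: the paper states this theorem as a direct consequence of \cref{thm:trival-Gmn-SDO} applied to the $L=m^{d-2}$ symmetric slice matrices (so $Lm=m^{d-1}$), with \cref{lem:D-SD-larger} handling all $n\geq m^{d-1}$. Your bookkeeping of the slice count and the remark distinguishing the abstract diagonal factors from the $m\times m$ slices are both accurate, so nothing is missing.
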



\section{Conclusions}\label{sec:conclusi}

Athough the SD and SDO notions are widely used in both theoretical and practical problems, the sets of matrices satisfying these two properties are limited. 
In this paper, using the functions $\varphi_{\rm T}$ in \eqref{eq:func_T} and $\varphi_{\rm D}$ in \eqref{eq:func_D}, 
we extend the SDO and SD from two different angles, and introduce several new notions, which are summarized in \Cref{table-example-3-0}. 
The $\varphi_{\rm D}$ based notions have been mostly studied in \cite{wang2021new}, some results of which are presented in \Cref{sec:decom_based_SD}. 
In \cref{sec:trans_based_SD,sec:TWSD}, We pay most attention to the $\varphi_{\rm D}$ based notions, and characterize them under different assumptions. 

The functions $\varphi_{\rm T}$ and $\varphi_{\rm D}$ look similar to each other.
However, the notions based on them are surprisingly different under various conditions as evidenced in \cref{sec:relationship}, and it is difficult to directly use the relationship between $\varphi_{\rm T}$ and $\varphi_{\rm D}$ to study these new notions.
Therefore, in this paper, we characterize the $\varphi_{\rm T}$ based notions using methods different from that in \cite{wang2021new}. 
We now take the sufficient conditions for TWSD-B and DWSD of nonsingular pairs as an example. Recall that \(\{ \matr{A}, \matr{B} \}\) is SD if and only if the real Jordan normal form of \( \matr{A}^{-1} \matr{B} \) is diagonal. In the proof of \cref{lem:DWSD-nonsingular-pair} ({\cite[Theorem 7]{wang2021new}}), the main idea of \cite{wang2021new} is to perturb the matirx \( \matr{B} \) such that \( \matr{A}^{-1} \matr{B} \) has distinct real eigenvalues, which implies diagonal Jordan normal form. While in the proof of \cref{thm:TWSD-B-nonsingular-pair}, we construct a sequence \(\{ \matr{P}_k \}_{k \geq 1}\) such that the off-diagonal elements of the Jordan normal form of \( \matr{P}_k^{-1} \matr{A}^{-1} \matr{B} \matr{P}_k \) tend to 0, so that the Jordan normal form of its limit is diagonal.

As two examples, in \cref{sec:application-QCQP,sec:application-approximate-diagonalization}, we consider the applications of these new notions to the well-known QCQP and BSS problems, respectively. It can be expected that, as the ``weakly" and ``projectively" extensions of SD and SDO, they will have more potential applications in other areas.



There are still missing characterizations, counter-examples, and open questions to complete the picture, which may deserve further investigations. 
\begin{itemize}
\item Are the conditions in  \cref{thm:TWSD-B-multiple-sufficient-condition}, \cref{thm:TWSD-block-TWSD-B-suffic} and \cref{cor:TWSD-nonsingular} also necessary?  
\item We have shown that any set \( \mathcal{C} \) is \( \mathbf{D}_{m,n} \)-SD (\( \mathbf{D}_{m,n} \)-SDO) when \( n \) is large enough. It may be interesting to further estimate the smallest \( n \) such that any set \( \mathcal{C} \) is always \( \mathbf{D}_{m,n} \)-SD (\( \mathbf{D}_{m,n} \)-SDO).
\end{itemize}


\appendix

\section{Long proofs in \Cref{sec:trans_based_SD}}\label{proofs-1}

Before the proofs of \cref{thm:better-decomposition,thm:TWSD-B-nonsingular-pair}, we first need to present several lemmas. 

\begin{lemma}\label{lem:weak-Jordan-form}
Let 
$\matr{A}\in\RR^{m\times m}$. 
Then there exist a sequence \( \{ \matr{P}_k \}_{k \geq 1} \subseteq \SL_m(\mathbb{R}) \) such that \( \matr{P}_k^{-1}\matr{A}\matr{P}_k \) converges to a diagonal matrix if and only if \( \matr{A} \) has only real eigenvalues.
\end{lemma}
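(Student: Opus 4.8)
The plan is to prove the two implications separately: the ``only if'' direction will follow from continuity of the characteristic polynomial together with its invariance under conjugation, and the ``if'' direction from an explicit construction built on the scaling matrices $\matr{R}_k(\cdot)$ of \eqref{def:matrix-G-R-k} and the limit \eqref{eq:R-sequence-J-limit}.

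For the ``only if'' direction, suppose $\{\matr{P}_k\}_{k\geq 1}\subseteq\SL_m(\RR)$ satisfies $\matr{P}_k^{-1}\matr{A}\matr{P}_k\to\matr{D}$ for some $\matr{D}=\Diag{d_1,\ldots,d_m}\in\mathbf{D}_m$. Writing $p_{\matr{M}}(t)\eqdef\det(t\matr{I}_m-\matr{M})$, conjugation leaves the characteristic polynomial unchanged, so $p_{\matr{P}_k^{-1}\matr{A}\matr{P}_k}=p_{\matr{A}}$ for all $k$; since the coefficients of $p_{\matr{M}}$ are polynomial in the entries of $\matr{M}$, letting $k\to\infty$ gives $p_{\matr{A}}=p_{\matr{D}}=\prod_{i=1}^m(t-d_i)$, which splits into real linear factors, so every eigenvalue of $\matr{A}$ is real. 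I would also note here the refinement used in the proof of \cref{thm:TWSD-B-multiple-necessary}: if the diagonal limit is $\matr{D}=\matr{0}$, the same argument gives $p_{\matr{A}}(t)=t^m$, that is, $\matr{A}$ is nilpotent.

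For the ``if'' direction, assume all eigenvalues of $\matr{A}$ are real. Its real Jordan normal form is then an upper triangular matrix $\matr{J}=\Diag{\matr{J}(\lambda_1,m_1),\ldots,\matr{J}(\lambda_p,m_p)}$ with $\lambda_1,\ldots,\lambda_p\in\RR$, so $\matr{T}_0^{-1}\matr{A}\matr{T}_0=\matr{J}$ for some $\matr{T}_0\in\GL_m(\RR)$. To obtain a sequence of determinant exactly $1$, I would pass to $\matr{T}=\matr{T}_0\,\Diag{\det(\matr{T}_0)^{-1},1,\ldots,1}\in\SL_m(\RR)$; conjugating the block-diagonal matrix $\matr{J}$ by the diagonal matrix $\Diag{\det(\matr{T}_0)^{-1},1,\ldots,1}$ alters at most the single entry in position $(1,2)$, so $\matr{T}^{-1}\matr{A}\matr{T}$ remains block-diagonal and upper triangular with diagonal $\lambda_1,\ldots,\lambda_m$. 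Taking $\matr{R}_k=\Diag{\matr{R}_k(m_1),\ldots,\matr{R}_k(m_p)}\in\SL_m(\RR)$ and $\matr{P}_k\eqdef\matr{T}\matr{R}_k\in\SL_m(\RR)$, equation \eqref{eq:R-sequence-J-limit} applied within each block (the possibly rescaled superdiagonal entry still tends to $0$) yields $\matr{P}_k^{-1}\matr{A}\matr{P}_k=\matr{R}_k^{-1}(\matr{T}^{-1}\matr{A}\matr{T})\matr{R}_k\to\Diag{\lambda_1\matr{I}_{m_1},\ldots,\lambda_p\matr{I}_{m_p}}$, which is diagonal, finishing the argument.

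I do not anticipate a real obstacle: the forward implication is a soft limiting argument, and the backward one is essentially \eqref{eq:R-sequence-J-limit} assembled over the Jordan blocks. The only bookkeeping point is enforcing $\det(\matr{P}_k)=1$ exactly rather than merely constant, which is dispatched by the above one-line adjustment of $\matr{T}$ and the fact that $\det\matr{R}_k(m_i)=1$.
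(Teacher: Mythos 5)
Your proposal is correct and follows essentially the same route as the paper: the ``only if'' direction via invariance and continuity of the characteristic polynomial, and the ``if'' direction by conjugating the real Jordan form with the scaling matrices $\matr{R}_k(\cdot)$ and invoking \eqref{eq:R-sequence-J-limit}. Your only addition is the explicit adjustment forcing $\det(\matr{P}_k)=1$ (and the nilpotency remark when the limit is $\matr{0}$), which the paper glosses over but which is harmless since conjugation is unaffected; no further changes are needed.
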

\begin{proof}
If \( \matr{A} \) has only real eigenvalues, its Jordan normal form is derived by
\[ \matr{A} = \matr{P}^{-1}  \Diag{\matr{J}(\lambda_1, m_1) , \ldots , \matr{J}(\lambda_r, m_r)} \matr{P},\]
with $\lambda_s\in\RR$ for $1\leq s\leq r$. 
Let \( \matr{P}_k = \matr{P} \Diag{\matr{R}_{k}(m_1) , \ldots , \matr{R}_{k}(m_r)} \) for $k\geq 1$. Then, by equation \eqref{eq:R-sequence-J-limit}, we have that 
\[ \lim_{k \to \infty}\matr{P}_k^{-1}\matr{A}\matr{P}_k = \Diag{\lambda_1\matr{I}_{m_1} , \ldots , \lambda_r \matr{I}_{m_r}},\] which is a diagonal matrix. 
Conversely, if \( \lim_{k \to \infty}\matr{P}_k^{-1}\matr{A}\matr{P}_k=\matr{D}\), where $\matr{D}\in\textbf{D}_{m}$ is a diagonal matrix, then their characteristic polynomials also converge to the characteristic polynomial of $\matr{D}$, which has only real roots. 
Note that \( \matr{P}_k^{-1}\matr{A}\matr{P}_k \) and \( \matr{A} \) have the same characteristic polynomial. The characteristic polynomial of \( \matr{A} \) has only real roots as well, and thus \( \matr{A} \) has only real eigenvalues.
The proof is complete. 
\end{proof}

\begin{lemma}
Let $\matr{A}\in\textbf{symm}(\RR^{m\times m})$. 
Then there exist a sequence \( \{ \matr{P}_k \}_{k \geq 1} \subseteq \SL_m(\mathbb{R}) \) such that $\lim\limits_{k \to \infty}\matr{P}_k^{\T}\matr{A}\matr{P}_k = \matr{0}_{m\times m}$ if and only if \( \det(\matr{A}) = 0 \).
\end{lemma}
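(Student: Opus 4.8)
The plan is to prove the two implications separately. The forward direction (existence of the sequence $\Rightarrow\det(\matr{A})=0$) follows at once from multiplicativity and continuity of the determinant; the converse is an explicit construction using an orthogonal diagonalization of the symmetric matrix $\matr{A}$ together with the scaling matrices $\matr{R}_k$ introduced in \cref{sec:preliminary}. Neither direction presents a real obstacle.

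For the ``only if'' direction: suppose $\{\matr{P}_k\}_{k\geq 1}\subseteq\SL_m(\RR)$ satisfies $\matr{P}_k^\T\matr{A}\matr{P}_k\to\matr{0}_{m\times m}$. Since $\det(\matr{P}_k)=1$, we have $\det(\matr{P}_k^\T\matr{A}\matr{P}_k)=\det(\matr{A})$ for every $k$. Letting $k\to\infty$ and using that $\det(\cdot)$ is continuous on $\RR^{m\times m}$, I obtain $\det(\matr{A})=\det(\matr{0}_{m\times m})=0$.

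For the ``if'' direction: assume $\det(\matr{A})=0$. Since $\matr{A}\in\textbf{symm}(\RR^{m\times m})$, write $\matr{A}=\matr{Q}^\T\matr{D}\matr{Q}$ with $\matr{D}=\diag{\lambda_1,\ldots,\lambda_m}$ and $\matr{Q}\in\ON_m$. Absorbing a permutation matrix into $\matr{Q}$, I may assume $\lambda_m=0$; and since $\diag{1,\ldots,1,-1}\,\matr{D}\,\diag{1,\ldots,1,-1}=\matr{D}$, replacing $\matr{Q}$ by $\diag{1,\ldots,1,-1}\,\matr{Q}$ if necessary, I may further assume $\matr{Q}\in\SON_m$. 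Now set $\matr{R}_k\eqdef\diag{k^{-1},\ldots,k^{-1},k^{m-1}}$ and $\matr{P}_k\eqdef\matr{Q}^\T\matr{R}_k$; then $\det(\matr{P}_k)=\det(\matr{Q})\cdot k^{-(m-1)}k^{m-1}=1$, so $\matr{P}_k\in\SL_m(\RR)$, and
\[
\matr{P}_k^\T\matr{A}\matr{P}_k=\matr{R}_k\matr{Q}\matr{Q}^\T\matr{D}\matr{Q}\matr{Q}^\T\matr{R}_k=\matr{R}_k\matr{D}\matr{R}_k=\diag{\lambda_1/k^2,\ldots,\lambda_{m-1}/k^2,0},
\]
which converges to $\matr{0}_{m\times m}$ as $k\to\infty$, as desired.

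The only point requiring a bit of care — and the closest thing to an obstacle — is keeping $\det(\matr{P}_k)$ equal to $1$ exactly rather than merely nonzero, which is why the scaling is taken asymmetrically as $k^{-1},\ldots,k^{-1},k^{m-1}$ and why the sign of $\matr{Q}$ is normalized. Alternatively, one may invoke the observation (cf.\ the third item of \Cref{defini:TWSD-B}) that replacing the constraint $\matr{P}_k\in\SL_m(\RR)$ by $\det(\matr{P}_k)=c$ for a fixed nonzero constant $c$ does not alter the statement, which removes this bookkeeping entirely.
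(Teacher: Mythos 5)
Your proof is correct and takes essentially the same route as the paper: the forward direction is the identical determinant--continuity argument, and the converse is the same construction that congruence-diagonalizes $\matr{A}$ and then shrinks the nonzero part by $1/k$ while blowing up one entry in the kernel direction to keep the determinant fixed. The only cosmetic difference is that you use an orthogonal spectral decomposition (normalizing $\det\matr{Q}=1$), whereas the paper congruences $\matr{A}$ to its inertia form $\Diag{\matr{I}_p,-\matr{I}_q,\matr{0}_{r\times r}}$ and absorbs $\det(\matr{P})$ into the last scaling entry.
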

\begin{proof}
Since \( \det (\matr{P}_k) = 1 \), we have \( \det (\matr{P}_k^{\T}\matr{A}\matr{P}_k) = \det (\matr{A})\) for \( k \geq  1 \). Note that the determinant is a continuous function of matrices. If \(\lim_{k \to \infty} \matr{P}_k^{\T}\matr{A}\matr{P}_k = \matr{0}_{m\times m} \), then \[\det (\matr{A}) = \lim_{k \to \infty} \det (\matr{P}_k^{\T}\matr{A}\matr{P}_k) = \det(\lim_{k \to \infty} \matr{P}_k^{\T}\matr{A}\matr{P}_k) = 0.\]
Conversely, if \( \det(\matr{A}) = 0 \), without loss of generality, we choose $\matr{P}\in\GL_{m}(\RR)$ such that 
\[ \matr{P}^{\T}\matr{A}\matr{P} = \Diag{\matr{I}_p, -\matr{I}_q, \matr{0}_{r\times r}},\] 
with $p,q\geq 0$ and \( r \geq 1 \). Let \( \matr{P}_k = \matr{P} \Diag{1/k, 1/k, \ldots , 1/k, k^{m-1}/\det(\matr{P}) }\) for $k\geq 1$. Then 
\[ \matr{P}_k^{\T}\matr{A}\matr{P}_k = \Diag{\matr{I}_p / k^2, -\matr{I}_q / k^2, \matr{0}_{r\times r}} \to \matr{0}_{m \times m} \mbox{ when } k \to \infty,\] 
and $\matr{P}_k\in  \SL_{m}(\RR)$ for $k\geq 1$. The proof is complete.
\end{proof}

\begin{lemma}\label{lem:bounded-inverse}
Let \(\{\matr{X}_k\}_{k \geq 1} \subseteq\GL_m(\RR)\) be a sequence of matrices having the same determinant, \emph{i.e.}, \( \det(\matr{X}_k) = c \neq 0 \) for all $k\geq 1$. \\
(i) If there exists \( M >0 \) such that \( \|\matr{X}_k\| \leq M \) for all \(k\geq 1\), then there exists \(M'>0\) such that \( \|\matr{X}_k^{-1}\| \leq M' \) for all \(k\geq 1\). \\
(ii) If \( \lim_{k\to \infty} \matr{X}_k = \matr{D} \) with \( \matr{D} \in \mathbf{D}_m \), then \(\lim_{k\to \infty}\matr{X}_k^{-1}=\matr{D}^{-1}\).
\end{lemma}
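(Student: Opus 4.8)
The plan is to express the inverse through the adjugate (classical adjoint) formula $\matr{X}_k^{-1} = \det(\matr{X}_k)^{-1}\,\mathrm{adj}(\matr{X}_k) = c^{-1}\,\mathrm{adj}(\matr{X}_k)$, and to exploit the elementary fact that every entry of $\mathrm{adj}(\matr{X}_k)$ is, up to sign, an $(m-1)\times(m-1)$ minor of $\matr{X}_k$, hence a fixed polynomial (independent of $k$) in the entries of $\matr{X}_k$. Both parts then follow quickly: part (i) from a crude bound on these polynomials, and part (ii) from their continuity.

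For part (i), I would first note that $\|\matr{X}_k\|\leq M$ forces $|(\matr{X}_k)_{ij}|\leq M$ for all $i,j$ and all $k$. Each cofactor of $\matr{X}_k$ is a signed sum of $(m-1)!$ products of $m-1$ entries of $\matr{X}_k$, so $|\mathrm{adj}(\matr{X}_k)_{ij}|\leq (m-1)!\,M^{m-1}$, whence $\|\mathrm{adj}(\matr{X}_k)\|\leq m!\,M^{m-1}$. Since $\det(\matr{X}_k)=c\neq 0$ for all $k$, we conclude $\|\matr{X}_k^{-1}\| = |c|^{-1}\|\mathrm{adj}(\matr{X}_k)\| \leq |c|^{-1}\, m!\,M^{m-1} =: M'$, the desired uniform bound. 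For part (ii), since every entry of $\mathrm{adj}(\matr{X})$ is a polynomial in the entries of $\matr{X}$, the map $\matr{X}\mapsto \mathrm{adj}(\matr{X})$ is continuous, and so is $\det$. From $\matr{X}_k\to\matr{D}$ and $\det(\matr{X}_k)=c$ for all $k$ we get $\det(\matr{D})=c\neq 0$, so $\matr{D}\in\GL_m(\RR)$, and $\mathrm{adj}(\matr{X}_k)\to\mathrm{adj}(\matr{D})$. Therefore
\[
\lim_{k\to\infty}\matr{X}_k^{-1} = \lim_{k\to\infty} c^{-1}\,\mathrm{adj}(\matr{X}_k) = c^{-1}\,\mathrm{adj}(\matr{D}) = \det(\matr{D})^{-1}\,\mathrm{adj}(\matr{D}) = \matr{D}^{-1}.
\]

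The argument is entirely routine; the only point deserving a word of care is that $\det(\matr{D})\neq 0$ in part (ii), which is not assumed directly but follows from the continuity of the determinant together with the hypothesis that all the $\matr{X}_k$ share the nonzero determinant $c$. (Alternatively, part (ii) can be deduced from part (i): the sequence $\{\matr{X}_k^{-1}\}$ is bounded, so every convergent subsequence has a limit $\matr{Y}$ satisfying $\matr{D}\matr{Y}=\matr{I}_m$ by passing to the limit in $\matr{X}_k\matr{X}_k^{-1}=\matr{I}_m$; hence $\matr{Y}=\matr{D}^{-1}$, and since every subsequential limit equals $\matr{D}^{-1}$, the whole sequence converges to $\matr{D}^{-1}$.) I do not expect any real obstacle here.
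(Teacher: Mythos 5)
Your proof is correct: the adjugate identity $\matr{X}_k^{-1}=c^{-1}\,\mathrm{adj}(\matr{X}_k)$ gives the uniform bound in (i) and, by continuity of $\det$ and $\mathrm{adj}$ together with $\det(\matr{D})=c\neq 0$, the limit in (ii); your fallback argument for (ii) via boundedness and subsequential limits is also sound. The paper in fact omits the proof of this lemma as routine, and your argument is exactly the standard one it implicitly relies on (Cramer's rule plus the constant-determinant hypothesis), so there is nothing further to reconcile.
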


\begin{remark}
In \cref{lem:bounded-inverse}(ii), if we only know that \( \lim_{k\to \infty} \textbf{offdiag}(\matr{X}_k) = \matr{0}_{m\times m}\),
it is not necessary that \( \lim_{k\to \infty} \textbf{offdiag}(\matr{X}_k^{-1}) = \matr{0}_{m\times m}\).
For example, let
\[
\matr{X}_k = \begin{bmatrix}
2/k & 1/k & 0 \\
1/k & 1/k & 0 \\
0 & 0 & k^2
\end{bmatrix}
, \quad 
\matr{X}_k^{-1} = \begin{bmatrix}
k & -k & 0 \\
-k & 2k &  0 \\
0 & 0 & 1/k^2
\end{bmatrix},
\]
for $k\geq 1$.
It can be seen that \( \lim_{k\to \infty} \textbf{offdiag}(\matr{X}_k) = \matr{0}_{m\times m}\), while $\textbf{offdiag}(\matr{X}_k^{-1})$ doesn't converge.
\end{remark}

\begin{remark}
Let the set $\mathcal{C}$ be as in \eqref{set_C} satisfying  $\mathcal{C}\subseteq\GL_m(\RR)$. 
It can be seen from \cref{lem:bounded-inverse} that $\mathcal{C}$ is TWSD-B if and only if the set $\{\matr{A}_{i}^{-1}\}_{1\leq i\leq L}$ is TWSD-B.
\end{remark}

\begin{lemma}\label{lem:E-F-simul}
Let $\matr{E}(m)$ and $\matr{F}(m)$ be as in \eqref{def:E-F-H}. Then the set \( \{ \matr{E}(m), \matr{F}(m) \} \) is TWSD-B.
\end{lemma}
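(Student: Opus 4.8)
The plan is to exhibit an explicit sequence $\{\matr{P}_k\}_{k\ge 1}\subseteq\SL_m(\RR)$ along which both $\matr{P}_k^\T\matr{E}(m)\matr{P}_k$ and $\matr{P}_k^\T\matr{F}(m)\matr{P}_k$ converge to diagonal matrices; by the last equivalent formulation in \Cref{defini:TWSD-B}(ii) this is precisely the definition of TWSD-B. The sequence I would use is $\matr{P}_k\eqdef\matr{R}_k(m)\matr{Q}$, where $\matr{R}_k(m)=\Diag{k^{\delta_1},\ldots,k^{\delta_m}}$ is the diagonal scaling from \eqref{def:matrix-G-R-k} (with $\delta_s=(m+1)/2-s$) and $\matr{Q}\in\SON_m$ is the fixed orthogonal matrix, recorded in \cref{sec:preliminary}, satisfying $\matr{G}(m)=\matr{Q}^\T\matr{E}(m)\matr{Q}$. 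Since $\matr{R}_k(m)\in\SL_m(\RR)$ and $\matr{Q}\in\SON_m$, we have $\det\matr{P}_k=1$, so $\matr{P}_k\in\SL_m(\RR)$ for every $k$.

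The key step is a short exponent computation. Because $(\matr{R}_k(m)\matr{E}(m)\matr{R}_k(m))_{ij}=k^{\delta_i+\delta_j}(\matr{E}(m))_{ij}$ and $(\matr{E}(m))_{ij}\ne 0$ forces $i+j=m+1$, on the support one has $\delta_i+\delta_j=(m+1)-(i+j)=0$; hence $\matr{R}_k(m)\matr{E}(m)\matr{R}_k(m)=\matr{E}(m)$ for all $k$, so $\matr{P}_k^\T\matr{E}(m)\matr{P}_k=\matr{Q}^\T\matr{E}(m)\matr{Q}=\matr{G}(m)$, a fixed diagonal matrix. Similarly $(\matr{F}(m))_{ij}\ne 0$ forces $i+j=m+2$, so there $\delta_i+\delta_j=-1$ and $\matr{R}_k(m)\matr{F}(m)\matr{R}_k(m)=k^{-1}\matr{F}(m)$; therefore $\matr{P}_k^\T\matr{F}(m)\matr{P}_k=k^{-1}\matr{Q}^\T\matr{F}(m)\matr{Q}\to\matr{0}_{m\times m}$ as $k\to\infty$, which is again diagonal. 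Taking $\matr{D}^{(1)}=\matr{G}(m)$ and $\matr{D}^{(2)}=\matr{0}_{m\times m}$, \Cref{defini:TWSD-B}(ii) shows that $\{\matr{E}(m),\matr{F}(m)\}$ is TWSD-B. This also explains why the lemma is isolated: the congruence matrices $\matr{R}_k(m)\matr{Q}$ are exactly the ones reused in the block step of \cref{alg:two_matrix}.

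I do not expect a genuine obstacle here; the only items needing attention are (a) recording that $\det\matr{R}_k(m)=1$ and $\matr{Q}\in\SON_m$ so that $\matr{P}_k$ never leaves $\SL_m(\RR)$, and (b) the bookkeeping identity $\delta_i+\delta_j=(m+1)-(i+j)$, which is what keeps $\matr{E}(m)$ invariant while damping $\matr{F}(m)$ by $k^{-1}$. If one prefers a one-line alternative: $\matr{E}(m)$ is nonsingular and symmetric with $\matr{E}(m)^{-1}=\matr{E}(m)$, and a direct entry computation gives $\matr{E}(m)^{-1}\matr{F}(m)=\matr{E}(m)\matr{F}(m)=\matr{J}(0,m)$, the nilpotent Jordan block, whose eigenvalues are all $0$; hence $\matr{E}(m)^{-1}\matr{F}(m)$ has only real eigenvalues and \cref{thm:TWSD-B-nonsingular-pair} immediately yields the claim. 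I would keep the constructive argument as the main proof because the explicit sequence $\matr{P}_k$ is needed again in \cref{alg:two_matrix}.
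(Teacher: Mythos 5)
Your construction is exactly the paper's proof: the same congruences $\matr{P}_k=\matr{R}_k(m)\matr{Q}$, the same identities $\matr{R}_k(m)^{\T}\matr{E}(m)\matr{R}_k(m)=\matr{E}(m)$ and $\matr{R}_k(m)^{\T}\matr{F}(m)\matr{R}_k(m)=\tfrac{1}{k}\matr{F}(m)$, and the same limits $\matr{G}(m)$ and $\matr{0}_{m\times m}$, so the argument is correct and essentially identical to the paper's. One caution about your side remark: deducing the lemma from \cref{thm:TWSD-B-nonsingular-pair} would be circular here, since the sufficiency direction of that theorem rests on \cref{thm:better-decomposition}, whose proof invokes the present lemma, so the constructive argument must remain the main proof, as you chose.
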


\begin{proof}
Let \( \matr{R}_{k}(m)\) be as in \eqref{def:matrix-G-R-k} for $k\geq 1$. 
Then we have that 
\[
\matr{R}_{k}(m)^{\T}\matr{E}(m)\matr{R}_{k}(m) = \matr{E}(m) ,\quad \matr{R}_{k}(m)^{\T}\matr{F}(m)\matr{R}_{k}(m) = \frac{1}{k} \matr{F}(m).
\]
Let $\matr{Q}\in\SON_{m}$ satisfy that \( \matr{Q}^{\T}\matr{E}(m)\matr{Q} = \matr{G}(m)\),
where $\matr{G}(m)$ is the diagonal matrix as in \eqref{def:matrix-G-R}.
Let \( \matr{P}_k = \matr{R}_{k}(m)\matr{Q} \). 
Then
\begin{equation}\label{eq:proof_lemm_EF}
\matr{P}_k^{\T}\matr{E}(m)\matr{P}_k = \matr{Q}^{\T}\matr{E}(m)\matr{Q} = \matr{G}(m), \quad \matr{P}_k^{\T}\matr{F}(m)\matr{P}_k = \frac{1}{k} \matr{Q}^{\T}\matr{F}(m)\matr{Q}.
\end{equation}
It can be seen that \( \lim_{k \to \infty}\matr{P}_k^{\T}\matr{E}(m) \matr{P}_k = \matr{G}(m)\) and \(\lim_{k \to \infty} \matr{P}_k^{\T}\matr{F}(m)\matr{P}_k = \matr{0}_{m\times m} \). 
The proof is complete.
\end{proof}

\begin{proof}[Proof of \cref{thm:better-decomposition}]
By \cref{lem:Uhlig-canonical-nonsingular}, there exists \(\matr{\tilde{P}}\in\GL_{m}(\RR)\) such that
\begin{align*} 
{\small \matr{\tilde{P}}^{\T}\matr{A}\matr{\tilde{P}}} &{\small= \Diag{\sigma_1 \matr{E}(m_{1}), \ldots, \sigma_r\matr{E}(m_{r}), \matr{E}(m_{r+1}), \ldots \matr{E}(m_{p})},}\\
{\small\matr{\tilde{P}}^{\T}\matr{B}\matr{\tilde{P}}} &{\small= \textbf{Diag}\{\sigma_1 \matr{E}(m_{1})\matr{J}(\lambda_{1}, m_{1}), \ldots, \sigma_r\matr{E}(m_{r})\matr{J}(\lambda_{r}, m_{r})},\\
&\ \ \ \ \ \ \ \ \ \   {\small\matr{E}(m_{r+1})\matr{J}(\lambda_{r+1}, m_{r+1}), \ldots \matr{E}(m_{p})\matr{J}(\lambda_{p}, m_{p})\}},
\end{align*}
where \(\lambda_s \in \mathbb{R}\) for \(1 \leq s \leq r\) and \( \lambda_s \in \mathbb{C}\backslash\mathbb{R} \) for \( r < s \leq p \). For \( 1 \leq s \leq r \), note that
\begin{equation}\label{eq:EJ-E-F}
{\small\matr{E}(m_s)\matr{J}(\lambda_s, m_s) = \begin{bmatrix}
0       &   &   &   & \lambda_s \\
  &   &   & \iddots & 1       \\
  &   & \iddots & \iddots &         \\
  & \iddots & \iddots &   &         \\
\lambda_s & 1 &   &   & 0       \\
\end{bmatrix} = \lambda_s \matr{E}(m_s) + \matr{F}(m_s).}
\end{equation}
By \cref{lem:E-F-simul} and \eqref{eq:proof_lemm_EF}, we see that \( \matr{E}(m_s) \) and  $\matr{E}(m_s)\matr{J}(\lambda_s, m_s)$ are TWSD-B, and there exists a sequence \( \{\matr{P}_{k}^{(s)}\}_{k\geq 1}\subseteq\SL_{m_s}(\RR)\) such that
\[
(\matr{P}_k^{(s)})^{\T}\matr{E}(m_s)\matr{P}_k^{(s)} = \matr{G}(m_s), \ \lim_{k \to \infty}(\matr{P}_k^{(s)})^{\T}\matr{F}(m_s)\matr{P}_k^{(s)} = \matr{0}_{m_s \times m_s},
\]
where $\matr{G}(m_s)$ is defined as in \eqref{def:matrix-G-R}. 
Then it follows by \eqref{eq:EJ-E-F} that 
\[
\lim_{k \to \infty} (\matr{P}^{(s)}_k)^{\T}\matr{E}(m_s) \matr{J}(\lambda_s, m_s)\matr{P}^{(s)}_k =   \lim_{k \to \infty} (\matr{P}^{(s)}_k)^{\T}(\lambda_s\matr{E}(m_s) + \matr{F}(m_s))\matr{P}^{(s)}_k = \lambda_s \matr{G}(m_s).
\]
Now we let
\( \matr{P}_k = \matr{\tilde{P}}\Diag{\matr{P}^{(1)}_k , \ldots , \matr{P}_k^{(r)}, \matr{I}_{m_{r+1}} , \ldots , \matr{I}_{m_p}}.
\)
Then 
\begin{align*}
{\small\matr{P}_k^{\T}\matr{A}\matr{P}_k} &{\small = \Diag{\sigma_1\matr{G}(m_1), \cdots,  \sigma_r\matr{G}(m_r), \matr{E}(m_{r+1}), \cdots, \matr{E}(m_{p})}, \ \forall k,} \\
{\small\lim_{k \to \infty} \matr{P}_k^{\T}\matr{B}\matr{P}_k} &{\small= \Diag{\lambda_1\sigma_1\matr{G}(m_1), \cdots,  \lambda_r\sigma_r\matr{G}(m_r), \matr{E}(m_{r+1})\matr{J}(\lambda_{r+1}, m_{r+1}), \cdots, \matr{E}(m_{p})\matr{J}(\lambda_p, m_p)}.}
\end{align*}
The proof is complete. 
\end{proof}

\begin{proof}[Proof of \cref{thm:TWSD-B-nonsingular-pair}]
If the set \(\{ \matr{A}, \matr{B} \} \) is TWSD-B, there exists a sequence \(\{\matr{P}_k\}_{k\geq 1} \subseteq \SL_m(\mathbb{R}) \) such that the sequences \( \matr{P}_k^{\T}\matr{A}\matr{P}_k \) and \( \matr{P}_k^{\T}\matr{B}\matr{P}_k \) both converge to diagonal matrices. By \cref{lem:bounded-inverse}, the sequence \( \matr{P}_k^{-1} \matr{A}^{-1}\matr{P}_k^{-\T}\) converges to a diagonal matrix, and thus \( (\matr{P}_k^{-1} \matr{A}^{-1}\matr{P}_k^{-\T})(\matr{P}_k^{\T}\matr{B}\matr{P}_k) = \matr{P}_k^{-1}\matr{A}^{-1}\matr{B}\matr{P}_k\) also converges to a diagonal matrix.
By \cref{lem:weak-Jordan-form}, the product \( \matr{A}^{-1}\matr{B} \) has only real eigenvalues. 
Conversely, if \( \matr{A}^{-1}\matr{B} \) has only real eigenvalues, by \cref{thm:better-decomposition}, the set \(\{ \matr{A}, \matr{B} \} \) is TWSD-B, since there are only Jordan blocks associated with real eigenvalue in the canonical form of \( \matr{A} \) and \( \matr{B} \).
The proof is complete. 
\end{proof}

\begin{proof}[Proof of \Cref{thm:TWSD-B-singular-pair}]
  By \cref{lem:lancaster-canonical-general-pair}, there exists a nonsingular matrix \( \matr{\bar{P}} \) such that
  \[
    \matr{\bar{P}}^{\top} \matr{A} \matr{\bar{P}} = \Diag{\matr{X}_1 , \ldots , \matr{X}_p} \mbox{ and } \matr{\bar{P}}^{\top} \matr{B} \matr{\bar{P}} = \Diag{\matr{Y}_1 , \ldots , \matr{Y}_p}
  \]
are both block diagonal matrices with the same block structure. Since \( \{ \matr{A}, \matr{B} \} \) is a singular pair, $p_4+p_5 > 0$ in  \cref{lem:lancaster-canonical-general-pair}.
Then it is sufficient to prove that \( \matr{A} \) and \( \matr{B} \) are TWSD-B in the following two cases:

\textbf{Case 1:} $p_5 = 1$. Then \( \matr{X}_p = \matr{Y}_p = \matr{0}_{m_p\times m_p} \). Let
\[\matr{U_k} = \Diag{\underbrace{1/k, 1/k , \ldots , 1/k}_{\small (m-1) \mbox{ elements}}, k^{m-1}}\]
for \( k \geq 1 \). Then we have that \( \det (\matr{U}_k) = 1 \) and
\begin{align*}
  \matr{U}_k^{\top} \matr{\bar{P}}^{\top} \matr{A} \matr{\bar{P}} \matr{U}_k = \Diag{\matr{X}_1/k^2 , \ldots , \matr{X}_{p-1}/k^2, \matr{0}_{m_p\times m_p}} \to \matr{0}_{m\times m},\\
  \matr{U}_k^{\top} \matr{\bar{P}}^{\top} \matr{B} \matr{\bar{P}} \matr{U}_k = \Diag{\matr{Y}_1/k^2 , \ldots , \matr{Y}_{p-1}/k^2, \matr{0}_{m_p\times m_p}} \to \matr{0}_{m\times m},
\end{align*}
when \( k \to \infty \). 
Let \( \matr{P}_k = \matr{\bar{P}} \matr{U}_k \) for \( k \geq 1 \). Then \( \det(\matr{P}_k) = \det(\matr{\bar{P}}) \), and \( \matr{P}_k^{\top} \matr{A} \matr{P}_k, \matr{P}_k^{\top} \matr{B} \matr{P}_k \) both converge to \( \matr{0}_{m\times m} \). It follows that \(\{ \matr{A}, \matr{B} \}\) is TWSD-B.

\textbf{Case 2:} $p_4 \geq 1, p_{4}=0$.
Since \(p_5 = 0\), the last diagonal blocks of their canonical form are
\[
\matr{X}_p = \begin{pmatrix}
& & \matr{E}(m_{p}) \\
& 0 & \\
\matr{E}(m_{p}) & & \\
\end{pmatrix},\ \matr{Y}_p = \matr{F}(2m_{p}+1).
\]
Let \( \matr{W}_k = (2m_p+1)\matr{R}_k \) for \( k \geq 1 \). Then we always have
\begin{equation}\label{eq:singular-pair-1}
\matr{W}_k^{\T}\matr{X}_p\matr{W}_k =\matr{X}_p, \ \matr{W}_k^{\T}\matr{Y}_p \matr{W}_k =  \matr{Y}_p/k.
\end{equation}
Now we define a matrix 
\[  \matr{V}_{k} = \Diag{ k^{- \frac{1}{2(m-1)}} \matr{I}_{m_p},  k^{\frac{1}{2}}, k^{- \frac{1}{2(m-1)}} \matr{I}_{m_p}}\in\RR^{( 2m_p+1)\times ( 2m_p+1)}.\]
By equation \eqref{eq:singular-pair-1}, we have that
\begin{align*}
  & \matr{V}_k^{\top} \matr{W}_k \matr{X}_p \matr{W}_k \matr{V}_k = k^{- \frac{1}{(m-1)}} \matr{X}_p \to \matr{0}_{(2m_p+1)\times(2m_p+1)}, \\
  & \matr{V}_k^{\top} \matr{W}_k \matr{Y}_p \matr{W}_k \matr{V}_k =
  {\tiny
    \left[
\begin{array}{ccc|c|ccc}
 &                                            &  &                   &                   &                                            & \\
 &                                            &  &                   &                   &  k^{- \frac{m}{m-1}} \matr{F}(m_p) & \\
 &                                            &  &                   &                   &                                            & \\
 \hline
 &                                            &  & 0                 & k^{- \frac{1}{2}} &                                            & \\
 \hline
 &                                            &  & k^{- \frac{1}{2}} &                   &                                            & \\
 &  k^{- \frac{m}{m-1}} \matr{F}(m_p) &  &                   &                   &                                            & \\
 &                                            &  &                   &                   &                                            &
\end{array}\right]} \to \matr{0}_{(2m_p+1)\times(2m_p+1)},
\end{align*}
when \( k \to \infty \). 
Let \( \matr{P}_k = \matr{\bar{P}} \Diag{\matr{I}_{m-2m_p-1}, \matr{W}_k} \Diag{k^{- \frac{1}{2(m-1)}}\matr{I}_{m-2m_{p}-1}, \matr{V}_k} \).
Then we have that  \( \det(\matr{\bar{P}}_k) = \det(\matr{\bar{P}}) \) and 
\begin{align*}
  \matr{P}_k^{\top} \matr{A} \matr{P}_{k} = \textbf{Diag}\left\{k^{- \frac{1}{m-1}} \matr{X}_1 , \ldots , k^{- \frac{1}{m-1}} \matr{X}_{p-1}, \matr{V}_k^{\top} \matr{W}_k^{\top} \matr{X}_p \matr{W}_k \matr{V}_k\right\} \to \matr{0}_{m\times m}, \\
  \matr{P}_k^{\top} \matr{B} \matr{P}_k = \textbf{Diag}\left\{k^{- \frac{1}{m-1}} \matr{Y}_1 , \ldots , k^{- \frac{1}{m-1}} \matr{Y}_{p-1}, \matr{V}_k^{\top} \matr{W}_k^{\top} \matr{Y}_p \matr{W}_k \matr{V}_k\right\} \to \matr{0}_{m\times m},
\end{align*}
when \( k \to \infty \). 
The proof is complete.
\end{proof}

Before the proofs of  \Cref{thm:TWSD-B-multiple-sufficient-condition,thm:TWSD-B-three-multiple-sufficient-condition}, we first need to present several lemmas. 

\begin{lemma}\label{lem:R-lower-stripped-R}
Let \( \matr{A} \in \mathbb{R}^{m_1 \times m_2} \) satisfy \( A_{ij} = 0 \) for \( i+j \leq \max \{ m_1, m_2 \}  \), {\it i.e.}, 
\begin{equation*}\label{def:lower-stripped-matrix}
{\footnotesize\matr{A}=\left[\begin{array}{lllll}
      0                & \cdot   & \cdot   & \cdot   & 0                          \\
      \cdot            &         &         &         & \cdot                      \\
      \cdot            &         &         &         & \cdot                      \\
      \cdot            &         &         &         & 0                          \\
      \cdot            &         &         & \iddots & A_{m_1-m_2+1,m_{2}} \\
      \cdot            &         & \iddots & \iddots & \cdot                      \\
      \cdot            & \iddots & \iddots & \cdot   & \cdot                      \\
      0                & \iddots &         &         & \cdot                      \\
      A_{m_1,1} & \cdot   & \cdot   & \cdot   & A_{m_1,m_{2}}
    \end{array}\right]
  \mbox{ or }
  \ \left[\begin{array}{ccccccccc}
            0     & \cdot & \cdot & \cdot & \cdot & \cdot                     & \cdot & 0     & A_{1,m_{2}} \\
            \cdot &       &       &       &       &                           & \cdot & \iddots & \cdot              \\
            \cdot &       &       &       &       & \cdot                     & \iddots &       & \cdot              \\
            0     & \cdot & \cdot & \cdot & 0     & A_{m_1, m_2-m_1+1} & \cdot & \cdot & A_{m_1,m_{2}}
          \end{array}\right].}
\end{equation*}
Then
\begin{equation}\label{eq:lemm_Abar}
\lim_{k \to \infty} \matr{R}_{k}(m_1)^{\T}\matr{A}\matr{R}_{k}(m_{2}) = \left\{
\begin{array}{cl}
  \matr{0}_{m_1\times m_2}, & \mbox{ if }m_1 \neq m_2; \\
  \bar{\matr{A}}, & \mbox{ if } m_1 = m_2,
\end{array}
\right.
\end{equation}
where $\matr{R}_{k}(m_1)$ and $\matr{R}_{k}(m_2)$ are as in \eqref{def:matrix-G-R-k}, and \( \bar{\matr{A}} \in \mathbb{R}^{m \times m} \) is defined as:  
\[ \bar{A}_{ij} = \left\{
\begin{array}{cl}
A_{ij}, & \mbox{if }i+j = m+1; \\
0, & \mbox{otherwise}.
\end{array}
\right. \]
\end{lemma}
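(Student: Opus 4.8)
The statement is a direct computation with the conjugation matrices $\matr{R}_k(m_1)$ and $\matr{R}_k(m_2)$, so the plan is to just track exponents. Recall from \eqref{def:matrix-G-R-k} that $\matr{R}_k(m)=\Diag{k^{\delta_1},\ldots,k^{\delta_m}}$ with $\delta_s=(m+1)/2-s$. Hence for $\matr{A}\in\RR^{m_1\times m_2}$ the $(i,j)$-entry of $\matr{R}_k(m_1)^{\T}\matr{A}\matr{R}_k(m_2)$ is exactly $k^{\delta_i+\delta_j}A_{ij}=k^{(m_1+1)/2-i+(m_2+1)/2-j}A_{ij}$, i.e.\ the exponent is $\tfrac{m_1+m_2}{2}+1-(i+j)$. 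So everything reduces to deciding, for each pair $(i,j)$ with $A_{ij}\neq 0$, whether this exponent is negative, zero, or positive, using the hypothesis that $A_{ij}=0$ whenever $i+j\le\max\{m_1,m_2\}$.

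\textbf{First step: the case $m_1=m_2=m$.} Here $\max\{m_1,m_2\}=m$, so the only entries that may be nonzero are those with $i+j\ge m+1$. The exponent is $(m+1)-(i+j)$. If $i+j=m+1$ it is $0$, so that entry is unchanged; if $i+j\ge m+2$ it is $\le -1$, so that entry tends to $0$ as $k\to\infty$. This is precisely the matrix $\bar{\matr{A}}$ described in the statement, proving \eqref{eq:lemm_Abar} in this case.

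\textbf{Second step: the case $m_1\neq m_2$.} Without loss of generality take $m_1>m_2$ (the other case being symmetric by transposition, or handled identically); then $\max\{m_1,m_2\}=m_1$ and the nonzero entries satisfy $i+j\ge m_1+1$. The exponent is $\tfrac{m_1+m_2}{2}+1-(i+j)\le \tfrac{m_1+m_2}{2}+1-(m_1+1)=\tfrac{m_2-m_1}{2}<0$, so every entry that could be nonzero carries a strictly negative power of $k$ and hence tends to $0$. Therefore $\matr{R}_k(m_1)^{\T}\matr{A}\matr{R}_k(m_2)\to\matr{0}_{m_1\times m_2}$, completing the proof.

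\textbf{Main obstacle.} There is no real obstacle here — the lemma is a bookkeeping exercise on the exponents $\delta_i+\delta_j$. The only point requiring a little care is checking that the sparsity pattern displayed in the two matrix pictures (the ``lower-stripped'' shape) is exactly the condition $A_{ij}=0$ for $i+j\le\max\{m_1,m_2\}$, so that the exponent estimate applies uniformly to all potentially-nonzero entries; once that identification is made, the three sign cases above finish everything.
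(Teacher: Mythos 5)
Your proof is correct and follows essentially the same route as the paper's: compute the $(i,j)$-entry of $\matr{R}_k(m_1)^{\T}\matr{A}\matr{R}_k(m_2)$ as $k^{(m_1+m_2)/2+1-(i+j)}A_{ij}$ and check the sign of the exponent on the entries not killed by the hypothesis $A_{ij}=0$ for $i+j\le\max\{m_1,m_2\}$. The only cosmetic difference is your WLOG $m_1>m_2$ in the unequal case, which is harmless since the bound $(m_1+m_2)/2+1-(i+j)\le(\min\{m_1,m_2\}-\max\{m_1,m_2\})/2<0$ is symmetric in $m_1,m_2$.
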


\begin{proof} Let \(\tilde{\matr{A}}^{(k)} =   \matr{R}_{k}(m_{1})^{\T}\matr{A}\matr{R}_{k}(m_{2}) \) for $k\geq 1$. 
Then
\begin{align*}
(\tilde{A}^{(k)})_{i,j}= (R_{k}(m_{1}))_{i,i}A_{ij}(R_{k}(m_{2}))_{j,j} = k^{(m_1+1)/2-i}A_{ij}k^{(m_2+1)/2-j} = k^{(m_1+m_2)/2+1-(i+j)}A_{ij}.
\end{align*}
If \( i+j \leq \max\{m_1, m_2\} \), since \(A_{ij} = 0 \), we have \((\tilde{A}^{(k)})_{i,j}= 0 \). 
If \( i+j \geq \max\{m_1, m_2\} + 1 \) and \( m_1 \neq m_2 \), we have \( (m_1+m_2)/2+1-(i+j) < 0 \), and thus \( \lim_{k \to \infty} (\tilde{A}^{(k)})_{i,j} =0 \).
If \( i+j > \max\{m_1, m_2\} + 1 \) and \( m_1 = m_2 \), we still have \( (m_1+m_2)/2 + 1 - (i+j) < 0 \), and thus \( \lim_{k \to \infty} (\tilde{A}^{(k)})_{i,j} =0 \).
If \( i+j = \max\{m_1, m_2\} + 1 \) and \( m_1 = m_2 \), then $(\tilde{A}^{(k)})_{i,j}=A_{ij}$. The proof is complete.
\end{proof}

\begin{remark}
In \cref{lem:R-lower-stripped-R}, if \(m_1 = m_2 = m\), then \eqref{eq:lemm_Abar} can be shown more clearly as
\[
\lim_{k \to \infty} \matr{R}_k(m)^{\top}
\begin{bmatrix}
0              & \cdots  & \cdots  & 0       & A_{1,m} \\
\vdots          &         & \iddots & \iddots & A_{2,m}              \\
\vdots          & \iddots & \iddots & \iddots   & \vdots          \\
0              & \iddots &  \iddots       &         & \vdots          \\
A_{m,1} & A_{m,2}       & \cdots  & \cdots  & A_{m,m}
\end{bmatrix}
\matr{R}_k(m) =
\begin{bmatrix}
0              & \cdots  & \cdots  & 0       & A_{1,m} \\
\vdots          &         & \iddots & \iddots & 0              \\
\vdots          & \iddots & \iddots & \iddots   & \vdots          \\
0              & \iddots &  \iddots       &         & \vdots          \\
A_{m,1} & 0       & \cdots  & \cdots  & 0
\end{bmatrix}.
\]
\end{remark}

The following lemma is adapted from the proof of \cite[Lemma 1]{uhlig1976canonical} .
\begin{lemma}[{\cite[Lemma 1]{uhlig1976canonical}}]
\label{lem:matrices-commute-with-jordan-norma-form} 
Suppose that $\matr{A}\in\RR^{m\times m}$ has real eigenvalues \( \lambda_1, \ldots , \lambda_r \in \mathbb{R} \), and its Jordan normal form is
\begin{equation}\label{eq:Jord_A}
\matr{L} = \Diag{\matr{C}(\lambda_1) , \ldots , \matr{C}(\lambda_{r})},
\end{equation}
where \( \matr{C}(\lambda_v) = \Diag{\matr{J}(\lambda_v, m^{(v)}_1) , \ldots , \matr{J}(\lambda_v, m^{(v)}_{s_v})}\) is the \emph{full chain of Jordan blocks} associated with the eigenvalue \( \lambda_v \) , \(s_v\) is the number of Jordan blocks associated with \(\lambda_v\) for $1\leq v\leq r$.
Then we have\\
(i) The ring of matrices commuting with \( \matr{L} \) in \eqref{eq:Jord_A} is the direct sum of the \(r\) rings of matrices commuting with \( \matr{C}(\lambda_v) \) for \( 1 \leq v \leq r \), respectively.
In other words, if \( \matr{X}\matr{L} = \matr{L}\matr{X} \), then \( \matr{X} = \Diag{\matr{X}_1 , \ldots , \matr{X}_r} \) with \( \matr{X}_v \in \mathbb{R}^{M_v \times M_v}\) satisfying \( \matr{X}_v \matr{C}(\lambda_v) = \matr{C}(\lambda_v)\matr{X}_v \), where \(M_v = \sum_{g=1}^{s_v} m_g^{(v)}\) is the size of \( \matr{C}(\lambda_v) \) for \( 1 \leq v \leq r \).\\
(ii) Let \( \matr{X}_v \) be a matrix commuting with \( \matr{C}(\lambda_v)\). Then we can partition \( \matr{X}_v \) into \( s_v^2 \) blocks in the same way as \(\Diag{\matr{J}(\lambda_v, m^{(v)}_1) , \ldots , \matr{J}(\lambda_v, m^{(v)}_{s_v})}\). Denote \( \matr{X}_v \) by \( \matr{X}_v = [\matr{\hat{X}}_v^{(g,h)}]_{1 \leq g,h \leq s_v} \), where \( \matr{\hat{X}}_v^{(g,h)} \in \RR^{m^{(v)}_g \times m^{(v)}_h} \). Then each block \( \matr{\hat{X}}_v^{(g,h)}\) is a triangularly striped matrix, that is, \( \matr{\hat{X}}_i^{(g,h)}\) is of the form
      \begin{equation}\label{def:triangle-stripped-matrix}{\small
      \left[\begin{array}{ccccc}
              (\hat{x}^{(g,h)}_v)_1 & \cdot & \cdot & \cdot & (\hat{x}^{(g,h)}_v)_{m_h} \\
              0 & \ddots & & & \cdot \\
              \vdots &\ddots & \ddots & & \cdot \\
              \vdots & &\ddots& \ddots & \cdot \\
              \vdots & & &\ddots & (\hat{x}^{(g,h)}_v)_1 \\
              \vdots & & & & 0 \\
              \vdots & & & & \vdots \\
              0 & \cdots & \cdots & \cdots & 0
            \end{array}\right]
      \mbox{ or }
      \left[\begin{array}{llllllllll}
              0 & \cdots & \cdots & \cdots & 0 &  (\hat{x}^{(g,h)}_v)_1& \cdot & \cdot & \cdot & (\hat{x}^{(g,h)}_v)_{m_g} \\
              \vdots & & & & &\ddots & \ddots & & & \cdot \\
              \vdots & & & & & & \ddots& \ddots && \cdot \\
              \vdots & & & & & & &\ddots& \ddots & \cdot \\
              0&\cdots &\cdots &\cdots &\cdots &\cdots &\cdots &\cdots & 0 &(\hat{x}^{(g,h)}_v)_1
            \end{array}\right].}
\end{equation}
\end{lemma}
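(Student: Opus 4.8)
The plan is to treat the two parts separately, in each case reducing the commutation relation to intertwining equations between nilpotent Jordan blocks. For part (i), I would take a matrix \( \matr{X} \) with \( \matr{X}\matr{L} = \matr{L}\matr{X} \) and partition it into blocks \( \matr{X}_{vw} \in \RR^{M_v \times M_w} \) conforming to the decomposition \( \matr{L} = \Diag{\matr{C}(\lambda_1), \ldots, \matr{C}(\lambda_r)} \), so that the hypothesis becomes \( \matr{X}_{vw}\matr{C}(\lambda_w) = \matr{C}(\lambda_v)\matr{X}_{vw} \) for all \( v,w \). Writing \( \matr{C}(\lambda_v) = \lambda_v\matr{I} + \matr{N}_v \) with \( \matr{N}_v \) the strictly upper triangular nilpotent part, this reads \( (\lambda_w - \lambda_v)\matr{X}_{vw} = \matr{N}_v\matr{X}_{vw} - \matr{X}_{vw}\matr{N}_w \). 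The point I would then exploit is that the linear operator \( \matr{Y} \mapsto \matr{N}_v\matr{Y} - \matr{Y}\matr{N}_w \) on \( \RR^{M_v \times M_w} \) is a difference of two commuting nilpotent operators (left multiplication by \( \matr{N}_v \) and right multiplication by \( \matr{N}_w \)), hence nilpotent; therefore, when \( v \neq w \) and so \( \lambda_w - \lambda_v \neq 0 \), subtracting this operator from \( (\lambda_w - \lambda_v)\matr{I} \) gives an invertible operator, which forces \( \matr{X}_{vw} = \matr{0} \). This yields \( \matr{X} = \Diag{\matr{X}_1, \ldots, \matr{X}_r} \) with each \( \matr{X}_v := \matr{X}_{vv} \) commuting with \( \matr{C}(\lambda_v) \); the reverse inclusion is immediate, which settles (i).

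For part (ii), I would fix \( v \) and note that, since \( \matr{C}(\lambda_v) = \lambda_v\matr{I} + \matr{N} \) with \( \matr{N} = \Diag{\matr{N}^{(1)}, \ldots, \matr{N}^{(s_v)}} \) and \( \matr{N}^{(g)} \in \RR^{m^{(v)}_g \times m^{(v)}_g} \) the nilpotent shift, commuting with \( \matr{C}(\lambda_v) \) is the same as commuting with \( \matr{N} \). Partitioning \( \matr{X}_v = [\matr{\hat{X}}_v^{(g,h)}] \) conformally decouples the condition into the family of equations \( \matr{\hat{X}}_v^{(g,h)}\matr{N}^{(h)} = \matr{N}^{(g)}\matr{\hat{X}}_v^{(g,h)} \) for \( 1 \leq g,h \leq s_v \). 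I would then solve each such intertwining equation directly, reading it one column (equivalently, one superdiagonal) at a time: this shows that \( \matr{\hat{X}}_v^{(g,h)} \) is constant along each diagonal, vanishes outside its main diagonal band, and in addition vanishes in its last \( m^{(v)}_g - m^{(v)}_h \) rows when \( m^{(v)}_g > m^{(v)}_h \) (resp. in its first \( m^{(v)}_h - m^{(v)}_g \) columns when \( m^{(v)}_g < m^{(v)}_h \)), leaving precisely \( \min\{m^{(v)}_g, m^{(v)}_h\} \) free scalars \( (\hat{x}^{(g,h)}_v)_1, (\hat{x}^{(g,h)}_v)_2, \ldots \); this is exactly the triangularly striped form displayed in \eqref{def:triangle-stripped-matrix}.

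I expect the only real work to be this last step, namely carrying out the column-by-column solution of \( \matr{Y}\matr{N}^{(h)} = \matr{N}^{(g)}\matr{Y} \) and keeping careful track of which entries are forced to vanish in the two cases \( m^{(v)}_g \leq m^{(v)}_h \) and \( m^{(v)}_g \geq m^{(v)}_h \); part (i), by contrast, is short once one observes the nilpotency of \( \matr{Y} \mapsto \matr{N}_v\matr{Y} - \matr{Y}\matr{N}_w \). As a shortcut, both parts may instead be quoted verbatim from \cite[Lemma 1]{uhlig1976canonical}, which is the source of the statement of \cref{lem:matrices-commute-with-jordan-norma-form}.
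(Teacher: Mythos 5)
Your proposal is correct, but note that the paper contains no internal proof of this lemma to compare against: as the citation indicates, the statement is imported from Uhlig's Lemma~1 (the classical description of the commutant of a Jordan matrix), and the surrounding text only says it is adapted from that source. What you supply is therefore a self-contained derivation rather than a variant of the paper's argument, and it is sound. For part (i), writing \( \matr{C}(\lambda_v)=\lambda_v\matr{I}+\matr{N}_v \) and reducing the block equation to \( (\lambda_w-\lambda_v)\matr{X}_{vw}=\matr{N}_v\matr{X}_{vw}-\matr{X}_{vw}\matr{N}_w \) does force \( \matr{X}_{vw}=\matr{0} \) for \( v\neq w \), since left multiplication by \( \matr{N}_v \) and right multiplication by \( \matr{N}_w \) are commuting nilpotent operators, so their difference is nilpotent and cannot have the nonzero eigenvalue \( \lambda_w-\lambda_v \); the one point you should make explicit is that the \( \lambda_v \) are pairwise distinct, which is exactly what the hypothesis that each \( \matr{C}(\lambda_v) \) is the \emph{full} chain for its eigenvalue guarantees. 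For part (ii), the entrywise reading of \( \matr{Y}\matr{N}^{(h)}=\matr{N}^{(g)}\matr{Y} \) gives \( Y_{i,j-1}=Y_{i+1,j} \) with zero boundary conditions in the first column and last row, which yields constancy along the down-right diagonals, vanishing of the lower-left region (the last \( m^{(v)}_g-m^{(v)}_h \) rows in the tall case, the first \( m^{(v)}_h-m^{(v)}_g \) columns in the wide case), and precisely \( \min\{m^{(v)}_g,m^{(v)}_h\} \) free parameters — exactly the triangularly striped shapes in \eqref{def:triangle-stripped-matrix}. In short, your route buys a proof the paper does not itself contain, at the cost of the routine bookkeeping in (ii); simply quoting Uhlig, as the paper does, remains the shorter option.
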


\begin{lemma}[{\cite[Lemma 8]{wang2021new}}]\label{lem-tri-st-eigen}
  Suppose \( \matr{X} = [ \matr{X}_{p,q}]_{1 \leq p,q \leq c} \) and each block \( \matr{X} \) is triangularly striped as \eqref{def:triangle-stripped-matrix}. Define another matrix \( \matr{\tilde{X}} \in \RR^{c \times c} \) by letting \( \tilde{X}_{p,q} \) be the diagonal elements of \( \matr{X}_{p,q} \) if it is square and 0 otherwise. Then \( \matr{X} \) and \( \matr{\tilde{X}} \) have the same eigenvalues.
\end{lemma}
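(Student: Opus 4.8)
The plan is to recognize $\matr{X}$ as an element of the centralizer of a suitable nilpotent matrix and then read its spectrum off from its action on a small, explicitly understood subspace. Since $\matr{X}$ must be square for its eigenvalues to be meaningful, say of order $M$, the block $\matr{X}_{p,q}$ has size $m_p\times m_q$ with $M=\sum_{p=1}^{c}m_p$; in particular the diagonal blocks $\matr{X}_{p,p}$ are square, and for such a square triangularly striped block the phrase ``the diagonal element'' refers to its common diagonal value, the leading entry $x_1^{(p,q)}$ of \eqref{def:triangle-stripped-matrix}. The first step is to introduce the block-diagonal nilpotent matrix $\matr{N}\eqdef\Diag{\matr{N}_{m_1},\ldots,\matr{N}_{m_c}}$, where $\matr{N}_m\in\RR^{m\times m}$ carries ones on the first superdiagonal and zeros elsewhere, and to show that $\matr{X}\matr{N}=\matr{N}\matr{X}$.

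To prove this commutation I would check, blockwise, that $\matr{X}_{p,q}\matr{N}_{m_q}=\matr{N}_{m_p}\matr{X}_{p,q}$ for each of the two shapes in \eqref{def:triangle-stripped-matrix}: right multiplication by $\matr{N}_{m_q}$ shifts the columns of a block one step to the right (prepending a zero column), while left multiplication by $\matr{N}_{m_p}$ deletes the top row and appends a zero row, and for the Toeplitz-type patterns of \eqref{def:triangle-stripped-matrix} these two operations produce the same matrix. Summing over $p,q$ then gives $\matr{X}\matr{N}=\matr{N}\matr{X}$.

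Granting this, the subspace $\ker\matr{N}$ is $\matr{X}$-invariant, and I claim (working over $\CC$) that $\matr{X}$ and $\matr{X}|_{\ker\matr{N}}$ have the same eigenvalues. The inclusion $\operatorname{spec}(\matr{X}|_{\ker\matr{N}})\subseteq\operatorname{spec}(\matr{X})$ is trivial; for the reverse, if $\matr{X}\vect{v}=\mu\vect{v}$ with $\vect{v}\neq\vect{0}$, choose the least $i\geq 1$ with $\matr{N}^{i}\vect{v}=\vect{0}$ and set $\vect{w}=\matr{N}^{i-1}\vect{v}$; then $\vect{w}\neq\vect{0}$, $\vect{w}\in\ker\matr{N}$, and $\matr{X}\vect{w}=\matr{N}^{i-1}\matr{X}\vect{v}=\mu\vect{w}$. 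Next I would identify $\matr{X}|_{\ker\matr{N}}$ concretely: since $\ker\matr{N}_m=\sspan\{\vect{e}_1\}$, a basis of $\ker\matr{N}$ is $\{\vect{e}^{(p)}_1\}_{p=1}^{c}$, where $\vect{e}^{(p)}_1$ is the first coordinate vector of the $p$-th block, and inspecting the leading column of each block in \eqref{def:triangle-stripped-matrix} gives $\matr{X}_{p,q}\vect{e}^{(q)}_1=x_1^{(p,q)}\vect{e}^{(p)}_1$ when $m_p\geq m_q$ and $\matr{X}_{p,q}\vect{e}^{(q)}_1=\vect{0}$ when $m_p<m_q$. Thus in this basis $\matr{X}|_{\ker\matr{N}}$ is the $c\times c$ matrix $\matr{Y}$ with $Y_{p,q}=x_1^{(p,q)}$ if $m_p\geq m_q$ and $Y_{p,q}=0$ otherwise. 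Relabelling so that $m_1\leq m_2\leq\cdots\leq m_c$, the matrix $\matr{Y}$ becomes block lower triangular with respect to the partition of $\{1,\dots,c\}$ into groups of equal size; on that same partition $\matr{\tilde{X}}$ is block diagonal, and on each group the blocks of $\matr{Y}$ and of $\matr{\tilde{X}}$ coincide (both restrict to the matrix $[x_1^{(p,q)}]$ indexed by the pairs with $m_p=m_q$). Hence $\det(\lambda\matr{I}_c-\matr{Y})=\det(\lambda\matr{I}_c-\matr{\tilde{X}})$, and therefore $\operatorname{spec}(\matr{X})=\operatorname{spec}(\matr{Y})=\operatorname{spec}(\matr{\tilde{X}})$, which is the assertion.

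The conceptual core (commuting with $\matr{N}$, then restricting to $\ker\matr{N}$) is short; the parts that require care are purely computational: verifying $\matr{X}_{p,q}\matr{N}_{m_q}=\matr{N}_{m_p}\matr{X}_{p,q}$ separately for the ``tall'' and the ``wide'' shape in \eqref{def:triangle-stripped-matrix}, and the case distinction $m_p\gtrless m_q$ that determines $\matr{Y}$ and justifies the reordering into block-triangular form. An alternative route would compute $\det(\lambda\matr{I}_M-\matr{X})$ directly via Schur-complement reductions or induction on $M$, but passing through $\ker\matr{N}$ is cleaner and avoids bookkeeping of multiplicities that play no role in the statement.
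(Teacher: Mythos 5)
Your proof is correct. Note that this paper does not prove the lemma itself: it is imported verbatim from \cite[Lemma 8]{wang2021new}, so there is no in-paper argument to compare against, and your write-up supplies a self-contained proof. Your route — check blockwise that $\matr{X}_{p,q}\matr{N}_{m_q}=\matr{N}_{m_p}\matr{X}_{p,q}$ for the upper-shift nilpotent blocks (both the tall and the wide shape in \eqref{def:triangle-stripped-matrix} pass this, as does the square Toeplitz case), conclude $\matr{X}\matr{N}=\matr{N}\matr{X}$ with $\matr{N}=\Diag{\matr{N}_{m_1},\ldots,\matr{N}_{m_c}}$, restrict to the invariant subspace $\ker\matr{N}$, and show via $\vect{w}=\matr{N}^{i-1}\vect{v}$ that every eigenvalue of $\matr{X}$ survives the restriction — is sound, and the identification of $\matr{X}|_{\ker\matr{N}}$ with the matrix $\matr{Y}$ ($Y_{p,q}=x_1^{(p,q)}$ if $m_p\geq m_q$, zero otherwise) is right, since the first column of a wide block is zero while that of a square or tall block is $x_1^{(p,q)}\vect{e}_1$. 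Sorting the sizes makes $\matr{Y}$ block lower triangular and $\matr{\tilde{X}}$ block diagonal with identical diagonal blocks, giving equal characteristic polynomials, hence $\operatorname{spec}(\matr{X})=\operatorname{spec}(\matr{Y})=\operatorname{spec}(\matr{\tilde{X}})$. Two reading conventions you adopt are indeed the intended ones: the conclusion is equality of spectra as sets (it cannot be with multiplicity, since $\matr{\tilde{X}}$ is $c\times c$ while $\matr{X}$ is larger — and set equality is all that is used later, to transfer realness of eigenvalues in the proof of \cref{thm:TWSD-B-three-multiple-sufficient-condition}), and ``the diagonal elements'' of a square triangularly striped block means its common diagonal value. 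Working over $\CC$ for the eigenvector argument is also the right setting, since the real matrices $\matr{X}$, $\matr{N}$ commute over $\CC$ as well and $\matr{Y}$ is real.
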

It is easy to check the following lemmas by straightforward computation.
\begin{lemma}\label{lem-perserve-comm}
  Let \( \matr{A}_0, \matr{A}_1, \matr{A}_2 \in \RR^{m \times m} \) satisfying \( [ \matr{A}_1, \matr{A}_2]_{\matr{A}_0} = \matr{0}_{m \times m} \).\\
  (i) Let \( \matr{P} \) be any nonsingualr matrix and \( \matr{\tilde{A}}_i = \matr{P}^{\top} \matr{A}_i \matr{P} \) for \(i=0,1,2\), then \( [\matr{ \tilde{A}}_1, \matr{ \tilde{A}}_2]_{ \matr{ \tilde{A}}_0} = \matr{0}_{m \times m} \).\\
  (ii) Let \(\{ \matr{P}_k \}_{k \geq 1} \subseteq \RR^{m \times m}\) be a sequence of nonsingular matrices such that \( \matr{ \bar{A}}_i = \lim_{k\to \infty}  \matr{P}_k^{\top} \matr{A}_i \matr{P}_ k \) exists for \( i=0,1,2 \). Then \( [\matr{ \bar{A}}_1, \matr{ \bar{A}}_2]_{ \matr{ \bar{A}}_0} = \matr{0}_{m \times m} \).
\end{lemma}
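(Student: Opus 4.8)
The plan is to reduce both parts to the single observation that the $\matr{S}$-commutator behaves covariantly under congruence. For part (i), I would substitute directly. Since $\matr{A}_0$ must be nonsingular for $[\matr{A}_1,\matr{A}_2]_{\matr{A}_0}$ to be defined, $\tilde{\matr{A}}_0=\matr{P}^{\top}\matr{A}_0\matr{P}$ is nonsingular with $\tilde{\matr{A}}_0^{-1}=\matr{P}^{-1}\matr{A}_0^{-1}\matr{P}^{-\top}$, so $\tilde{\matr{A}}_0^{-1}\tilde{\matr{A}}_i=\matr{P}^{-1}\matr{A}_0^{-1}\matr{P}^{-\top}\matr{P}^{\top}\matr{A}_i\matr{P}=\matr{P}^{-1}\matr{A}_0^{-1}\matr{A}_i\matr{P}$ for $i=1,2$. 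Multiplying two such factors, the middle pair $\matr{P}\matr{P}^{-1}$ telescopes, giving $\tilde{\matr{A}}_0^{-1}\tilde{\matr{A}}_1\tilde{\matr{A}}_0^{-1}\tilde{\matr{A}}_2=\matr{P}^{-1}\bigl(\matr{A}_0^{-1}\matr{A}_1\matr{A}_0^{-1}\matr{A}_2\bigr)\matr{P}$, and symmetrically with the indices swapped. Subtracting the two yields $[\tilde{\matr{A}}_1,\tilde{\matr{A}}_2]_{\tilde{\matr{A}}_0}=\matr{P}^{-1}[\matr{A}_1,\matr{A}_2]_{\matr{A}_0}\matr{P}=\matr{0}_{m\times m}$.

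For part (ii), I would first note again that $\matr{A}_0$ is nonsingular (this is built into the hypothesis $[\matr{A}_1,\matr{A}_2]_{\matr{A}_0}=\matr{0}_{m\times m}$), so $\matr{P}_k^{\top}\matr{A}_0\matr{P}_k$ is nonsingular for every $k$, and we take $\bar{\matr{A}}_0=\lim_{k\to\infty}\matr{P}_k^{\top}\matr{A}_0\matr{P}_k$ to be nonsingular as well (otherwise the target commutator $[\bar{\matr{A}}_1,\bar{\matr{A}}_2]_{\bar{\matr{A}}_0}$ would not be defined). Applying part (i) with $\matr{P}=\matr{P}_k$ gives $[\matr{P}_k^{\top}\matr{A}_1\matr{P}_k,\matr{P}_k^{\top}\matr{A}_2\matr{P}_k]_{\matr{P}_k^{\top}\matr{A}_0\matr{P}_k}=\matr{P}_k^{-1}[\matr{A}_1,\matr{A}_2]_{\matr{A}_0}\matr{P}_k=\matr{0}_{m\times m}$ for all $k$. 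Since matrix inversion is continuous on the open set of invertible matrices, the map $(\matr{X},\matr{Y},\matr{S})\mapsto\matr{S}^{-1}\matr{X}\matr{S}^{-1}\matr{Y}-\matr{S}^{-1}\matr{Y}\matr{S}^{-1}\matr{X}$ is continuous at $(\bar{\matr{A}}_1,\bar{\matr{A}}_2,\bar{\matr{A}}_0)$; passing to the limit $k\to\infty$ in the previous identity then yields $[\bar{\matr{A}}_1,\bar{\matr{A}}_2]_{\bar{\matr{A}}_0}=\matr{0}_{m\times m}$.

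There is essentially no obstacle here beyond bookkeeping. The only place that deserves a word of care is ensuring that the $\matr{S}$-commutators appearing in the approximating sequence are well defined (i.e.\ $\matr{P}_k^{\top}\matr{A}_0\matr{P}_k$ invertible for each $k$) and that the limiting one is well defined (i.e.\ $\bar{\matr{A}}_0$ invertible, so that inversion is continuous at that point and the limit may be taken inside the commutator); both follow from the standing assumption that $\matr{S}$-commutators are only formed with nonsingular $\matr{S}$. I expect to state the result and dispatch it in two or three lines using exactly these two observations.
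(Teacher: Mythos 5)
Your proof is correct and is exactly the ``straightforward computation'' the paper alludes to (it states this lemma without proof): the congruence identity \( [\matr{\tilde{A}}_1,\matr{\tilde{A}}_2]_{\matr{\tilde{A}}_0}=\matr{P}^{-1}[\matr{A}_1,\matr{A}_2]_{\matr{A}_0}\matr{P} \) for part (i), and for part (ii) applying it with \( \matr{P}=\matr{P}_k \) and passing to the limit via continuity of inversion at the nonsingular \( \matr{\bar{A}}_0 \), is all that is needed. Your caveat that \( \matr{\bar{A}}_0 \) must be nonsingular for the limiting commutator to be defined is a fair reading of the paper's convention that \( \matr{S} \)-commutators are only formed with nonsingular \( \matr{S} \).
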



\begin{lemma}\label{lem:product-of-triangularly-matrices-diagonal}
  Let \( \matr{X}_1 \in \RR^{m_1 \times m_2}, \matr{X}_2 \in \RR^{m_2 \times m_1} \) be two triangularly striped matrices. Then \(\matr{X}_1 \matr{X}_2\) is also a triangularly striped matrix. If \(m_1 = m_2\), then the diagonal elements of \(\matr{X}_1\matr{X}_2\) is \(x_1x_2\), where \(x_1\) and \(x_2\) are diagonal elements of \(\matr{X}_1\) and \(\matr{X}_2\) respectively. If \(m_1 \neq m_2\), then the diagonal elements of \(\matr{X}_1\matr{X}_2\) is 0.
\end{lemma}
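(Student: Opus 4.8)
The plan is to recast the triangularly striped shape of \eqref{def:triangle-stripped-matrix} in terms of commutation with nilpotent shift matrices. This turns the first assertion into a one-line associativity computation and reduces the second to evaluating a single entry of the product.

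For $n\ge 1$ let $\matr{N}_n\in\RR^{n\times n}$ be the nilpotent shift with $(\matr{N}_n)_{i,i+1}=1$ and all other entries $0$. Two observations will do the work. First, by directly inspecting the two patterns in \eqref{def:triangle-stripped-matrix} one checks that every triangularly striped $\matr{Y}\in\RR^{p\times q}$ satisfies $\matr{N}_p\matr{Y}=\matr{Y}\matr{N}_q$ --- this is exactly the condition that the entries of $\matr{Y}$ be constant along its diagonals, together with the vanishing of the first column below the top entry and of the last row left of the rightmost entry, which is what \eqref{def:triangle-stripped-matrix} encodes; equivalently, it is the condition for $\matr{Y}$ to intertwine the real Jordan blocks $\matr{J}(\lambda,p)$ and $\matr{J}(\lambda,q)$, matching \cref{lem:matrices-commute-with-jordan-norma-form}. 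Second, the classical description of the commutant of $\matr{N}_n$ gives the converse in the square case: any $\matr{Z}\in\RR^{n\times n}$ with $\matr{N}_n\matr{Z}=\matr{Z}\matr{N}_n$ is upper triangular Toeplitz, i.e.\ square triangularly striped.

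Now apply the first observation to the hypotheses: $\matr{N}_{m_1}\matr{X}_1=\matr{X}_1\matr{N}_{m_2}$ and $\matr{N}_{m_2}\matr{X}_2=\matr{X}_2\matr{N}_{m_1}$. By associativity,
\[
\matr{N}_{m_1}(\matr{X}_1\matr{X}_2)=(\matr{X}_1\matr{N}_{m_2})\matr{X}_2=\matr{X}_1(\matr{X}_2\matr{N}_{m_1})=(\matr{X}_1\matr{X}_2)\matr{N}_{m_1},
\]
so $\matr{X}_1\matr{X}_2\in\RR^{m_1\times m_1}$ commutes with $\matr{N}_{m_1}$ and hence, by the second observation, is a square triangularly striped matrix. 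Being Toeplitz along its diagonal, all its diagonal entries agree and equal $(\matr{X}_1\matr{X}_2)_{1,1}=\sum_{k=1}^{m_2}(\matr{X}_1)_{1,k}(\matr{X}_2)_{k,1}$. In every case --- $\matr{X}_2$ tall, square, or wide --- the pattern \eqref{def:triangle-stripped-matrix} forces $(\matr{X}_2)_{k,1}=0$ for $k\ge 2$, so this sum collapses to $(\matr{X}_1\matr{X}_2)_{1,1}=(\matr{X}_1)_{1,1}(\matr{X}_2)_{1,1}$. If $m_1=m_2$, then $(\matr{X}_1)_{1,1}$ and $(\matr{X}_2)_{1,1}$ are exactly the common diagonal entries $x_1,x_2$ of $\matr{X}_1,\matr{X}_2$, giving $x_1x_2$. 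If $m_1\ne m_2$, one of the factors is strictly wide --- $\matr{X}_2$ when $m_1>m_2$, and $\matr{X}_1$ when $m_1<m_2$ --- and \eqref{def:triangle-stripped-matrix} makes the first column of a strictly wide striped matrix identically zero; hence $(\matr{X}_1)_{1,1}(\matr{X}_2)_{1,1}=0$, and the diagonal of $\matr{X}_1\matr{X}_2$ vanishes.

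The only genuinely fiddly part is matching the boundary-zero conditions built into \eqref{def:triangle-stripped-matrix} with the identity $\matr{N}_p\matr{Y}=\matr{Y}\matr{N}_q$, and keeping the index ranges straight in the final case split --- in particular, confirming that a strictly wide triangularly striped matrix kills its leading columns. These are routine once written out, and with them the proof is just the displayed line plus the single-entry evaluation.
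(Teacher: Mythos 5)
Your proof is correct. The paper gives no written argument for this lemma (it is covered by the remark that the lemmas are ``easy to check by straightforward computation'', i.e.\ an entrywise check against the two patterns in \eqref{def:triangle-stripped-matrix}), whereas you take a more structural route: you recast ``triangularly striped'' for $\matr{Y}\in\RR^{p\times q}$ as the intertwining relation $\matr{N}_p\matr{Y}=\matr{Y}\matr{N}_q$ with the nilpotent upper shifts, so that closure under products is pure associativity plus the classical description of the commutant of a single nilpotent Jordan block, and the common diagonal value reduces to the single entry $(\matr{X}_1\matr{X}_2)_{1,1}$. The boundary bookkeeping you flag does check out: $\matr{N}_p\matr{Y}=\matr{Y}\matr{N}_q$ forces constancy along diagonals together with vanishing of the first column below the $(1,1)$ entry and of the last row left of the $(p,q)$ entry, and propagating these along diagonals reproduces exactly the two shapes in \eqref{def:triangle-stripped-matrix} — in the strictly tall case the entire bottom strip (including the $(p,q)$ entry) vanishes, and in the strictly wide case the leading $q-p$ columns (including the $(1,1)$ entry) vanish, which is precisely what your $m_1\neq m_2$ case needs; likewise $(\matr{X}_2)_{k,1}=0$ for $k\geq 2$ holds in the tall, square and wide cases alike, so the collapse to $(\matr{X}_1)_{1,1}(\matr{X}_2)_{1,1}$ is valid in all cases. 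Compared with the direct computation the paper has in mind, your version costs the (routine) verification of the equivalence with shift-commutation but buys a one-line proof of the product-shape claim and makes the connection to \cref{lem:matrices-commute-with-jordan-norma-form} explicit.
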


\begin{proof}[Proof of \cref{thm:TWSD-B-multiple-sufficient-condition}]
  Suppose the Jordan normal form of \( \matr{S}^{-1}\matr{A}_1\) is \(\Diag{\matr{C}(\lambda_1) , \ldots , \matr{C}(\lambda_r)}\), where \( \matr{C}(\lambda_v) \) is the chain of Jordan blocks associated with eigenvalue $\lambda_v$ for $1\leq v\leq r$, as in \cref{lem:matrices-commute-with-jordan-norma-form}. By assumption, \(\lambda_v \in \mathbb{R}\) for \(1 \leq v \leq r\). Here we use the same notations as in  \cref{lem:matrices-commute-with-jordan-norma-form}, that is, for \( 1 \leq v \leq r \), \( s_v \) is the number of Jordan blocks in the Jordan normal form associated with \(\lambda_v\), and the sizes of these blocks are \( m_1^{(v)} , \ldots , m_{s_v}^{(v)} \).

  For the convenience of readers, we start with the simple case where each Jordan chain has only one block, that is, \(s_v = 1\) and \( \matr{C}(\lambda_v) \) is a Jordan block for \( 1 \leq v \leq r \). Then we consider the general case where there may exist multiple Jordan blocks with the same eigenvalue.

\textbf{Case 1:} When \(s_v = 1\) for \( 1 \leq v \leq r \), let \(m_v\) be the size of the Jordan blocks associated with eigenvalue \(\lambda_v\). So \(\matr{C}(\lambda_v) = \matr{J}(\lambda_v, m_v)\) for \(1 \leq v \leq r\).
By \cref{lem:Uhlig-canonical-nonsingular}, there exists a nonsingular matrix \( \matr{\bar{P}} \) such that
\begin{equation}
  \label{eq-Sbar}
 \begin{aligned}
  & \matr{\bar{P}}^{\T}\matr{S} \matr{\bar{P}} = \Diag{\sigma_1 \matr{E}(m_1) , \ldots , \sigma_r \matr{E}(m_r)},\\
  &\matr{\bar{P}}^{\T}\matr{A}_1 \matr{\bar{P}} =  \Diag{\sigma_1 \matr{E}(m_1)\matr{J}(\lambda_1,m_1) , \ldots , \sigma_r\matr{E}(m_r)\matr{J}(\lambda_r, m_r)}.
\end{aligned}
\end{equation}
Let \( \matr{\bar{S}} = \matr{\bar{P}}^{\T} \matr{S} \matr{\bar{P}} \) and \( \matr{\bar{A}}_{i} = \matr{\bar{P}}^{\T} \matr{A}_{i} \matr{\bar{P}} \) for all \( 1 \leq i \leq L \).
Then \( \matr{\bar{S}}^{-1} \matr{\bar{A}}_1 = \Diag{\matr{C}(\lambda_1) , \ldots , \matr{C}(\lambda_r)} \) is in Jordan normal form.
For any \( i \neq j \), \( \matr{\bar{S}}^{-1}\matr{\bar{A}}_i \) commutes with \( \matr{\bar{S}}^{-1}\matr{\bar{A}}_j \) by \cref{lem-perserve-comm}.
Combining it with the fact that
\( \matr{\bar{S}}^{-1}\matr{\bar{A}}_1 \) is in Jordan normal form and there is only one Jordan block for each eigenvalue, \( \matr{\bar{S}}^{-1}\matr{\bar{A}}_i \) is also a block diagonal matrix consisting of \( r \) blocks, and each block is triangularly striped as in \eqref{def:triangle-stripped-matrix} by \cref{lem:matrices-commute-with-jordan-norma-form}.
For all \( 1 \leq i \leq L \), we assume that 
\begin{equation}\label{eq-Sbar-Abar}
  \matr{\bar{S}}^{-1} \matr{\bar{A}}_i = \textbf{Diag}\left\{\matr{\bar{X}}^{(1,i)} , \ldots , \matr{\bar{X}}^{(r,i)}\right\},
\end{equation}
where \(\matr{ \bar{X}}^{(v,i)} \in \RR^{m_{v} \times m_v}\) is triangularly striped for \( 1 \leq v \leq r \). Since the diagonal elements of triangularly striped matrix are the same, we denote the diagonal elements of \( \hat{\matr{X}}^{(i,v)} \) by \(x^{(i,v)}\) for \(1 \leq v \leq r \).

For all \( 1 \leq i \leq L \), combining \eqref{eq-Sbar} and \eqref{eq-Sbar-Abar}, we have
\begin{equation*}\label{eq-Abar}
 \matr{\bar{A}}_i = \matr{\bar{S}} (\matr{\bar{S}}^{-1} \matr{\bar{A}}_i) = \textbf{Diag}\left\{\sigma_1 \matr{E}(m_1) \matr{\bar{X}}^{(1,i)}, \ldots , \sigma_r \matr{E}(m_r) \matr{\bar{X}}^{(r,i)}\right\}.
\end{equation*}
For \( 1 \leq i \leq L\) and \(1 \leq r \leq v \), note that each block \( \sigma_v\matr{E}(m_{v}) \matr{\bar{X}}^{(v,i)} \) satisfies the assumption in \cref{lem:R-lower-stripped-R}.
We have 
\begin{equation}\label{eq:limit-REXR}
 \lim_{k \to \infty} \matr{R}_k(m_v)^{\top}  (\sigma_v\matr{E}(m_v) \matr{\bar{X}}^{(v,i)}) \matr{R}_k(m_v) =  \sigma_v x^{(v,i)} \matr{E}(m_v).
\end{equation}
Suppose that \( \matr{Q}_v \) is the orthogonal matrix such that \( \matr{Q}_v^{\top} \matr{E}(m_v) \matr{Q}_v = \matr{G}(m_v) \), where \( \matr{G} \) is defined as in \eqref{def:matrix-G-R}. Combining it with \eqref{eq:limit-REXR}, we have
\begin{equation}\label{eq-limit-QREX}
 \lim_{k \to \infty} \matr{Q}_v^{\top} \matr{R}_k(m_v)^{\top}  (\sigma_v\matr{E}(m_v) \matr{\bar{X}}^{(v,i)}) \matr{R}_k(m_v) \matr{Q}_v =  \sigma_v x^{(v,i)} \matr{G}(m_v),
\end{equation}
which is diagonal.

Finally, for all \( k \geq 1 \), define \( \matr{P}_k = \matr{\bar{P}} \Diag{\matr{R}_k(m_1) \matr{Q}_1 , \ldots , \matr{R}_k(m_r) \matr{Q}_r} \). By  \eqref{eq-limit-QREX}, we have
\begin{equation}\label{eq-limit-PA}
  \lim_{k \to \infty} \matr{P}_k^{\top} \matr{A}_i \matr{P}_k = \Diag{\sigma_1 x^{(1,i)} \matr{G}(m_1), \ldots , \sigma_r x^{(r,i)} \matr{G}(m_r)},
\end{equation}
for all $ 1 \leq i \leq L$. 
Since \( \det(\matr{R}_k) = 1 \) for \( 1 \leq k \leq r \), \( \det(\matr{P}_k) \) is a constant. 
Then \eqref{eq-limit-PA} implies that \( \mathcal{C} \) is TWSD-B.

\textbf{Case 2:} Now we consider the general case where there may exist multiple Jordan blocks with the same eigenvalue. 
Let \( \matr{C}(\lambda_v) = \Diag{\matr{J}(\lambda_v, m^{(v)}_1) , \ldots , \matr{J}(\lambda_v, m^{(v)}_{s_v})}\) be the Jordan chain associated with eigenvalue \(\lambda_v\) for \( 1 \leq v \leq r \). Without loss of generality, we also assume \(m_1^{(v)} >  m_2^{(v)} > \ldots > m_{s_v}^{(v)} \). Let \( M_v = \sum_{p=1}^{s_v} m_p^{(v)} \).
Then \( \matr{C}(\lambda_v) \in \RR^{M_v \times M_v} \).

By \cref{lem:Uhlig-canonical-nonsingular}, there exists a nonsingular matrix \( \matr{\bar{P}} \) such that
\begin{equation}\label{eq:PSP-PAP}
\begin{aligned}
  \matr{\bar{P}}^{\T}\matr{S} \matr{\bar{P}} & = \textbf{Diag}\{\underbrace{\sigma^{(1)}_1 \matr{E}(m_{1}^{(1)}), \ldots , \sigma^{(1)}_{s_1} \matr{E}(m_{s_1}^{(1)})}_{{\small \mbox{blocks correpsonding to } \matr{C}(\lambda_1)}} , \ldots , \underbrace{\sigma_1^{(r)} \matr{E}(m_1^{(r)}), \ldots, \sigma_{s_r}^{(r)} \matr{E}(m^{(r)}_{s_r})}_{{\small \mbox{blocks corresponding to } \matr{C}(\lambda_r)}}\},\\
  \matr{\bar{P}}^{\T}\matr{A}_1 \matr{\bar{P}} &= \Diag{\underbrace{\sigma^{(1)}_1 \matr{E}(m_1^{(1)}) \matr{J}(\lambda_1, m^{(1)}_1), \ldots , \sigma^{(1)}_{s_1} \matr{E}(m_{s_1}^{(1)}) \matr{J}(\lambda_1, m^{(1)}_{s_1})}_{{\small \mbox{blocks correpsonding to } \matr{C}(\lambda_1)}}, \ldots , \\
  & \ \ \ \ \ \ \ \ \ \ \ \ \ \underbrace{\sigma_{1}^{(r)} \matr{E}(m^{(r)}_{1}) \matr{J}(\lambda_r, m^{(r)}_{1}), \ldots , \sigma_{s_r}^{(r)} \matr{E}(m^{(r)}_{s_r}) \matr{J}(\lambda_r, m^{(r)}_{s_r})}_{{\small \mbox{blocks correpsonding to } \matr{C}(\lambda_r)}}}.
\end{aligned}
\end{equation}
Let \( \matr{\bar{S}} = \matr{\bar{P}}^{\T} \matr{S} \matr{\bar{P}} \) and \( \matr{\bar{A}}_{i} = \matr{\bar{P}}^{\T} \matr{A}_{i} \matr{\bar{P}} \) for all \( 1 \leq i \leq L \). 
Let \( \matr{S}^{(v)} \) and \( \matr{A}^{(v,1)} \) be the diagonal blocks of \( \matr{\bar{S}} \) and \( \matr{\bar{A}_1} \) corresponding to \( \matr{C}(\lambda_v) \), respectively.
In other words, 
\begin{equation}
  \label{eq-Sv-A1v}
\begin{aligned}
 \matr{S}^{(v)} = & \textbf{Diag}\left\{ \sigma_1^{(v)} \matr{E}(m_1^{(v)}) , \ldots , \sigma_{s_v}^{(v)} \matr{E}(m_{s_v}^{(v)})\right\}, \\
 \matr{A}^{(v,1)} = & \textbf{Diag}\left\{ \sigma_1^{(v)} \matr{E}(m_1^{(v)} \matr{J}(m_1^{(v)}, \lambda_v)) , \ldots , \sigma_{s_v}^{(v)} \matr{E}(m_{s_v}^{(v)}) \matr{J}(m_{s_v}^{(v)}, \lambda_v)\right\},
\end{aligned}
\end{equation}
for \( 1 \leq v \leq r \).
Then \( \matr{\bar{S}} = \Diag{\matr{S}^{(1)} , \ldots , \matr{S}^{(r)}} \), \( \matr{\bar{A}}_1 = \Diag{\matr{S}^{(1)} \matr{C}(\lambda_1) , \ldots , \matr{S}^ {(r)}\matr{C}(\lambda_r)} \) and \( \matr{\bar{S}}^{-1} \matr{\bar{A}}_1 = \Diag{\matr{C}(\lambda_1) , \ldots , \matr{C}(\lambda_r)} \).

For any \( i \neq j \), similar to the above Case 1, \( \matr{\bar{S}}^{-1} \matr{\bar{A}}_i \) commutes with \( \matr{\bar{S}}^{-1} \matr{\bar{A}}_j \).
For  \(1 \leq i \leq L \), since \( \matr{\bar{S}}^{-1}  \matr{\bar{A}}_1\) is in Jordan normal form, we have
\begin{equation}\label{eq-Sbar-inv-Abar-2}
 \matr{\bar{S}}^{-1} \matr{\bar{A}}_i = \textbf{Diag}\left\{\matr{\bar{X}}^{(1,i)} , \ldots , \matr{\bar{X}}^{(r,i)}\right\},
\end{equation}
by \cref{lem:matrices-commute-with-jordan-norma-form}.
Moreover, each diagonal block matrix \( \matr{\bar{X}}^{(v,i)} \in \RR^{M_v \times M_v} \) can be partitioned into \(s_v^2\) blocks
\begin{equation}\label{eq-X-partition}
 \matr{\bar{X}}^{(v,i)} = \left[\matr{\bar{X}}_{(p,q)}^{(v,i)}\right]_{1 \leq p,q \leq s_v},
\end{equation}
where \( \matr{\bar{X}}_{(p,q)}^{(v,i)} \in \RR^{m_p^{(v)} \times m_q^{(v)}} \) is triangularly striped as in \eqref{def:triangle-stripped-matrix}.
Denote its diagonal elements by \(x_{(p,q)}^{(v,i)}\).

For \( 1 \leq i \leq L\) and \(1 \leq v \leq r \), define matrix
\begin{equation}\label{eq-A-SX}
 \matr{\bar{A}}^{(v,i)} \eqdef \matr{S}^{(v)} \matr{\bar{X}}^{(v,i)} \in \mathbb{R}^{M_v \times M_v}.
\end{equation}
Then, by equations \eqref{eq-Sbar-inv-Abar-2} and \eqref{eq-Sv-A1v}, we have 
\begin{equation}\label{eq-Abar-i-block}
 \matr{\bar{A}}_{i} = \matr{\bar{S}} (\matr{\bar{S}}^{-1} \matr{\bar{A}}_i) = \textbf{Diag}\left\{\matr{S}^{(1)} \matr{\bar{X}}^{(1,i)} , \ldots , \matr{S}^{(r)} \matr{\bar{X}}^{(r,i)}\right\} = \textbf{Diag}\left\{\matr{\bar{A}}^{(1,i)} , \ldots , \matr{\bar{A}}^{(r,i)}\right\}.
\end{equation}
We also partition \( \matr{\bar{A}}^{(v,i)} \) into \(s_v^2\) blocks, \( \matr{\bar{A}}^{(v,i)} = [ \matr{\bar{A}}_{(p,q)}^{(v,i)}]_{1 \leq g,h \leq s_v} \) in the same way as in \eqref{eq-X-partition}.
Combining \eqref{eq-Sv-A1v}, \eqref{eq-X-partition} with \eqref{eq-A-SX}, we have
 \begin{equation}
 \matr{\bar{A}}_{(p,q)}^{(v,i)} = \sigma_p^{(v)} \matr{E}(m_p^{(v)}) \matr{\bar{X}}_{(p,q)}^{(v,i)},
\end{equation}
which satisfies the assumption in \cref{lem:R-lower-stripped-R}. Thus,
\begin{equation}\label{eq:3-25-RAR}
  \lim_{k \to \infty} \matr{R}_k(m_p^{(v)})^{\top} \matr{\bar{A}}_{(p,q)}^{(v,i)} \matr{R}_k(m_q^{(v)}) = \left\{
    \begin{array}{cl}
      \matr{0}_{m_p^{(v)} \times m_q^{(v)}} & m_p^{(v)} \neq m_q^{(v)}, \\
      \sigma_p^{v}x_{(p,q)}^{(v,i)} \matr{E}(m_p^{(v)}) &  m_p^{(v)} = m_q^{(v)}.
    \end{array}
  \right.
\end{equation}
Let \( \matr{U}_k^{(v)} = \Diag{\matr{R}_k(m_1^{(v)}) , \ldots , \matr{R}_k(m_{s_v}^{(v)})} \). Then
\begin{equation}\label{eq-U-A0U}
  (\matr{U}_k^{(v)})^{\top} \matr{\bar{A}}^{(v,i)} \matr{U}_k^{(v)} =
  \begin{bmatrix}
    \matr{R}_k(m_1^{(v)}) \matr{\bar{A}}^{(v,i)}_{(1,1)} \matr{R}_k(m_1^{(v)}) & \cdots & \matr{R}_k(m_1^{(v)}) \matr{\bar{A}}^{(v,i)}_{(1,s_{v})} \matr{R}_k(m_{s_v}^{(v)}) \\
    \vdots & \ddots & \vdots\\
    \matr{R}_k(m_{s_v}^{(v)}) \matr{\bar{A}}^{(v,i)}_{(s_v,1)} \matr{R}_k(m_1^{(v)}) & \cdots & \matr{R}_k(m_{s_v}^{(v)}) \matr{\bar{A}}^{(v,i)}_{(s_{v},s_{v})} \matr{R}_k(m_{s_v}^{(v)}) \\
  \end{bmatrix}.
\end{equation}
Combining it with \eqref{eq:3-25-RAR}, we have \( \lim_{k\to \infty} (\matr{U}_k^{(v)})^{\top} \matr{\bar{A}}^{(v,i)} \matr{U}_k^{(v)} \) exists, and nonsquare blocks will converge to zero matrices. 
Since the sizes of Jordan blocks with the same blocks are different, the limit is a block diagonal matrix:
\begin{equation}\label{eq:limit-RAR}
 \lim_{k\to \infty} \matr{U}_k^{(v)} \matr{\bar{A}}^{(v,i)} \matr{U}_k^{(v)} = \textbf{Diag}\left\{ \sigma_1^{(v)} x_{(1,1)}^{(v,i)} \matr{E}(m_1^{(v)}), \ldots , \sigma_{s_v}^{(v)} x_{(s_v, s_v)}^{(v,i)} \matr{E}(m_{s_v}^{(v)})\right\}.
\end{equation}

Suppose \( \matr{Q}_p^{(v)} \) is the orthogonal matrix such that \( (\matr{Q}_p^{(v)})^{\top} \matr{E}(m_p^{(v)}) \matr{Q}_p^{(v)} \) is diagonal, and
\[
  \matr{V} = \textbf{Diag}\left\{ \matr{Q}_1^{(1)} , \ldots , \matr{Q}_{s_1}^{(1)} , \ldots , \matr{Q}_1^{(r)} , \ldots , \matr{Q}_{s_r}^{(r)}\right\}.
\]
Then by equation \eqref{eq:limit-RAR}, we know
\[
  \lim_{k \to \infty} (\matr{\bar{P}} \matr{U}_k \matr{V})^{\top} \matr{A}_i \matr{\bar{P}} \matr{U}_k \matr{V} = \matr{V}^{\top} (\lim_{k\to \infty} \matr{U}^{\top}_k \matr{\bar{A}} \matr{U}_k) \matr{V}
\]
exists, and it is diagonal for \( 1 \leq i \leq L \). The proof is complete.
\end{proof}

\begin{proof}[Proof of \cref{thm:TWSD-B-three-multiple-sufficient-condition}]
  Here we use the same notations as in the proof of \cref{thm:TWSD-B-multiple-sufficient-condition}. For \( 1 \leq v \leq r \), since there may exist blocks with the same size and eigenvalue, we assume \(m_1^{(v)} \geq m_2^{(v)} \geq \ldots \geq m_{s_v}^{(v)}\), and there are \(b_v\) kinds of sizes of the Jordan blocks with eigenvalue \(\lambda_v\), and there are \(c_{(t,v)}\) blocks with the \( t \)-th size associated with eigenvalue \(\lambda_v\). Then we have
\begin{align*}
    \matr{\bar{S}}^{(v)}&= \textbf{Diag}\left\{\sigma_1^{(v)} \matr{E}(m_1^{(v)}) , \ldots , \sigma_{s_v}^{(v)} \matr{E}(m^{(v)}_{s_v})\right\}, \\
     \matr{\bar{A}}^{(v,1)} &= \textbf{Diag}\left\{\sigma_1^{(v)} \matr{E}(m_1^{(v)}) \matr{J}(\lambda_v, m_1^{(v)}) , \ldots , \sigma_{s_v}^{(v)} \lambda_v \matr{E}(m^{(v)}_{s_v}) \matr{J}(\lambda_v, m_{s_v})\right\}, \\
    \matr{\bar{A}}^{(v,2)}&= \left [ \sigma_p^{(v)}\matr{E}(m_p^{(v)}) \matr{\bar{X}}_{(p,q)}^{(v,i)}\right]_{1 \leq p,q \leq s_v}.
\end{align*}

Let \(x_{p,q}^{(v,i)}\) be the diagonal-elements of \( \matr{\bar{X}}_{(p,q)}^{(v,i)} \) if it is square, and \( 0 \) otherwise. Define \( \matr{\Sigma} = \Diag{\sigma_1 , \ldots , \sigma_c}, \matr{\tilde{X}} = [ x_{p,q}]_{1 \leq p, q \leq s_v}\in \RR^{s_v \times s_v} \). Since each block of \( (\matr{\bar{S}}^{(v)})^{-1} \matr{\bar{A}}^{(v,2)}  \) is \(  \matr{\bar{X}}^{(v,2)} \) by equation \eqref{eq-X-partition}, \( \matr{\Sigma}^{-1} \matr{\tilde{X}} \) has only real eigenvalues by \cref{lem-tri-st-eigen} and \( \matr{S}^{-1} \matr{A}_2 \) has only real eigenvalues.

  Similar to \eqref{eq:limit-RAR}, for \( i=1,2\) and \(1 \leq v \leq r \), the nonsquare blocks of \( \matr{U}_k^{(v)} \matr{\bar{A}}_{(i,v)} \matr{U}_k^{(v)}\) converge to zero matrices. Moreover, the order of convergence is \( O(\frac{1}{k}) \), and the anti-diagonal elements of each block are equal to the anti-diagonal elements of the corresponding block in \( \matr{\bar{A}}_{(i,v)} \) from the proof of \cref{lem:R-lower-stripped-R}. Thus, we have
  \begin{align*}
    (\matr{U}^{(v)}_k)^{\top}\matr{\bar{S}}^{(v)}\matr{U}^{(v)}_k &= \Diag{\sigma_1^{(v)} \matr{E}(m_1^{(v)}) , \ldots , \sigma_{s_v}^{(v)} \matr{E}(m^{(v)}_{s_v})}, \\
     (\matr{U}_k^{(v)})^{\top} \matr{\bar{A}}^{(v,1)} \matr{U}_k^{(v)} &= \Diag{\sigma_1^{(v)} \lambda_v \matr{E}(m_1^{(v)}) , \ldots , \sigma_{s_v}^{(v)} \lambda_v \matr{E}(m^{(v)}_{s_v})} + O(\frac{1}{k}), \\
    (\matr{U}_k^{(v)})^{\top} \matr{\bar{A}}^{(v,2)} \matr{U}_k^{(v)} &= \Diag{\matr{\hat{X}}^{(1,v)} , \ldots , \matr{\hat{X}}^{(b_v,v)}} + O(\frac{1}{k}),
\end{align*}
where \( \matr{\hat{X}}^{(t,v)} \) can be partitioned into \(c_{(t,v)}^2\) square blocks of size \(m_{(t,v)}\), and each block \( \matr{\hat{X}}^{(t,v)}_{(p,q)} = x_{(p,q)}^{(t,v)} \matr{E}(m_{(t,v)}) \in \RR^{m_{(t,v)} \times m_{(t,v)}} \) for \( 1 \leq t \leq b_v \) and \( 1 \leq u,w\leq c_{(t,v)} \).

Obviously, it is sufficient to consider the blocks correpsonding to \( \matr{\hat{X}}^{(t,v)} \) for fixed \( t \). Without loss of generality, we assume \(b_v = 1\), and ignore the indices \( v, t \) to simplify our notations. Let \( \bar{m} \) be the size of each Jordan block. Consider
\begin{equation}
  \label{eq-simple-U-S-U}
\begin{aligned}
    \matr{U}^{\top}_k\matr{\bar{S}}\matr{U}_k &= \Diag{\sigma_1 \matr{E}(\bar{m}) , \ldots , \sigma_c \matr{E}(\bar{m})}, \\
     \matr{U}^{\top}_k \matr{\bar{A}}^{(1)} \matr{U}_k &= \Diag{\sigma_1 \lambda \matr{E}(\bar{m}) , \ldots , \sigma_{c} \lambda \matr{E}(\bar{m})} + O(\frac{1}{k})  = \lambda \matr{U}^{\top}_k\matr{\bar{S}}\matr{U}_k + O(\frac{1}{k}),\\
    \matr{U}^{\top}_k \matr{\bar{A}}^{(2)} \matr{U}_k &= \left [x_{u,w} \matr{E}(\bar{m})\right]_{1 \leq p,q \leq c} + O(\frac{1}{k}).
\end{aligned}
\end{equation}

 By \cref{thm:better-decomposition}, there exists a sequence \( \{\matr{\tilde{V}}_k\}_{k \geq 1} \) such that
\(  \matr{\tilde{V}}_k^{\top} \matr{\Sigma} \matr{\tilde{V}}_k\) is diagonal. Suppose
\begin{align*}
  \matr{ \tilde{V}}_k^{\top} \matr{\Sigma} \matr{\tilde{V}}_k = & \Diag{\tilde{\sigma} , \ldots , \tilde{\sigma}_c},\\
 \lim_{k \to \infty} \matr{ \tilde{V}}_k^{\top} \matr{\tilde{X}} \matr{\tilde{V}}_{k} = & \Diag{\tilde{x}_1 , \ldots , \tilde{x}_c}.
\end{align*}
Moreover, we can replace \( \delta \) in \eqref{def:matrix-G-R} by \(\frac{\delta}{20c}\) so that the maximum order of the elements of \( \matr{\tilde{V}}_k \) is no larger than \( k^{\frac{1}{5}} \). Define \( \matr{V} = [\tilde{V}_{p,q} \matr{I}_{\bar{m}}]_{1 \leq p,q \leq c} \in \RR^{c\bar{m} \times c\bar{m}}  \). Thus, combining with \eqref{eq-simple-U-S-U}, we have
\begin{equation}
  \label{eq-Vk-Uk-S}
 \begin{aligned}
  \matr{V}^{\top}_k \matr{U}^{\top}_k \matr{ \bar{S}} \matr{U}_k \matr{V}_{k} &= \textbf{Diag}\left\{\tilde{\sigma}_1 \matr{E}(\bar{m}) , \ldots , \tilde{\sigma}_c \matr{E}(\bar{m})\right\},\\
  \matr{V}^{\top}_k \matr{U}^{\top}_k \matr{ \bar{A}}^{(1)} \matr{U}_k \matr{V}_{k} &= \lambda \matr{V}^{\top}_k \matr{U}^{\top}_k\matr{\bar{S}}\matr{U}_k \matr{V}_k + O(\frac{1}{k^{1/5}}),\\
  \matr{V}^{\top}_k \matr{U}^{\top}_k \matr{ \bar{A}}^{(2)} \matr{U}_k \matr{V}_{k} &= \Diag{\tilde{x}_1 \matr{E}(\bar{m}) , \ldots , \tilde{x}_c \matr{E}(\bar{m})} + O(\frac{1}{k^{1/5}}).\\
\end{aligned}
\end{equation}

Let \( \matr{Q} \) be the orthogonal matrix such that \( \matr{Q}^{\top} \matr{E}(\bar{m}) \matr{Q} = \matr{G}(\bar{m})\) and \( \matr{W} = \Diag{ \matr{Q} , \ldots , \matr{Q}} \). By equation \eqref{eq-Vk-Uk-S}, we have
\begin{equation}
  \label{eq-Vk-Uk-S}
 \begin{aligned}
 \matr{W}^{\top} \matr{V}^{\top}_k \matr{U}^{\top}_k \matr{ \bar{S}} \matr{U}_k \matr{V}_{k} \matr{W} &= \Diag{\tilde{\sigma}_1 \matr{G}(\bar{m}) , \ldots , \tilde{\sigma}_c \matr{G}(\bar{m})},\\
\lim_{k \to \infty} \matr{W}^{\top} \matr{V}^{\top}_k \matr{U}^{\top}_k \matr{ \bar{A}}^{(1)} \matr{U}_k \matr{V}_{k} \matr{W}&= \lambda \matr{V}^{\top}_k \matr{U}^{\top}_k\matr{\bar{S}}\matr{U}_k \matr{V}_k, \\
 \lim_{k \to \infty} \matr{W}^{\top} \matr{V}^{\top}_k \matr{U}^{\top}_k \matr{ \bar{A}}^{(2)} \matr{U}_k \matr{V}_{k} \matr{W} &= \Diag{\tilde{x}_1 \matr{G}(\bar{m}) , \ldots , \tilde{x}_c \matr{G}(\bar{m})}.\\
\end{aligned}
\end{equation}
The proof is complete.
\end{proof}

\section{Long proofs in  \Cref{sec:TWSD}}\label{proofs-2}

\begin{proof}[Proof of \Cref{thm:PSD-sufficient}]
Suppose that $\alpha\not=0$ without loss of generality. 
If $\alpha \matr{A} + \beta \matr{B}=\matr{0}_{m \times m}$, then $\matr{A}=-\frac{\beta}{\alpha}\matr{B}$, and thus the set $\{\matr{A}, \matr{B}\}$ is SD.
In general, if $\matr{C} = \alpha \matr{A} + \beta \matr{B} \succeq (\not=)\matr{0}_{m \times m}$, we let $\matr{\tilde{C}}_{k}=(\matr{C}+\frac{1}{k} \matr{I}_{m})^{-1/2}$ for $k\geq 1$, and $\matr{Q}_{k}\in\SON_{m}$ satisfying that $\matr{Q}_{k}^\T \matr{\tilde{C}}_{k} \matr{B} \matr{\tilde{C}}_{k} \matr{Q}_{k}$ is diagonal.
Then 
\[
\matr{I}_{m} = \matr{\tilde{C}}_{k}(\matr{C} + \frac{1}{k} \matr{I}_m) \matr{\tilde{C}}_{k} = \alpha \matr{\tilde{C}}_{k} \matr{A} \matr{\tilde{C}}_{k} + \beta \matr{\tilde{C}}_{k} \matr{B} \matr{\tilde{C}}_{k} +\frac{1}{k} ( \matr{C}+\frac{1}{k} \matr{I}_{m})^{-1}, 
\]
which implies that 
\[
\matr{I}_{m} = \matr{Q}_{k}^\T\matr{I}_{m}\matr{Q}_{k}=  \alpha \matr{Q}_{k}^\T \matr{\tilde{C}}_{k} \matr{A} \matr{\tilde{C}}_{k} \matr{Q}_{k} + \beta \matr{Q}_{k}^\T \matr{\tilde{C}}_{k} \matr{B} \matr{\tilde{C}}_{k} \matr{Q}_{k} +\frac{1}{k} \matr{Q}_{k}^\T ( \matr{C}+\frac{1}{k} \matr{I}_{m})^{-1} \matr{Q}_{k}.
\]
Let $\matr{P}_{k} =  \matr{\tilde{C}}_{k} \matr{Q}_{k}/\det(\matr{\tilde{C}}_{k})^{1/m}$ such that $\det (\matr{P}_{k}) =1$.
We have 
\begin{equation}\label{eq:PAP-I-B-C}
\matr{P}^{\T}_{k} \matr{A} \matr{P}_{k} = \frac{1}{\alpha \det(\matr{\tilde{C}}_{k})^{2/m}} \matr{I}_{m} - \frac{\beta}{\alpha } \matr{P}_{k}^\T \matr{B} \matr{P}_{k} -
\frac{\frac{1}{k}}{\alpha \det(\matr{\tilde{C}}_{k})^{2/m}} \matr{Q}_{k}^\T ( \matr{C}+\frac{1}{k} \matr{I}_{m})^{-1} \matr{Q}_{k}.
\end{equation}
Let a spectral decomposition of $\matr{C}$ be
\[
\matr{C}=\matr{Q}^\T \mbox{\textbf{Diag}} \{\underbrace{\lambda_1,\cdots,\lambda_{m-r}}_{\mbox{\small positive}},\underbrace{0,\cdots,0}_{r}\} \matr{Q},
\]
where $r\ge 1$ is an integer, and $\matr{Q}\in\SON_{m}$.
Then 
\begin{equation}\label{eq:C_1k_inver}
(\matr{C}+\frac{1}{k} \matr{I}_{m})^{-1} = \matr{Q}^\T \mbox{\textbf{Diag}} \{\underbrace{(\lambda_1+\frac{1}{k})^{-1},\cdots,(\lambda_{m-r}+\frac{1}{k})^{-1}}_{\mbox{\small constant order}},\underbrace{k,\cdots,k}_{r}\} \matr{Q},
\end{equation}
and
\begin{equation}\label{eq:C_k_tilde_det}
\det(\matr{\tilde{C}}_{k}) = k^{\frac{r}{2}}\Pi_{s=1}^{m-r}(\lambda_s+\frac{1}{k})^{-\frac{1}{2}}. 
\end{equation}
By equation \eqref{eq:C_k_tilde_det}, the first term in \eqref{eq:PAP-I-B-C} satisfies
\begin{equation*}
\lim_{k\rightarrow\infty}\frac{1}{\alpha \det(\matr{\tilde{C}}_{k})^{2/m}} \matr{I}_{m} = \matr{0}_{m\times m}.
\end{equation*}
By the construction of $\matr{Q}_k$, 
the second term \(-\frac{\beta}{\alpha}\matr{P}_{k}^{\top} \matr{B} \matr{P}_{k} \) is diagonal in \eqref{eq:PAP-I-B-C}.
By equation \eqref{eq:C_1k_inver}, the third term in \eqref{eq:PAP-I-B-C} satisfies
\begin{align*}
{\small\lim_{k\rightarrow\infty}
\frac{\frac{1}{k}}{\alpha \det(\matr{\tilde{C}}_{k})^{2/m}} \matr{Q}_{k}^\T ( \matr{C}+\frac{1}{k} \matr{I}_{m})^{-1} \matr{Q}_{k}
=\lim_{k\rightarrow\infty}
\frac{1}{\alpha \Pi_{s=1}^{m-r}(\lambda_s+\frac{1}{k})^{-\frac{1}{m}}} \matr{Q}_{k}^\T \frac{1}{k^{1+\frac{r}{m}}}( \matr{C}+\frac{1}{k} \matr{I}_{m})^{-1} \matr{Q}_{k}
=\matr{0}_{m\times m}.}
\end{align*}
Above all, we see that $$\lim_{k\rightarrow\infty}\textbf{offdiag}(\matr{P}^{\T}_{k} \matr{A} \matr{P}_{k})=\matr{0}_{m\times m},$$
and thus the set \( \{ \matr{A}, \matr{B} \} \) is TWSD. The proof is complete.
\end{proof}

Before the proof of \Cref{thm:equi_A1_bounded}, we give a lemma to show that under a sequence of similar transformations, if the off-diagonal elements converge to 0, then the diagonal elements are bounded.

\begin{lemma}\label{lem:weak-sim-diag-bounded}
Let \( \matr{A} \in \textbf{symm}(\mathbb{R}^{m \times m}) \). 
If there exists a sequence \( \{\matr{P}_k\}_{k \geq 1} \subseteq  \mathbf{GL}_m(\RR) \) such that the off-diagonal elements of \( \matr{P}_k^{-1}\matr{A}\matr{P}_k \) converge to 0, then the diagonal elements of \( \matr{P}_k^{-1}\matr{A}\matr{P}_k \) are uniformly bounded.
\end{lemma}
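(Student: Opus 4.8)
The plan is to argue by contradiction, using a normalization that turns the potentially ill-behaved sequence into a bounded one, and then exploiting that similarity preserves the spectrum. Write $\matr{B}_k = \matr{P}_k^{-1}\matr{A}\matr{P}_k$ and split $\matr{B}_k = \textbf{diag}(\matr{B}_k) + \matr{E}_k$ with $\matr{E}_k = \textbf{offdiag}(\matr{B}_k)$, so that $\|\matr{E}_k\| \to 0$ by hypothesis. Suppose, contrary to the claim, that the diagonal part is not uniformly bounded; set $M_k = \max_{1 \le i \le m} |(\matr{B}_k)_{ii}|$ and pass to a subsequence along which $M_k \to \infty$.

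First I would normalize. The matrices $\hat{\matr{B}}_k \eqdef \matr{B}_k / M_k$ have diagonal entries of absolute value at most $1$ and off-diagonal entries of absolute value at most $\|\matr{E}_k\|/M_k \to 0$, so $\{\hat{\matr{B}}_k\}$ lies in a bounded subset of $\RR^{m \times m}$. Passing to a further subsequence — and, by the pigeonhole principle applied to the finitely many row indices, to one along which the index attaining the maximum in the definition of $M_k$ is some fixed $i_*$ — we get $\hat{\matr{B}}_k \to \hat{\matr{B}}$, where $\hat{\matr{B}}$ is diagonal (its off-diagonal part is the limit of $\matr{E}_k/M_k$, which is $\matr{0}_{m \times m}$) and $|\hat{B}_{i_* i_*}| = 1$, so in particular $\hat{\matr{B}} \neq \matr{0}_{m \times m}$.

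Next I would bring in the eigenvalue information. Since $\matr{B}_k$ is similar to $\matr{A}$, its characteristic polynomial equals that of $\matr{A}$, say $t^m + a_1 t^{m-1} + \cdots + a_m$ with the $a_j$ fixed. Then the characteristic polynomial of $\hat{\matr{B}}_k = M_k^{-1}\matr{B}_k$ is $t^m + (a_1/M_k) t^{m-1} + \cdots + (a_m/M_k^m)$, whose non-leading coefficients all tend to $0$. Because the coefficients of the characteristic polynomial are polynomial, hence continuous, functions of the matrix entries, the characteristic polynomial of $\hat{\matr{B}}$ must be $t^m$; thus $\hat{\matr{B}}$ is nilpotent. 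But a diagonal nilpotent matrix is zero, which contradicts $\hat{\matr{B}} \neq \matr{0}_{m \times m}$. Therefore $\textbf{diag}(\matr{P}_k^{-1}\matr{A}\matr{P}_k)$ is uniformly bounded.

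The only genuinely delicate step is the normalization: a priori the individual diagonal entries of $\matr{B}_k$ could diverge with heavy cancellation (keeping $\tr{\matr{B}_k}$, for instance, constant), which is exactly why one cannot take limits in $\matr{B}_k$ directly. Dividing by $M_k$ is what rescues the argument, because it produces a bounded sequence whose limit is a \emph{nonzero} diagonal matrix while simultaneously forcing all limiting eigenvalues to be $0$ — an impossibility. Everything else (similarity invariance of the characteristic polynomial, continuity of its coefficients, and the triviality of diagonal nilpotents) is routine. Note also that symmetry of $\matr{A}$ is not actually needed for this argument; it suffices that all the $\matr{B}_k$ share a fixed finite spectrum.
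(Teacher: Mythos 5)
Your proof is correct, and it takes a genuinely different route from the paper's, although both ultimately rest on the same invariant: the characteristic polynomial of $\matr{P}_k^{-1}\matr{A}\matr{P}_k$ is fixed. The paper argues by contradiction at the level of a single coefficient: after passing to a subsequence in which exactly $r$ diagonal entries diverge to $\pm\infty$, it examines the sum of all $r\times r$ principal minors (up to sign, the coefficient of $\lambda^{m-r}$), and shows that the minor built from the $r$ divergent entries dominates all the others, so the sum would blow up while it must stay constant. This requires a somewhat delicate dominance/ordering analysis of the principal minors (which entries appear in which minor, and why the leading product of diagonals beats everything else as the off-diagonal part vanishes). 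You instead rescale the whole matrix by $M_k=\max_i\left|(\matr{B}_k)_{ii}\right|$, extract a convergent subsequence, and observe that the limit would have to be simultaneously a nonzero diagonal matrix (pigeonhole on the maximizing index) and nilpotent (the scaled characteristic polynomial $t^m + (a_1/M_k)t^{m-1}+\cdots+a_m/M_k^m$ degenerates to $t^m$, and the coefficients depend continuously on the entries) — an immediate contradiction. Your compactness-plus-scaling argument buys a cleaner proof: it avoids the case analysis over which and how many diagonal entries diverge and the comparison of growth rates of minors, needing only the scaling identity and continuity. You also correctly note that symmetry of $\matr{A}$ plays no role; the paper's proof likewise never uses it, so both arguments prove the statement for any fixed $\matr{A}\in\RR^{m\times m}$, or indeed for any sequence of matrices sharing a fixed characteristic polynomial.
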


\begin{proof}
We prove this result by contradiction. 
Without loss of generality, we assume that there exists a sequence \( \{\matr{P}_k\}_{k \geq 1} \subseteq  \mathbf{GL}_m(\RR) \) such that the first \( r\) \( (1 \leq r \leq m)\) diagonal elements of $\matr{P}_k^{-1}\matr{A}\matr{P}_k$ are unbounded, while the rest elements are bounded. Note that, for any unbounded sequence, we can always find a subsequence which goes to \( \pm \infty \). So without loss of generality, we assume that all of the unbounded diagonal elements go to \( \pm \infty \), that is,
\begin{equation}\label{eq:lim_block_diag}
  \lim_{k\to \infty} \matr{P}_k^{-1}\matr{A}\matr{P}_k = \Diag{\underbrace{\pm \infty, \pm \infty , \ldots, \pm \infty}_{\mbox{\small \( r \) elements}}, \alpha_{r+1} , \ldots , \alpha_m}
,
\end{equation}
where \(\alpha_s \in \mathbb{R}\), for \( r+1 \leq s \leq m \). Now we consider the \( r \times r \) principal minors of \( \matr{P}_k^{-1} \matr{A} \matr{P}_k\). Let \(M_{k}(i_1,i_2 , \ldots , i_r)\) be the determinant of the submatrix of \( \matr{P}_k^{-1} \matr{A} \matr{P} \) consisting of the \( (i_1, i_2 , \ldots , i_r) \)-th rows and columns, where \(1\leq i_1 < i_2  <  \ldots <  i_r\leq m\).
Then \( M_k(1,2 , \ldots , r)\) goes to \(\pm \infty \).
By equation \eqref{eq:lim_block_diag}, we also know that \( \lim_{k\to \infty} M_k(1,2 , \ldots , r) / M_k(i_1, i_2 , \ldots , i_r) = \pm \infty \) for all \( (i_1, i_2 , \ldots , i_r) \neq (1,2 , \ldots , r) \).
Let 
\[
M_k \eqdef \sum_{1 \leq i_1 < i_2 < \ldots  < i_r \leq m} M_k(i_1, i_2 , \ldots , i_r).
\]
It follows that $\lim_{k\to\infty}M_k=\pm \infty$, since there are finitely-many principal minors which may tend to infinity and there is one term whose order is strictly than others.
However, \(|M_k|\) is equal to the absolute value of the coefficient of the \(\lambda^{m-r}\) in the characteristic polynomial of \( \matr{P}^{-1}_k \matr{A} \matr{P} \), which is always the same as the characteristic polynomial of \( \matr{A} \). Therefore, \(M_k\) is invariant for all \( k \), which contradicts $\lim_{k\to\infty}M_k=\pm\infty$.
The proof is complete. 
\end{proof}

\begin{proof}[Proof of \cref{thm:equi_A1_bounded}]
Since \( \det(\matr{P}_{k}) \) is a constant and \( \matr{P}_k^{\top} \matr{A}_1 \matr{P}_k \) is diagonal and uniformly bounded for all \( k \geq 1 \) , its inverse \( \matr{P}_k^{-1} \matr{A}_1^{-1} \matr{P}_k^{-\top} \) is also diagonal and uniformly bounded by \cref{lem:bounded-inverse}(i). Combining it with the fact that the off-diagonl elements of \( \matr{P}_k^{\top} \matr{A}_i \matr{P}_k \) converge to 0 for all \( 1 \leq i \leq L \), we have the off-diagonal elements of
\begin{equation}
(\matr{P}_k^{-1} \matr{A}_1^{-1} \matr{P}_k^{-\top})(\matr{P}_k^{\top} \matr{A}_{i} \matr{P}_k)=\matr{P}_k^{-1} \matr{A}_1^{-1}\matr{A}_{i} \matr{P}_k
\end{equation}
converge to 0.
By \cref{lem:weak-sim-diag-bounded}, the diagonal elements of \(\matr{P}_k^{-1} \matr{A}_1^{-1}\matr{A}_{i} \matr{P}_k\) are bounded.
It follows that the elements of 
\begin{equation*}
(\matr{P}_k^{\top}\matr{A}_1\matr{P}_k)(\matr{P}_k^{-1} \matr{A}_1^{-1}\matr{A}_{i} \matr{P}_k)= \matr{P}_k^{\top}\matr{A}_{i} \matr{P}_k
\end{equation*}
are also bounded. Thus, the set \( \mathcal{C} \) is TWSD-B. The proof is complete.
\end{proof}


\bibliographystyle{abbrv}
\bibliography{TensorRef}

\end{document}